\numberwithin{equation}{section}
\newcommand{\C}{\mathcal{C}}
\newcommand{\N}{\mathbb{N}}
\newcommand{\R}{\mathbb{R}}
\renewcommand{\S}{\mathbb{S}}
\newcommand{\Z}{\mathbb{Z}}
\newcommand{\loc}{{\rm loc}}
\newcommand{\sgn}{{\mbox{\normalfont sgn}}}
\newcommand{\dist}{{\mbox{\normalfont dist}}}
\newcommand{\PV}{\mbox{\normalfont P.V.}}
\newcommand{\Haus}{\mathcal{H}}
\newcommand{\Lfrac}{{(-\Delta)^{\frac{1}{2}}}}
\renewcommand{\epsilon} {\varepsilon}
\DeclareMathOperator{\supp}{supp}
\DeclareMathOperator{\CAP}{Cap}
\DeclareMathOperator{\spn}{span}
\def\XXint#1#2#3{{\setbox0=\hbox{$#1{#2#3}{\int}$ }
\vcenter{\hbox{$#2#3$ }}\kern-.6\wd0}}
\theoremstyle{plain}
\newtheorem{definition}{Definition}[section]
\newtheorem{theorem}[definition]{Theorem}
\newtheorem{proposition}[definition]{Proposition}
\newtheorem{lemma}[definition]{Lemma}
\theoremstyle{definition}
\newtheorem{remark}[definition]{Remark}
\renewcommand{\le}{\leqslant}
\renewcommand{\leq}{\leqslant}
\renewcommand{\ge}{\geqslant}
\renewcommand{\geq}{\geqslant}
\begin{document}

\title[Blowing-up solutions for a nonlocal Liouville type equation]{\textbf{Blowing-up solutions for a nonlocal\\Liouville type equation}}

\author[Matteo Cozzi and Antonio J. Fern\'andez]{Matteo Cozzi and Antonio J. Fern\'andez}

\address{
\vspace{0.25cm}
\newline
\textbf{{\small Matteo Cozzi}}
\vspace{0.1cm}
\newline \indent Dipartimento di Matematica ``Federigo Enriques'', Universit\`a degli Studi di Milano, Milan, Italy}
\email{matteo.cozzi@unimi.it}

\address{
\vspace{-0.25cm}
\newline
\textbf{{\small Antonio J. Fern\'andez}} 
\vspace{0.1cm}
\newline \indent Instituto de Ciencias Matem\'aticas, Consejo Superior de Investigaciones Cient\'ificas, Madrid, Spain}
\email{antonio.fernandez@icmat.es}


\maketitle

\begin{abstract}
We consider the nonlocal Liouville type equation
$$
(-\Delta)^{\frac{1}{2}} u = \varepsilon \kappa(x) e^u, \quad u > 0, \quad \textup{in } I, \qquad u = 0, \quad \textup{in } \R \setminus I,
$$
where $I$ is a union of $d \geq 2$ disjoint bounded intervals, $\kappa$ is a smooth bounded function with positive infimum and~$\varepsilon > 0$ is a small parameter. For any integer~$1 \leq m \leq d$, we construct a family of solutions~$(u_\varepsilon)_{\varepsilon}$ which blow up at~$m$ interior distinct points of $I$ and for which~$\varepsilon \int_I \kappa e^{u_\varepsilon} \, \rightarrow 2 m \pi$, as~$\varepsilon \to 0$. Moreover, we show that, when~$d = 2$ and~$m$ is suitably large, no such construction is possible.

\bigbreak
\noindent {\sc Keywords:} Liouville type equation, Half-Laplacian, Blow up phenomena, Lyapunov-Schmidt reduction.

\smallbreak
\noindent {\sc 2020 MSC:} 35R11, 35B44, 35B25, 35S15.
\medbreak

\end{abstract}


\section{Introduction and main results} \label{Introduction and main results}

\noindent 
Let~$I$ be the union of a finite number~$d \geq 2$ of bounded open intervals with disjoint closure, that is
\begin{equation} \label{I}
I:= \bigcup_{k = 1}^{d} I_k, \quad \textup{ with } \quad I_k := (a_k,b_k) \quad \textup{ and }\quad - \infty < a_1 < b_1 < \cdots < a_d < b_d < + \infty.
\end{equation}
For~$\epsilon \in (0,1)$ and~$\kappa \in C^{2}(\overline{I})$, with~$\inf_{I}\kappa > 0$, we consider the Dirichlet problem
\[ \label{main-problem} \tag{$P_{\varepsilon}$}
\left\{
\begin{aligned}
(-\Delta)^{\frac12} u & = \varepsilon \kappa(x) e^u, \quad && \textup{ in } I, \\
u & > 0, && \textup{ in } I,\\
u & = 0, && \textup{ in } \R \setminus I.
\end{aligned}
\right.
\]
Here,~$(-\Delta)^{\frac{1}{2}}$ is the half-Laplace operator, defined for a bounded and sufficiently smooth function~$u$ as
$$
(-\Delta)^{\frac{1}{2}} u(x) := \frac{1}{\pi} \, \PV \int_{\R} \frac{u(x) - u(x + z)}{z^2} \, dz = \frac{1}{\pi} \, \lim_{\delta \to 0^{+}} \int_{\R \setminus (-\delta, \delta)} \frac{u(x) - u(x + z)}{z^2} \, dz.
$$
The main aim of the present paper is to construct solutions to~\eqref{main-problem} for small values of~$\epsilon$ which become larger and larger and eventually blow up at interior points of~$I$ as~$\epsilon \to 0$.

\medbreak 
The first motivation for considering these types of problems comes from Riemannian Geometry. Let~$(\Sigma,g_0)$ be a Riemannian surface with boundary. A classical problem is the prescription of the Gaussian and geodesic curvatures on~$\Sigma$ and~$\partial\Sigma$ under a conformal change of the metric. In other words, to study if, given~$K_1: \Sigma \to \R$ and~$\kappa_1: \partial \Sigma \to \R$, there exists a conformal metric~$g_1 = e^{2 U} g_0$ on~$\Sigma$ such that~$K_1$ is the Gaussian curvature of~$\Sigma$ and~$\kappa_1$ is the geodesic curvature of~$\partial \Sigma$, both with respect to new metric~$g_1$. This geometric problem can be reformulated in a PDEs framework. Indeed, it is equivalent to find a function~$U$ solving the boundary value problem 
\begin{equation} \label{geometry1}
\left\{
\begin{aligned}
-\Delta_{g_0} U + K_0(x) & =  K_1(x) e^{2U}, \quad && \textup{ in } \Sigma,\\
\partial_{\nu} U + \kappa_0(x) & = \kappa_1(x) e^{U}, \quad && \textup{ on } \partial \Sigma,
\end{aligned}
\right.
\end{equation}
where~$\Delta_{g_0}$ is the Laplace Beltrami operator associated to the metric~$g_0$,~$\nu$ is the outward normal vector to~$\partial \Sigma$,~$K_0$ is the Gaussian curvature of~$\Sigma$ under the metric~$g_0$, and~$\kappa_0$ is geodesic curvature of~$\partial \Sigma$ under the metric~$g_0$. Note that the interested reader can find an exhaustive list of references concerning~\eqref{geometry1} in the introductions of~\cite{CB-R18,B-M-P21,J-LS-M-R22, LS-M-R22}. When~$\Sigma = \R^2_{+} := \{(x,y) \in \R^2: y > 0\}$ and~$g_0$ is the standard Euclidean metric, problem~\eqref{geometry1} is reduced to
\begin{equation} \label{geometry2}
\left\{
\begin{aligned}
-\Delta U & = K_1(x) e^{2U}, \quad && \textup{ in } \R^2_{+},\\
-\partial_{y} U & = \kappa_1(x) e^{U}, \quad && \textup{ on } \partial \R^2_{+}.
\end{aligned}
\right.
\end{equation}
For~$K_1$ and~$\kappa_1$ constants, problem~\eqref{geometry2} has been largely studied and its solutions are completely classified (see for instance~\cite{O00,L-Z95,G-M09,Z03}). The cases where~$K_1$ and/or~$\kappa_1$ are not constants are however far from being fully understood.

Another closely related problem is obtained by prescribing the geodesic curvature only in a portion~$I$ of the boundary~$\R \equiv \partial \R^2_+$ and keeping the metric unchanged in its complement. This corresponds to
\begin{equation} \label{geometry3}
\left\{
\begin{aligned}
-\Delta U & = K_1(x) e^{2 U}, \quad && \textup{ in } \R^2_{+},\\
U & = 0, \quad && \textup{ on } (\R \setminus I) \times \{0\},\\
-\partial_{y} U & = \kappa_1(x) e^{U}, \quad && \textup{ on } I \times \{0\}.
\end{aligned}
\right.
\end{equation}
If we now restrict ourselves to taking~$K_1 \equiv 0$, this problem becomes equivalent to~\eqref{main-problem} with~$\kappa_1 := \epsilon \kappa$. Indeed, by understanding the half-Laplacian as a Dirichlet-to-Neumann operator, it is clear that the harmonic extension of any solution to~\eqref{main-problem} is a solution to~\eqref{geometry3}--note that the positivity condition occurring in~\eqref{main-problem} is actually automatically satisfied thanks to the strong maximum principle. Hence, from a geometric point of view, we can rephrase our main aim as follows: to construct a family of conformal metrics with prescribed Gaussian curvature~$K_1 \equiv 0$ in~$\R^2_{+}$ and prescribed geodesic curvature~$\epsilon \kappa(x)$ on~$I$ which remains unchanged on~$\R \setminus I$ and blows up at interior points of~$I$ as~$\epsilon \to 0$.

Problem~\eqref{main-problem} can also be considered as the one-dimensional analogue of the Liouville type equation
\begin{equation} \label{Liouville2d}
\left\{
\begin{aligned}
-\Delta u & = \epsilon^2 \kappa_2(x) e^{u}, \quad && \textup{ in } \Omega,\\
u & > 0, \quad && \textup{ in } \Omega,\\
u & = 0, \quad && \textup{ on } \partial \Omega,
\end{aligned}
\right.
\end{equation}
where~$\Omega \subset \R^2$ is a bounded open set with smooth boundary,~$\epsilon \in (0,1)$, and~$\kappa_2 \in C^2(\overline{\Omega})$ satisfies~$\inf_{\Omega} \kappa_2 > 0$. Notice that the Laplacian in the plane has many common features to the one-dimensional half-Laplacian--think, e.g., to their fundamental solutions and the criticality of the Sobolev embedding. However, while it suffices to study~\eqref{Liouville2d} for connected~$\Omega$, the nonlocality of the half-Laplacian makes it interesting to deal with problem~\eqref{main-problem} in the case of a general (disconnected) set~$I$.
 
The existence and behaviour of solutions for the two-dimensional problem~\eqref{Liouville2d} has been largely studied. Directly related to the issues we address here are the seminal papers~\cite{B-P98, dP-K-M05, E-G-P05}, where the authors construct blowing-up solutions to~\eqref{Liouville2d}. Taking~$\Omega$ to be a general non-simply connected set,~\cite[Theorem 1.1]{dP-K-M05} establishes that, given any $m \in \N$, there exists a family of solutions~$(u_{\epsilon})_{\epsilon}$ to~\eqref{Liouville2d} which blow up (as $\epsilon \to 0$) at~$m$ distinct points in the interior of~$\Omega$ and such that
$$
\lambda_2(\epsilon) := \epsilon^2 \int_{\Omega} \kappa_2(x) e^{u_{\epsilon}(x)} dx \longrightarrow 8m\pi, \quad \mbox{as } \epsilon \to 0.
$$
It is also possible to construct such a family of solutions for some carefully chosen simply connected domains (see~\cite[Section 5]{E-G-P05}). However, this is not in general the case. For instance, for~$\kappa_2 \equiv 1$ and~$\Omega = B_1(0)$, Pohozaev's identity implies (see~\cite[Proposition 7.1]{C-L-M-P92}) that, if~$u_{\epsilon}$ is a solution to~\eqref{Liouville2d}, then 
$$
\lambda_2(\epsilon) =\epsilon^2 \int_{B_1(0)} e^{u_{\epsilon}(x)} dx < 8\pi.
$$
More generally, if~$\Omega$ is a star-shaped domain, Pohozaev's identity yields an upper bound on the possible values of~$\lambda_2(\epsilon)$ for which solutions exist.

As shown in~\cite{DlT-H-M-S-2020}, in the one-dimensional problem~\eqref{main-problem} the role of~$8 \pi$ is played by~$2 \pi$. Indeed, the authors of~\cite{DlT-H-M-S-2020} proved that the one-dimensional mean-field type equation
\begin{equation} \label{meanfieldhalflap}
\left\{
\begin{aligned}
(-\Delta)^{\frac{1}{2}} v & = \lambda \, \frac{e^v}{\int_{-1}^1 e^v dx}, \quad && \mbox{in } (-1, 1),\\
v & > 0, \quad && \mbox{in } (-1, 1), \\
v & = 0, \quad && \mbox{in } \R \setminus (-1, 1),
\end{aligned}
\right.
\end{equation}
admits a solution~$v_\lambda$ if and only if~$\lambda \in (0, 2 \pi)$. Moreover, such a solution~$v_\lambda$ blows up at~$0$ as~$\lambda \to 2 \pi$. As a result, if~$u_\varepsilon$ is a solution to~\eqref{main-problem} with~$I = (-1, 1)$ and~$\kappa \equiv 1$, then necessarily
$$
\lambda(\epsilon) := \epsilon \int_{-1}^1 e^{u_{\epsilon}(x)} dx < 2\pi.
$$

In view of these results, it seems very natural to analyze whether, for~$I$ as in~\eqref{I} and~$m \in \N$,~$m \geq 2$, there exists a family of solutions~$(u_{\epsilon})_{\epsilon}$ to problem~\eqref{main-problem} which blow up at interior points of~$I$ (as~$\epsilon \to 0$) and such that
$$
\lim_{\epsilon \to 0} \epsilon \int_{I} \kappa(x) e^{u_{\epsilon}(x)} dx = 2 m \pi.
$$
In our main result we provide an affirmative answer to this question, under the additional assumption~$m \leq d$. More precisely, we have the following statement.

\begin{theorem} \label{mainTheoremExistence}
For any integer~$m \in [1, d]$, there exist~$\epsilon_{\star} \in (0,1)$ and a family of solutions~$(u_{\epsilon})_{\epsilon}$ to~\eqref{main-problem}, defined for~$\epsilon \in (0,\epsilon_{\star})$, such that
$$
\lim_{\epsilon \to 0} \epsilon \int_{I} \kappa(x) e^{u_{\epsilon}(x)} dx = 2m\pi.
$$
Moreover, given any infinitesimal sequence~$(\epsilon_n)_n \subset (0,\epsilon_\star)$, there exist~$m$ distinct points~$\xi_1, \ldots, \xi_m \in I$ for which, up to a subsequence and for any~$\delta > 0$, the following is true:
\begin{itemize}
\item[$\circ$] $(u_{\epsilon_n})_{\epsilon_n}$ is uniformly bounded in~$\displaystyle I \setminus \bigcup_{j=1}^m (\xi_j-\delta,\, \xi_j+\delta)$;
\item[$\circ$] $\displaystyle \sup_{(\xi_j-\delta,\, \xi_j+\delta)} u_{\epsilon_n} \to \infty$ as~$n \to \infty$.
\end{itemize}
\end{theorem}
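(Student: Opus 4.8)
The plan is to construct the solutions by a Lyapunov–Schmidt reduction, following the scheme of de Pino–Kowalczyk–Musso \cite{dP-K-M05} adapted to the nonlocal one-dimensional setting. First I would fix an integer $m \in [1,d]$ and seek solutions concentrating near $m$ points, one in each of $m$ distinct intervals among $I_1,\dots,I_d$; placing the blow-up points in different components is the natural way to exploit the hypothesis $m \le d$ and to avoid the obstruction appearing in the last part of the abstract. For a vector of points $\boldsymbol{\xi} = (\xi_1,\dots,\xi_m)$ with $\xi_j$ in the interior of a chosen interval, I would build an approximate solution $U_{\boldsymbol{\xi}}$ by summing suitably scaled and translated copies of the standard one-dimensional Liouville bubble — i.e. the half-harmonic extension of $w_\mu(x) = \log \frac{2\mu}{\mu^2 + x^2}$, which solves $(-\Delta)^{1/2} w = e^w$ on $\R$ — with concentration parameters $\mu_j = \mu_j(\varepsilon,\boldsymbol{\xi})$ chosen so that the boundary condition $u = 0$ in $\R\setminus I$ and the coefficient $\varepsilon\kappa$ are matched to leading order. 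This forces a logarithmic relation $\log \mu_j \sim \log\varepsilon + (\text{lower order})$, and the correction to make $U_{\boldsymbol{\xi}}$ vanish outside $I$ is governed by the regular part of the relevant Green's function for $(-\Delta)^{1/2}$ on $I$ with Dirichlet exterior condition.

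Next I would set up the functional-analytic framework: work in the fractional Sobolev space $H^{1/2}_0(I)$ (or an equivalent trace/extension space on $\R^2_+$), write $u = U_{\boldsymbol{\xi}} + \phi$, and linearize. The linearized operator around a single bubble has, by the nondegeneracy results for the one-dimensional Liouville equation, a kernel spanned (modulo the natural scaling/translation invariances) by an explicit function; one shows invertibility of the linearized operator on the $L^\infty$-type weighted space orthogonal to these approximate kernel elements, with norm bounds that blow up only logarithmically in $\varepsilon$. A contraction mapping argument then produces, for each admissible $\boldsymbol{\xi}$, a unique small solution $\phi = \phi_{\boldsymbol{\xi},\varepsilon}$ of the projected problem, depending smoothly on $\boldsymbol{\xi}$ with quantitative estimates $\|\phi_{\boldsymbol{\xi},\varepsilon}\| = O(\varepsilon^{\sigma})$ or $O(|\log\varepsilon|^{-1})$ for a suitable exponent. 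At this stage one has a genuine solution of \eqref{main-problem} provided the Lagrange multipliers (the components along the approximate kernel) vanish; this reduces to finding a critical point of a finite-dimensional reduced energy $\mathcal{F}_\varepsilon(\boldsymbol{\xi})$ on the configuration space of $m$-tuples of interior points.

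The reduced functional will have the form $\mathcal{F}_\varepsilon(\boldsymbol{\xi}) = c_1 m + c_2 |\log\varepsilon| \big( m + o(1)\big) + \varphi_m(\boldsymbol{\xi}) + o(1)$ as $\varepsilon\to 0$, where $\varphi_m$ is a ``Robin-function''–type functional built from the diagonal regular part of the Dirichlet Green's function of $(-\Delta)^{1/2}$ on $I$ together with $\log\kappa(\xi_j)$, and the $o(1)$ holds $C^1$-uniformly on compact subsets of the interior configuration space. Since $\varphi_m(\boldsymbol{\xi}) \to +\infty$ (or $-\infty$, depending on sign conventions) as any $\xi_j$ approaches $\partial I$ or as two points in the same interval collide — here using that each $\xi_j$ lives in its own component, so only the boundary-approach degeneration can occur — the functional attains an interior extremum, which is a stable critical point surviving the perturbation. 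This yields the solution family $(u_\varepsilon)_\varepsilon$. Finally, the quantization $\varepsilon\int_I \kappa e^{u_\varepsilon} \to 2m\pi$ follows from $\varepsilon\int_I \kappa e^{U_{\boldsymbol{\xi}}} \to \sum_{j=1}^m \int_\R e^{w_1} = 2m\pi$ together with the smallness of $\phi$, and the blow-up/local-boundedness dichotomy follows from the explicit bubble profile plus the $L^\infty$ control of the remainder away from the $\xi_j$'s, extracting the limit points $\xi_1,\dots,\xi_m$ along subsequences from the compactness of the reduced configuration space.

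I expect the main obstacle to be twofold: first, establishing the sharp invertibility of the linearized half-Laplacian operator on the weighted $L^\infty$ spaces, which requires a careful nonlocal analogue of the de Pino–Kowalczyk–Musso linear theory — in particular controlling the interaction between distant bubbles and the far-field behavior of the error through the nonlocal tail term; and second, obtaining the $C^1$-expansion of the reduced energy with explicit identification of $\varphi_m$ in terms of the Dirichlet Green's function of $(-\Delta)^{1/2}$ on the \emph{disconnected} set $I$, including the genuinely nonlocal cross-terms coming from the other components. Once these two technical pillars are in place, the existence of a critical point and the passage to the limit are comparatively standard.
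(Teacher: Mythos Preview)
Your proposal is correct and follows essentially the same approach as the paper: a Lyapunov--Schmidt reduction in the style of \cite{dP-K-M05}, with bubbles corrected by the regular part of the Dirichlet Green function of $(-\Delta)^{1/2}$ on $I$, linear theory on weighted $L^\infty$ spaces with a $|\log\varepsilon|$ loss, a contraction for the remainder, and a reduced energy whose leading $\xi$-dependent part is exactly the Robin-type functional $\Xi(\xi) = -\sum_j\big(2\log\kappa(\xi_j)+H(\xi_j,\xi_j)+\sum_{i\ne j}G(\xi_j,\xi_i)\big)$, minimized by placing each $\xi_j$ in a distinct interval so that only the boundary degeneration (where $\Xi\to+\infty$) matters. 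The two technical pillars you flag---the nonlocal linear invertibility with sharp constants and the $C^1$ expansion of the reduced energy on the disconnected domain---are precisely where the paper concentrates its effort.
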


We stress that the solutions $u_{\epsilon}$ constructed in Theorem~\ref{mainTheoremExistence} are classical, in the sense that it is as regular as the function~$\kappa$ allows--namely, if~$\kappa$ is of class~$C^{\ell}(\overline{I})$, $\ell \geq 2$, then~$u_{\epsilon} \in H^{\frac12}(\R) \cap C^{\,0,\frac12}(\R) \cap C^{\,\ell+1}(I)$--and that~\eqref{main-problem} holds pointwise.

\medbreak
Our proof of Theorem~\ref{mainTheoremExistence} is perturbative and based on a Lyapunov-Schmidt reduction inspired by the one pioneered in~\cite{dP-K-M05}. This approach allows for a rather precise description of the blow up of the family of solutions~$(u_\epsilon)_{\epsilon}$ at the points~$\xi_j$'s. Indeed, focusing for simplicity on the case~$\kappa \equiv 1$, the solution $u = u_\epsilon$ will be of the form~$u = \mathscr{U} + \psi$, with
\begin{equation} \label{mathscrUdefintro}
\mathscr{U}(x) := \sum_{j = 1}^m \left( \log \Big( {\frac{2 \mu_j}{\mu_j^2 \epsilon^2 + (x - \xi_j)^2}} \Big) + H_j(x) \right),
\end{equation}
for some appropriately chosen parameters~$\mu_1, \ldots, \mu_m \in (0, +\infty)$ and suitable corrector terms~$H_j$--for more details, see the beginning of Section~\ref{The approximate solution} and in particular formulas~\eqref{ujdef} and~\eqref{mu_j}. The remainder~$\psi$ being small in~$\epsilon$. 

Problem~\eqref{main-problem} will be addressed by linearizing around~$\mathscr{U}$ (in suitable expanded variables) and solving the corresponding problem for~$\psi$. It turns out that this is possible if the points~$\xi_j$'s are chosen in a suitable way. Indeed, letting~$\Gamma$ be the fundamental solution of~$(-\Delta)^{\frac{1}{2}}$,~$G$ its Green function in the set~$I$, and~$H(x, z) := G(x, z) - \Gamma(x - z)$ its regular part (see Section~\ref{The approximate solution} for more precise definitions), the vector~$\xi = (\xi_1, \ldots, \xi_m)$ will (necessarily) be a critical point of a function~$F_\epsilon: I^m \to \R$ satisfying
$$
F_\epsilon(\xi) \approx - 2 \pi m (1 + \log \epsilon) + \pi \, \Xi(\xi), \quad \mbox{ with } \quad \Xi(\xi) := - \sum_{j = 1}^m \Big( {H(\xi_j, \xi_j) + \sum_{i \ne j} G(\xi_j, \xi_i)} \Big),
$$
for~$\epsilon$ small. Our problem is thus further reduced to finding a critical point for~$\Xi$. Such a critical point (actually, a local minimizer) is found at the expense of requiring~$m \le d$ and choosing each~$\xi_j$ in a different interval~$I_k$. 

We note in passing that, even though our construction is carried out for~$d \ge 2$, the same argument works also in the case of a single interval~$I = I_1$ and gives quantitative information on the blow up rate in~\cite[Theorem~1.2]{DlT-H-M-S-2020} (in this case,~$- \Xi$ agrees with the explicit Robin function~$H(\xi_1, \xi_1)$ of~$I$, which has its maximum at the midpoint of~$I$). Let us also mention here \cite{DlT-M-P-2020}, where the authors analyze the somehow related one-dimensional nonlocal sinh-Poisson equation in a single interval using (as we do) a Lyapunov-Schmidt reduction. 

\medbreak
Theorem~\ref{mainTheoremExistence} leaves a few questions open, more preeminently whether solutions with multiple blow up points in the same interval exist or not. The following result goes in the direction of a negative answer, showing that, for~$\kappa \equiv 1$, the construction which we just described cannot be carried out in the case of two intervals ($d = 2$) and a sufficiently large number~$m$ of blow up points.

\begin{proposition} \label{nonExistence}
Let~$\delta_0 \in (0,1/10]$. There exist~$b_\star \ge 1$ and~$m_{\star} > 2$ for which the following holds true: for all~$m \geq m_{\star}$ and all~$\delta \in (0,1/10]$, there exists~$\epsilon_0 \in (0,1)$ such that, if~$\epsilon \in (0,\epsilon_0)$, then, setting~$I_{b_\star} := (-b_{\star}-1,-b_{\star}) \cup (b_{\star}, b_{\star}+1)$, problem
$$
\left\{
\begin{aligned}
(-\Delta)^{\frac12} u & = \varepsilon e^u, \quad && \textup{ in } I_{b_{\star}}, \\
u & > 0, && \textup{ in } I_{b_{\star}},\\
u & = 0, && \textup{ in } \R \setminus I_{b_{\star}},
\end{aligned}
\right.
$$
has no solution of the form~$u = \mathscr{U} + \psi_{\epsilon}$ with~$\mathscr{U}$ as in~\eqref{mathscrUdefintro} and: \smallbreak
\begin{itemize}
\item[$\circ$] $\dist(\xi_j, \R \setminus I_{b_{\star}}) \geq \delta_0$ and $|\xi_j-\xi_i| \geq \delta$, for all $i,j \in \{1,\ldots,m\}$ with $i\neq j$; \smallbreak
\item[$\circ$] $\mu_j(\xi)$ as in \eqref{mu_j} for all $j \in \{1,\ldots,m\}$; \smallbreak
\item[$\circ$] $\psi_{\epsilon} \in L^{\infty}(\R)$ satisfying $\psi_{\epsilon} = 0$ a.e.~in $\R \setminus J_{b_{\star}}$ and $\|\psi_{\epsilon}\|_{L^{\infty}(J_{b_{\star}})} \to 0$, as $\epsilon \to 0$.
\end{itemize}
\end{proposition}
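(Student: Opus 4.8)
Arguing by contradiction, I would proceed in two steps: first, show that a solution of the stated form forces the concentration vector $\xi=(\xi_1,\dots,\xi_m)$ to be an \emph{approximate} critical point of $\Xi$ on $I_{b_\star}^m$; second, prove a Pohozaev-type inequality for $\Xi$ that rules this out once $m$ and $b_\star$ are large. For the first step, suppose $u=\mathscr{U}+\psi_\epsilon$ solves the problem on $I_{b_\star}$ under the stated hypotheses, and let $Z_j:=\partial_{\xi_j}\mathscr{U}$ be the kernel element attached to the $j$-th bubble (projected onto the functions vanishing outside $I_{b_\star}$, if needed). Testing the weak form of the equation against $Z_j$, expanding $u=\mathscr{U}+\psi_\epsilon$, and using the self-adjointness of $\Lfrac$, one obtains
\[
0 \;=\; \big\langle \Lfrac\mathscr{U}-\epsilon\,e^{\mathscr{U}},\,Z_j\big\rangle \;+\; \big\langle \psi_\epsilon,\,\Lfrac Z_j\big\rangle \;-\; \epsilon\int_{I_{b_\star}} e^{\mathscr{U}}\big(e^{\psi_\epsilon}-1\big)\,Z_j\,dx .
\]
The first term equals $\pi\,\partial_{\xi_j}\Xi(\xi)+o(1)$ uniformly on the admissible set, by the reduced-energy expansion underlying Theorem~\ref{mainTheoremExistence} --- this is precisely where the choice $\mu_j=\mu_j(\xi)$ of~\eqref{mu_j} enters; the remaining two terms are $o(1)$ by the smallness of $\psi_\epsilon$ together with the weighted bounds on $\Lfrac\mathscr{U}-\epsilon e^{\mathscr{U}}$ and on $e^{\mathscr{U}}$ already established for the existence proof. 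Hence every solution of the stated form forces $\partial_{\xi_j}\Xi(\xi)=o(1)$ as $\epsilon\to0$, for each $j\in\{1,\dots,m\}$.

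For the second step, write $I_{b_\star}=I_{b_\star}^-\cup I_{b_\star}^+$, with $I_{b_\star}^\pm$ of length $1$ and centres $c_\pm:=\pm(b_\star+\tfrac12)$; put $m_\pm:=\#\{j:\xi_j\in I_{b_\star}^\pm\}$ (so $m_-+m_+=m$), and for each $j$ let $\chi_j:=\xi_j-c_\pm$ with the sign matching the interval containing $\xi_j$, so that $|\chi_j|<\tfrac12$. From $\Xi(\xi)=-\sum_j\big(H(\xi_j,\xi_j)+\sum_{i\ne j}G(\xi_j,\xi_i)\big)$ one has $\partial_{\xi_j}\Xi=-2\,\partial_1H(\xi_j,\xi_j)-2\sum_{i\ne j}\partial_1G(\xi_j,\xi_i)$ (with $G=G_{I_{b_\star}}$, $H=H_{I_{b_\star}}$, and $\partial_1$ the derivative in the first variable), and I would estimate $\sum_j\chi_j\,\partial_{\xi_j}\Xi$ by splitting $G=\Gamma+H$, $\Gamma(t)=-\tfrac1\pi\log|t|$, into three groups of terms. \emph{(a) Singular part:} for $\xi_i,\xi_j$ in the same interval $\chi_j-\chi_i=\xi_j-\xi_i$, so $\chi_j\Gamma'(\xi_j-\xi_i)+\chi_i\Gamma'(\xi_i-\xi_j)=-\tfrac1\pi$, and summing over all $\binom{m_-}{2}+\binom{m_+}{2}$ same-interval pairs produces the \emph{positive} contribution $\tfrac2\pi\big(\binom{m_-}{2}+\binom{m_+}{2}\big)\ge\tfrac{m(m-2)}{2\pi}$. \emph{(b) Regular part, single-interval model:} letting $H_*$ be the regular part of a length-$1$ interval (a rescaling of $(-1,1)$, whose regular part is the classical $\tfrac1\pi\log\big(1-xz+\sqrt{(1-x^2)(1-z^2)}\big)$), a direct computation gives the identity
\[
(x\partial_x+z\partial_z)\Big[\tfrac1\pi\log\big(1-xz+\sqrt{(1-x^2)(1-z^2)}\big)\Big]
\;=\; -\frac1\pi\,\frac{\big(x\sqrt{1-z^2}+z\sqrt{1-x^2}\big)^2}{\sqrt{(1-x^2)(1-z^2)}\,\big(1-xz+\sqrt{(1-x^2)(1-z^2)}\big)}\;\le\;0 ,
\]
so that (after rescaling) both $\chi_j\,\partial_1H_*(\xi_j,\xi_j)\le0$ and $\chi_j\partial_1H_*(\xi_j,\xi_i)+\chi_i\partial_1H_*(\xi_i,\xi_j)\le0$ for $\xi_i,\xi_j$ in the same interval, and hence the regular-part terms never work against the positive quantity in (a).

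\emph{(c) Two-interval corrections:} since $\dist(\xi_j,\R\setminus I_{b_\star})\ge\delta_0$, one has $\|H_{I_{b_\star}}-H_*\|_{C^1}\le\eta(b_\star)$ on the $\delta_0$-interior and $\partial_1G_{I_{b_\star}}(\xi_j,\xi_i)=O(1/b_\star)$ whenever $\xi_i,\xi_j$ lie in different intervals, where $\eta(b_\star)\to0$ as $b_\star\to\infty$. Collecting (a)--(c) and using $|\chi_j|<\tfrac12$,
\[
\sum_{j=1}^m \chi_j\,\partial_{\xi_j}\Xi(\xi)\;\ge\;\frac{m(m-2)}{2\pi}\;-\;C(\delta_0)\,m^2\big(\eta(b_\star)+b_\star^{-1}\big) ,
\]
and choosing $b_\star=b_\star(\delta_0)$ large enough makes the right-hand side $\ge\tfrac1\pi>0$ for every $m\ge3$ (so that $m_\star=3$ already suffices). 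Combined with the first step --- for fixed $m$, the left-hand side is $o(1)$ as $\epsilon\to0$ --- this is a contradiction for $\epsilon$ small, which proves the proposition.

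The main obstacle is the analysis in (b)--(c). The sign in (b) is what makes the whole scheme work; note that $\|\partial_1H_*\|_\infty$ on the $\delta_0$-interior degenerates like $\delta_0^{-3/2}$ as $\delta_0\to0$ and would be useless, so the exact algebraic identity --- with its perfect square --- is genuinely needed. The stability bound in (c) can be obtained either from a (semi-)explicit expression for $G_{I_{b_\star}}$, or, more robustly, by noting that $w:=G_{I_{b_\star}}(\cdot,z)-G_{I_{b_\star}^-}(\cdot,z)$ (for $z$ in the left interval) is $\Lfrac$-harmonic in $I_{b_\star}^-$ with exterior datum supported in $I_{b_\star}^+$, hence at distance $\ge 2b_\star$ from $I_{b_\star}^-$ and of size $O(1/b_\star)$, so that interior regularity estimates force it to be $C^1$-small on the $\delta_0$-interior. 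Finally, the $o(1)$ bookkeeping in the first step needs some care since $\psi_\epsilon$ is only assumed small in $L^\infty$; this is routine given the estimates behind Theorem~\ref{mainTheoremExistence}, possibly after first improving a priori the decay rate of $\psi_\epsilon$.
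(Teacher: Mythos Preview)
Your approach is genuinely different from the paper's. The paper does \emph{not} go through (approximate) critical points of $\Xi$; instead it works directly with the PDE via the generalized fractional Pohozaev identity of D\'avila--Fall--Weth applied with the piecewise-linear field $\Upsilon(t)=\max\{t,0\}$, together with a quantitative fractional Hopf lemma. After passing to the mean-field formulation on $J_{b}=(-2b-1,-2b)\cup(0,1)$, the Pohozaev identity yields (for $b$ chosen large enough) $\|w\|_{L^1(J_b)}^2\le C/\lambda$, while an elementary Green-function lower bound gives $\int_{J_b}\mathscr{U}\ge c_1\,m\log(1+\delta_0)$; since $\lambda\sim 2m\pi$ this forces $m\le C(\delta_0)$, a contradiction for $m\ge m_\star(\delta_0)$. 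This route never needs to know anything about $\psi_\epsilon$ beyond $\|\psi_\epsilon\|_{L^\infty}\to0$.

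Your Step~2 (the Pohozaev-type identity for $\Xi$ with the centred radial vector field) is correct and neat --- the perfect-square identity for the regular part of the interval Green function is exactly right --- and it would even yield the sharper conclusion $m_\star=3$ (independent of $\delta_0$). The difficulty is entirely in Step~1, and it is more serious than you suggest. Testing the equation against $Z_j=\partial_{\xi_j}\mathscr{U}$ produces (after the cancellation you indicate) a linearised error $\langle\psi_\epsilon,\partial_{\xi_j}\mathcal{R}\rangle$ with $\mathcal{R}:=(-\Delta)^{1/2}\mathscr{U}-\epsilon e^{\mathscr{U}}$, and a nonlinear remainder $\epsilon\int e^{\mathscr{U}}(e^{\psi_\epsilon}-1-\psi_\epsilon)Z_j$. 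A direct computation shows $\|\partial_{\xi_j}\mathcal{R}\|_{L^1(I)}\sim|\log\epsilon|$ and $\epsilon\int e^{\mathscr{U}}|Z_j|\sim\epsilon^{-1}$, so the two error terms are $O(\|\psi_\epsilon\|_\infty|\log\epsilon|)$ and $O(\epsilon^{-1}\|\psi_\epsilon\|_\infty^2)$ respectively; since the leading term $\langle\mathcal{R},Z_j\rangle=\pi\,\partial_{\xi_j}\Xi+o(1)$ is of unit size in $\partial_{\xi_j}\Xi$, extracting $\partial_{\xi_j}\Xi=o(1)$ requires at least $\|\psi_\epsilon\|_\infty=o(\epsilon^{1/2})$, not merely $o(1)$. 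The hoped-for a~priori improvement does not follow from the linear theory, because $\psi_\epsilon$ need not satisfy the orthogonality conditions of Proposition~\ref{mainlinearprop}; the natural fix is a recentering/implicit-function argument (replace $\xi$ by a nearby $\tilde\xi$ for which the remainder \emph{is} orthogonal, then invoke the uniqueness in Proposition~\ref{propnonlineartheory}), together with a $C^1$ version of the expansion in Lemma~\ref{JepsasPhipertlem}. Both steps are plausible but substantial, and neither is available in the paper as stated --- this is why the authors' PDE-level Pohozaev route is cleaner here.
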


\medbreak

We stress that Proposition~\ref{nonExistence} is just a partial non-existence result, in that only solutions of the type~$u = \mathscr{U} + \psi_\epsilon$ are excluded and, as can be seen from the proof, the upper bound~$m_\star$ on the number of blow up points becomes unbounded as~$\delta_0 \to 0$. We believe it would be interesting to understand if a more precise non-existence result can be established, perhaps, as in~\cite{DlT-H-M-S-2020}, phrased in terms of a bound for~$\lambda$ in the mean-field type formulation~\eqref{meanfieldhalflap}.

\subsection*{Organization of the paper}

The remaining of the paper is organized as follows. The proof of Theorem~\ref{mainTheoremExistence} occupies Sections~\ref{The approximate solution}--\ref{Variational reduction}. More precisely, in Section~\ref{The approximate solution}, we analyze the approximate solution~$\mathscr{U}$; in Section~\ref{Linear theory}, we develop a conditional solvability theory for the linearized operator at~$\mathscr{U}$; in Section~\ref{Nonlinear theory}, we use these results to address a projected version of the original nonlinear problem; and, in Section~\ref{Variational reduction}, we deal with the finite-dimensional reduced problem concluding thus the proof of Theorem \ref{mainTheoremExistence}. On the other hand, Section~\ref{The non-existence result} is devoted to the proof of the non-existence result, namely Proposition~\ref{nonExistence}. The paper is then closed by three appendices: Appendix~\ref{The one dimensional half-Laplacian} gathers some known facts about the half-Laplacian adapted to our framework; Appendix~\ref{App Nondeg} contains a proof of a known non-degeneracy result (namely, the forthcoming Proposition~\ref{prop-nondegeneracy-L}); and Appendix~\ref{App dependence on the xijs} is devoted to the verification of a couple of technical steps in the proof of Theorem~\ref{mainTheoremExistence}.

\subsection*{Acknowledgments} 
This work has received funding from the European Research Council (ERC)
under the European Union's Horizon 2020 research and innovation programme
through the Consolidator Grant agreement 862342 (A.J.F.). The authors wish to thank M. Medina, A. Pistoia, and D. Ru\'iz for helpful discussions concerning the topic of the present paper.

\section{The approximate solution} \label{The approximate solution}

\noindent It is known, see~\cite[Theorem 1.8]{DL-M-R15}, that all solutions to
\begin{equation} \label{liouville}
(-\Delta)^{\frac12} u = e^u, \quad \textup{ in } \R,
\end{equation}
satisfying
$$
\int_\R e^u dx < + \infty,
$$
are of the the form
\begin{equation} \label{bubbles}
\mathcal{U}_{\mu,\xi}(x):= \log \left( \frac{2\mu}{\mu^2+(x-\xi)^2} \right), \quad \textup{ for some } \mu > 0 \textup{ and } \xi \in \R.
\end{equation}
This family of functions, the so-called bubbles, is going to be the basis on which we construct our first approximation for the solution to \eqref{main-problem}. For~$m \in \N$ fixed, let~$\xi_1, \ldots, \xi_m \in I$ and~$\mu_1, \ldots, \mu_m \in (0,+\infty)$ to be chosen later. Define
\begin{equation} \label{ujdef}
u_j(x) := \log \left( \frac{2\mu_j}{ \kappa(\xi_j)\big(\mu_j^2\varepsilon^2 + (x-\xi_j)^2\big)} \right) = \mathcal{U}_{\mu_j,0} \left( \frac{x-\xi_j}{\varepsilon} \right) - 2 \log \epsilon - \log \kappa (\xi_j)\,, 
\end{equation}
for all $ j \in \{1,\ldots,m\}$. One can directly check that, for all $j \in \{1,\ldots,m\}$, $u_j$ is a solution to
\begin{equation} \label{probforuj}
\Lfrac u_j = \epsilon \kappa(\xi_j) e^{u_j}, \quad \textup{ in } \R\,.
\end{equation}
Hence,~$u_j$ seems to be a good approximation for a solution to \eqref{main-problem} near $\xi_j$. It is then natural to guess that~$\sum_{j = 1}^{m} u_j$ would be a good approximation for a solution to~\eqref{main-problem} in the whole~$I$. However, such a function does not satisfy the boundary conditions. To fix this, for all $j \in \{1,\ldots,m\}$, we define $H_j$ to be the solution to
\begin{equation} \label{probforHj}
\left\{
\begin{aligned}
\Lfrac H_j & = 0, \quad && \textup{ in } I,\\
H_j & = - u_j, && \textup{ in } \R \setminus I,
\end{aligned}
\right.
\end{equation}
and the ansatz we consider is 
\begin{equation} \label{ansatz}
\mathscr{U}(x):= \sum_{j=1}^m \mathscr{U}_j(x) = \sum_{j = 1}^m \big(u_j(x) + H_j(x)\big) = \sum_{j=1}^m  \Big( \mathcal{U}_{\mu_j,0} \left( \frac{x-\xi_j}{\varepsilon} \right) - \log ( \kappa(\xi_j) \epsilon^2) + H_j(x) \Big).
\end{equation}

It is not hard to realize (see~\textbf{Case 2} in the proof of the forthcoming Proposition~\ref{error-measure}) that~$\mathscr{U}$ is already a good ansatz away from the~$\xi_j$'s. In order for it to be a good approximation near the concentration points too, we need to choose the~$\mu_j$'s appropriately. To this aim, let~$\Gamma(x):= -2\log|x|$ for all~$x \in \R \setminus \{0\}$ and recall (see for instance~\cite[Theorem 2.3]{B16}) that
$$
\Lfrac \Gamma = 2\pi \delta_0, \quad \textup{ in } \R.
$$
Here, for every~$x_0 \in \R$,~$\delta_{x_0}$ denotes the Dirac delta centered at~$x_0$. Having at hand the fundamental solution~$\Gamma$, for every~$z \in I$ we define~$\mathcal{H}^{z}$ to be the unique solution to
$$
	\left\{
	\begin{aligned}
		\Lfrac \mathcal{H}^{z} & = 0, \quad && \textup{ in } I,\\
		\mathcal{H}^{z} & = -\Gamma (\cdot - z), && \textup{ in } \R \setminus I,
	\end{aligned}
	\right.
$$
and we introduce~$H: \R \times I \to \R$ and~$G: \R \times I \to \R$ given respectively by
$$
	H(x,z) = \mathcal{H}^z(x), \quad \mbox{for } (x,z) \in \R \times I,
$$
and
\begin{equation}  \label{decomposition-Green}
	G(x,z) = H(x,z) + \Gamma(x-z), \quad \mbox{for } (x,z) \in \R \times I \textup{ with } x \neq z.
\end{equation}
We stress that, for every~$z \in I$, the function~$G(\cdot,z)$ is the unique solution to 
$$
	\left\{
	\begin{aligned}
		(-\Delta)^{\frac12} G(\cdot,z)  & = 2\pi \delta_{z}, \quad && \textup{ in } I, \\
		G(\cdot,z) & = 0, && \textup{ in } \R \setminus I.
	\end{aligned}
	\right.
$$
In other words,~$G$ is the Green function for the half Laplacian in~$I$. It is well-known that~$G$ and~$H$ are symmetric in~$I \times I$. We thus understand them to be extended to symmetric functions defined over~$\left( \R \times \R \right) \setminus \left( I \times I \right)$. Now that we have fixed this notation, we can choose
\begin{equation} \label{mu_j}
\mu_j = \mu_j(\xi) := \frac{1}{2} \exp \left( \log \big(\kappa(\xi_j)\big) + H(\xi_j,\xi_j) + \sum_{i \neq j} G(\xi_j, \xi_i) \right), \quad \textup{ for all } j \in \{1, \ldots, m\}.
\end{equation}

To quantify how well ansatz~$\mathscr{U}$ ``solves''~\eqref{main-problem}, it is convenient to express this problem in the expanded variable~$y = x/\varepsilon$. It is immediate to see that~$u$ is a solution of~\eqref{main-problem} if and only if~$v := u(\varepsilon \, \cdot \,) + 2 \log \varepsilon$ is a solution to
\[ \label{main-problem-expanded-variables} \tag{$Q_{\epsilon}$}
\left\{
\begin{aligned}
	\Lfrac v & = \kappa(\epsilon\cdot) e^v, \quad && \textup{ in } I_{\epsilon}, \\
	v & > 2\log \epsilon , && \textup{ in } I_{\epsilon}, \\
	v & = 2\log \epsilon, && \textup{ in } \R \setminus I_{\epsilon},
\end{aligned}
\right.
\]
where~$I_{\epsilon}:= \Big\{ {\displaystyle \frac x \epsilon: x \in I} \Big\}$. Then, we define
\begin{equation} \label{ansatz-expanded-variables}
	\mathscr{V}(y):= \mathscr{U}(\epsilon y) + 2\log \epsilon,
\end{equation}
and analyze how well~$\mathscr{V}$ ``solves''~\eqref{main-problem-expanded-variables}. To this end, we further define~$\eta_j := \epsilon^{-1} \xi_j$ for~$j \in \{1,\ldots,m\}$,
\begin{equation} \label{error}
	\mathscr{E}(y):= \Lfrac \mathscr{V}(y) - \kappa(\epsilon y) e^{\mathscr{V}(y)},
\end{equation}
and introduce the weighted $L^{\infty}$--norm
\begin{equation} \label{norm-star}
	\|v\|_{\star,\,\sigma} := \sup_{y \in I_\varepsilon} \left( \, \varepsilon + \sum\limits_{j = 1}^{m} \frac{1}{(1 + |y - \eta_j|)^{1+\sigma}} \right)^{-1} |v(y)|\,, 
\end{equation}
for some fixed~$\sigma \in (0,1)$. Also, for~$\delta_0 > 0$, we set
$$
	\mathcal{I}_{\delta_0}:= \Big\{ \xi:= (\xi_1, \ldots, \xi_m) \in I^{m}  : \dist( \xi_k, \R \setminus I) \geq \delta_0 \textup{ and } \min_{l \neq k} |\xi_k - \xi_l| \geq \delta_0, \mbox{ for all } k \in \{1, \ldots, m\} \Big\}.
$$
\begin{remark}
Note that, for~$\delta_0$ small enough, the set~$\mathcal{I}_{\delta_0}$ is non-empty. Also, for every~$\delta_0 > 0$ small, there exist two constants~$C_\star \ge c_\star > 0$ such that
\begin{equation} \label{mu-bounded both sides}
c_\star \leq \mu_j  \leq C_\star, \quad \mbox{ for all } \xi \in \mathcal{I}_{\delta_0} \textup{ and } j \in \{1,\ldots,m\}.
\end{equation}
\end{remark}


In the next proposition, which is the main result of this section, we estimate the size of~$\mathscr{E}$ in terms of the weighted~$L^{\infty}$--norm~$\|\cdot\|_{\star,\,\sigma}$. 

\begin{proposition} \label{error-measure}
Let~$\delta_0, \epsilon, \sigma \in (0,1)$ and~$\xi \in \mathcal{I}_{\delta_0}$. There exists a constant~$C > 0$, independent of~$\epsilon$,~$\sigma$, and $\xi$, such that
$$
\|\mathscr{E}\|_{\star,\,\sigma} \leq C \epsilon^{1-\sigma}. 
$$
\end{proposition}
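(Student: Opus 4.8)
The plan is to rewrite $\mathscr{E}$ as an explicit expression and then estimate it separately on the region close to the concentration points $\eta_j$ and on its complement in $I_\varepsilon$.

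\textbf{Reducing $\mathscr{E}$ to an explicit form.} First I would record that, by the scaling identity $\Lfrac[w(\varepsilon\,\cdot)](y)=\varepsilon\,[\Lfrac w](\varepsilon y)$, the harmonicity \eqref{probforHj} of each $H_j$ in $I$, and equation \eqref{probforuj}, one has, for every $y\in I_\varepsilon$,
\[
\Lfrac\mathscr{V}(y)=\varepsilon\sum_{j=1}^{m}[\Lfrac u_j](\varepsilon y)=\sum_{j=1}^{m}\varepsilon^2\kappa(\xi_j)\,e^{u_j(\varepsilon y)}=\sum_{j=1}^{m}e^{\mathcal{U}_{\mu_j,0}(y-\eta_j)},
\]
so that $\mathscr{E}(y)=\sum_{j=1}^{m}e^{\mathcal{U}_{\mu_j,0}(y-\eta_j)}-\kappa(\varepsilon y)\,e^{\mathscr{V}(y)}$ on $I_\varepsilon$. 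I would then fix a radius $\rho\in(0,\delta_0/4)$ independent of $\varepsilon$; since $\xi\in\mathcal{I}_{\delta_0}$ the balls $B_\rho(\xi_k)$ are pairwise disjoint and contained in $I$, and the estimate would be proved separately on $\{\,y\in I_\varepsilon:\varepsilon y\in B_\rho(\xi_k)\text{ for some (necessarily unique) }k\,\}$ and on its complement in $I_\varepsilon$.

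\textbf{The region near $\eta_k$.} On $\{\varepsilon y\in B_\rho(\xi_k)\}$ I would split $\mathscr{V}=U_k+\theta_k$ with $U_k(y):=\mathcal{U}_{\mu_k,0}(y-\eta_k)-\log\kappa(\xi_k)=u_k(\varepsilon y)+2\log\varepsilon$ and $\theta_k:=\sum_{j\ne k}u_j(\varepsilon\,\cdot)+\sum_{j=1}^{m}H_j(\varepsilon\,\cdot)$, so that
\[
\mathscr{E}(y)=e^{\mathcal{U}_{\mu_k,0}(y-\eta_k)}\Big(1-\tfrac{\kappa(\varepsilon y)}{\kappa(\xi_k)}\,e^{\theta_k(y)}\Big)+\sum_{j\ne k}e^{\mathcal{U}_{\mu_j,0}(y-\eta_j)}.
\]
The heart of the proof is the expansion $\theta_k(y)=O\big(\varepsilon(1+|y-\eta_k|)\big)$, uniformly over $\xi\in\mathcal{I}_{\delta_0}$. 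For $j\ne k$ one has $|\varepsilon y-\xi_j|\ge\delta_0/2$, hence $u_j(\varepsilon y)=\log\tfrac{2\mu_j}{\kappa(\xi_j)}+\Gamma(\varepsilon y-\xi_j)+O(\varepsilon^2)$; and the maximum principle for $\Lfrac$--harmonic functions, applied to $H_j-\mathcal{H}^{\xi_j}+\log\tfrac{2\mu_j}{\kappa(\xi_j)}$, whose exterior datum is $\log\big(1+\mu_j^2\varepsilon^2/(\cdot-\xi_j)^2\big)=O(\varepsilon^2)$, gives $H_j=H(\cdot,\xi_j)-\log\tfrac{2\mu_j}{\kappa(\xi_j)}+O(\varepsilon^2)$ in $L^\infty(I)$. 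Feeding in the uniform Lipschitz bounds for $\Gamma$ on $\{|\cdot|\ge\delta_0/2\}$ and for $x\mapsto H(x,\xi_j)$ on $B_\rho(\xi_k)$, I would get $(u_j+H_j)(\varepsilon y)=G(\xi_k,\xi_j)+O\big(\varepsilon(1+|y-\eta_k|)\big)$ for $j\ne k$ and $H_k(\varepsilon y)=H(\xi_k,\xi_k)-\log\tfrac{2\mu_k}{\kappa(\xi_k)}+O\big(\varepsilon(1+|y-\eta_k|)\big)$. Summing over $j$ and using that the choice \eqref{mu_j} is precisely equivalent to $\log\tfrac{2\mu_k}{\kappa(\xi_k)}=H(\xi_k,\xi_k)+\sum_{i\ne k}G(\xi_k,\xi_i)$, all the bounded terms cancel and $\theta_k(y)=O\big(\varepsilon(1+|y-\eta_k|)\big)$ remains. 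Since $\varepsilon(1+|y-\eta_k|)\le\varepsilon+\rho\le1$ on $B_\rho(\xi_k)$ and $\kappa$ is Lipschitz with positive infimum, this yields $\big|1-\tfrac{\kappa(\varepsilon y)}{\kappa(\xi_k)}e^{\theta_k(y)}\big|\le C\min\{1,\varepsilon(1+|y-\eta_k|)\}\le C\big(\varepsilon(1+|y-\eta_k|)\big)^{1-\sigma}$, using $\min\{1,t\}\le t^{1-\sigma}$ for $t\ge0$; and since $e^{\mathcal{U}_{\mu_k,0}(y-\eta_k)}\le C(1+|y-\eta_k|)^{-2}$ by \eqref{mu-bounded both sides}, the first summand of $\mathscr{E}(y)$ is $O\big(\varepsilon^{1-\sigma}(1+|y-\eta_k|)^{-1-\sigma}\big)$. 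For the remaining bubbles, $|y-\eta_j|\ge\delta_0/(2\varepsilon)$ when $j\ne k$, so $e^{\mathcal{U}_{\mu_j,0}(y-\eta_j)}\le 2C_\star|y-\eta_j|^{-2}\le C\varepsilon^{1-\sigma}(1+|y-\eta_j|)^{-1-\sigma}$. Hence $|\mathscr{E}(y)|\le C\varepsilon^{1-\sigma}\sum_j(1+|y-\eta_j|)^{-1-\sigma}$ on this region.

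\textbf{The region away from the $\eta_j$'s, and conclusion.} On $\{\varepsilon y\in I,\ |\varepsilon y-\xi_k|\ge\rho\text{ for all }k\}$ one has $|y-\eta_j|\ge\rho/\varepsilon$ for every $j$, so each bubble obeys $e^{\mathcal{U}_{\mu_j,0}(y-\eta_j)}\le2C_\star|y-\eta_j|^{-2}\le C\varepsilon^{1-\sigma}(1+|y-\eta_j|)^{-1-\sigma}$, whence $\Lfrac\mathscr{V}(y)\le C\varepsilon^{1-\sigma}\sum_j(1+|y-\eta_j|)^{-1-\sigma}$; moreover the uniform bound $\|\mathscr{U}\|_{L^\infty(I\setminus\bigcup_k B_\rho(\xi_k))}\le C$ (from \eqref{mu-bounded both sides}, which bounds each $u_j$ there, together with the uniform boundedness of each $H_j$ on $\overline I$) gives $\kappa(\varepsilon y)e^{\mathscr{V}(y)}\le C\varepsilon^2\le C\varepsilon^{1-\sigma}\cdot\varepsilon$. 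Since the weight $\varepsilon+\sum_j(1+|y-\eta_j|)^{-1-\sigma}$ controls both contributions, combining the two regions gives $\|\mathscr{E}\|_{\star,\sigma}\le C\varepsilon^{1-\sigma}$, as claimed. The manipulations above are essentially bookkeeping once the correctors are under control; the genuine obstacle is the uniform-in-$\xi$ analysis of the $H_j$'s — the $O(\varepsilon^2)$ comparison with $H(\cdot,\xi_j)$ and, especially, the boundedness on $\overline I$ and the interior Lipschitz bounds for the regular part $H$ of the Green function, with constants depending only on $\delta_0$ and not on $\varepsilon$, $\sigma$, or $\xi$ — which rest on the maximum principle and interior regularity estimates for $\Lfrac$ (see Appendix~\ref{The one dimensional half-Laplacian}).
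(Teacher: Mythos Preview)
Your proof is correct and follows essentially the same route as the paper's: the same near/far split around the $\eta_k$'s, the same expansions $H_j=H(\cdot,\xi_j)-\log\tfrac{2\mu_j}{\kappa(\xi_j)}+O(\varepsilon^2)$ and $u_j+H_j=G(\cdot,\xi_j)+O(\varepsilon^2)$ (which the paper isolates as a separate lemma), and the same cancellation coming from the choice \eqref{mu_j}. The only cosmetic difference is how you pass from $O\big(\varepsilon(1+|y-\eta_k|)^{-1}\big)$ to $O\big(\varepsilon^{1-\sigma}(1+|y-\eta_k|)^{-1-\sigma}\big)$: you use $\min\{1,t\}\le t^{1-\sigma}$, while the paper simply observes that $(1+|y-\eta_k|)^\sigma\le C\varepsilon^{-\sigma}$ on the near region---these are equivalent here.
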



To prove this proposition, we first need the following auxiliary result, containing the expansions of the functions~$H_j$ and~$u_j$ for~$\epsilon$ small.

\begin{lemma}  \label{Hj and uj expansions}
Let~$j \in \{1, \ldots, m\}$,~$\xi_j \in I$,~$\delta_1, \delta_2 > 0$, and~$\xi_j$ be such that~$\dist(\xi_j, \R \setminus I) \ge \delta_1$. Then:
\begin{enumerate}[label=$(\roman*)$]
\item \label{Hjandujlem(i)} For all~$x \in I$, we have~$H_j(x) = H(x,\xi_j) - \log \left( \frac{2\mu_j}{\kappa(\xi_j)} \right) + O(\epsilon^2)$~as~$\epsilon \to 0$.
\item \label{Hjandujlem(ii)}  For all~$x \in I$ such that~$|x - \xi_j| \geq \delta_2$, we have~$u_j(x) + H_j(x) = G(x,\xi_j) + O(\epsilon^2)$ as~$\epsilon \to 0$.
\end{enumerate}
\end{lemma}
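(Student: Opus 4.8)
The plan is to obtain both expansions from a single comparison estimate for a bounded, $\Lfrac$--harmonic auxiliary function. Using $\Gamma(x-\xi_j) = -2\log|x-\xi_j| = -\log\big((x-\xi_j)^2\big)$, the defining formula~\eqref{ujdef} reads, for every $x\neq\xi_j$,
\[
u_j(x) = \log\!\Big(\frac{2\mu_j}{\kappa(\xi_j)}\Big) + \Gamma(x-\xi_j) - \log\!\Big(1+\frac{\mu_j^2\varepsilon^2}{(x-\xi_j)^2}\Big),
\]
so that $u_j$ differs from $\log(2\mu_j/\kappa(\xi_j)) + \Gamma(\cdot-\xi_j)$ by a nonnegative term that is $O(\varepsilon^2)$ wherever $|x-\xi_j|$ stays away from $0$, by the elementary bound $\log(1+t)\le t$. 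First I would set
\[
\phi_j := H_j - H(\cdot,\xi_j) + \log\!\Big(\frac{2\mu_j}{\kappa(\xi_j)}\Big),
\]
and record three facts: (a) since $H_j$ and $H(\cdot,\xi_j)=\mathcal H^{\xi_j}$ both solve $\Lfrac(\cdot)=0$ in $I$ and both grow like $-\Gamma$ (i.e.\ only logarithmically) at infinity, $\phi_j$ is bounded on $\R$ and, by linearity of $\Lfrac$ on such slowly growing functions, $\Lfrac\phi_j = 0$ in $I$; (b) in $\R\setminus I$ one has $H_j=-u_j$ and $H(\cdot,\xi_j)=-\Gamma(\cdot-\xi_j)$, which combined with the displayed identity for $u_j$ gives the explicit formula $\phi_j(x)=\log\big(1+\mu_j^2\varepsilon^2/(x-\xi_j)^2\big)$ for $x\in\R\setminus I$.

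Next I would estimate $\phi_j$. Since $\dist(\xi_j,\R\setminus I)\ge\delta_1$, for $x\in\R\setminus I$ we have $|x-\xi_j|\ge\delta_1$, whence $0\le\phi_j(x)\le\mu_j^2\varepsilon^2/\delta_1^2$ on $\R\setminus I$. Applying the maximum principle for $\Lfrac$ in $I$ (see Appendix~\ref{The one dimensional half-Laplacian}; equivalently, representing $\phi_j$ in $I$ through the Poisson kernel of $I$, which is nonnegative and integrates to $1$) to $\phi_j$ and to $\mu_j^2\varepsilon^2/\delta_1^2-\phi_j$ yields $0\le\phi_j\le\mu_j^2\varepsilon^2/\delta_1^2$ on all of $\R$. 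As $\mu_j=\mu_j(\xi)$ is independent of $\varepsilon$ — and bounded above on $\mathcal I_{\delta_0}$ by~\eqref{mu-bounded both sides} — this is precisely~\ref{Hjandujlem(i)} (with an error that is in fact $O(\varepsilon^2)$ for every $\varepsilon$, not merely as $\varepsilon\to0$). For~\ref{Hjandujlem(ii)}, I would simply insert~\ref{Hjandujlem(i)} into the displayed expansion of $u_j$: for $x\in I$ with $|x-\xi_j|\ge\delta_2$ the last term there is again $O(\varepsilon^2)$, the terms $\pm\log(2\mu_j/\kappa(\xi_j))$ cancel, and~\eqref{decomposition-Green} gives $u_j(x)+H_j(x)=\Gamma(x-\xi_j)+H(x,\xi_j)+O(\varepsilon^2)=G(x,\xi_j)+O(\varepsilon^2)$.

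The one step that is not entirely routine is the passage in fact~(a) from the exterior bound on $\phi_j$ to a global bound: $H_j$ and $H(\cdot,\xi_j)$ are individually unbounded — each behaves like $2\log|x|$ at infinity — so it is the cancellation built into $\phi_j$ that produces a function bounded on $\R$, to which a nonlocal maximum principle can legitimately be applied. Accordingly one must check that $\Lfrac$ is well defined on these logarithmically growing functions (it is, log growth being admissible for the singular integral) and that the comparison principle quoted from Appendix~\ref{The one dimensional half-Laplacian} covers the Dirichlet problem in $I$ with a bounded but not compactly supported exterior datum. Granting this, everything else reduces to the bound $\log(1+t)\le t$ used under the standing lower bounds $|x-\xi_j|\ge\delta_1$ and $|x-\xi_j|\ge\delta_2$, so no further computation is needed.
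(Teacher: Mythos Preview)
Your argument is correct and follows the same decomposition as the paper: both define the auxiliary function $\phi_j = H_j - H(\cdot,\xi_j) + \log(2\mu_j/\kappa(\xi_j))$, observe it is $\Lfrac$-harmonic in $I$ with exterior datum $\log\big(1+\mu_j^2\varepsilon^2/(x-\xi_j)^2\big)$, and then derive~\ref{Hjandujlem(ii)} from~\ref{Hjandujlem(i)} via~\eqref{decomposition-Green}. The only difference is in how the interior bound on $\phi_j$ is obtained: the paper invokes Proposition~\ref{boundary regularity half-laplacian} to control $\|\phi_j\|_{C^{0,1/2}(\overline{I})}$ by the $C^1$ and $L^\infty$ norms of the exterior datum, whereas you use the maximum principle (Lemma~\ref{maxprincequivlemma} with $c\equiv 0$) to propagate the two-sided bound $0\le\phi_j\le \mu_j^2\varepsilon^2/\delta_1^2$ from $\R\setminus I$ into $I$. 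Your route is slightly more elementary and yields exactly what the lemma claims; the paper's route gives a stronger $C^{0,1/2}$ estimate that is not actually needed here.
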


As will be clear from the proof, for Lemma~\ref{Hj and uj expansions} to hold we do not actually need~$\mu_j$ to be defined as in~\eqref{mu_j}, but only that it satisfies the uniform bounds~\eqref{mu-bounded both sides}. Indeed, the constants involved in the big~O's depend only on~$\delta_1$,~$\delta_2$,~$I$, and on the constants~$c_\star$,~$C_\star$ appearing in~\eqref{mu-bounded both sides}.

\begin{proof}[Proof of Lemma~\ref{Hj and uj expansions}]
It follows from the definition of~$H_j$ that, for all~$x \in \R \setminus I$,
$$
H_j(x) = -u_j(x) = H(x,\xi_j) - \log \Big( \frac{2\mu_j}{\kappa(\xi_j)} \Big) + \log \Big( 1+ \frac{\epsilon^2 \mu_j^2}{(x-\xi_j)^2} \Big).
$$
Hence, we have that
\begin{equation*}
\left\{
\begin{aligned}
\Lfrac \Big[ H_j - H(\cdot, \xi_j) + \log \Big( \frac{2\mu_j}{\kappa(\xi_j)} \Big) \Big] & = 0, \quad && \textup{ in } I, \\
H_j - H(\cdot,\xi_j) + \log \Big( \frac{2\mu_j}{\kappa(\xi_j)} \Big) & =  \log \Big( 1+ \frac{\epsilon^2 \mu_j^2}{(\cdot - \xi_j)^2} \Big), && \textup{ in } \R \setminus I.
\end{aligned}
\right.
\end{equation*}
Thus, defining~$K=(a_1-3, b_d+3)$ and applying Proposition \ref{boundary regularity half-laplacian}, we get 
\begin{align*}
& \left\|H_j - H(\cdot, \xi_j)+ \log \Big( \frac{2\mu_j}{\kappa(\xi_j)} \Big) \right\|_{C^{0,\frac12}(\overline{I})} \\
& \qquad \leq C \left( \bigg\|  \log \Big( 1+ \frac{\epsilon^2 \mu_j^2}{(\cdot-\xi_j)^2} \Big) \bigg\|_{C^1(\overline{K}\setminus I)} + \bigg\|  \log \Big( 1+ \frac{\epsilon^2 \mu_j^2}{(\cdot-\xi_j)^2} \Big) \bigg\|_{L^\infty(\R \setminus I)} \right).
\end{align*}
for some~$C > 0$ depending only on~$I$. Since~$\dist(\xi_j, \R \setminus I) \ge \delta_1$,
$$
 \bigg\|  \log \Big( 1+ \frac{\epsilon^2 \mu_j^2}{(\cdot-\xi_j)^2} \Big) \bigg\|_{C^1(\overline{K} \setminus I)} + \bigg\|  \log \Big( 1+ \frac{\epsilon^2 \mu_j^2}{(\cdot-\xi_j)^2} \Big) \bigg\|_{L^\infty(\R \setminus I)} \le C \varepsilon^2,
$$
where~$C$ may now depend on~$\delta_1$ and on the constants~$c_\star$,~$C_\star$ occurring in~\eqref{mu-bounded both sides} as well. So, we deduce that, for all $x \in I$,
$$
H_j(x) = H(x,\xi_j) - \log \left( \frac{2\mu_j}{\kappa(\xi_j)} \right) + O(\epsilon^2), \quad \textup{ as } \epsilon \to 0.
$$

\noindent It remains to prove Point $(ii)$. As~$|x - \xi_j| \geq \delta_2$, it holds that
$$
\begin{aligned}
u_j(x) & = \log \left( \frac{1}{\mu_j^2 \epsilon^2 + (x-\xi_j)^2} \right) + \log \left( \frac{2\mu_j}{\kappa(\xi_j)} \right) \\
& = \log \left( \frac{2\mu_j}{\kappa(\xi_j)} \right) + \Gamma (x-\xi_j) - \log \Big( 1+ \frac{\epsilon^2 \mu_j^2}{(x-\xi_j)^2} \Big) \\
& = \log \left( \frac{2\mu_j}{\kappa(\xi_j)} \right) + \Gamma (x-\xi_j) + O(\epsilon^2), \qquad \textup{ as } \epsilon \to 0.
\end{aligned}
$$
Thus, taking into account \eqref{decomposition-Green} and Point $(i)$, the second point follows and the proof is concluded.
\end{proof}

We can now move to the

\begin{proof}[Proof of Proposition~\ref{error-measure}]
We consider separately two cases:
\medbreak
\noindent \textbf{Case 1:} \textit{There exists $j \in \{1,\ldots,m\}$ such that $|y-\eta_j| < \frac{\delta_0}{2\epsilon}$.}
\medbreak
First note that
\begin{equation} \label{eq1-error-measure}
\begin{aligned}
\kappa(\epsilon y) e^{\mathscr{V}(y)} & = \epsilon^2 \kappa(\epsilon y) \exp \left( \sum_{i = 1}^m \big( u_i(\epsilon y) + H_i(\epsilon y) \big) \right) \\
& = \epsilon^2 \kappa(\epsilon y) \exp \left( u_j (\epsilon y)+ H_j(\epsilon y) + \sum_{i \neq j} \big( u_{i}(\epsilon y) + H_i (\epsilon y) \big) \right) \\
& = \frac{2\mu_j \kappa(\epsilon y)}{\kappa(\xi_j)\big(\mu_j^2+(y-\eta_j)^2\big)}  \exp \left( H_j(\epsilon y) + \sum_{i \neq j} \big( u_{i}(\epsilon y) + H_i (\epsilon y) \big) \right).
\end{aligned}
\end{equation}
Now, using Lemma~\ref{Hj and uj expansions}~\ref{Hjandujlem(i)} and a Taylor expansion for~$H$ in the first variable (note that, for all~$x_0 \in I$,~$H(\cdot, x_0) \in C^1(I)$ by Lemma~\ref{regularity-Green}~$(i)$), we get 
$$
\begin{aligned}
H_l(\epsilon y)& = H(\epsilon y,\xi_l) - \log \Big( \frac{2\mu_l}{\kappa(\xi_l)} \Big) + O(\epsilon^2) \\
& = H(\xi_j,\xi_l) - \log \Big( \frac{2\mu_l}{\kappa(\xi_l)} \Big) + O(\epsilon|y-\eta_j|) + O(\epsilon^2), \quad \mbox{ for all } l \in \{1, \ldots,m\}.
\end{aligned}
$$
\noindent Also, since for all~$i \in \{1, \ldots, m\} \setminus \{j\}$ it follows that~$|\epsilon y-\xi_i| \geq \delta_0/2$, by Lemma~\ref{Hj and uj expansions}~\ref{Hjandujlem(ii)} and a Taylor expansion for~$G$ in the first variable (note that~$G(\cdot, \xi_i) \in C^1(I \setminus \{\xi_i\})$ by Lemma~\ref{regularity-Green}), we have 
$$
u_i(\epsilon y) + H_i (\epsilon y) = G(\epsilon y, \xi_i) + O(\epsilon^2) = G(\xi_j,\xi_i) + O(\epsilon|y-\eta_j|) + O(\epsilon^2), \quad \mbox{ for all } i \ne j.
$$
\noindent Substituting these expansions in \eqref{eq1-error-measure} and taking into account \eqref{mu_j}, we deduce that 
\begin{equation} \label{eq2-error-measure}
\begin{aligned}
\kappa(\epsilon y) e^{\mathscr{V}(y)} & = \frac{2\mu_j \kappa(\epsilon y)}{\kappa(\xi_j)\big(\mu_j^2+(y-\eta_j)^2\big)}  \exp \Bigg( -\log(2\mu_j) + \log (\kappa(\xi_j))  \\
& \hspace{5.2cm} + H(\xi_j,\xi_j) + \sum_{i \neq j} G(\xi_j, \xi_i) + O(\epsilon|y-\eta_j|) + O(\epsilon^2) \Bigg)  \\
& = \frac{2\mu_j \kappa(\epsilon y)}{\kappa(\xi_j)\big(\mu_j^2+(y-\eta_j)^2\big)}  \Big( 1+ O(\epsilon|y-\eta_j|)+O(\epsilon^2) \Big) \\
& = \frac{2\mu_j}{\mu_j^2+(y-\eta_j)^2} \Big( 1 + O(\epsilon|y-\eta_j|)+O(\epsilon^2) \Big).
\end{aligned}
\end{equation}
Note that the last identity follows from a Taylor expansion for $\kappa$. 
On the other hand, since $|x-\xi_i| \geq \delta_0/2$ for all $i \in \{1,\ldots,m\} \setminus \{j\}$, it follows that
\begin{equation} \label{eq3-error-measure}
\begin{aligned}
\Lfrac \mathscr{V}(y) & = \Lfrac \left[ \sum_{i=1}^m \bigg( \mathcal{U}_{\mu_i,0}(y-\eta_i) + H_j(\epsilon y) \bigg) \right] \\
& = \sum_{i=1}^m \Lfrac \mathcal{U}_{\mu_i,0}(y-\eta_i) = \frac{2\mu_j}{\mu_j^2 + (y - \eta_j)^2} + \sum_{i\neq j}^m \frac{2\mu_i}{\mu_i^2 + (y - \eta_i)^2} \\
& = \frac{2\mu_j}{\mu_j^2 + (y - \eta_j)^2} + \epsilon^2 \sum_{i\neq j}^m \frac{2\mu_i}{\epsilon^2 \mu_i^2 + (x - \xi_i)^2}  = \frac{2\mu_j}{\mu_j^2 + (y - \eta_j)^2} + O(\epsilon^2).
\end{aligned}
\end{equation}
Thus, combining~\eqref{eq2-error-measure} with~\eqref{eq3-error-measure} and taking into account the definition of~$\mathcal{I}_{\delta_0}$ and~\eqref{mu-bounded both sides}, we conclude that, for all $ y \in I_{\epsilon}$ satisfying $|y-\eta_j| < \frac{\delta_0}{2\epsilon}$, 
\begin{equation} \label{conclussion-case1-error-measure}
|\mathscr{E}(y)| \leq  \frac{C \epsilon}{1+|y-\eta_j|} + O(\epsilon^2)  \leq  \frac{2C \epsilon^{1-\sigma}}{(1+|y-\eta_j|)^{1+\sigma}} + O(\epsilon^2),
\end{equation}
with~$C > 0$ independent of~$\epsilon$,~$\sigma$, and~$\xi$.
\medbreak
\noindent \textbf{Case 2: }\textit{$|y-\eta_j| \geq \frac{\delta_0}{2\epsilon}$ for all $j \in \{1,\ldots,m\}$.}
\medbreak
Since~$|x-\xi_j|\geq \frac{\delta_0}{2}$ for all $j \in \{1,\ldots,m\}$, we can directly use Lemma~\ref{Hj and uj expansions}~\ref{Hjandujlem(ii)} and  get that
\begin{equation} \label{eq4-error-measure}
\kappa(\epsilon y) e^{\mathscr{V}(y)} = \epsilon^2 \kappa(\epsilon y) \exp \left( \sum_{i=1}^m G(x,\xi_i) + O(\epsilon^2) \right) = O(\epsilon^2).
\end{equation}
On the other hand, arguing as in the first case, we obtain
$$
\Lfrac \mathscr{V}(y) = \epsilon^2 \sum_{i = 1}^m \frac{2\mu_i}{\epsilon^2 \mu_i^2 + (x-\xi_i)^2} = O(\epsilon^2).
$$
Hence, for all $y \in I_{\epsilon}$ satisfying $|y-\eta_j| \geq \frac{\delta_0}{2\epsilon}$ for all $j \in \{1,\ldots,m\}$, it follows that
\begin{equation} \label{conclussion-case2-error-measure}
|\mathscr{E}(y)| = O(\epsilon^2).
\end{equation}
Gathering \eqref{conclussion-case1-error-measure} and \eqref{conclussion-case2-error-measure} the result follows.
\end{proof}

Having at hand the approximate solution $\mathscr{V}$ (cf. \eqref{ansatz-expanded-variables}), we look for solutions to \eqref{main-problem-expanded-variables} of the form $v = \mathscr{V} + \phi$. If we want $v$ to be a solution to \eqref{main-problem-expanded-variables}, we need to find $\phi$ solving
\begin{equation} \label{nonlinprobforphi}
\left\{
\begin{aligned}
L(\phi) & = - \mathscr{E} + \mathcal{N}(\phi), \quad &&\textup{ in } I_{\epsilon},\\
\phi & = 0, && \textup{ in } \R \setminus I_{\epsilon},
\end{aligned}
\right.
\end{equation}
where $\mathscr{E}$ is given in \eqref{error} and we set
\begin{equation} \label{L - W and N}
L(\phi) := \Lfrac \phi - W \phi, \qquad W:= \kappa(\epsilon \cdot)e^{\mathscr{V}}, \quad \textup{ and } \quad \mathcal{N}(\phi) :=  W\big( e^{\phi} -1 -\phi \big).
\end{equation}

\noindent A key step to solve a problem of the form \eqref{nonlinprobforphi} for small $\phi$ is to develop a suitable solvability theory for the linear operator $L$. This is the aim of the next section. 

\begin{remark} In the rest of the paper we will use the fact that the potential~$W$ can be written as
\begin{equation} \label{Wasperturb}
W(y) = \sum_{j = 1}^m \frac{2 \mu_j}{\mu_j^2 + (y - \eta_j)^2} + \theta(y),
\end{equation}
with~$\theta$ satisfying
\begin{equation} \label{thetabounds}
|\theta(y)| \le C \varepsilon \sum_{j = 1}^m \frac{1}{1 + |y - \eta_j|}, \quad \mbox{ for all } y \in I_\varepsilon,
\end{equation}
for some constant~$C > 0$ independent of $\epsilon, \sigma$ and $\xi$. This can be readily established via the computations of Lemma~\ref{error-measure} (cf.~\eqref{eq2-error-measure} and~\eqref{eq4-error-measure}).
\end{remark}

\section{Linear theory} \label{Linear theory}

\noindent
We address here the solvability of the Dirichlet problem
\begin{equation} \label{linear-eq}
\left\{
\begin{aligned}
L \phi & = g, \quad && \textup{ in } I_{\epsilon},\\
\phi & = 0, \quad && \textup{ in } \R \setminus I_{\epsilon}.
\end{aligned}
\right.
\end{equation}
under suitable orthogonality assumptions on~$\phi$. A key fact needed to develop such linear theory for~$L$ is the so-called non-degeneracy of the entire solutions to \eqref{liouville} modulo the natural invariance of the equation under translations and dilations. Indeed, note that if~$u$ is a solution to~\eqref{liouville}, then~$u_{\tau}(y):= u(\tau y) + \ln(\tau)$ and~$u_{\eta}(y):= u(y-\eta)$ are also solutions to~\eqref{liouville} for all~$\tau > 0$ and~$\eta \in \R$. Next, observe that the linearized operator for~\eqref{liouville} at a bubble~$\mathcal{U}_{\mu, \eta}$ is given by
\begin{equation} \label{Lmueta}
L_{\mu,\eta}\phi := (-\Delta)^{\frac12} \phi - W_{\mu, \eta} \,\phi, \quad \textup{ with } \quad W_{\mu, \eta}(y):= \frac{2\mu}{\mu^2+(y - \eta)^2},
\end{equation}
and that the functions
\begin{equation} \label{Z0Z1defs}
Z_{0, \mu,\eta}(y) := \partial_{\mu} \mathcal{U}_{\mu,\eta}(y) =  \frac{1}{\mu} - \frac{2\mu}{\mu^2+(y-\eta)^2}, \quad\ Z_{1, \mu,\eta}(y) := \partial_{\eta} \mathcal{U}_{\mu,\eta}(y) = \frac{2(y-\eta)}{\mu^2 + (y-\eta)^2},
\end{equation}
are both bounded solutions of
\begin{equation} \label{linearized-problem}
L_{\mu, \eta} \phi = 0, \quad \textup{ in }  \R.
\end{equation}
Also, recall that~$\mathcal{U}_{\mu,\eta}$ is \textit{non-degenerate} if all bounded solutions to~\eqref{linearized-problem} are a linear combination of~$Z_{0, \mu,\eta}$ and $Z_{1, \mu,\eta}$. In other words,~$\mathcal{U}_{\mu,\eta}$ is \textit{non-degenerate} if~$\ker(L_{\mu, \eta}) \cap L^{\infty}(\R) = \spn\big\{Z_{0, \mu,\eta},Z_{1, \mu,\eta}\big\}$. In the next result we see that this is indeed the case. 

\begin{proposition}{\rm(\hspace{-0.003cm}\cite[Proposition 2.1]{D-dP-M05}).} \label{prop-nondegeneracy-L}
Let~$\mu > 0$ and~$\eta \in \R$. If~$\phi \in L^{\infty}(\R)$ is a solution to~\eqref{linearized-problem} then~$\phi$ is a linear combination of~$Z_{0, \mu,\eta}$ and $Z_{1, \mu,\eta}$.
\end{proposition}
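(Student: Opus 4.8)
The statement is invariant under the scaling and translation symmetries of \eqref{liouville}, so I would begin by reducing to the model case $\mu=1$, $\eta=0$. If $\phi\in L^\infty(\R)$ solves $L_{\mu,\eta}\phi=0$, then $\tilde\phi(y):=\phi(\mu y+\eta)$ solves $L_{1,0}\tilde\phi=0$, and the same change of variables carries $\spn\{Z_{0,1,0},Z_{1,1,0}\}$ onto $\spn\{Z_{0,\mu,\eta},Z_{1,\mu,\eta}\}$; hence it suffices to show that every bounded solution of $\Lfrac\phi=\frac{2}{1+y^2}\phi$ in $\R$ belongs to $\spn\{Z_{0,1,0},Z_{1,1,0}\}$, where $Z_{0,1,0}(y)=\frac{y^2-1}{y^2+1}$ and $Z_{1,1,0}(y)=\frac{2y}{y^2+1}$.

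The plan is then to pass to the Caffarelli--Silvestre extension. A regularity bootstrap (using the results recalled in Appendix~\ref{The one dimensional half-Laplacian}) shows that a bounded solution $\phi$ is smooth, so its bounded harmonic extension $\Phi$ to the half-plane $\R^2_+=\{(t,s):s>0\}$ is well defined, and the equation is equivalent to the Steklov-type problem
\[
\begin{cases}
\Delta\Phi=0 & \textup{in }\R^2_+,\\
\partial_\nu\Phi=\frac{2}{1+t^2}\,\phi & \textup{on }\partial\R^2_+=\R,
\end{cases}
\]
with $\Phi|_{\partial\R^2_+}=\phi$ and $\nu$ the outward unit normal. I would then apply the Cayley transform $T(z):=\frac{z-i}{z+i}$, which maps $\R^2_+$ conformally onto the unit disk $\mathbb{D}\subset\R^2\cong\mathbb{C}$ and satisfies $|T'(t)|=\frac{2}{1+t^2}$ for $t\in\R$. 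Since planar harmonicity is conformally invariant and boundary normal derivatives transform by $\partial_\nu(\Phi\circ T^{-1})=|(T^{-1})'|\,(\partial_\nu\Phi)\circ T^{-1}$, the function $\Psi:=\Phi\circ T^{-1}$ is bounded and harmonic in $\mathbb{D}$, and, because the two conformal factors cancel exactly, it satisfies $\partial_\nu\Psi=\Psi$ on $\partial\mathbb{D}$.

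The problem is thus reduced to identifying the eigenspace of the Dirichlet-to-Neumann operator of $\mathbb{D}$ for the eigenvalue $1$. Expanding the bounded --- hence $L^2(\partial\mathbb{D})$ --- harmonic function $\Psi$ in Fourier series, $\Psi(re^{i\theta})=\sum_{n\in\Z}c_n r^{|n|}e^{in\theta}$, the condition $\partial_r\Psi|_{r=1}=\Psi|_{r=1}$ reads $|n|\,c_n=c_n$ for every $n$, forcing $c_n=0$ whenever $|n|\neq1$. Therefore $\Psi$ is a real linear combination of the two coordinate functions on $\mathbb{D}$. Restricting their pullbacks by $T$ to $\R$ and using $T(t)=\frac{t^2-1}{t^2+1}-i\,\frac{2t}{t^2+1}$, one sees that the real and imaginary parts of $T|_\R$ are precisely $Z_{0,1,0}$ and $-Z_{1,1,0}$; hence $\phi\in\spn\{Z_{0,1,0},Z_{1,1,0}\}$, and the first reduction completes the proof.

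The routine but slightly delicate points I would need to address are: (i) the regularity bootstrap and the precise Caffarelli--Silvestre characterization of $\Lfrac$ in the normalization adopted in this paper; and (ii) the fact that the Cayley transform loses no information at the point $\infty\in\partial\R^2_+$, which is sent to $1\in\partial\mathbb{D}$ --- boundedness of $\phi$ propagates to boundedness of $\Psi$ up to all of $\partial\mathbb{D}$, so no concentration or loss occurs there. I expect the conceptual heart of the argument to be the conformal transformation of the nonlocal linearized equation into the Steklov problem on the disk; once this is in place the spectral computation is elementary. An alternative, following \cite{D-dP-M05}, is to argue directly on $\R$ via the Fourier transform, which turns the equation into an integral equation with an exponential kernel, but the conformal reduction above seems more transparent.
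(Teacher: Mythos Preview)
Your proposal is correct and follows essentially the same strategy as the paper's Appendix~\ref{App Nondeg}: both reduce, via the Cayley/stereographic map, to the Fourier condition $|n|\,c_n=c_n$ on the circle, which singles out the span of the first harmonics. The only difference is packaging---the paper works intrinsically with $(-\Delta_{\S^1})^{1/2}$ on $\S^1$ (transforming the equation on $\R$ directly to \eqref{eqfortildephi} without passing to 2D), whereas you go through the Caffarelli--Silvestre extension and the Steklov problem on the disk; since the Dirichlet-to-Neumann map on $\mathbb{D}$ \emph{is} $(-\Delta_{\S^1})^{1/2}$, the two computations are the same, and your ``delicate point (ii)'' about the image of $\infty$ is exactly what the paper handles via the Kelvin-type transform $\phi^*(x)=\phi(1/x)$ in Lemma~\ref{energyidlem}.
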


This result can be deduced from \cite[Proposition 2.1]{D-dP-M05} using the so-called Caffarelli-Silvestre extension \cite{C-S07}. An alternative proof without using the extension can be done via~\cite[Theorem 1.4]{S19}. For the benefit of the reader, we provide a detailed and self-contained proof in Appendix~\ref{App Nondeg} in the spirit of~\cite{S19}.

\medbreak

In order to solve problem~\eqref{linear-eq}, we first need to establish some \emph{a priori}~$L^\infty$ bounds. The right-hand side~$g$ will be measured in terms of the weighted $L^\infty$--norm $\|g\|_{\star,\sigma}$ (cf. \eqref{norm-star}) for some fixed~$\sigma \in (0, 1)$.  Before going further, let us introduce some notation. Let~$\xi_1, \ldots, \xi_m \in \mathcal{I}_{\delta_0}$ be arbitrary and~$\mu_1, \ldots, \mu_m$ be defined by~\eqref{mu_j}. We set
\begin{equation} \label{Zij}
Z_{i j} := Z_{i, \mu_j, \eta_j}, \quad \mbox{ for all } i = 0, 1, \ j = 1, \ldots, m.
\end{equation}
In addition, we take~$\bar{R} > 0$ large but fixed and an even cut-off function~$\chi \in C^\infty$ satisfying~$0 \le \chi \le 1$ in~$\R$,~$\chi = 1$ in~$[- \bar{R}, \bar{R}]$, and~$\chi = 0$ in~$\R \setminus (- \bar{R} - 1, \bar{R} + 1)$. Then, for~$j = 1, \ldots, m$, we set
\begin{equation} \label{chij}
\chi_j(y) := \chi(y - \eta_j), \quad \mbox{ for all } y \in \R.
\end{equation}
On the other hand, we also adopt the shortened notation
$$
L_\mu := L_{\mu, 0}
$$
and define $D:= \max\big\{|a_1|, |b_d|\big\}$. Finally, for $\sigma \in (0,1)$, we introduce the Banach space
$$
L^{\infty}_{\star,\,\sigma}(I_{\epsilon}):= \big\{ f \in L^{\infty}(I_{\epsilon}) : \|f\|_{\star,\,\sigma} < + \infty \big\}.
$$

\bigbreak
The main goal of this section is to establish the following result.

\begin{proposition} \label{mainlinearprop}
Let $\delta_0,\, \sigma \in (0,1)$. There exist $\varepsilon_0 \in (0, 1)$ and~$C > 0$ for which the following holds true: if $\xi \in \mathcal{I}_{\delta_0}$, $\varepsilon \in (0, \varepsilon_0]$ and~$g \in L^{\infty}_{\star,\,\sigma}(I_{\epsilon})$, there exists a unique bounded weak solution to
\begin{equation} \label{mainprojlinearprob}
\left\{
\begin{aligned}
L \phi & = g + \sum_{j = 1}^m c_j \chi_j Z_{1 j}, \quad && \mbox{ in } I_\varepsilon, \\
\phi & = 0, \quad && \mbox{ in } \R \setminus I_\varepsilon,
\end{aligned}
\right.
\end{equation}
satisfying the orthogonality conditions
\begin{equation} \label{phiorthtoZ1j}
\int_\R \phi \chi_j Z_{1 j} = 0, \quad \mbox{ for all } j = 1, \ldots, m,
\end{equation}
for some uniquely determined $c_1, \ldots, c_m \in \R$. Moreover,~$\phi$ satisfies
\begin{equation} \label{mainlinearaprioriest}
\| \phi \|_{L^\infty(\R)} \le C \big( {\log \frac{1}{\epsilon}} \big) \| g \|_{\star,\, \sigma}.
\end{equation}
\end{proposition}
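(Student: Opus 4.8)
The plan is to prove Proposition~\ref{mainlinearprop} in two stages: first an \emph{a priori} estimate of the form~\eqref{mainlinearaprioriest} valid for solutions of the projected problem~\eqref{mainprojlinearprob}, and then an existence/uniqueness argument via a standard functional-analytic scheme (Fredholm alternative or a Galerkin-type approximation in the Hilbert space $H^{1/2}_0(I_\varepsilon)$). The heart of the matter is the \emph{a priori} bound, which I would establish by contradiction: suppose there are sequences $\varepsilon_n \to 0$, $\xi^{(n)} \in \mathcal{I}_{\delta_0}$, data $g_n$ with $\|g_n\|_{\star,\sigma} \to 0$ (after normalizing $\|\phi_n\|_{L^\infty(\R)} = 1$ and absorbing the $\log\frac{1}{\varepsilon}$ factor), and solutions $\phi_n$ of $L\phi_n = g_n + \sum_j c_j^{(n)} \chi_j Z_{1j}$ with the orthogonality~\eqref{phiorthtoZ1j}, yet $\|\phi_n\|_{L^\infty} = 1$. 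The goal is to derive a contradiction.

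The first step in the contradiction argument is to control the multipliers $c_j^{(n)}$: testing the equation against $\chi_j Z_{1j}$ (or a suitable truncation/correction of it) and using that $L_{\mu_j,\eta_j}Z_{1j} = 0$, that $\chi_j$ is supported near $\eta_j$, the decay of $Z_{1j}$, the potential perturbation estimate~\eqref{thetabounds}, and the separation of the $\eta_j$'s (which are $\sim \delta_0/\varepsilon$ apart), one gets $|c_j^{(n)}| \le C(\|g_n\|_{\star,\sigma} + o(1)\|\phi_n\|_{L^\infty})$, so the projection terms are negligible. The second step is the local analysis near each concentration point: set $\tilde\phi_{j,n}(y) := \phi_n(y + \eta_j^{(n)})$; since $\|\tilde\phi_{j,n}\|_{L^\infty} \le 1$ and, by~\eqref{Wasperturb}--\eqref{thetabounds}, the operator $L$ converges locally uniformly to $L_{\mu_j^\infty, 0}$ for some limiting $\mu_j^\infty \in [c_\star, C_\star]$, elliptic estimates for the half-Laplacian (Schauder / $C^{0,\alpha}$ regularity up to taking a subsequence) give $\tilde\phi_{j,n} \to \phi_j^\infty$ locally uniformly, with $\phi_j^\infty$ a bounded solution of $L_{\mu_j^\infty,0}\phi_j^\infty = 0$ in $\R$. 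By the non-degeneracy Proposition~\ref{prop-nondegeneracy-L}, $\phi_j^\infty \in \spn\{Z_{0,\mu_j^\infty,0}, Z_{1,\mu_j^\infty,0}\}$; the orthogonality condition~\eqref{phiorthtoZ1j} passes to the limit and kills the $Z_1$ component, while — and this is the subtle point — one must also rule out the $Z_0$ component, which is done either because $Z_0$ is not orthogonal to $\chi_j Z_{1j}$ in the right way, or, more robustly, by exploiting that the right-hand side does not carry a $Z_0$-projection and that the approximate solution was \emph{tuned} (via the choice~\eqref{mu_j} of $\mu_j$) so that the $\mu$-derivative direction is under control; in the cleaner formulations this is handled by observing $\int_\R \phi_n \chi_j Z_{1j} = 0$ is the only imposed condition and $Z_0$ is ruled out by a separate argument using that $L\phi_n$ has no component along $\chi_j Z_{0j}$. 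Hence $\phi_j^\infty \equiv 0$ for every $j$.

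The third step upgrades this local vanishing to a global one. Away from the concentration points the potential $W$ is $O(\varepsilon)$ by~\eqref{thetabounds}, so $(-\Delta)^{1/2}\phi_n = g_n + W\phi_n + (\text{projection})$ has small right-hand side in $\|\cdot\|_{\star,\sigma}$; using the Green representation for $(-\Delta)^{1/2}$ on $I_\varepsilon$ together with the decay weight built into $\|\cdot\|_{\star,\sigma}$, one derives a pointwise bound $|\phi_n(y)| \le o(1) + C\sum_j \|\tilde\phi_{j,n}\|_{L^\infty(B_{R})}$ for $y$ outside fixed balls $B_R(\eta_j)$, which tends to $0$. Combined with the local convergence $\tilde\phi_{j,n} \to 0$ this forces $\|\phi_n\|_{L^\infty(\R)} \to 0$, contradicting the normalization. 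This yields~\eqref{mainlinearaprioriest} (the $\log\frac{1}{\varepsilon}$ factor being the known loss coming from the slow decay of the Green function / the $Z_0$ direction, which must be carried through the barrier construction in this last step). Once the \emph{a priori} bound is in hand, existence and uniqueness of $\phi$ and of the $c_j$ follow from a routine argument: the problem is equivalent, via the bounded invertibility of $(-\Delta)^{1/2}$ on $I_\varepsilon$, to a compact perturbation of the identity on the finite-codimension subspace of $H^{1/2}_0(I_\varepsilon)$ defined by~\eqref{phiorthtoZ1j}, so the Fredholm alternative applies and the \emph{a priori} estimate rules out a nontrivial kernel; uniqueness of $c_1,\dots,c_m$ comes from the linear independence of the $\chi_j Z_{1j}$ modulo the range of $L$ on that subspace.

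The main obstacle I anticipate is the blow-up/compactness argument in the second and third steps: one has to carefully track the two invariances (the $Z_1$ direction, handled by the imposed orthogonality, and the $Z_0$ direction, which the projected problem does \emph{not} explicitly kill), ensure the elliptic regularity theory for $(-\Delta)^{1/2}$ applies uniformly as $\varepsilon \to 0$ with the moving domains $I_\varepsilon$, and keep precise control of the weighted norm when converting the equation into pointwise decay estimates via the Green function — this last conversion is where the $\log\frac1\varepsilon$ factor in~\eqref{mainlinearaprioriest} is genuinely produced and must be accounted for rather than absorbed.
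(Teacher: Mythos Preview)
Your overall architecture (a priori estimate by contradiction/blow-up, then Fredholm for existence) is the standard one and matches the paper's. However, there is a genuine gap in your Step~2, precisely at the point you yourself flag as ``the subtle point'': you cannot rule out the $Z_0$ component in the limit. Your candidate arguments do not work. First, $Z_{0,\mu,0}$ is even and $\chi Z_{1,\mu,0}$ is odd (with the cutoff symmetric), so $\int \phi_j^\infty \chi Z_{1,\mu,0} = 0$ is automatically satisfied by any multiple of $Z_0$; the orthogonality~\eqref{phiorthtoZ1j} gives you nothing against $Z_0$. Second, the tuning of $\mu_j$ via~\eqref{mu_j} is a property of the \emph{nonlinear} ansatz and plays no role in the linear theory of Proposition~\ref{mainlinearprop}. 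Third, ``$L\phi_n$ has no component along $\chi_j Z_{0j}$'' is not a constraint you can impose or verify for a general $g$. So your blow-up limit could perfectly well be a nonzero multiple of $Z_{0,\mu_j^\infty,0}$, and the contradiction fails.

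The paper does \emph{not} attempt to kill $Z_0$ in the blow-up. Instead it proceeds in two stages. First (Lemma~\ref{1stLinftyestlem}) it proves the a priori estimate---without any $\log\frac{1}{\varepsilon}$ loss---for solutions orthogonal to \emph{both} the $Z_{0j}$'s and the $Z_{1j}$'s; here the blow-up argument is clean because both kernel directions are excluded by hypothesis. Second (Lemma~\ref{2ndLinftyestlem}) it reduces the case of orthogonality to the $Z_{1j}$'s alone to the previous one by correcting $\phi$ with a combination $\sum_k d_k \tilde{z}_{0k}$, where $\tilde{z}_{0k}$ is a logarithmic cutoff of $Z_{0,\mu_k,\eta_k}$ built in Lemma~\ref{ztilde0lem}. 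The $\log\frac{1}{\varepsilon}$ factor in~\eqref{mainlinearaprioriest} arises \emph{precisely} from the energy estimate $\int \tilde{z}_{0} L_\mu \tilde{z}_{0} \gtrsim |\log\varepsilon|^{-1}$, which controls the size of the correction coefficients $d_k$. This is the missing idea: the logarithmic loss is not a by-product of a barrier in the far field but the quantitative cost of projecting away the $Z_0$ direction, and producing it requires the explicit construction of $\tilde{z}_0$.
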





\subsection{A priori estimate for solutions orthogonal to the full approximated kernel} $ $ \medbreak

\noindent The aim of this subsection is to prove the following \textit{a priori} bound. Note that this estimate is better than~\eqref{mainlinearaprioriest} in its dependence on~$\epsilon$. However, to establish it we require the solution~$\phi$ to be orthogonal to both the~$Z_{0, j}$'s and the~$Z_{1, j}$'s, not only to the latter functions as in~\eqref{phiorthtoZ1j}.

\begin{lemma} \label{1stLinftyestlem}
Let $\delta_0,\, \sigma \in (0,1)$. There exist $\varepsilon_0 \in (0, 1)$ and~$C_0 > 0$ for which the following holds true: if $\xi \in \mathcal{I}_{\delta_0},\ \varepsilon \in (0, \varepsilon_0]$,~$g \in L^\infty_{\star,\,\sigma}(I_\varepsilon)$ and~$\phi$ is a bounded weak solution to~\eqref{linear-eq} satisfying
$$
\int_\R \phi \chi_j Z_{i j} = 0, \quad \mbox{ for all } i = 0, 1 \mbox{ and } j = 1, \ldots, m,
$$
then
$$
\| \phi \|_{L^\infty(\R)} \le C_0 \| g \|_{\star,\,\sigma}.
$$
\end{lemma}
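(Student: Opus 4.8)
The plan is to argue by contradiction. Suppose the estimate fails: then there exist sequences $\varepsilon_n \to 0$, $\xi^{(n)} \in \mathcal{I}_{\delta_0}$, right-hand sides $g_n$ with $\|g_n\|_{\star,\sigma} \to 0$, and bounded weak solutions $\phi_n$ to \eqref{linear-eq} (with the corresponding potential $W_n$ and parameters $\mu_j^{(n)}, \eta_j^{(n)}$) satisfying the orthogonality conditions $\int_\R \phi_n \chi_j Z_{ij} = 0$ for $i=0,1$ and $j=1,\dots,m$, yet $\|\phi_n\|_{L^\infty(\R)} = 1$. After passing to a subsequence, I would assume $\xi_j^{(n)} \to \bar\xi_j \in \overline{I}$ and $\mu_j^{(n)} \to \bar\mu_j \in [c_\star, C_\star]$ (using \eqref{mu-bounded both sides}); note that the separation hypothesis in $\mathcal{I}_{\delta_0}$ guarantees $|\bar\xi_i - \bar\xi_j| \ge \delta_0$ for $i \ne j$ and $\dist(\bar\xi_j, \R\setminus I) \ge \delta_0$, so the bubbles remain well-separated and away from the boundary in the original variable.

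The heart of the argument is a two-scale blow-up analysis. First, around each concentration point: fix $j$ and consider $\tilde\phi_n^{(j)}(y) := \phi_n(y + \eta_j^{(n)})$. This is bounded by $1$ and solves $(-\Delta)^{1/2}\tilde\phi_n^{(j)} = W_n(\cdot + \eta_j^{(n)}) \tilde\phi_n^{(j)} + g_n(\cdot + \eta_j^{(n)})$ locally. Using \eqref{Wasperturb}--\eqref{thetabounds}, the potential $W_n(\cdot + \eta_j^{(n)})$ converges locally uniformly to $W_{\bar\mu_j, 0}(y) = \frac{2\bar\mu_j}{\bar\mu_j^2 + y^2}$ (the other bubbles $\frac{2\mu_i}{\mu_i^2+(y+\eta_j^{(n)}-\eta_i^{(n)})^2}$ vanish locally since $|\eta_j^{(n)}-\eta_i^{(n)}| \sim \delta_0/\varepsilon_n \to \infty$), while $g_n(\cdot+\eta_j^{(n)}) \to 0$ in $L^\infty_{\loc}$ because $\|g_n\|_{\star,\sigma}\to 0$ and the weight is bounded on compacta. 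By elliptic regularity for the half-Laplacian (interior Schauder / Hölder estimates), $\tilde\phi_n^{(j)}$ is precompact in $C^0_{\loc}(\R)$; a diagonal argument gives a limit $\tilde\phi_\infty^{(j)} \in L^\infty(\R)$ with $\|\tilde\phi_\infty^{(j)}\|_\infty \le 1$ solving $L_{\bar\mu_j, 0}\tilde\phi_\infty^{(j)} = 0$ in $\R$. By the non-degeneracy Proposition~\ref{prop-nondegeneracy-L}, $\tilde\phi_\infty^{(j)}$ is a linear combination of $Z_{0,\bar\mu_j,0}$ and $Z_{1,\bar\mu_j,0}$; passing the orthogonality conditions $\int_\R \phi_n \chi_j Z_{ij} = 0$ to the limit (the cutoff $\chi_j$ recentered becomes the fixed $\chi$, and dominated convergence applies since $\chi Z_{i,\bar\mu_j,0}$ is integrable) forces $\tilde\phi_\infty^{(j)} \equiv 0$. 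Thus $\phi_n \to 0$ locally uniformly near each $\bar\xi_j$ in the expanded variable.

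To reach a contradiction with $\|\phi_n\|_{L^\infty}=1$, I would combine this with a global barrier argument showing $\phi_n \to 0$ uniformly everywhere. The standard device is to build a supersolution for $L_n = (-\Delta)^{1/2} - W_n$ outside fixed neighborhoods of the $\eta_j^{(n)}$'s: the weight $\rho(y) := \varepsilon_n + \sum_j (1+|y-\eta_j^{(n)}|)^{-1-\sigma}$ defining $\|\cdot\|_{\star,\sigma}$ essentially serves this purpose, and one checks that $(-\Delta)^{1/2}$ applied to a comparison function of the form $\sum_j (1+|y-\eta_j^{(n)}|)^{-\sigma}$ (plus a constant times the distance-to-$\partial I_\varepsilon$ barrier) dominates $W_n$ times it, up to the small error from \eqref{thetabounds}; since $W_n(y) \lesssim \sum_j (1+|y-\eta_j^{(n)}|)^{-2}$ decays faster than the barrier, the maximum principle for $(-\Delta)^{1/2}$ on $I_\varepsilon$ gives $|\phi_n(y)| \le C\|g_n\|_{\star,\sigma} + C\big(\sup_{\cup_j \partial B_R(\eta_j^{(n)})} |\phi_n|\big) \cdot (\text{barrier})$ for $y$ outside $\cup_j B_R(\eta_j^{(n)})$. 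Both terms on the right tend to $0$ (the first by hypothesis, the second by the local analysis just done), so $\|\phi_n\|_{L^\infty}\to 0$, contradicting normalization.

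The main obstacle I anticipate is making the global barrier step rigorous for the \emph{nonlocal} operator: unlike the local Laplacian, one cannot simply patch local supersolutions, and the maximum principle comparison must be carried out with a genuinely global test function, carefully accounting for the nonlocal contributions of $\phi_n$ from inside the balls $B_R(\eta_j^{(n)})$ and from $\R \setminus I_\varepsilon$ (where $\phi_n = 0$). Getting the barrier's exponent right (here $\sigma < 1$ is exactly what makes $(-\Delta)^{1/2}[(1+|y|)^{-\sigma}]$ comparable to $(1+|y|)^{-1-\sigma}$, matching the decay of the $\star$-norm weight) and controlling the error term $\theta$ uniformly in $\xi \in \mathcal{I}_{\delta_0}$ is the delicate computational core; everything else is a fairly standard concentration-compactness plus non-degeneracy routine.
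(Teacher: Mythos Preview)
Your proposal is correct and follows essentially the same approach as the paper: contradiction, a barrier/maximum-principle estimate controlling $\|\phi_n\|_{L^\infty}$ by its inner norm on $\bigcup_j B_R(\eta_j^{(n)})$ plus $\|g_n\|_{\star,\sigma}$, and a blow-up around a concentration point combined with Proposition~\ref{prop-nondegeneracy-L} and the orthogonality conditions. The only cosmetic difference is the order---the paper runs the barrier first (Lemmas~\ref{wcomplem} and~\ref{maxprincforLlem}, using $w_\sigma(y)=(1+y^2)^{-\sigma/2}$ together with the torsion function $(1-\varepsilon_n^2 y^2/(D+1)^2)_+^{1/2}$) to force the supremum to be attained near some $\eta_k^{(n)}$, then blows up once; you blow up at every $\eta_j$ first and apply the barrier afterwards---but the ingredients and the delicate point you flag (the nonlocal comparison and the exponent $\sigma$) are exactly those of the paper.
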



\medbreak
Let us begin with some preliminary technical lemmas. First of all, we construct a useful barrier.

\begin{lemma} \label{wcomplem}
Let~$\sigma \in (0, 1)$ and~$w_\sigma(y) := (1 + y^2)^{- \frac{\sigma}{2}}$ for~$y \in \R$. Then, there exist two constants~$c_0 \in (0, 1)$ and~$R_0 \ge 1$, depending only on~$\sigma$, such that
$$
(- \Delta)^{\frac{1}{2}} w_\sigma(y) \le - \frac{c_0}{|y|^{1 + \sigma}}, \quad \mbox{ for all } y \in \R \setminus (- R_0, R_0).
$$
\end{lemma}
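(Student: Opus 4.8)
The plan is to compute $(-\Delta)^{1/2} w_\sigma$ and show its negativity for large $|y|$ by a combination of a scaling/homogeneity heuristic and a direct splitting of the principal value integral. First I would record that $w_\sigma$ is smooth, even, bounded, and decays like $|y|^{-\sigma}$ at infinity, so the singular integral defining $(-\Delta)^{1/2} w_\sigma(y)$ converges absolutely at $\infty$ (since $\sigma>0$) and the principal value at $z=0$ exists because $w_\sigma \in C^2$ near any fixed $y$. The heuristic driving the sign is that $w_\sigma$ behaves asymptotically like the function $|y|^{-\sigma}$, which is (a multiple of) an explicit $\tfrac12$-harmonic-type power; more precisely, for $0<\sigma<1$ one has the classical identity $(-\Delta)^{1/2}\big(|y|^{-\sigma}\big) = \gamma(\sigma)\,|y|^{-1-\sigma}$ on $\R\setminus\{0\}$ with a constant $\gamma(\sigma)$ whose sign is \emph{positive} for this range — wait, I must be careful with signs, so rather than quote that I would argue directly.

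The core computation: fix $|y|\ge R_0$ (by evenness assume $y>0$) and write, using the symmetric form,
\[
(-\Delta)^{1/2} w_\sigma(y) = \frac{1}{\pi}\,\PV\!\int_{\R} \frac{w_\sigma(y) - w_\sigma(y+z)}{z^2}\,dz = \frac{1}{2\pi}\int_{\R} \frac{2 w_\sigma(y) - w_\sigma(y+z) - w_\sigma(y-z)}{z^2}\,dz .
\]
I would split the $z$-integral into the ``near'' region $|z| \le y/2$ and the ``far'' region $|z| > y/2$. In the near region, a second-order Taylor expansion of $w_\sigma$ around $y$ gives $2w_\sigma(y)-w_\sigma(y+z)-w_\sigma(y-z) = -w_\sigma''(\tilde y) z^2$ for some $\tilde y$ between $y-z$ and $y+z$; since for $|z|\le y/2$ we have $\tilde y \in [y/2, 3y/2]$ and a direct differentiation shows $w_\sigma''(t) = \sigma(1+t^2)^{-\sigma/2-2}\big((\sigma+1)t^2 - 1\big) \asymp t^{-\sigma-2}>0$ for $t$ large, the near contribution is $\le -c_1 y^{-\sigma-2}\cdot(\text{length }\asymp y) = -c_1' y^{-1-\sigma}$, i.e.\ strictly negative of the right order. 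In the far region $|z|>y/2$, I bound crudely: $|w_\sigma(y\pm z)|\le 1$ and $w_\sigma(y)\le (1+y^2)^{-\sigma/2}\le c\, y^{-\sigma}$, and split further according to whether $|z|$ is comparable to $y$ or much larger. The genuinely dangerous far-subregion is $|z|$ comparable to $y$ (say $y/2<|z|<2y$), where $w_\sigma(y+z)$ can be as large as $O(1)=O(y^0)$ while $z^{-2}\asymp y^{-2}$ — giving a positive contribution of size at most $O(y^{-2})$ there after integrating over a set of length $O(y)$, hence $O(y^{-1})$... which is \emph{larger} than $y^{-1-\sigma}$, so the crude bound is insufficient and one must extract cancellation.

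To fix this I would treat the far region more carefully by substituting $z = y s$ and passing to the limit: for $|z|\ge y/2$, set $z=ys$ with $|s|\ge 1/2$, so that
\[
\int_{|z|\ge y/2}\frac{2w_\sigma(y)-w_\sigma(y+z)-w_\sigma(y-z)}{z^2}\,dz = \frac{1}{y}\int_{|s|\ge 1/2} \frac{2w_\sigma(y)-w_\sigma(y(1+s))-w_\sigma(y(1-s))}{s^2}\,ds,
\]
and observe $y^{\sigma} w_\sigma(yt) = (y^{-2}+t^2)^{-\sigma/2} \to |t|^{-\sigma}$ as $y\to\infty$, uniformly on compacts away from $t=0$ and dominated on the whole range (using $(y^{-2}+t^2)^{-\sigma/2}\le |t|^{-\sigma}$). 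Hence $y^{1+\sigma}$ times the far integral converges, as $y\to\infty$, to $\int_{|s|\ge 1/2}\big(2\cdot 0 - |1+s|^{-\sigma} - |1-s|^{-\sigma}\big)s^{-2}\,ds$, which is a \emph{strictly negative finite} number (the integrand is $\le 0$ everywhere and not identically $0$; convergence at $s=1$ is fine since $|1-s|^{-\sigma}$ is integrable for $\sigma<1$, and at $\infty$ since the integrand is $O(|s|^{-2})$). Combining: $y^{1+\sigma}(-\Delta)^{1/2}w_\sigma(y)$ is bounded above, for all large $y$, by (near part $\le -c_1'<0$) plus (far part $\to$ negative constant $+ o(1)$), hence is $\le -c_0<0$ for $y\ge R_0$ with $R_0$ large; by evenness the same holds for $y\le -R_0$.

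The main obstacle, as flagged above, is the far region $|z|\asymp y$: the naive triangle-inequality bound there is off by a factor $y^{\sigma}$ and gives the wrong sign/order, so one must genuinely use the rescaling $z=ys$ together with a dominated-convergence argument to identify the limiting (strictly negative) integral $\int_{|s|\ge 1/2}(-|1+s|^{-\sigma}-|1-s|^{-\sigma})s^{-2}\,ds$. A clean alternative, which I would mention as a remark, is to compare $w_\sigma$ directly with the exact $\tfrac12$-harmonic power: since the Poisson/Riesz-type computation gives $(-\Delta)^{1/2}(|y|^{-\sigma})=c(\sigma)|y|^{-1-\sigma}$ with an explicit constant that one can check is \emph{negative} for $\sigma\in(0,1)$ — this is exactly the statement that $|y|^{-\sigma}$ is $\tfrac12$-subharmonic away from the origin for such $\sigma$ — and then control the difference $w_\sigma(y)-|y|^{-\sigma} = O(|y|^{-\sigma-2})$ in a suitable weighted norm to show it contributes a lower-order, hence harmless, term to $(-\Delta)^{1/2}$. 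Either route yields the claimed pointwise bound with $c_0\in(0,1)$ and $R_0\ge 1$ depending only on $\sigma$.
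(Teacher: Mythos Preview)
Your rescaling $z=ys$ is essentially the paper's approach (the paper does a single global substitution $y'=ty$ and applies dominated convergence directly), but there is a concrete error that breaks your argument. In the far region you claim the limiting integrand contains a ``$2\cdot 0$'' term; this is wrong, because $y^\sigma w_\sigma(y)=(y^{-2}+1)^{-\sigma/2}\to 1$, not $0$. The correct limiting far integrand is therefore
\[
\frac{2-|1+s|^{-\sigma}-|1-s|^{-\sigma}}{s^2},
\]
which is \emph{not} everywhere nonpositive: it is negative near $s=\pm 1$ (where $|1\mp s|^{-\sigma}$ blows up) but positive for large $|s|$ (where $|1\pm s|^{-\sigma}\to 0$). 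So your sign claim for the far limit is unjustified, and since your near contribution is only bounded (not computed exactly), you cannot conclude.

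Once the error is fixed, near and far together give $y^{1+\sigma}(-\Delta)^{1/2}w_\sigma(y)\to\gamma(\sigma):=(-\Delta)^{1/2}(|\cdot|^{-\sigma})(1)$, and the entire content of the lemma is to show $\gamma(\sigma)<0$---which you assert in your ``alternative'' remark but never verify. The paper proves this by a symmetrization: writing $\pi\gamma(\sigma)=\PV\int_0^\infty(1-t^{-\sigma})K(t)\,dt$ with $K(t)=(t-1)^{-2}+(t+1)^{-2}$, using the identity $K(1/t)=t^2K(t)$ to fold the integral onto $(0,1)$ as $\int_0^1(2-t^{-\sigma}-t^{\sigma})K(t)\,dt$, and then observing that $2-t^{-\sigma}-t^{\sigma}=-(t^{\sigma/2}-t^{-\sigma/2})^2\le 0$, which makes the strict negativity manifest. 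This sign computation is the missing ingredient in your proposal.
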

\begin{proof}
Let~$y \ge 2$ and write~$w = w_\sigma$. We have
$$
(-\Delta)^{\frac{1}{2}} w(y) = \frac{1}{\pi} \int_\R \frac{w(y) - w(y') + w'(y) (y' - y) \mathds{1}_{\left( - \frac{y}{2}, \frac{y}{2}\right)}(y' - y)}{(y' - y)^2} \, dy',
$$
so that, changing variables as~$y' = t y$, it holds 
$$
y^{1 + \sigma} (-\Delta)^{\frac{1}{2}} w(y) = \frac{1}{\pi} \int_\R \frac{\Psi_y(t)}{(t - 1)^2} \, dt, \ \, \textup{with}\ \,  \Psi_y(t) := y^{\sigma} \left\{ w(y) - w(t y) + w'(y) y (t - 1) \mathds{1}_{\left( \frac{1}{2}, \frac{3}{2}\right)}(t) \right\}.
$$
We plan to let $y \rightarrow +\infty$ using Lebesgue's dominated convergence Theorem. For~$y \ge 2$ and~$t \in \left( \frac{1}{2}, \frac{3}{2} \right)$, we compute
\begin{align*}
w'(y) & = - \frac{\sigma y}{(1 + y^2)^{1 + \frac{\sigma}{2}}}, \hspace{-60pt} & w''(y) & = \sigma \frac{(1 + \sigma) y^2 - 1}{(1 + y^2)^{2 + \frac{\sigma}{2}}}, \\
\Psi_y'(t) & = y^{1 + \sigma} (w'(y) - w'(t y)), \hspace{-60pt} & \Psi_y''(t) & = - y^{2 + \sigma} w''(t y).
\end{align*}
Thus, a Taylor expansion yields that, for $y \ge 2$ and~$t \in \left( \frac{1}{2}, \frac{3}{2} \right)$, there exists~$\theta = \theta(y, t) \in [0, 1]$ such that
\begin{align*}
\big| \Psi_y(t) \big| & = \left| \Psi_y(1) + \Psi_y'(1) (t - 1) + \frac{1}{2} \Psi_y'' \big( {1 - \theta + \theta t} \big) (t - 1)^2 \right| \\
& = \frac{y^{2 + \sigma}}{2} w''\big( (1 - \theta + \theta t) y \big) (t - 1)^2 \le 8 (t - 1)^2.
\end{align*}
On the other hand, for~$t \in \R \setminus \left( \frac{1}{2}, \frac{3}{2} \right)$, we simply have
$$
\big| \Psi_y(t) \big| = y^{\sigma} \left| w(y) - w(t y) \right| \le \left( \frac{y^2}{1 + y^2} \right)^{\! \frac{\sigma}{2}} + \left( \frac{y^2}{1 + t^2 y^2} \right)^{\! \frac{\sigma}{2}} \le 1 + |t|^{- \sigma}.
$$
All in all, we have established that
$$
\frac{\big| \Psi_y(t) \big|}{(t - 1)^2} \le g(t) := 8 \mathds{1}_{\left( \frac{1}{2}, \frac{3}{2} \right)}(t) + \frac{1 + |t|^{- \sigma}}{(t - 1)^2} \mathds{1}_{\R \setminus \left( \frac{1}{2}, \frac{3}{2} \right)}(t), \quad \mbox{ for all } t \in \R \mbox{ and } y \ge 2.
$$
As~$\sigma > 0$, we have that~$g \in L^1(\R)$. Thus, by dominated convergence
\begin{align*}
\gamma(\sigma) := & \,\, \lim_{y \rightarrow +\infty} y^{1 + \sigma} (-\Delta)^{\frac{1}{2}} w(y) =  \frac{1}{\pi} \int_\R \lim_{y \rightarrow +\infty} \frac{\Psi_y(t)}{(t - 1)^2} \, dt \\
= & \,\, \frac{1}{\pi} \int_\R \lim_{y \rightarrow +\infty} \left\{ \left( \frac{y^2}{1 + y^2} \right)^{\frac{\sigma}{2}} - \left( \frac{y^2}{1 + t^2 y^2} \right)^{\frac{\sigma}{2}} - \sigma \left( \frac{y^2}{1 + y^2} \right)^{1 + \frac{\sigma}{2}} (t - 1) \mathds{1}_{\left( \frac{1}{2}, \frac{3}{2}\right)}(t) \right\} \frac{dt}{(t - 1)^2} \\
= & \,\,  \frac{1}{\pi} \int_\R \frac{1 - |t|^{- \sigma} - \sigma (t - 1) \mathds{1}_{\left( \frac{1}{2}, \frac{3}{2}\right)}(t)}{(t - 1)^2} \, dt = (- \Delta)^{\frac{1}{2}} \big( {|\cdot|^{- \sigma}} \big)(1).
\end{align*}

\noindent We now claim that~$\gamma(\sigma) < 0$ for all~$\sigma \in (0, 1)$. To see this, set
$$
K(t) := \frac{1}{(t - 1)^2} + \frac{1}{(t + 1)^2}, \quad \mbox{ for } t > 0,
$$
and notice that~$K(t^{-1}) = t^2 K(t)$. By virtue of this and changing variables appropriately, we write
\begin{align*}
\pi \gamma(\sigma) & = \PV \int_\R \frac{1 - |t|^{- \sigma}}{(t - 1)^2} \, dt = \PV  \int_0^{+\infty} \frac{1 - t^{- \sigma}}{(t - 1)^2} \, dt  + \int_{-\infty}^0 \frac{1 - (-t)^{- \sigma}}{(t - 1)^2} \, dt \\
& = \PV \int_0^{+\infty} \frac{1 - t^{- \sigma}}{(t - 1)^2} \, dt +  \int_0^{+\infty} \frac{1 - \tau^{- \sigma}}{(\tau + 1)^2} \, d\tau = \PV \int_0^{+\infty} (1 - t^{- \sigma}) K(t) dt \\
& = \lim_{\delta \to 0^{+}} \left\{ \int_0^{1 - \delta} (1 - t^{- \sigma}) K(t) \, dt + \int_{1 + \delta}^{+\infty} (1 - t^{- \sigma}) K(t) \, dt \right\} \\
& = \lim_{\delta \to 0^{+}} \left\{ \int_0^{1 - \delta} (1 - t^{- \sigma}) K(t) \, dt + \int_0^{\frac{1}{1 + \delta}} (1 - \tau^{\sigma}) K(\tau^{-1}) \tau^{-2} \, d\tau \right\} \\
& = \lim_{\delta \to 0^{+}} \left\{ \int_0^{1 - \delta} (2 - t^{- \sigma} - t^\sigma) K(t) \, dt + \int_{1 - \delta}^{\frac{1}{1 + \delta}} (1 - t^{\sigma}) K(t) \, dt \right\}.
\end{align*}
Now, on one hand, since~$(1 - t^\sigma) K(t) = O(\delta^{-1})$ for all~$t \in \big[ \frac{1}{2}, \frac{1}{1 + \delta} \big)$ and~$\frac{1}{1 + \delta} - 1 + \delta = O(\delta^2)$, we get 
$$
\int_{1 - \delta}^{\frac{1}{1 + \delta}} (1 - t^{\sigma}) K(t) \, dt = O(\delta) \to 0, \quad \textup{ as } \delta \to 0.
$$
On the other hand,~$2 - t^{- \sigma} - t^\sigma = - \left( t^{\frac{\sigma}{2}} - t^{- \frac{\sigma}{2}} \right)^{\! 2}$ and therefore
$$
\gamma(\sigma) = - \int_0^1 \left( t^{\frac{\sigma}{2}} - t^{- \frac{\sigma}{2}} \right)^{\! 2} K(t) \, dt < 0, \quad \mbox{ for every } \sigma \in (0, 1).
$$
Thus, we have proved that there exists~$R > 0$ depending only on~$\sigma$ for which
$$
(-\Delta)^{\frac{1}{2}} w(y) \le - \frac{\pi|\gamma(\sigma)|}{2} \, y^{- 1 - \sigma}, \quad \mbox{ for all } y \ge R.
$$
As, by symmetry, the same holds true also for~$y \le - R$, the proof is complete.
\end{proof}

Having at hand the barrier constructed in the previous lemma, we now prove that the operator~$L$ enjoys the maximum principle outside of a large neighbourhood of the points~$\eta_j$.

\begin{lemma} \label{maxprincforLlem}
Let $\delta_0 \in (0,1)$. There exists $R_1 > 0$ for which the following holds true: given~$\varepsilon$ and~$\xi \in \mathcal{I}_{\delta_0}$, if~$\psi$ is a bounded and continuous weak supersolution to~$L\psi = 0$ in~$I_\varepsilon \setminus \mathscr{B}_{R}$, with~$\mathscr{B}_{R} := \bigcup_{j = 1}^m (\eta_j - R, \eta_j + R)$ and~$R \ge R_1$, which is non-negative in~$\mathscr{B}_R \cup \left( \R \setminus I_\varepsilon \right)$, then~$\psi$ is non-negative in the whole~$\R$.
\end{lemma}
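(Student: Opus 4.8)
The plan is to construct a bounded, strictly positive \emph{subsolution} $Z$ of $L$ on the set $A := I_\varepsilon \setminus \overline{\mathscr{B}_R}$ (an open bounded set) and then to compare $\psi$ with a suitable multiple of $Z$ through the substitution $\psi \mapsto \psi/Z$, which cancels the zeroth order term of $L$ at the contact point. Note that $L\psi \ge 0$ holds in $A$ and that, by hypothesis and continuity, $\psi \ge 0$ on $\R \setminus A = \overline{\mathscr{B}_R} \cup (\R \setminus I_\varepsilon)$. Recall also that $W = \kappa(\varepsilon\cdot) e^{\mathscr{V}} > 0$ in $\R$ while, by \eqref{Wasperturb}--\eqref{thetabounds}, $W = \sum_{j=1}^m \frac{2\mu_j}{\mu_j^2 + (\cdot - \eta_j)^2} + \theta$ with $|\theta(y)| \le C\varepsilon \sum_j (1 + |y - \eta_j|)^{-1}$ on $I_\varepsilon$.

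I would first record a \emph{weighted} smallness of $W$ away from the $\eta_j$'s. Fix any $\sigma \in (0,1)$; for $y \in A$ one has $|y - \eta_j| > R$ for all $j$ and $|y - \eta_j| < (b_d - a_1)/\varepsilon$ (since $y, \eta_j \in I_\varepsilon$), so, combining these with \eqref{mu-bounded both sides} and $1 - \sigma > 0$, up to a constant depending only on $I$,
$$
0 < W(y) \le \Lambda_{R,\varepsilon} \sum_{j=1}^m \frac{1}{|y - \eta_j|^{1+\sigma}}, \qquad \Lambda_{R,\varepsilon} := \frac{2C_\star}{R^{1-\sigma}} + C\varepsilon^{1-\sigma}, \qquad \text{for all } y \in A.
$$
The barrier is then $Z(y) := 1 - \frac{1}{m+1}\sum_{j=1}^m w_\sigma(y - \eta_j)$, with $w_\sigma$, $c_0$, $R_0$ as in Lemma~\ref{wcomplem}: since $0 < w_\sigma \le 1$, $Z$ is continuous, bounded, and $\frac{1}{m+1} \le Z \le 1$ on $\R$; and since $(-\Delta)^{\frac12}$ is linear, translation invariant and annihilates constants, Lemma~\ref{wcomplem} gives, whenever $|y - \eta_j| \ge R_0$ for all $j$, that $(-\Delta)^{\frac12} Z(y) = -\frac{1}{m+1}\sum_j [(-\Delta)^{\frac12} w_\sigma](y - \eta_j) \ge \frac{c_0}{m+1}\sum_j |y - \eta_j|^{-1-\sigma}$. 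Choosing $R_1 \ge R_0$ large (depending only on $\sigma$, $m$, $C_\star$, hence only on $\delta_0$, $m$, $I$) and $\varepsilon$ small so that $\Lambda_{R_1,\varepsilon} \le \frac{c_0}{m+1}$, we get, for every $R \ge R_1$ and $y \in A$, that $L Z(y) = (-\Delta)^{\frac12} Z(y) - W(y) Z(y) \ge (\frac{c_0}{m+1} - \Lambda_{R,\varepsilon})\sum_j |y - \eta_j|^{-1-\sigma} \ge 0$, i.e.\ $Z$ is a positive bounded subsolution of $L$ in $A$.

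For the comparison, assume by contradiction that $\psi(\bar y) < 0$ for some $\bar y$; then necessarily $\bar y \in A$. As $Z > 0$ is continuous, $\overline A$ is compact, and $\psi/Z \ge 0$ on $\R \setminus A \supseteq \partial A$, the number $m_0 := \min_{\overline A}(\psi/Z)$ is negative and attained at some $y_0$ lying in the open set $A$. Set $\phi := \psi - m_0 Z$; then $\phi \ge 0$ on $\R$, $\phi(y_0) = 0$, $\phi > 0$ on $\R \setminus A$ (there $\psi \ge 0$, $-m_0 > 0$, $Z > 0$), and $L\phi = L\psi + (-m_0)LZ \ge 0$ in $A$, so $(-\Delta)^{\frac12}\phi \ge W\phi \ge 0$ weakly in $A$; that is, $\phi$ is $(-\Delta)^{\frac12}$-superharmonic there. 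Being continuous, nonnegative, and attaining its global minimum $0$ at the interior point $y_0$, the function $\phi$ must vanish identically on $\R$ by the strong minimum principle for the half-Laplacian, contradicting $\phi > 0$ on $\R \setminus A$. Hence $\psi \ge 0$ in $\R$.

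The main difficulty lies in the weighted estimate for $W$: since $A$ has diameter of order $1/\varepsilon$, the first Dirichlet eigenvalue of $(-\Delta)^{\frac12}$ on $A$ is only of order $\varepsilon$, so the crude bound $W \lesssim R^{-2}$ on $A$ is far too weak, and one must instead reproduce the precise decay rate $|y - \eta_j|^{-1-\sigma}$ that $(-\Delta)^{\frac12} w_\sigma$ exhibits in Lemma~\ref{wcomplem}; this is what forces both the weighted control of the remainder $\theta$ (and hence the smallness of $\varepsilon$) and the normalization by $\frac{1}{m+1}$ needed to keep $Z$ positive after summing $m$ translates of $w_\sigma$. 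The final passage from the weak supersolution inequality to the pointwise statement underlying the strong minimum principle is routine for continuous weak supersolutions.
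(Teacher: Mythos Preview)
Your proof is correct and follows essentially the paper's approach: your barrier $Z$ equals (up to the scalar factor $m+1$) the paper's supersolution $z := m+1 - \sum_{j} w_\sigma(\cdot - \eta_j)$, and your contact-point comparison is precisely the content of Lemma~\ref{maxprincequivlemma}, which the authors invoke as a black box rather than inlining. Two cosmetic remarks: you call $Z$ a ``subsolution'' but what you construct and need is a \emph{supersolution} ($LZ \ge 0$); and the smallness assumption on $\varepsilon$ is unnecessary, since on $I_\varepsilon$ one has $\varepsilon \le C|y-\eta_j|^{-1}$, which lets the $\theta$-term be absorbed directly into $C|y-\eta_j|^{-2}$ as the paper does.
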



\begin{proof}
In light of Lemma~\ref{maxprincequivlemma}, it suffices to exhibit a positive supersolution~$z$ in~$I_\varepsilon \setminus \mathscr{B}_{R}$. To this aim, let~$w = w_\sigma$ be as in Lemma~\ref{wcomplem}, for some arbitrary but fixed~$\sigma \in (0, 1)$, and set
$$
z(y) := m + 1 - \sum_{j = 1}^{m} w(y - \eta_j).
$$
By definition, it is clear that the function $z$ is non-negative in $\R$ and positive in $\R \setminus \mathscr{B}_R$ for all $R > 0$. Also, from~\eqref{Wasperturb}--\eqref{thetabounds} we estimate that
$$
W(y) \le \sum_{j = 1}^m \frac{2 \mu_j}{\mu_j^2 + (y - \eta_j)^2} + C \varepsilon \sum_{j = 1}^m \frac{1}{1 + |y - \eta_j|} \le \sum_{j = 1}^m \frac{C}{|y - \eta_j|^2}, \quad \mbox{ for all } y \in I_\varepsilon,
$$
for some $C > 0$ independent of $\epsilon$ and $\xi$. Hence, we have that

\begin{equation} \label{Wz}
W(y) z(y) \le (m + 1) W(y) \le \sum_{j = 1}^m \frac{C_0}{|y - \eta_j|^2}, \quad \mbox{ for all } y \in I_\varepsilon,
\end{equation}
for some possibly larger~$C_0 > 0$. On the other hand, by Lemma~\ref{wcomplem}, we have
\begin{equation} \label{fraclapofzislarge}
(-\Delta)^{\frac{1}{2}} z(y) = - \sum_{j = 1}^{m} (-\Delta)^{\frac{1}{2}} w(y - \eta_j) \ge \sum_{j = 1}^m \frac{c_0}{|y - \eta_j|^{1 + \sigma}}, \quad \mbox{for all } y \in \R \setminus \mathscr{B}_R,
\end{equation}
provided~$R \ge R_0$, with~$c_0 \in (0, 1)$ and~$R_0 \ge 1$ depending only on~$\sigma$. We now choose $R_1 \geq \max\{R_0, (2 C_0/c_0)^{1/(1-\sigma)}\} \geq 1$ and fix an arbitrary~$R \geq R_1$. If $I_{\epsilon} \setminus \mathscr{B}_R = \varnothing$, there is nothing to prove. Let us then assume that $I_{\epsilon} \setminus \mathscr{B}_R \neq \varnothing$. Combining \eqref{Wz} with \eqref{fraclapofzislarge}, we get
\begin{align*}
L z(y) & = (-\Delta)^{\frac{1}{2}} z(y) - W(y) z(y) \ge \sum_{j = 1}^m \frac{1}{|y - \eta_j|^{1 + \sigma}} \left( c_0 - \frac{C_0}{|y - \eta_j|^{1 - \sigma}} \right) \\
& \ge \left( c_0 - \frac{C_0}{R_1^{1 - \sigma}} \right) \sum_{j = 1}^m \frac{1}{|y - \eta_j|^{1 + \sigma}} \ge \frac{c_0}{2} \, \sum_{j = 1}^m \frac{1}{|y - \eta_j|^{1 + \sigma}} > 0, \quad \textup{ for all } y \in I_\varepsilon \setminus \mathscr{B}_R.
\end{align*}
Thus,~$z$ is the desired supersolution and the proof is complete.
\end{proof} 
 
Thanks to the above maximum principle we can now prove Lemma~\ref{1stLinftyestlem}.


\begin{proof}[Proof of Lemma \ref{1stLinftyestlem}]
We argue by contradiction. Taking into account the homogeneity of the equation, we suppose that, for any~$n \in \N$, there exist~$\varepsilon_n \in (0, 1)$,~$\xi^{(n)} \in \mathcal{I}_{\delta_0}$, and two functions~$\phi_n, g_n \in L^\infty(\R)$ such that~$\varepsilon_n \to 0^{+}$ as~$n \rightarrow +\infty$, the function~$\phi_n$ weakly satisfies the equation
$$
\left\{
\begin{aligned}
L^{(n)}(\phi_n) & = g_n, \quad && \textup{ in } I_{\epsilon_n},\\
\phi_n & = 0, \quad && \textup{ in } \R \setminus I_{\epsilon_n},
\end{aligned}
\right.
$$
as well as the orthogonality conditions
$$
\int_\R \phi_n \chi_j^{(n)} Z_{i j}^{(n)} = 0, \quad \mbox{ for all } i = 0, 1, \, j = 1, \ldots, m,
$$
but
\begin{equation} \label{phin=1gn=1/n}
\| \phi_n \|_{L^\infty(\R)} = 1 \quad \mbox{and} \quad {\| g_n \|}_{\star, \sigma}^{(n)} < \frac{1}{n}.
\end{equation}
Here, we set~$\eta^{(n)}_j := \varepsilon_n^{-1} \xi^{(n)}_j$,~$\mu_j^{(n)} := \mu_j(\xi^{(n)})$ as in~\eqref{mu_j},~$\chi^{(n)}_j := \chi( \,\cdot\, - \eta_j^{(n)})$,~$Z^{(n)}_{i j} := Z_{i, \mu_j^{(n)}, \eta_j^{(n)}}$,
$$
{\| g \|}_{\star, \sigma}^{(n)} := \sup_{y \in I_{\varepsilon_n}} \, \dfrac{|g(y)|}{ \, \varepsilon_n + \sum\limits_{j = 1}^{m} \big( 1 + \big| y - \eta_j^{(n)} \big| \big)^{- 1 - \sigma}},
$$
and
$$
L^{(n)}(\phi) := (-\Delta)^{\frac{1}{2}} \phi - W^{(n)} \phi,
$$
where, recalling decomposition~\eqref{Wasperturb}--\eqref{thetabounds},
\begin{equation} \label{Wnasperturb}
W^{(n)}(y) = \sum_{j = 1}^m \frac{2 \mu_j^{(n)}}{\big( \mu_j^{(n)} \big)^2 + \big( y - \eta_j^{(n)} \big)^2} + \theta_n(y),
\end{equation}
with
\begin{equation} \label{thetanbounds}
|\theta_n(y)| \le C \varepsilon_n \sum_{j = 1}^m \frac{1}{1 + \big| y - \eta_j^{(n)} \big|}, \quad \mbox{ for all } y \in I_{\varepsilon_n},
\end{equation}
for some constant~$C > 0$ independent of~$n$.

Let~$R > 0$ be a fixed large constant independent of~$n$ and consider the \emph{inner} norm
$$
{\| \phi \|}_{R, n} := \| \phi \|_{L^\infty(\mathscr{B}_{R, n})}, \quad \mbox{with } \, \mathscr{B}_{R, n} := \bigcup_{j = 1}^m \left( \eta_j^{(n)} - R, \eta_j^{(n)} + R \right).
$$
We claim that 
\begin{equation} \label{tailestclaim}
\| \phi_n \|_{L^\infty(\R)} \le C \Big( {\| \phi_n \|}_{R, n} + {\| g_n \|}_{\star,\sigma}^{(n)} \Big),
\end{equation}
provided~$R \ge R_0$ with~$R_0 \ge 1$ large enough and for some constant~$C > 0$, both independent of~$n$. To see this, it suffices to consider the function
$$
\overline{\phi}_n(y) := M \left( m + 1 + \left( 1 - \frac{\varepsilon_n^2 y^2}{(D+1)^2} \right)_{\! +}^{\!\frac{1}{2}} - \sum_{j = 1}^m w_\sigma \big( y - \eta_j^{(n)} \big) \right),
$$
with~$w_\sigma$ as in Lemma~\ref{wcomplem} and for some~$M > 0$ to be chosen large. Arguing as in the proof of Lemma~\ref{maxprincforLlem} and taking advantage of the fact that~$(-\Delta)^{\frac{1}{2}} \left( 1 - (D+1)^{-2}\varepsilon_n^2 y^2 \right)_{+}^{\frac{1}{2}} = \bar{c} \, \varepsilon_n$ in~$(- (D + 1) \varepsilon_n^{-1}, (D + 1) \varepsilon_n^{-1})$ for some numerical constant~$\bar{c} > 0$ (see, e.g.,~\cite[Theorem 1]{Dy2012}), one can directly check that~$\overline{\phi}_n$ satisfies
$$
L^{(n)} \, \overline{\phi}_n(y) \ge \frac{M}{C_1} \left( \varepsilon_n + \sum_{j = 1}^m \frac{1}{\big( 1 + \big| y - \eta_j^{(n)} \big| \big)^{1 + \sigma}} \right), \quad \mbox{for all } y \in I_{\varepsilon_n} \setminus \mathscr{B}_R,
$$
for some constant~$c_1 > 0$ depending only on~$\sigma$ and provided~$R \ge R_0$, with~$R_0 \ge 1$ depending only on~$\delta_0$,~$m$,~$D$, and~$\sigma$. Hence,~$L^{(n)} \overline{\phi}_n \ge g_n$ in~$I_{\varepsilon_n} \setminus \mathscr{B}_R$, provided~$M \ge C_1 {\| g_n \|}_{\star, \sigma}^{(n)}$. In addition,~$\overline{\phi}_n \ge \phi_n$ in~$\mathscr{B}_R \cup \left( \R \setminus I_{\varepsilon_n} \right)$, provided~$M \ge {\| \phi_n \|}_{R, n}$. We then choose~$M := {\| \phi_n \|}_{R, n} + M_0 {\| g_n \|}_{\star, \sigma}^{(n)}$ and, by Lemma~\ref{maxprincforLlem}, we obtain that
$$
\phi_n(y) \le \overline{\phi}_n(y) \le (m + 2) \Big( {\| \phi_n \|}_{R, n} + C_1 {\| g_n \|}_{\star, \sigma}^{(n)} \Big) \le C \Big( {\| \phi_n \|}_{R, n} + {\| g_n \|}_{\star,\sigma}^{(n)} \Big), \quad \mbox{for all } y \in \R,
$$
for some constant~$C > 0$ independent of~$n$. Since an analogous lower bound can be easily established by considering~$-\overline{\phi}_n$ instead of~$\overline{\phi}_n$, we conclude that claim~\eqref{tailestclaim} holds true.

From~\eqref{tailestclaim} and~\eqref{phin=1gn=1/n}, we get the existence of a small constant~$\kappa > 0$ independent of~$n$ such that
$$
{\| \phi_n \|}_{R, n} \ge 2 \kappa, \quad \mbox{ for every large } n.
$$
Hence, for any large~$n$, there exists a point~$p_n \in \mathscr{B}_{R, n}$ at which
$$
|\phi_n(p_n)| \ge \kappa.
$$
Up to considering a subsequence, there exists~$k \in \{ 1, \ldots, m \}$ such that~$p_n \in \big( \eta_k^{(n)} - R, \eta_k^{(n)} + R \big)$ for every large~$n$. That is,~$p_n - \eta_k^{(n)} \in (- R, R)$ for every large~$n$. Up to a further subsequence,~$p_n - \eta_k^{(n)}$ thus converges to some point~$p \in [- R, R]$ as~$n \rightarrow +\infty$. Consider the translations~$\psi_n := \phi_n(\,\cdot\, + p_n - p)$. Clearly,~$\psi_n$ satisfies
$$
\begin{dcases}
(-\Delta)^{\frac{1}{2}} \psi_n - V^{(n)} \psi_n = f_n, & \quad \mbox{ in } \mathscr{I}_{\epsilon_n}, \\
\int_\R \psi_n(y) \chi \big( y + p_n - \eta_k^{(n)} - p \big) Z_{i k}^{(n)}(y + p_n - p) \, dy = 0, & \quad \mbox{ for } i = 0, 1,\\
\vphantom{\int_\R \psi_n(y) \chi \big( y + p_n - \eta_k^{(n)} - p \big) Z_{i k}^{(n)}(y + p_n - p) \, dy = 0} \, \| \psi_n \|_{L^\infty(\R)} = 1, \quad \mbox{ and } \quad |\psi_n(p)| \ge \kappa, & \\
\end{dcases}
$$
for every large~$n$, where~$f_n := g_n (\,\cdot\, + p_n - p)$,~$V^{(n)} := W^{(n)}(\,\cdot\, + p_n - p)$, and~$\mathscr{I}_{\epsilon_n} := I_{\varepsilon_n} - p_n + p$. Note that, as~$\xi^{(n)} \in \mathcal{I}_{\delta_0}$, it holds
$$
J_n := \left( - \frac{\delta_0}{4 \varepsilon_n}, \frac{\delta_0}{4 \varepsilon_n} \right) \subset \left( \eta_k^{(n)} - p_n + p - \frac{\delta_0}{\varepsilon_n}, \eta_k^{(n)} - p_n + p + \frac{\delta_0}{\varepsilon_n} \right) \subset \mathscr{I}_{\epsilon_n},
$$
for~$n$ sufficiently large. In addition,~$\| f_n \|_{L^\infty(J_n)} \le (m + 1)/n$. Thanks to~\eqref{mu-bounded both sides} and up to taking another subsequence, we also have that~$\mu_k^{(n)} \rightarrow \mu \in (0, +\infty)$ as~$n \rightarrow +\infty$. Recalling~\eqref{Wnasperturb}--\eqref{thetanbounds}, we estimate
\begin{equation} \label{Vnlimit}
\begin{aligned}
\left| V^{(n)}(y) - \frac{2 \mu}{\mu^2 + y^2} \right| & \le \left| \frac{2 \mu_k^{(n)}}{\big( \mu_k^{(n)} \big)^2 + \big( y + p_n - \eta_k^{(n)} - p \big)^2} - \frac{2 \mu}{\mu^2 + y^2} \right| \\
& \quad + \sum_{\substack{j = 1, \ldots, m\\ j \ne k}} \frac{2 \mu_j^{(n)}}{\big( \mu_j^{(n)} \big)^2 + \big( y + p_n - \eta_j^{(n)} - p \big)^2} + |\theta_n(y + p_n - p)| \\
& \le C \left( |\mu_k^{(n)} - \mu| + |p_n - \eta_k^{(n)} - p| + \varepsilon_n \right),
\end{aligned}
\end{equation}
for every~$y \in J_n$, for some constant~$C > 0$ independent of~$n$, and for~$n$ large enough.

We now take the limit as~$n \rightarrow +\infty$. Let~$M > 0$ and observe that~$(-M-1, M+1) \subset J_n$, provided~$n$ is sufficiently large. As~$\| \psi_n \|_{L^\infty(\R)} = 1$,~$\| f_n \|_{L^\infty(-M-1, M+1)} \le (m + 1)/n$, and~$\| V^{(n)} \|_{L^\infty(-M-1, M+1)}$ is uniformly bounded in~$n$ (and~$M$), by standard regularity theory for the half-Laplacian (see, e.g.,~Proposition~\ref{interior-regularity}) we infer that
$$
\| \psi_n \|_{C^\alpha(- M, M)} \le C,
$$
for any~$\alpha \in (0, 1)$ and for some constant~$C > 0$ independent of~$n$. Applying then  \cite[4.44 Arzel\`a-Ascoli Theorem II]{folland} we conclude that, up to a subsequence,~$\psi_n$ converges to some function~$\psi$ in~$C^\alpha_\loc(\R)$ as~$n \rightarrow +\infty$. Thanks to~\eqref{Vnlimit}, one can directly see that~$\psi$ satisfies
$$
\begin{dcases}
(-\Delta)^{\frac{1}{2}} \psi - W_\mu \psi = 0, & \quad \mbox{ in } \R, \\
\int_\R \psi \chi Z_{i, \mu, 0} = 0, & \quad \mbox{ for } i = 0, 1,\\
\vphantom{\int_\R \psi(y) \chi(y - \bar{y}) Z_{i, \mu, \bar{y}}(y) \, dy = 0} \, \| \psi \|_{L^\infty(\R)} \le 1, \quad \mbox{ and } \quad |\psi(p)| \ge \kappa, & \\
\end{dcases}
$$
with~$W_\mu := W_{\mu, 0}$ as in~\eqref{Lmueta}. The fact that the equation is satisfied (in the weak sense), can be easily justified by taking~$\alpha > 1/2$, which ensures that~$\psi \in H^{\frac{1}{2}}_\loc(\R)$.

From this, we immediately arrive at a contradiction. Indeed, Proposition~\ref{prop-nondegeneracy-L} gives that~$\psi$ is a linear combination of~$Z_{0, \mu, 0}$ and~$Z_{1, \mu, 0}$, i.e.,
$$
\psi(y) = a_0 Z_{0, \mu, 0}(y) + a_1 Z_{1, \mu, 0}(y), \quad \mbox{ for all } y \in \R,
$$
for some coefficients~$a_0, a_1 \in \R$. Recall now that~$Z_{0, \mu, 0}$ and~$\chi$ are even functions, while~$Z_{1, \mu, 0}$ is odd. As a result, the orthogonality conditions readily give that~$a_0 = a_1 = 0$. That is,~$\psi = 0$ in~$\R$, in contradiction with the fact that~$|\psi(p)| \ge \kappa > 0$. The lemma is thus proved.
\end{proof}

\subsection{A priori estimates for solutions orthogonal only to the~$Z_{1 j}$'s} $ $ 
\medbreak
\noindent We now establish a worse~$L^\infty$ estimate (worse in its dependence on~$\varepsilon$), which holds however for all solutions which are orthogonal to the~$Z_{1 j}$'s alone. First, we have a technical lemma.

\begin{lemma} \label{ztilde0lem}
Let~$\delta_0, \sigma \in (0, 1)$,~$\mu_0 \ge 1$,~$R \ge 2 \mu_0$, and~$\varepsilon \in \big( 0, \frac{\delta_0}{16 R} \big]$. For any~$\mu \in \big[ \frac{1}{\mu_0}, \mu_0 \big]$, there exists an even function~$\tilde{z}_0 = \tilde{z}_0^{(\mu)} \in C^\infty_c(\R)$ satisfying
\begin{equation} \label{z0tilderequirements1}
\tilde{z}_0 = Z_{0, \mu, 0} \mbox{ in } [- R, R], \quad
0 \le \tilde{z}_0 \le \frac{1}{\mu} \mbox{ in } \R \setminus [-R, R], \quad \tilde{z}_0 = 0 \mbox{ in } \R \setminus \left( - \frac{\delta_0}{3 \varepsilon}, \frac{\delta_0}{3 \varepsilon} \right),
\end{equation}
\begin{equation} \label{z0tildeL1est}
\| \tilde{z}_0 \|_{L^1(\R)} \le C \left( R + \frac{1}{\varepsilon |{\log \varepsilon}|} \right),
\end{equation}
\begin{equation} \label{estimateforL0z0}
|L_{\mu} \tilde{z}_0(y)| \le \frac{C}{|{\log \varepsilon}|} \left( \varepsilon + \frac{R^\sigma}{1 + |y|^{1 + \sigma}} \right), \quad \mbox{ for all } y \in \R,
\end{equation}
and
\begin{equation} \label{energylowerbound}
\int_\R \tilde{z}_0(y) L_{\mu} \tilde{z}_0(y) \, dy \ge \frac{1}{C |{\log \varepsilon}|},
\end{equation}
for some constant~$C \ge 1$, depending only on~$\delta_0$,~$\sigma$,~$\mu_0$, provided that~$R$ is sufficiently large, in dependence of~$\delta_0$,~$\sigma$,~$\mu_0$ alone, and that~$\varepsilon$ is sufficiently small, also in dependence of~$R$.
\end{lemma}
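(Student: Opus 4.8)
Here is the plan. Write $L:=\tfrac{\delta_0}{3\varepsilon}$ and recall that, since $\Gamma=-2\log|\cdot|$ satisfies $\Lfrac\Gamma=2\pi\delta_0$, the function $\log|\cdot|$ is $\Lfrac$--harmonic off the origin; hence affine functions of $\log|y|$ are essentially $\Lfrac$--harmonic interpolation profiles at logarithmic scales, and this is what will generate the factor $|\log\varepsilon|^{-1}$. I would take
\[
\tilde z_0 \;:=\; \psi_\varepsilon\, Z_{0,\mu,0},
\]
where $\psi_\varepsilon\in C^\infty_c(\R)$ is an even cut--off with $0\le\psi_\varepsilon\le1$, $\psi_\varepsilon\equiv1$ on $[-R,R]$, $\psi_\varepsilon\equiv0$ on $\R\setminus(-L,L)$, $\psi_\varepsilon(y)=h(y):=\big(\log(L/|y|)\big)/\big(\log(L/R)\big)$ on the bulk $2R\le|y|\le L/2$, and such that on the two thin transition bands $R\le|y|\le2R$ and $L/2\le|y|\le L$ one has $|\psi_\varepsilon-h|$, $|y|\,|\psi_\varepsilon'|$ and $|y|^2|\psi_\varepsilon''|$ all of order $O\big(1/\log(L/R)\big)$ (possible because $h$ itself varies by only $O\big(1/\log(L/R)\big)$ across each band). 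Since $\tilde z_0=Z_{0,\mu,0}$ on $[-R,R]$ and, as $R\ge2\mu_0\ge2\mu$, one has $0\le Z_{0,\mu,0}\le\tfrac1\mu$ on $\R\setminus[-R,R]$, the requirements \eqref{z0tilderequirements1} follow at once, and evenness and $\tilde z_0\in C^\infty_c(\R)$ are clear. For \eqref{z0tildeL1est} I would split $\int_\R|\tilde z_0|=\int_{-R}^R|Z_{0,\mu,0}|+\int_{R\le|y|\le L}\psi_\varepsilon Z_{0,\mu,0}\le\tfrac{2R}{\mu}+\tfrac{2}{\mu\log(L/R)}\int_R^L\log(L/t)\,dt\le\tfrac{2R}{\mu}+\tfrac{2L}{\mu\log(L/R)}$, and then use that for $\varepsilon$ small in terms of $R$ one has $\log(L/R)=|\log\varepsilon|+O(1)\ge\tfrac12|\log\varepsilon|$, which yields exactly $\|\tilde z_0\|_{L^1(\R)}\le C\big(R+\tfrac1{\varepsilon|\log\varepsilon|}\big)$.

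For the pointwise estimate \eqref{estimateforL0z0} the key tool is the ``carr\'e du champ'' identity
\[
\Lfrac(fg)=f\,\Lfrac g+g\,\Lfrac f-B(f,g),\qquad B(f,g)(y):=\frac1\pi\int_\R\frac{\big(f(y)-f(w)\big)\big(g(y)-g(w)\big)}{(y-w)^2}\,dw,
\]
combined with $\Lfrac Z_{0,\mu,0}=W_{\mu,0}Z_{0,\mu,0}$ in $\R$ (true since $Z_{0,\mu,0}=\partial_\mu\mathcal U_{\mu,0}$ and $\Lfrac\mathcal U_{\mu,0}=e^{\mathcal U_{\mu,0}}=W_{\mu,0}$). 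Taking $f=Z_{0,\mu,0}$, $g=\psi_\varepsilon$ and using $L_\mu=\Lfrac-W_{\mu,0}$, the term $\psi_\varepsilon\big(\Lfrac Z_{0,\mu,0}-W_{\mu,0}Z_{0,\mu,0}\big)$ cancels and one is left with
\[
L_\mu\tilde z_0 \;=\; Z_{0,\mu,0}\,\Lfrac\psi_\varepsilon \;-\; B(Z_{0,\mu,0},\psi_\varepsilon).
\]
So it suffices to prove the scalar bound $|\Lfrac\psi_\varepsilon(y)|\le\frac{C}{|\log\varepsilon|}\big(\varepsilon+\frac{R}{(R+|y|)^2}\big)$ for all $y\in\R$ (and that $B$ obeys the same bound), because $|Z_{0,\mu,0}|\le\tfrac1\mu\le\mu_0$ and $\frac{R}{(R+|y|)^2}\le\frac{2R^\sigma}{1+|y|^{1+\sigma}}$. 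The scalar bound I would get by comparison with the globally defined $h$, which satisfies $\Lfrac h(y)=0$ for $y\ne0$: on the bulk $\psi_\varepsilon=h$, whence $\Lfrac\psi_\varepsilon(y)=-\tfrac1\pi\int_\R\tfrac{(\psi_\varepsilon-h)(w)}{(y-w)^2}\,dw$, and $\psi_\varepsilon-h$ is supported in $\{|w|\lesssim R\}\cup\{|w|\gtrsim L\}$ together with the two transition bands, each of which carries the factor $1/\log(L/R)$; the inner part contributes $O\big(\tfrac{R}{(R+|y|)^2\log(L/R)}\big)$ and the outer part $O\big(\tfrac{\varepsilon}{\log(L/R)}\big)$. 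For $|y|\lesssim R$ one instead writes $\Lfrac\psi_\varepsilon(y)=\tfrac1\pi\int_\R\tfrac{1-\psi_\varepsilon(w)}{(y-w)^2}\,dw$ and uses the infinite--order vanishing of $1-\psi_\varepsilon$ at $|w|=R$; for $|y|\gtrsim L$ one uses $\Lfrac\tilde z_0(y)=-\tfrac1\pi\int_\R\tfrac{\tilde z_0(w)}{(y-w)^2}\,dw$ together with \eqref{z0tildeL1est}, the worst case $|y|\approx L$ again producing precisely $\tfrac{\varepsilon}{|\log\varepsilon|}$. Finally, since the increments of $Z_{0,\mu,0}=\tfrac1\mu-W_{\mu,0}$ decay like $|w|^{-2}$, the commutator $B(Z_{0,\mu,0},\psi_\varepsilon)$ is of lower order and satisfies the same bound with room to spare; this gives \eqref{estimateforL0z0}.

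For the energy lower bound \eqref{energylowerbound} I would integrate the identity above against $\tilde z_0$ and expand $Z_{0,\mu,0}^2=\tfrac1{\mu^2}-\tfrac{2W_{\mu,0}}{\mu}+W_{\mu,0}^2$, obtaining
\[
\int_\R\tilde z_0\,L_\mu\tilde z_0=\frac1{\mu^2}\int_\R\psi_\varepsilon\,\Lfrac\psi_\varepsilon+\int_\R\Big(W_{\mu,0}^2-\tfrac{2W_{\mu,0}}{\mu}\Big)\psi_\varepsilon\,\Lfrac\psi_\varepsilon-\int_\R Z_{0,\mu,0}\,\psi_\varepsilon\,B(Z_{0,\mu,0},\psi_\varepsilon).
\]
The first term equals $\tfrac1{\mu^2}\|\psi_\varepsilon\|_{\dot H^{1/2}(\R)}^2$; since $\psi_\varepsilon$ is admissible for the relative $\tfrac12$--capacity of $[-R,R]$ in $(-L,L)$, the standard logarithmic capacity estimate gives $\|\psi_\varepsilon\|_{\dot H^{1/2}(\R)}^2\ge\CAP_{1/2}\big([-R,R];(-L,L)\big)\ge\frac{c}{\log(L/R)}\ge\frac{c'}{|\log\varepsilon|}$ for $\varepsilon$ small. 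The remaining two terms are $O\big(\tfrac1{R|\log\varepsilon|}\big)$, by the bounds of the previous paragraph together with the $|y|^{-2}$--decay of $W_{\mu,0}$ and of the increments of $Z_{0,\mu,0}$; hence, choosing $R$ large in dependence of $\mu_0$, they get absorbed and $\int_\R\tilde z_0 L_\mu\tilde z_0\ge\frac1{C|\log\varepsilon|}$.

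The step I expect to be most delicate is pinning down the exact power of $\log\varepsilon$ simultaneously in \eqref{estimateforL0z0} and \eqref{energylowerbound}: the logarithmic interpolation profile is essentially forced — a cut--off decaying more slowly than $\log$ would break the pointwise bound, while a faster one would make $\|\psi_\varepsilon\|_{\dot H^{1/2}}^2$ too small for the energy bound — and one must check with care that the two transition bands at the ends of the annulus, as well as the non--local commutator term $B(Z_{0,\mu,0},\psi_\varepsilon)$, do not spoil the $|\log\varepsilon|^{-1}$ gain in any of the regimes $|y|\lesssim R$, $R\lesssim|y|\lesssim L$, $|y|\gtrsim L$ (in particular, that the bands must be linearly thin rather than geometrically wide, so that $|\psi_\varepsilon-h|$ stays $O(1/\log(L/R))$ there).
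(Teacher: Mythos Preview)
Your proposal is correct and takes a genuinely different route from the paper. Both constructions give essentially the same function $\tilde z_0=\psi_\varepsilon\,Z_{0,\mu,0}$ with a logarithmic cutoff interpolating between $|y|\sim R$ and $|y|\sim L=\delta_0/(3\varepsilon)$, but the analysis is organized very differently.

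The paper works in two dimensions via the Caffarelli--Silvestre extension: it takes the explicit harmonic extension $Z_0(y,\lambda)$ of $Z_{0,\mu,0}$ to $\R^2_+$, multiplies it by a radial logarithmic cutoff $h(Y)=1-\log(|Y|/R)/\log(L/R)$ smoothed by two bump functions $\varrho_1,\varrho_2$, and reads $\tilde z_0$ off as the trace. Then $L_\mu\tilde z_0$ is written as the normal derivative of the difference between the cut-off extension and the true harmonic extension of $\tilde z_0$, and this difference is represented through the half-space Green function, so that \eqref{estimateforL0z0} reduces to estimating $\int_{\R^2_+}\frac{f(y',\lambda)}{(y-y')^2+\lambda^2}\,\lambda\,dy'd\lambda$ for an explicit $f$ supported on two annuli. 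For the energy bound~\eqref{energylowerbound} the paper does not invoke capacity: it performs a careful two-dimensional integration by parts that isolates a single positive term $\int_\R\tilde z_0\int_{\R^2_+}\frac{Z_0\,\nabla\varrho_1\cdot\nabla h}{(y-y')^2+\lambda^2}\,\lambda$, bounded below by an explicit constant over $|\log\varepsilon|$, and shows by hand that every other term is $O(1/(R|\log\varepsilon|))$.

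Your approach stays purely one-dimensional, using the product rule $\Lfrac(fg)=f\,\Lfrac g+g\,\Lfrac f-B(f,g)$ together with the crucial observation that $\log|\cdot|$ is $\Lfrac$-harmonic off the origin. This is cleaner conceptually and avoids the extension altogether; the price is that the transition-band and commutator estimates must be checked in several $|y|$-regimes (which you flag), and the lower bound~\eqref{energylowerbound} rests on the relative $\tfrac12$-capacity estimate $\CAP_{1/2}([-R,R];(-L,L))\ge c/\log(L/R)$. The latter is standard but you could also avoid the black box by computing $[\psi_\varepsilon]_{\dot H^{1/2}}^2$ directly on the bulk: in logarithmic coordinates $u=\log y$, $v=\log w$ one finds $\int_{2R}^{L/2}\!\!\int_{2R}^{L/2}\frac{(h(y)-h(w))^2}{(y-w)^2}\,dydw=\frac{1}{\log^2(L/R)}\int\!\!\int\frac{(u-v)^2}{4\sinh^2((u-v)/2)}\,dudv\ge\frac{c\log(L/R)}{\log^2(L/R)}$, which gives exactly the needed $c/|\log\varepsilon|$. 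For the remainder terms in your energy expansion, the identity $\int_\R g\,B(f,g)=\int_\R g^2\,\Lfrac f$ applied with $f=W_{\mu,0}$, $g=\psi_\varepsilon$, together with $\int_\R\Lfrac W_{\mu,0}=0$ and $1-\psi_\varepsilon^2=O(\log(|y|/R)/\log(L/R))$ on the bulk, cleanly produces the $O(1/(R|\log\varepsilon|))$ bound you claim.
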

\begin{proof}
Let~$z_0(y) := Z_{0, \mu, 0}(y) = \frac{1}{\mu}\frac{y^2 - \mu^2}{y^2 + \mu^2}$ for~$y \in \R$ and consider its harmonic extension~$Z_0$ to the upper half-space~$\R^2_+ = \left\{ (y, \lambda) \in \R^2 : \lambda > 0 \right\}$, i.e.,
$$
Z_0(y, \lambda) = \frac{1}{\mu} \frac{y^2 + \lambda^2 - \mu^2}{y^2 + (\lambda + \mu)^2}, \quad \mbox{ for } y \in \R, \, \lambda \ge 0.
$$
We modify~$Z_0$ by means of a logarithmic cutoff inspired by the proof of~\cite[Lemma~4.4]{D-dP-M05}. Let~$\varrho \in C^\infty(\R)$ be a function satisfying~$0 \le \varrho \le 1$ in~$\R$,~$\varrho = 1$ in~$\left( -\infty, \frac{1}{10} \right]$,~$\varrho = 0$ in~$\left[ \frac{9}{10}, +\infty \right)$,~$\frac{1}{2} \mathds{1}_{\big[ \frac{1}{4}, \frac{3}{4} \big]} \le - \varrho' \le 2$ in~$\R$, and~$|\varrho''| \le 100$ in~$\R$. Given~$R \ge 2 \mu_0$ and~$\delta_0, \varepsilon \in (0, 1)$, let
$$
\varrho_1(Y) := \varrho \left( \frac{|Y|}{R} - 1 \right), \quad \varrho_2(Y) := \varrho \left( \frac{12 \varepsilon |Y|}{\delta_0} - 3 \right), \quad \mbox{ for } Y \in \R^2.
$$
Taking~$\varepsilon \le \frac{\delta_0}{16 R}$, we define
$$
h(Y) := \begin{dcases}
1, & \quad \mbox{ for } Y \in \overline{B}_R, \\
1 - \frac{\log \left( \frac{|Y|}{R} \right)}{\log \left( \frac{\delta_0}{3 \varepsilon R} \right)}, & \quad \mbox{ for } Y \in B_{\frac{\delta_0}{3\varepsilon}} \setminus \overline{B}_R,\\
0, & \quad \mbox{ for } Y \in \R^2 \setminus B_{\frac{\delta_0}{3\varepsilon}},
\end{dcases}
$$
and
$$
\widetilde{Z}_0(Y) := \Big( {\varrho_1(Y) + \big( {1 - \varrho_1(Y)} \big) \varrho_2(Y) h(Y)} \Big) Z_0(Y), \quad \mbox{ for } Y = (y, \lambda) \in \overline{\R^2_+}.
$$
Clearly,~$\widetilde{Z}_0 \in C^\infty(\overline{\R^2_+})$ and is supported in the closure of~$B^+_{\delta_0 / (3 \varepsilon)}$--here,~$B^+_R := B_R \cap \R^2_+$. We denote by~$\tilde{z}_0$ the restriction of~$\widetilde{Z}_0$ to~$\R \equiv \partial \R^2_+$, i.e.,~$\tilde{z}_0(y) := \widetilde{Z}_0(y, 0)$ for~$y \in \R$. Clearly,~$\tilde{z}_0$ satisfies requirements~\eqref{z0tilderequirements1}. We now show that it also fulfills~\eqref{z0tildeL1est},~\eqref{estimateforL0z0},~\eqref{energylowerbound}, provided that~$R$ is sufficiently large, in dependence of~$\sigma$,~$\delta_0$,~$\mu_0$, and that~$\varepsilon$ is sufficiently small, in dependence of the said quantities and~$R$.

Understanding the half-Laplacian as a Dirichlet-to-Neumann operator, we write
$$
L_{\mu} \tilde{z}_0 = (-\Delta)^{\frac{1}{2}} \tilde{z}_0 - W_\mu \tilde{z}_0 = - \partial_\lambda \widehat{Z}_0(\cdot, 0) - W_\mu \widetilde{Z}_0(\cdot,0), \quad \mbox{ in } \R,
$$
where $\widehat{Z}_0$ is the harmonic extension of~$\tilde{z}_0$ to~$\R^2_+$. As~$L_\mu z_0 = - \partial_\lambda Z_0(\cdot, 0) - W_\mu Z_0(\cdot,0) = 0$ in~$\R$ and~$\partial_\lambda \varrho_1 = \partial_\lambda \varrho_2 = \partial_\lambda h = 0$ on~$\partial \R^2_+$, by their rotational symmetry, we have
$$
\partial_\lambda \widetilde{Z}_0 = \Big( {\varrho_1 + \big( {1 - \varrho_1} \big) \varrho_2 h} \Big) \partial_\lambda Z_0 = - W_\mu \widetilde{Z}_0, \quad \textup{ on } \partial \R^2_{+}.
$$
Hence, it follows that
\begin{equation} \label{L0asD2N}
L_\mu \tilde{z}_0(y) = \partial_\lambda \big( {\widetilde{Z}_0 - \widehat{Z}_0} \big)(y, 0), \quad \mbox{ for all } y \in \R. 
\end{equation}
Since~$h$ is harmonic in~$B_{\delta_0/(3 \varepsilon)} \setminus \overline{B}_R$ and~$Z_0$ in~$\R^2_+$, using the properties of~$\varrho_1$ and~$\varrho_2$, we find that
$$
\Delta \widetilde{Z}_0 = f, \quad \mbox{ in } \R^2_+,
$$
with
\begin{equation} \label{fdef}
\begin{aligned}
f & := 2 \Big( {\big( {1 - h} \big) \nabla \varrho_1 + h \nabla \varrho_2 + \big( {1 - \varrho_1} \big) \varrho_2 \nabla h} \Big) \cdot \nabla Z_0 \\
& \quad\,\, + \Big( { \big( {1 - h} \big) \Delta \varrho_1 - 2 \nabla \varrho_1 \cdot \nabla h + h \Delta \varrho_2 + 2 \nabla \varrho_2 \cdot \nabla h } \Big) Z_0.
\end{aligned}
\end{equation}
In particular, as~$\widehat{Z}_0$ is harmonic in~$\R^2_+$, the difference~$\widetilde{Z}_0 - \widehat{Z}_0$ solves the Dirichlet problem
$$
\left\{
\begin{aligned}
\Delta \big( {\widetilde{Z}_0 - \widehat{Z}_0} \big) & = f, \quad && \mbox{ in } \R^2_+, \\
\widetilde{Z}_0 - \widehat{Z}_0 & = 0, \quad && \mbox{ on } \partial \R^2_+,
\end{aligned}
\right.
$$
and has therefore the Green function representation
\begin{align*}
\big( {\widetilde{Z}_0 - \widehat{Z}_0} \big)(y, \lambda) & = \frac{1}{4 \pi} \int_{\R^2_+} f(y', \lambda') \log \left( \frac{(y - y')^2 + (\lambda - \lambda')^2}{(y - y')^2 + (\lambda + \lambda')^2} \right) dy' d\lambda' \\
& = - \frac{1}{4 \pi} \int_{-\lambda}^{+\infty} \left( \int_\R f(y', \lambda + t) \log \left( \frac{(y - y')^2 + (2 \lambda + t)^2}{(y - y')^2 + t^2} \right) dy' \right) dt
\end{align*}
for all~$y \in \R$ and~$\lambda > 0$. Differentiating under the integral sign and recalling~\eqref{L0asD2N}, we get
\begin{equation} \label{L0ztilde0repr}
L_\mu \tilde{z}_0(y) = - \frac{1}{\pi} \int_{\R^2_+} \frac{f(y', \lambda)}{(y - y')^2 + \lambda^2} \, \lambda \, dy' d\lambda, \quad \mbox{ for all } y \in \R.
\end{equation}

We now estimate the right-hand side of the above identity. To do it, it is useful to recall that
\begin{align*}
& \rho_1 = 1 \mbox{ in } B_R, \quad \rho_1 = 0 \mbox{ in } \R^2 \setminus B_{2 R}, \quad |D^j \! \rho_1| \le \frac{C}{R^j} \mbox{ in } \R^2, \\
& \rho_2 = 1 \mbox{ in } B_{\frac{\delta_0}{4 \varepsilon}}, \quad \rho_2 = 0 \mbox{ in } \R^2 \setminus B_{\frac{\delta_0}{3 \varepsilon}}, \quad |D^j \! \rho_2| \le C \varepsilon^j \mbox{ in } \R^2,
\end{align*}
for~$j = 1, 2$, and to observe that
$$
|h(Y) - 1| \le C \, \frac{\log \left( \frac{|Y|}{R} \right)}{|{\log \varepsilon}|} , \quad |h(Y)| \le - C \, \frac{\log \left( \frac{3 \varepsilon |Y|}{\delta_0} \right)}{|{\log \varepsilon}|} , \quad |\nabla h(Y)| \le \frac{C}{|{\log \varepsilon}| |Y|}, \quad \mbox{ for all } Y \in B_{\frac{\delta_0}{3 \varepsilon}} \setminus B_R
$$
and
$$
|Z_0(Y)| \le C, \quad |\nabla Z_0(Y)| \le \frac{C}{1 +|Y|^2}, \quad \mbox{ for all } Y \in \R^2_+,
$$
for some constant~$C \ge 1$ depending only on~$\delta_0$,~$\mu_0$ and provided~$\varepsilon$ is small enough. Integrating in polar coordinates we compute
\begin{align*}
& \left| \int_{\R^2_+} \frac{h(y', \lambda) \nabla \varrho_2(y', \lambda) \cdot \nabla Z_0(y', \lambda)}{(y - y')^2 + \lambda^2} \, \lambda \, dy' d\lambda \right| \le \frac{C \varepsilon^3}{|{\log \varepsilon}|} \int_{B_{\frac{\delta_0}{3 \varepsilon}}^+ \setminus B_{\frac{\delta_0}{4 \varepsilon}}^+} \frac{\lambda \, dy' d\lambda}{(y - y')^2 + \lambda^2} \\
& \hspace{10pt} = \frac{C \varepsilon^3}{|{\log \varepsilon}|} \int_{\frac{\delta_0}{4 \varepsilon}}^{\frac{\delta_0}{3 \varepsilon}} \left( \int_0^\pi \frac{\sin \theta}{r^2 + y^2 - 2 r y \cos \theta} \, d\theta \right) r^2 \, dr = \frac{C \varepsilon^3 |y|}{|{\log \varepsilon}| } \int_{\frac{\delta_0}{4 \varepsilon |y|}}^{\frac{\delta_0}{3 \varepsilon |y|}}  \log \left( \frac{s + 1}{|s - 1|} \right) s \, ds \\
& \hspace{10pt} \le \frac{C \varepsilon^3 |y|}{|{\log \varepsilon}| } \left\{ \mathds{1}_{\big( {0, \frac{\delta_0}{8 \varepsilon} } \big)} \! (|y|) \int_{2}^{\frac{\delta_0}{3 \varepsilon |y|}} ds + \mathds{1}_{\big[ {\frac{\delta_0}{8 \varepsilon}, \frac{\delta_0}{\varepsilon}} \big]} \! (|y|) \int_{\frac{1}{4}}^{3} \log \left( \frac{4}{|s - 1|} \right) ds + \mathds{1}_{\big( {\frac{\delta_0}{\varepsilon}, +\infty} \big)} \! (|y|) \int_{0}^{\frac{\delta_0}{3 \varepsilon |y|}} s^2 \, ds \right\} \\
& \hspace{10pt} \le \frac{C \varepsilon^2}{|{\log \varepsilon}|},
\end{align*}
for all~$y \in \R$. With the same computation, we estimate
\begin{align*}
& \left| \int_{\R^2_+} \frac{Z_0(y', \lambda) \big( {h(y', \lambda) \Delta \varrho_2(y', \lambda) + 2 \nabla \varrho_2(y', \lambda) \cdot \nabla h(y', \lambda)} \big)}{(y - y')^2 + \lambda^2} \, \lambda \, dy' d\lambda \right| \\
& \hspace{200pt} \le \frac{C \varepsilon^2}{|{\log \varepsilon}|} \int_{B_{\frac{\delta_0}{3 \varepsilon}}^+ \setminus B_{\frac{\delta_0}{4 \varepsilon}}^+} \frac{\lambda \, dy' d\lambda}{(y - y')^2 + \lambda^2} \le \frac{C \varepsilon}{|{\log \varepsilon}|},
\end{align*}
while, arguing analogously, we get
\begin{align*}
& \left| \int_{\R^2_+} \frac{(1 - \varrho_1(y', \lambda)) \varrho_2(y', \lambda) \nabla h(y', \lambda) \cdot \nabla Z_0(y', \lambda)}{(y - y')^2 + \lambda^2} \, \lambda \, dy' d\lambda \right| \\
& \hspace{10pt} \le \frac{C}{|{\log \varepsilon}|} \int_{B_{\frac{\delta_0}{3 \varepsilon}}^+ \setminus B_{R}^+} \frac{\lambda \, dy' d\lambda}{\left( (y - y')^2 + \lambda^2 \right) \left( y'^2 + \lambda^2 \right)^{\frac{3}{2}}} = \frac{C}{|{\log \varepsilon}|} \int_R^{\frac{\delta_0}{3 \varepsilon}} \left( \int_0^\pi \frac{\sin \theta}{r^2 + y^2 - 2 r y \cos \theta} \, d\theta \right) \frac{dr}{r} \\
& \hspace{10pt} = \frac{C}{|{\log \varepsilon}| \, y^2} \int_{\frac{R}{|y|}}^{\frac{\delta_0}{3 \varepsilon|y|}} \log \left( \frac{s + 1}{|s - 1|} \right) \frac{ds}{s^2} \\
& \hspace{10pt} \le \frac{C}{|{\log \varepsilon}| \, y^2} \left\{ \mathds{1}_{\big( 0, \frac{R}{2} \big)} \! (|y|) \int_{\frac{R}{|y|}}^{+\infty} \frac{ds}{s^3} + \mathds{1}_{\big[ \frac{R}{2}, +\infty \big)} \! (|y|) \int_{\frac{1}{2}}^{+\infty} \log \left( \frac{s + 1}{|s - 1|} \right) \frac{ds}{s^2} + \mathds{1}_{\big[ 2 R, +\infty \big)} \! (|y|) \int_{\frac{R}{|y|}}^{\frac{1}{2}} \frac{ds}{s} \right\} \\
& \hspace{10pt} \le \frac{C}{|{\log \varepsilon}| \, y^2} \left\{ \mathds{1}_{\big( 0, \frac{R}{2} \big)} \! (|y|) \frac{y^2}{R^2} + \mathds{1}_{\big[ \frac{R}{2}, +\infty \big)} \! (|y|) + \mathds{1}_{\big[ 2 R, +\infty \big)} \! (|y|) \log \left( \frac{|y|}{2 R} \right) \right\} \le C \frac{(R + |y|)^{-1 - \sigma}}{ R^{1 - \sigma} |{\log \varepsilon}|},
\end{align*}
for any fixed~$\sigma \in (0, 1)$ and where~$C$ may now depend on~$\sigma$ as well. As a result, recalling \eqref{fdef} and \eqref{L0ztilde0repr}, we obtain
\begin{equation} \label{mainL0z0terms}
\begin{aligned}
L_\mu \tilde{z}_0(y) & = - \frac{1}{\pi} \int_{\R^2_+} \frac{\Big\{ {2 (1 - h) \nabla \varrho_1 \cdot \nabla Z_0 + \Big( { (1 - h) \Delta \varrho_1 - 2 \nabla \varrho_1 \cdot \nabla h } \Big) Z_0} \Big\}(y', \lambda)}{(y - y')^2 + \lambda^2} \, \lambda \, dy' d\lambda \\
& \quad\, + O \! \left( \frac{\varepsilon}{|\log \varepsilon|} \right) + O \! \left( \frac{(R + |y|)^{-1 - \sigma}}{ R^{1 - \sigma} |{\log \varepsilon}|} \right).
\end{aligned}
\end{equation}

\noindent We proceed to estimate the integral above. Arguing as before, we find
\begin{equation} \label{estwithonemoreR}
\begin{aligned}
& \left| \int_{\R^2_+} \frac{\big( {1 - h(y', \lambda)} \big) \nabla \varrho_1(y', \lambda) \cdot \nabla Z_0(y', \lambda)}{(y - y')^2 + \lambda^2} \, \lambda \, dy' d\lambda \right| \le \frac{C}{R^3 |{\log \varepsilon}|} \int_{B_{2 R}^+ \setminus B_R^+} \frac{\lambda \, dy' d\lambda}{(y - y')^2 + \lambda^2} \\
& \hspace{25pt} = \frac{C}{R^3 |{\log \varepsilon}|} \int_R^{2 R} \left( \int_0^\pi \frac{\sin \theta}{r^2 + y^2 - 2 r y \cos \theta} \, d\theta \right) \! r^2 \, dr = \frac{C |y|}{R^3 |{\log \varepsilon}|} \int_{\frac{R}{|y|}}^{\frac{2 R}{|y|}} \log \left( \frac{s + 1}{|s - 1|} \right) \! s \, ds \\
& \hspace{25pt} \le \frac{C |y|}{R^3 |{\log \varepsilon}|} \left\{ \mathds{1}_{\big( 0, \frac{R}{2} \big)} \! (|y|) \frac{R}{|y|} + \mathds{1}_{\big[ \frac{R}{2}, 4 R \big]} \! (|y|) + \mathds{1}_{\big( 4 R, +\infty \big)} \! (|y|) \frac{R^3}{|y|^3} \right\} \le \frac{C (R + |y|)^{-2}}{|{\log \varepsilon}|}
\end{aligned}
\end{equation}
and
\begin{equation} \label{estwithonelessR}
\begin{aligned}
& \left| \int_{\R^2_+}  \frac{\big( {1 - h(y', \lambda)} \big) Z_0(y', \lambda) \Delta \varrho_1(y', \lambda)}{(y - y')^2 + \lambda^2} \, \lambda \, dy' d\lambda \right| + \left| \int_{\R^2_+}  \frac{Z_0(y', \lambda) \nabla \varrho_1(y', \lambda) \cdot \nabla h(y', \lambda)}{(y - y')^2 + \lambda^2} \, \lambda \, dy' d\lambda \right| \\
& \hspace{188pt} \le \frac{C}{R^2 |{\log \varepsilon}|} \int_{B_{2 R}^+ \setminus B_R^+} \frac{\lambda \, dy' d\lambda}{(y - y')^2 + \lambda^2} \le \frac{C R (R + |y|)^{-2}}{|{\log \varepsilon}|}.
\end{aligned}
\end{equation}
From these and~\eqref{mainL0z0terms}, estimate~\eqref{estimateforL0z0} follows at once.

We now address the~$L^1$ estimate~\eqref{z0tildeL1est} and the lower bound~\eqref{energylowerbound}. Observing that
\begin{equation} \label{z0ests}
|z_0(y)| \le C \quad \mbox{and} \quad |z_0'(y)| \le \frac{C |y|}{1 + y^4}, \quad \mbox{ for all } y \in \R,
\end{equation}
we have
\begin{align*}
\int_\R |\tilde{z}_0(y)| \, dy & \le C \left( \int_0^{2 R} \, dy - \frac{1}{|{\log \varepsilon}|} \int_{2 R}^{\frac{\delta_0}{3 \varepsilon}} \log \left( \frac{3 \varepsilon y}{\delta_0} \right) dy \right) \\
& \le C \left( R - \frac{1}{\varepsilon |{\log \varepsilon}|} \int_{0}^1 \log t \, dt \right) \le C \left( R + \frac{1}{\varepsilon |{\log \varepsilon}|} \right),
\end{align*}
which is actually~\eqref{z0tildeL1est}, and
\begin{equation} \label{z0againstdecay}
\begin{aligned}
\int_\R \frac{|\tilde{z}_0(y)|}{(R + |y|)^{1 + \sigma}} \, dy & \le C \left( \frac{1}{R^{1 + \sigma}} \int_0^{2 R} dy - \frac{1}{|{\log \varepsilon}|} \int_{2 R}^{\frac{\delta_0}{3 \varepsilon}} \frac{\log \left( \frac{3 \varepsilon y}{\delta_0} \right)}{y^{1 + \sigma}} \, dy \right) \\
& \le C \left( \frac{1}{R^\sigma} - \frac{\varepsilon^\sigma}{|{\log \varepsilon}|} \int_{\frac{2 R \varepsilon}{\delta}}^1 \frac{\log t}{t^{1 + \sigma}} \, dt \right) \le \frac{C}{R^\sigma},
\end{aligned}
\end{equation}
where the last inequality follows after integration by parts. Consequently, we deduce from~\eqref{mainL0z0terms} that
\begin{equation} \label{z0againstLz0}
\begin{aligned}
& \hspace{-8pt} \int_\R \tilde{z}_0(y) L_\mu \tilde{z}_0(y) \, dy \\
& = - \frac{1}{\pi} \int_\R \tilde{z}_0(y) \bigg\{ { \int_{\R^2_+} \frac{\Big\{ {2 (1 - h) \nabla \varrho_1 \cdot \nabla Z_0 + \Big( { (1 - h) \Delta \varrho_1 - 2 \nabla \varrho_1 \cdot \nabla h } \Big) Z_0} \Big\}(y', \lambda)}{(y - y')^2 + \lambda^2} \, \lambda \, dy' d\lambda} \bigg\} \, dy \\
& \quad\, + O \! \left( \frac{1}{R |{\log \varepsilon}|} \right),
\end{aligned}
\end{equation}
if~$\varepsilon$ is small enough, in dependence of~$R$ and~$\delta_0$. We plan to integrate by parts the term involving the Laplacian of~$\varrho_1$. Set~$\Psi(y, \lambda) := \log |(y, \lambda)|$ and notice that~$\partial_\lambda \Psi(y, \lambda) = \frac{\lambda}{y^2 + \lambda^2}$ for all~$(y, \lambda) \in \R^2 \setminus \{ (0, 0) \}$. For~$y \in \R$ and~$t > 0$, set~$\C_t(y) := (y - t, y + t) \times (0, +\infty)$. We then write 
\begin{equation} \label{ItJtstart}
\int_\R \tilde{z}_0(y) \left\{ \int_{\R^2_+} \frac{\big\{ {(1 - h) Z_0 \Delta \varrho_1} \big\}(y', \lambda)}{(y - y')^2 + \lambda^2} \, \lambda \, dy' d\lambda \right\} dy = \lim_{t \searrow 0} \big( {Y_t + \Lambda_t} \big),
\end{equation}
where
\begin{align*}
Y_t & := \int_\R \tilde{z}_0(y) \bigg\{ {\int_{\R^2_+ \setminus \C_t(y)} \big( {1 - h(y', \lambda)} \big) Z_0(y', \lambda) \partial^2_{y' y'} \varrho_1(y', \lambda) \partial_\lambda \Psi(y - y', \lambda) \, dy' d\lambda} \bigg\} \, dy, \\
\Lambda_t & := \int_\R \tilde{z}_0(y) \bigg\{ {\int_{\R^2_+ \setminus \C_t(y)} \big( {1 - h(y', \lambda)} \big) Z_0(y', \lambda) \partial^2_{\lambda \lambda} \varrho_1(y', \lambda) \partial_\lambda \Psi(y - y', \lambda) \, dy' d\lambda} \bigg\} \, dy.
\end{align*}
Changing variables and integrating by parts with respect to~$y$ in~$Y_t$, we get
\begin{align*}
Y_t & = \int_{\R^2_+ \setminus \C_t(0)} \left\{ \int_\R \tilde{z}_0(y) \big( {1 - h(y + \tilde{y}, \lambda)} \big) Z_0(y + \tilde{y}, \lambda) \partial^2_{y y} \varrho_1(y + \tilde{y}, \lambda) \, dy \right\} \partial_\lambda \Psi(- \tilde{y}, \lambda) \, d\tilde{y} d\lambda \\
& = - \int_{\R^2_+ \setminus \C_t(0)} \bigg\{ {\int_\R \bigg( \tilde{z}_0(y) \Big( {\big( {1 - h(y + \tilde{y}, \lambda)} \big) \partial_y Z_0(y + \tilde{y}, \lambda) - \partial_y h(y + \tilde{y}, \lambda) Z_0(y + \tilde{y}, \lambda)} \Big)} \\
& \quad\,\, {+ \tilde{z}_0'(y) \big( {1 - h(y + \tilde{y}, \lambda)} \big) Z_0(y + \tilde{y}, \lambda) \bigg) \partial_{y} \varrho_1(y + \tilde{y}, \lambda) \, dy} \bigg\} \partial_\lambda \Psi(- \tilde{y}, \lambda) \, d\tilde{y} d\lambda.
\end{align*}
On the other hand, integrating by parts with respect to~$\lambda$ in~$\Lambda_t$,
\begin{align*}
\Lambda_t & = - \int_\R \tilde{z}_0(y) \bigg\{ {\int_{\R^2_+ \setminus \C_t(y)} \bigg( {- \partial_\lambda h(y', \lambda) Z_0(y', \lambda) \partial_\lambda \Psi(y - y', \lambda)}} \\
& \quad\,\, {{+ \big( {1 - h(y', \lambda)} \big) \Big( {\partial_\lambda Z_0(y', \lambda) \partial_\lambda \Psi(y - y', \lambda) + Z_0(y', \lambda) \partial^2_{\lambda \lambda} \Psi(y - y', \lambda)} \Big)} \bigg) \partial_{\lambda} \varrho_1(y', \lambda) \, dy' d\lambda} \bigg\} \, dy.
\end{align*}
Putting these two identities together, switching back variables in~$Y_t$, and recognizing the inner products between gradients, we obtain
\begin{align*}
Y_t + \Lambda_t & = - \int_\R \tilde{z}_0(y) \bigg\{ {\int_{\R^2_+ \setminus \C_t(y)} \Big\{ {\Big( {(1 - h) \nabla Z_0 - Z_0 \nabla h} \Big) \cdot \nabla \varrho_1} \Big\}(y', \lambda) \partial_\lambda \Psi(y - y', \lambda) \, dy' d\lambda} \bigg\} \, dy \\
& \quad\, - \int_\R \tilde{z}_0'(y) \bigg\{ {\int_{\R^2_+ \setminus \C_t(y)} \Big\{ {(1 - h) Z_0 \partial_{y'} \varrho_1} \Big\}(y', \lambda) \partial_\lambda \Psi(y - y', \lambda) \, dy' d\lambda} \bigg\} \, dy \\
& \quad\, - \int_\R \tilde{z}_0(y) \bigg\{ {\int_{\R^2_+ \setminus \C_t(y)} \Big\{ {(1 - h) Z_0 \partial_{\lambda} \varrho_1} \Big\}(y', \lambda) \partial^2_{\lambda \lambda} \Psi(y - y', \lambda) \, dy' d\lambda} \bigg\} \, dy.
\end{align*}
Exchanging the order of integration, noticing that~$\partial^2_{\lambda \lambda} \Psi = - \partial^2_{y y} \Psi$ in~$\R^2 \setminus \{ (0, 0) \}$, and integrating by parts with respect to~$y$, we rewrite the integral on the third line as
\begin{align*}
& \int_\R \tilde{z}_0(y) \bigg\{ {\int_{\R^2_+ \setminus \C_t(y)} \Big\{ {(1 - h) Z_0 \partial_{\lambda} \varrho_1} \Big\}(y', \lambda) \partial^2_{\lambda \lambda} \Psi(y - y', \lambda) \, dy' d\lambda} \bigg\} \, dy \\
& \hspace{50pt} = - \int_{\R^2_+} \Big\{ {(1 - h) Z_0 \partial_{\lambda} \varrho_1} \Big\}(y', \lambda) \bigg\{ {\int_{\R \setminus (y' - t, y' + t)} \tilde{z}_0(y) \partial^2_{y y} \Psi(y - y', \lambda) \, dy} \bigg\} \, dy' d\lambda \\
& \hspace{50pt} = \int_{\R^2_+} \Big\{ {(1 - h) Z_0 \partial_{\lambda} \varrho_1} \Big\}(y', \lambda) \bigg\{ {\int_{\R \setminus (y' - t, y' + t)} \tilde{z}_0'(y) \partial_{y} \Psi(y - y', \lambda) \, dy} \bigg\} \, dy' d\lambda \\
& \hspace{50pt} \quad + \int_{\R^2_+} \Big\{ {(1 - h) Z_0 \partial_{\lambda} \varrho_1} \Big\}(y', \lambda) \Big( {\tilde{z}_0(y' + t) + \tilde{z}_0(y' - t)} \Big) \partial_{y} \Psi(t, \lambda) \, dy' d\lambda,
\end{align*}
where for the last identity we also took advantage of the oddness of~$\partial_y \Psi(\,\cdot\,, \lambda)$. Observe that, by dominated convergence, the last integral goes to zero as~$t \searrow 0$. Indeed, its integrand converges to zero a.e.~in~$\R^2_+$ and, recalling the definition of~$\varrho_1$, it satisfies the uniform-in-$t$ bound
\begin{align*}
& \left| \Big\{ {(1 - h) Z_0 \partial_{\lambda} \varrho_1} \Big\}(y', \lambda) \Big( {\tilde{z}_0(y' + t) + \tilde{z}_0(y' - t)} \Big) \partial_{y} \Psi(t, \lambda) \right| \\
& \hspace{60pt} \le \frac{C}{R^2 |{\log \varepsilon}|} \frac{t \lambda}{t^2 + \lambda^2} \, \mathds{1}_{B^+_{2 R} \setminus B_{R}^+}(y', \lambda) \le \frac{C}{R^2 |{\log \varepsilon}|} \, \mathds{1}_{B^+_{2 R} \setminus B_{R}^+}(y', \lambda) \in L^1(\R^2_+),
\end{align*}
thanks to Young's inequality. As a result,~\eqref{ItJtstart} becomes
\begin{align*}
& \int_\R \tilde{z}_0(y) \left\{ \int_{\R^2_+} \frac{\big\{ {(1 - h) Z_0 \Delta \varrho_1} \big\}(y', \lambda)}{(y - y')^2 + \lambda^2} \, \lambda \, dy' d\lambda \right\} dy \\
& \hspace{30pt} = - \int_\R \tilde{z}_0(y) \bigg\{ {\int_{\R^2_+} \Big\{ {(1 - h) \nabla Z_0 \cdot \nabla \varrho_1 - Z_0 \nabla h \cdot \nabla \varrho_1} \Big\}(y', \lambda) \partial_\lambda \Psi(y - y', \lambda) \, dy' d\lambda} \bigg\} \, dy \\
& \hspace{30pt} \quad\, - \lim_{t \searrow 0} \int_\R \tilde{z}_0'(y) \bigg\{ {\int_{\R^2_+ \setminus \C_t(y)} \big( {1 - h(y', \lambda)} \big) Z_0(y', \lambda) \Big( {\partial_{y'} \varrho_1(y', \lambda) \partial_\lambda \Psi(y - y', \lambda)}} \\
& \hspace{30pt} \quad\,\, {{+ \partial_\lambda \varrho_1(y', \lambda) \partial_{y} \Psi(y - y', \lambda)} \Big) \, dy' d\lambda} \bigg\} \, dy.
\end{align*}
Recalling once again the definition of~$\varrho_1$, we compute
\begin{align*}
& \partial_{y'} \varrho_1(y', \lambda) \partial_\lambda \Psi(y - y', \lambda) + \partial_\lambda \varrho_1(y', \lambda) \partial_{y} \Psi(y - y', \lambda) \\
& \hspace{80pt} = \frac{1}{|(y', \lambda)| R} \, \frac{1}{(y - y')^2 + \lambda^2} \, \varrho' \! \left( \frac{|(y', \lambda)|}{R} - 1 \right) \left( y' \lambda + \lambda (y - y') \right) \\
& \hspace{80pt} = \frac{1}{|(y', \lambda)| R} \, \frac{\lambda y}{(y - y')^2 + \lambda^2} \, \varrho' \! \left( \frac{|(y', \lambda)|}{R} - 1 \right),
\end{align*}
and thus, also expressing~$\partial_\lambda \Psi$ explicitly,
\begin{equation} \label{Laplrho1termgood}
\begin{aligned}
& \int_\R \tilde{z}_0(y) \left\{ \int_{\R^2_+} \frac{\big\{ {(1 - h) Z_0 \Delta \varrho_1} \big\}(y', \lambda)}{(y - y')^2 + \lambda^2} \, \lambda \, dy' d\lambda \right\} dy \\
& \hspace{30pt} = - \int_\R \tilde{z}_0(y) \bigg\{ {\int_{\R^2_+} \frac{\Big\{ {(1 - h) \nabla Z_0 \cdot \nabla \varrho_1 - Z_0 \nabla h \cdot \nabla \varrho_1} \Big\}(y', \lambda)}{(y - y')^2 + \lambda^2} \, \lambda \, dy' d\lambda} \bigg\} \, dy \\
& \hspace{30pt} \quad\, - \frac{1}{R} \int_\R y \tilde{z}_0'(y) \bigg\{ {\int_{\R^2_+} \frac{\big( {1 - h(y', \lambda)} \big) Z_0(y', \lambda)}{|(y', \lambda)| \big( {(y - y')^2 + \lambda^2} \big)} \, \varrho' \! \left( \frac{|(y', \lambda)|}{R} - 1 \right) \lambda \, dy' d\lambda} \bigg\} \, dy.
\end{aligned}
\end{equation}
We now estimate this last term. Recalling the rightmost inequality in~\eqref{estwithonelessR}, for the inner integral we have the bound
\begin{align*}
\left| \int_{\R^2_+} \frac{\big( {1 - h(y', \lambda)} \big) Z_0(y', \lambda)}{|(y', \lambda)| \big( {(y - y')^2 + \lambda^2} \big)} \, \varrho' \! \left( \frac{|(y', \lambda)|}{R} - 1 \right) \lambda \, dy' d\lambda \right| & \le \frac{C}{R |{\log \varepsilon}|} \int_{B_{2 R}^+ \setminus B_R^+} \frac{\lambda \, dy' d\lambda}{(y - y')^2 + \lambda^2} \\
& \le \frac{C R^2 (R + |y|)^{-2}}{|{\log \varepsilon}|}.
\end{align*}
Since it also holds
\begin{align*}
\tilde{z}_0'(y) = z_0'(y) \Big\{ {\varrho_1 + (1 - \varrho_1) \varrho_2 h} \Big\}(y, 0) + z_0(y) \Big\{ {(1 - h) \partial_y \varrho_1 + h \partial_y \varrho_2 + (1 - \varrho_1) \varrho_2 \partial_y h} \Big\}(y, 0),
\end{align*}
using~\eqref{z0ests} we compute
\begin{align*}
& \left| \frac{1}{R} \int_\R y \tilde{z}_0'(y) \bigg\{ {\int_{\R^2_+} \frac{\big( {1 - h(y', \lambda)} \big) Z_0(y', \lambda)}{|(y', \lambda)| \big( {(y - y')^2 + \lambda^2} \big)} \, \varrho' \! \left( \frac{|(y', \lambda)|}{R} - 1 \right) \lambda \, dy' d\lambda} \bigg\} \, dy \right| \\
& \hspace{140pt} \le \frac{C R}{|{\log \varepsilon}|} \Bigg( {\frac{1}{R^2} \int_0^{2 R} \frac{y^2}{1 + y^4} \, dy - \frac{1}{|{\log \varepsilon}|} \int_{R}^{\frac{\delta_0}{3 \varepsilon}} \frac{\log \left(\frac{3 \varepsilon y}{\delta_0} \right)}{y^4} \, dy} \\
& \hspace{140pt} \quad\, {+ \frac{1}{R^2 |{\log \varepsilon}|} \int_R^{2 R} dy + \frac{\varepsilon}{|{\log \varepsilon}|} \int_{\frac{\delta_0}{4 \varepsilon}}^{\frac{\delta_0}{3 \varepsilon}} \frac{dy}{y} + \frac{1}{|{\log \varepsilon}|} \int_R^{\frac{\delta_0}{3 \varepsilon}} \frac{dy}{y^2}} \Bigg) \\
& \hspace{140pt} \le \frac{C}{|{\log \varepsilon}|} \left( \frac{1}{R} + \frac{1}{|{\log \varepsilon}|} + \frac{\varepsilon R}{|{\log \varepsilon}|} \right) \le \frac{C}{R |{\log \varepsilon}|}.
\end{align*}
Putting together~\eqref{z0againstLz0},~\eqref{Laplrho1termgood}, and this, we arrive at
\begin{align*}
\int_\R \tilde{z}_0(y) L_\mu \tilde{z}_0(y) \, dy & = - \frac{1}{\pi} \int_\R \tilde{z}_0(y) \bigg\{ { \int_{\R^2_+} \frac{\Big\{ {(1 - h) \nabla \varrho_1 \cdot \nabla Z_0 - Z_0 \nabla \varrho_1 \cdot \nabla h} \Big\}(y', \lambda)}{(y - y')^2 + \lambda^2} \, \lambda \, dy' d\lambda} \bigg\} \, dy \\
& \quad\, + O \! \left( \frac{1}{R |{\log \varepsilon}|} \right).
\end{align*}
Recalling~\eqref{estwithonemoreR} and~\eqref{z0againstdecay}, we see that the integral involving the term~$(1 - h) \nabla \varrho_1 \cdot \nabla Z_0$ is also of order~$(R |{\log \varepsilon}|)^{-1}$. Thus,
$$
\int_\R \tilde{z}_0(y) L_\mu \tilde{z}_0(y) \, dy = \frac{1}{\pi} \int_\R \tilde{z}_0(y) \bigg\{ { \int_{\R^2_+} \frac{Z_0(y', \lambda) \nabla \varrho_1(y', \lambda) \cdot \nabla h(y', \lambda)}{(y - y')^2 + \lambda^2} \, \lambda \, dy' d\lambda} \bigg\} \, dy + O \! \left( \frac{1}{R |{\log \varepsilon}|} \right).
$$
Note now that, by~\eqref{estwithonelessR},
$$
\left| \int_{-2 \mu}^{2 \mu} \tilde{z}_0(y) \bigg\{ { \int_{\R^2_+} \frac{Z_0(y', \lambda) \nabla \varrho_1(y', \lambda) \cdot \nabla h(y', \lambda)}{(y - y')^2 + \lambda^2} \, \lambda \, dy' d\lambda} \bigg\} \, dy \right| \le \frac{C}{R |{\log \varepsilon}|}.
$$
On the other hand, observe that~$\tilde{z}_0 \ge \frac{1}{4 \mu}$ on $(-2R,2R) \setminus (-2 \mu, 2 \mu)$ (if~$\epsilon$ is small enough). In addition, as both~$\varrho_1$ and~$h$ are radially non-increasing, we have that~$\nabla \varrho_1 \cdot \nabla h = |\nabla \varrho_1| |\nabla h| \ge 0$ in~$\R^2_+$. As~$Z_0 \ge \frac{1}{2 \mu}$ on the support of~$\nabla \varrho_1$ (if~$R$ is large enough) and~$|\nabla \varrho_1| \ge \frac{1}{2 R}$ in~$B_{\frac{7R}{4}} \setminus B_{\frac{5 R}{4}}$ (thanks to our hypotheses on~$\varrho$), we have
\begin{align*}
& \int_{\R \setminus (- 2 \mu, 2 \mu)} \tilde{z}_0(y) \bigg\{ { \int_{\R^2_+} \frac{Z_0(y', \lambda) \nabla \varrho_1(y', \lambda) \cdot \nabla h(y', \lambda)}{(y - y')^2 + \lambda^2} \, \lambda \, dy' d\lambda} \bigg\} \, dy \\
& \hspace{100pt} \ge \frac{1}{2^4 \mu^2 R \log \left( \frac{\delta_0}{3 \varepsilon R} \right)} \int_R^{2 R} \bigg\{ { \int_{B^+_{\frac{7R}{4}} \setminus B_{\frac{5R}{4}}^+} \frac{\lambda \, dy' d\lambda}{|(y', \lambda)||(y', \lambda) - (y, 0)|^2}} \bigg\} \, dy \\
& \hspace{100pt} \ge \frac{1}{2^8 \mu_0^2 R^3 \log \left( \frac{\delta_0}{3 \varepsilon R} \right)} \int_{\frac{5 R}{4}}^{\frac{7 R}{4}} \left( \int_0^\pi \sin \theta \, d\theta \right) r^2 dr \ge \frac{c}{|{\log \varepsilon}|},
\end{align*}
for some small constant~$c > 0$ depending only on~$\mu_0$. Combining the last three formulas, we easily conclude that~\eqref{energylowerbound} holds true, provided we take~$R$ sufficiently large in dependence of~$\sigma$,~$\delta$, and~$\mu_0$.
\end{proof}

Thanks to the previous result, we may now prove the desired \textit{a priori} estimate for solutions to~\eqref{linear-eq} which satisfy the orthogonality conditions~\eqref{phiorthtoZ1j}.

\begin{lemma} \label{2ndLinftyestlem}
Let $\delta_0,\, \sigma \in (0,1)$. There exist $\varepsilon_1 \in (0, 1)$ and~$C_1 > 0$ for which the following holds true:  if $\xi \in \mathcal{I}_{\delta_0}$, $\epsilon \in (0,\varepsilon_1]$, $g \in L^\infty_{\star,\sigma}(I_\varepsilon)$ and~$\phi$ is a bounded weak solution to~\eqref{linear-eq} satisfying~\eqref{phiorthtoZ1j}, then
\begin{equation} \label{logepsestforphi}
\| \phi \|_{L^\infty(\R)} \le C_1 \big( {\log \frac{1}{\epsilon}} \big) \| g \|_{\star,\,\sigma}.
\end{equation}
\end{lemma}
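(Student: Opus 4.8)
The strategy is to subtract from $\phi$ a suitable linear combination of translated/rescaled copies of the modified function $\tilde{z}_0$ from Lemma~\ref{ztilde0lem}, so as to land in the framework of Lemma~\ref{1stLinftyestlem}, and then to recover the size of the coefficients from the equation, exploiting the positivity of the ``energy'' $\int\tilde{z}_0 L_\mu\tilde{z}_0$ granted by~\eqref{energylowerbound}. Fix $\mu_0:=\max\{C_\star,1/c_\star,1\}$, so that $\mu_j\in[1/\mu_0,\mu_0]$ by~\eqref{mu-bounded both sides}, and fix $R$ large enough as in Lemma~\ref{ztilde0lem} and with $R\ge\bar{R}+1$; set $\tilde{z}_{0 j}(y):=\tilde{z}_0^{(\mu_j)}(y-\eta_j)$. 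For $\varepsilon$ small the intervals $\supp\tilde{z}_{0 j}$ are pairwise disjoint and disjoint from each $\supp\chi_k$, $k\ne j$. Given $\phi$ solving~\eqref{linear-eq} and satisfying~\eqref{phiorthtoZ1j}, define
$$
\phi^\perp:=\phi-\sum_{j=1}^m d_j\,\tilde{z}_{0 j},\qquad d_j:=\frac{1}{\alpha_j}\int_\R\phi\,\chi_j Z_{0 j},\qquad \alpha_j:=\int_\R\chi_j Z_{0 j}^2.
$$
Since $R\ge\bar{R}+1$ we have $\tilde{z}_{0 j}=Z_{0 j}$ on $\supp\chi_j$, whence $\alpha_j\in[c,C]$ uniformly and $\int_\R\phi^\perp\chi_j Z_{0 j}=0$; moreover $\int_\R\phi^\perp\chi_j Z_{1 j}=0$, because $Z_{1 j}$ is odd while $\tilde{z}_{0 j},\chi_j$ are even about $\eta_j$, together with the support disjointness and~\eqref{phiorthtoZ1j}. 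Finally $\phi^\perp$ vanishes outside $I_\varepsilon$ (as $\supp\tilde{z}_{0 j}\subset I_\varepsilon$ by $\dist(\xi_j,\R\setminus I)\ge\delta_0$) and $L\phi^\perp=g-\sum_j d_j L\tilde{z}_{0 j}$.

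Writing $L\tilde{z}_{0 j}=L_{\mu_j,\eta_j}\tilde{z}_{0 j}+(W_{\mu_j,\eta_j}-W)\tilde{z}_{0 j}$, using~\eqref{estimateforL0z0} translated to $\eta_j$, and observing that on $\supp\tilde{z}_{0 j}$ the potential $W-W_{\mu_j,\eta_j}=\sum_{i\ne j}W_{\mu_i,\eta_i}+\theta$ is $O(\varepsilon)$ with suitable decay (recall~\eqref{Wasperturb}--\eqref{thetabounds} and~\eqref{mu-bounded both sides}), one obtains both $\|L\tilde{z}_{0 j}\|_{\star,\sigma}\le C/|{\log\varepsilon}|$ and---following the proof of Lemma~\ref{ztilde0lem}, e.g.\ from representation~\eqref{L0ztilde0repr} via Fubini---$\|L\tilde{z}_{0 j}\|_{L^1(\R)}\le C/|{\log\varepsilon}|$. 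Applying Lemma~\ref{1stLinftyestlem} to $\phi^\perp$ then yields $\|\phi^\perp\|_{L^\infty(\R)}\le C_0\|g\|_{\star,\sigma}+\tfrac{C}{|{\log\varepsilon}|}\sum_{j}|d_j|$, and hence, using $\|\tilde{z}_{0 j}\|_{L^\infty(\R)}\le C$,
$$
\|\phi\|_{L^\infty(\R)}\le C_0\|g\|_{\star,\sigma}+C\sum_{j=1}^m|d_j|.
$$
It remains to bound $\sum_j|d_j|$ by $C|{\log\varepsilon}|\,\|g\|_{\star,\sigma}$.

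To this end I test $L\phi=g$ against the smooth compactly supported function $\tilde{z}_{0 k}$ and use the symmetry of the bilinear form of $L$: substituting $\phi=\phi^\perp+\sum_l d_l\tilde{z}_{0 l}$ gives, for each $k$,
$$
\sum_{l=1}^m M_{k l}\,d_l=\int_\R g\,\tilde{z}_{0 k}-\int_\R\phi^\perp\,L\tilde{z}_{0 k},\qquad M_{k l}:=\int_\R\tilde{z}_{0 l}\,L\tilde{z}_{0 k}.
$$
The diagonal entries satisfy $M_{k k}\ge c/|{\log\varepsilon}|$---this is~\eqref{energylowerbound}, up to an error $O(\varepsilon\log\tfrac1\varepsilon)$ from replacing $W$ by $W_{\mu_k,\eta_k}$, controlled via~\eqref{z0tildeL1est}---whereas for $k\ne l$ the disjointness of supports removes the $W$-term, so $M_{k l}=\int\tilde{z}_{0 l}\,\Lfrac\tilde{z}_{0 k}$, and since $|\Lfrac\tilde{z}_{0 k}(y)|\le\pi^{-1}\|\tilde{z}_{0 k}\|_{L^1}\dist(y,\supp\tilde{z}_{0 k})^{-2}\le C\varepsilon/|{\log\varepsilon}|$ on $\supp\tilde{z}_{0 l}$ (by~\eqref{z0tildeL1est} and $\dist(\supp\tilde{z}_{0 l},\supp\tilde{z}_{0 k})\ge\delta_0/(3\varepsilon)$), one gets $|M_{k l}|\le C/|{\log\varepsilon}|^2$. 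For the right-hand side, $|\int g\tilde{z}_{0 k}|\le\|g\|_{\star,\sigma}\int|\tilde{z}_{0 k}|\big(\varepsilon+\sum_j(1+|y-\eta_j|)^{-1-\sigma}\big)\le C\|g\|_{\star,\sigma}$ (again using~\eqref{z0tildeL1est} for the $\varepsilon$-term), while $|\int\phi^\perp L\tilde{z}_{0 k}|\le\|\phi^\perp\|_{L^\infty}\|L\tilde{z}_{0 k}\|_{L^1}\le\tfrac{C}{|{\log\varepsilon}|}\|\phi^\perp\|_{L^\infty}\le\tfrac{CC_0}{|{\log\varepsilon}|}\|g\|_{\star,\sigma}+\tfrac{C}{|{\log\varepsilon}|^2}\sum_l|d_l|$ by the bound on $\|\phi^\perp\|_{L^\infty}$ above. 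Hence $\tfrac{c}{|{\log\varepsilon}|}|d_k|\le|M_{k k}d_k|\le\big|\sum_l M_{k l}d_l\big|+\sum_{l\ne k}|M_{k l}||d_l|\le C\|g\|_{\star,\sigma}+\tfrac{C}{|{\log\varepsilon}|^2}\sum_l|d_l|$; summing over $k$ and absorbing the last term for $\varepsilon$ small gives $\sum_k|d_k|\le C|{\log\varepsilon}|\,\|g\|_{\star,\sigma}$.

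Inserting this into the displayed bound for $\|\phi\|_{L^\infty(\R)}$ yields $\|\phi\|_{L^\infty(\R)}\le C_1|{\log\varepsilon}|\,\|g\|_{\star,\sigma}$, which is~\eqref{logepsestforphi}; the threshold $\varepsilon_1$ is then fixed by the smallness conditions used above (support disjointness, validity of Lemma~\ref{ztilde0lem}, the final absorption). The main obstacle, in my view, is the careful control of $\tilde{z}_{0 k}$ away from a bounded neighbourhood of $\eta_k$---concretely, establishing $\|L\tilde{z}_{0 k}\|_{L^1(\R)}\le C/|{\log\varepsilon}|$ and $|M_{k l}|\le C/|{\log\varepsilon}|^2$ for $k\ne l$---since the slowly decaying, non-integrable behaviour of $Z_{0,\mu,0}$ at infinity must be tamed precisely by the logarithmic cutoff quantified in Lemma~\ref{ztilde0lem}; the crucial positivity $\int\tilde{z}_0 L_\mu\tilde{z}_0\gtrsim 1/|{\log\varepsilon}|$ is already in hand.
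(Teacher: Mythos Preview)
Your proof is correct and follows essentially the same route as the paper's: modify $\phi$ by a combination of the $\tilde{z}_{0j}$'s to restore orthogonality to the $Z_{0j}$'s, apply Lemma~\ref{1stLinftyestlem}, then test against $\tilde{z}_{0k}$ and use the energy lower bound~\eqref{energylowerbound} together with the off-diagonal smallness $|M_{kl}|\le C/|\log\varepsilon|^2$ to close the estimate on the $d_k$'s. The one technical deviation is that you invoke an $L^1$ bound $\|L\tilde z_{0k}\|_{L^1(\R)}\le C/|\log\varepsilon|$ (which is valid, via the representation~\eqref{L0ztilde0repr} and Fubini, but not stated in the paper); the paper avoids this extra step by instead bounding $\big|\int\phi^\perp L\tilde z_{0k}\big|\le \|\phi^\perp\|_{L^\infty}\|L\tilde z_{0k}\|_{\star,\sigma}\int_{I_\varepsilon}\big(\varepsilon+\sum_j(1+|y-\eta_j|)^{-1-\sigma}\big)dy\le \tfrac{C}{|\log\varepsilon|}\|\phi^\perp\|_{L^\infty}$, and similarly handles the off-diagonal term via the pointwise bound~\eqref{otheroestfortilde0j} (which is exactly your $|\Lfrac\tilde z_{0k}|\le C\varepsilon/|\log\varepsilon|$ away from $\supp\tilde z_{0k}$) combined with~\eqref{z0tildeL1est}.
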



\begin{proof}
Let~$R \ge \bar{R} + 1$. For~$j = 1, \ldots, m$, let~$\tilde{z}_{0 j} := \tilde{z}_0^{(\mu_j)}(\,\cdot\, - \eta_j)$, with~$\tilde{z}_0^{(\mu_j)}$ being the function constructed in Lemma~\ref{ztilde0lem}. Note that this function exists, provided that~$R$ is large enough, in dependence of~$\delta_0$,~$\sigma$--recall~\eqref{mu-bounded both sides}--and that~$\varepsilon$ is small enough, in dependence of~$\delta_0$,~$\sigma$, and~$R$. We know that~$\tilde{z}_{0 j} \in C^\infty_c(\R)$ is symmetric with respect to~$\eta_j$ and that it satisfies~$\tilde{z}_{0 j} = Z_{0 j}$ in~$[\eta_j - R, \eta_j + R]$,~$0 \le \tilde{z}_{0 j} \le C$ in~$\R \setminus [\eta_j - R, \eta_j + R]$, and~$\tilde{z}_{0 j} = 0$ in~$\R \setminus \big( {\eta_j - \frac{\delta_0}{3 \varepsilon}, \eta_j + \frac{\delta_0}{3 \varepsilon}} \big)$, for some constant~$C \ge 1$ depending only on~$m$,~$D$,~$\delta_0$, $\sup_{I} \kappa$, and $\inf_{I} \kappa$. Furthermore, observe that
$$
L \tilde{z}_{0 j}(y) = L_{\mu_j, \eta_j} \tilde{z}_{0 j}(y) + \left( \frac{2 \mu_j}{\mu_j^2 + (y - \eta_j)^2} - W(y) \right) \! \tilde{z}_{0 j}(y), \quad \mbox{ for all } y \in \R.
$$
Recalling~\eqref{Wasperturb}--\eqref{thetabounds} and the fact that~$\tilde{z}_{0 j}$ is supported in~$\big( {\eta_j - \frac{\delta_0}{3 \varepsilon}, \eta_j + \frac{\delta_0}{3 \varepsilon}} \big)$, we see that
\begin{align*}
\left| \left( \frac{2 \mu_j}{\mu_j^2 + (y - \eta_j)^2} - W(y) \right) \! \tilde{z}_{0 j}(y) \right| & \le C \mathds{1}_{\big( {\eta_j - \frac{\delta_0}{3 \varepsilon}, \eta_j + \frac{\delta_0}{3 \varepsilon}} \big)} \! (y) \left( \sum_{k \ne j}\frac{2 \mu_k}{\mu_k^2 + (y - \eta_k)^2} + \varepsilon \sum_{k = 1}^m \frac{1}{1 + |y - \eta_k|} \right) \\
& \le C \varepsilon^{1 - \sigma} \mathds{1}_{\big( {\eta_j - \frac{\delta_0}{3 \varepsilon}, \eta_j + \frac{\delta_0}{3 \varepsilon}} \big)} \! (y) \left( \varepsilon + \sum_{k = 1}^m \frac{1}{\big( {1 + |y - \eta_k|} \big)^{1 + \sigma}} \right),
\end{align*}
with~$C$ now depending also on~$\sigma$. As~$L_{\mu_j, \eta_j} \tilde{z}_{0 j} = \big( {L_{\mu_j} \tilde{z}_0^{(\mu_j)}} \big)(\,\cdot\, - \eta_j)$, we deduce from~\eqref{z0tildeL1est}--\eqref{energylowerbound} that
\begin{equation} \label{estsfortilde0j}
\| \tilde{z}_{0 j} \|_{L^1(\R)} \le \frac{C}{\varepsilon |{\log \varepsilon}|}, \quad \| L \tilde{z}_{0 j} \|_{\star,\,\sigma} \le \frac{C}{|{\log \varepsilon}|}, \quad \mbox{and} \quad \int_\R \tilde{z}_{0 j}(y) L \tilde{z}_{0 j}(y) \, dy \ge \frac{1}{C |{\log \varepsilon}|},
\end{equation}
if~$\varepsilon$ is small enough. Notice, in addition, that~\eqref{estimateforL0z0} also gives
\begin{equation} \label{otheroestfortilde0j}
|L \tilde{z}_{0 j}(y)| \le \frac{C \varepsilon}{|{\log \varepsilon}|}, \quad \mbox{ for all } y \in \R \setminus \left( \eta_j - \frac{\delta_0}{3 \varepsilon}, \eta_j + \frac{\delta_0}{3 \varepsilon} \right).
\end{equation}

Let now~$\phi$ be a bounded solution of~\eqref{linear-eq} satisfying the orthogonality conditions~\eqref{phiorthtoZ1j}. We modify it to obtain a new function orthogonal to both the~$Z_{0 j}$'s and the~$Z_{1 j}$'s. Let
$$
\widetilde{\phi} := \phi + \sum_{k = 1}^m d_k \tilde{z}_{0 k},
$$
for some~$d_1, \ldots, d_m \in \R$ to be determined. Note that~$\widetilde{\phi}$ is always orthogonal to the~$Z_{1 j}$'s, thanks to~\eqref{phiorthtoZ1j} and the properties of~$\tilde{z}_{0 k}$, namely, its symmetry with respect to~$\eta_k$ and the fact that it is supported in~$\big( {\eta_k - \frac{\delta_0}{3 \varepsilon}, \eta_k + \frac{\delta_0}{3 \varepsilon}} \big)$. The choice
$$
d_j := - \dfrac{\displaystyle \int_{\R} \phi \chi_j Z_{0 j}}{\displaystyle \int_\R \chi_j Z_{0 j}^2}, \quad \mbox{ for } j = 1, \ldots, m,
$$
yields that~$\widetilde{\phi}$ is also orthogonal to the~$Z_{0 j}$'s. Here, we used that~$\tilde{z}_{0 j} = Z_{0 j}$ on the support of~$\chi_j$ (as we took~$R \ge \bar{R} + 1$) and that~$\tilde{z}_{0 k}$ and~$\chi_j$ have disjoint supports if~$k \ne j$. In addition to this,~$\widetilde{\phi} = 0$ outside of~$I_\varepsilon$, since~$\xi \in \mathcal{I}_{\delta_0}$. Hence, it satisfies
$$
\left\{
\begin{aligned}
L \widetilde{\phi} & = g + \sum_{k = 1}^m d_k L \tilde{z}_{0 k}, \quad && \mbox{ in } I_\varepsilon, \\
\widetilde{\phi} & = 0, \quad && \mbox{ in } \R \setminus I_\varepsilon, \\
\int_\R \widetilde{\phi} \chi_j Z_{i j} & = 0, \quad && \mbox{ for all } i = 0, 1 \mbox{ and } j = 1, \ldots, m,
\end{aligned}
\right.
$$
and we may apply Lemma~\ref{1stLinftyestlem} to deduce that
\begin{equation} \label{Linftyestforphitilde}
\big\| {\widetilde{\phi} \,} \big\|_{L^\infty(\R)} \le C_0 \left( \| g \|_{\star,\,\sigma} + \sum_{k = 1}^m |d_k| \, \big\| {L \tilde{z}_{0 k}} \big\|_{\star,\,\sigma} \right),
\end{equation}
provided~$\varepsilon$ is sufficiently small. We now test the equation for~$\widetilde{\phi}$ against~$\tilde{z}_{0 j}$ obtaining that
\begin{equation} \label{testingagainstZtilde}
\int_{\R} \widetilde{\phi} \, L \tilde{z}_{0 j} = \int_\R g \tilde{z}_{0 j} + \sum_{k = 1}^m d_k \int_\R \tilde{z}_{0 j} L \tilde{z}_{0 k}, \quad \mbox{ for all } j = 1, \ldots, m.
\end{equation}
Taking advantage of~\eqref{estsfortilde0j}, we estimate
\begin{align*}
\left| \int_{\R} \widetilde{\phi} \, L \tilde{z}_{0 j} \right| & \le \big\| {\widetilde{\phi} \,} \big\|_{L^\infty(\R)} \, \big\| {L \tilde{z}_{0 j}} \big\|_{\star,\,\sigma} \int_{- \frac{D}{\varepsilon}}^{\frac{D}{\varepsilon}} \Bigg\{ \varepsilon + \sum\limits_{k = 1}^{m} \frac{1}{(1 + |y - \eta_k|)^{1 + \sigma}} \Bigg\} \, dy \le \frac{C}{|{\log \varepsilon}|} \, \big\| {\widetilde{\phi} \,} \big\|_{L^\infty(\R)}, \\
\left| \int_{\R} g \tilde{z}_{0 j} \right| & \le \| g \|_{\star,\,\sigma} \big\| {\tilde{z}_{0 j}} \big\|_{L^\infty(\R)} \int_{- \frac{D}{\varepsilon}}^{\frac{D}{\varepsilon}} \Bigg\{ \varepsilon + \sum\limits_{k = 1}^{m} \frac{1}{(1 + |y - \eta_k|)^{1 + \sigma}} \Bigg\} \, dy \le C \| g \|_{\star,\,\sigma},
\end{align*}
for all~$j = 1, \ldots, m$, while, using~\eqref{otheroestfortilde0j} as well,
\begin{align*}
\left| \int_{\R} \tilde{z}_{0 j} L \tilde{z}_{0 k} \right| \le \int_{\eta_j - \frac{\delta_0}{3 \varepsilon}}^{\eta_j + \frac{\delta_0}{3 \varepsilon}} |\tilde{z}_{0 j}| |L \tilde{z}_{0 k}| \le \| L \tilde{z}_{0 k} \|_{L^\infty \big( {\R \setminus \big( {\eta_k - \frac{\delta_0}{3 \varepsilon}, \eta_k + \frac{\delta_0}{3 \varepsilon}} \big)} \big)} \| \tilde{z}_{0 j} \|_{L^1(\R)} \le \frac{C}{|{\log \varepsilon}|^2},
\end{align*}
for all~$k \ne j$. Combining these three estimates with~\eqref{Linftyestforphitilde} and~\eqref{testingagainstZtilde}, we easily arrive at
$$
|d_j| \left| \int_\R \tilde{z}_{0 j} \, L \tilde{z}_{0 j} \right| \le C \left( \| g \|_{\star,\,\sigma} + \frac{1}{|{\log \varepsilon}|^2} \sum_{k = 1}^m |d_k| \right), \quad \mbox{ for all } j = 1, \ldots, m.
$$
Summing up in~$j$, multiplying both sides by~$|{\log \varepsilon}|$, and recalling the last inequality in~\eqref{estsfortilde0j}, we get
$$
\sum_{k = 1}^m |d_k| \le C \left( |{\log \varepsilon}| \, \| g \|_{\star,\,\sigma} + \frac{1}{|{\log \varepsilon}|} \sum_{k = 1}^m |d_k| \right).
$$
By taking~$\varepsilon$ sufficiently small, we can reabsorb in the left-hand side the second term on the right and obtain
$$
\sum_{k = 1}^m |d_k| \le C |{\log \varepsilon}| \, \| g \|_{\star,\,\sigma}.
$$
Going back to the definition of~$\widetilde{\phi}$ and using again~\eqref{estsfortilde0j} and~\eqref{Linftyestforphitilde}, we are immediately led to the desired estimate~\eqref{logepsestforphi}.
\end{proof}

\subsection{Solvability of problem (\ref{mainprojlinearprob})-(\ref{phiorthtoZ1j}). Proof of Proposition \ref{mainlinearprop}} $ $ 
\medbreak
\noindent We finally tackle here the proof of Proposition~\ref{mainlinearprop}. In order to do it, we need one more technical lemma.

\begin{lemma} \label{ztilde1lem}
Let~$\delta_0 \in (0, 1)$,~$\mu_0 \ge 1$, and~$\varepsilon \in \big( {0, \frac{\delta_0}{8}} \big]$. For any~$\mu \in \big[ {\frac{1}{\mu_0}, \mu_0} \big]$, there exists an odd function~$\tilde{z}_1 = \tilde{z}_1^{(\mu)} \in C^\infty_c(\R)$ satisfying
\begin{equation} \label{tildez1props}
\tilde{z}_1 = Z_{1, \mu, 0} \mbox{ in } \left[ - \frac{\delta_0}{4 \varepsilon}, \frac{\delta_0}{4 \varepsilon} \right], \quad 0 \le \tilde{z}_1 \le Z_{1, \mu, 0} \mbox{ in } \left( \frac{\delta_0}{4 \varepsilon}, +\infty \right), \quad \tilde{z}_1 = 0 \mbox{ in } \R \setminus \left(- \frac{\delta_0}{3 \varepsilon}, \frac{\delta_0}{3 \varepsilon} \right),
\end{equation}
and
\begin{equation} \label{L0tildez1est}
|L_{\mu} \tilde{z}_1(y)| \le \frac{C \varepsilon}{1 + |y|},  \quad \mbox{ for all } y \in \R,
\end{equation}
for some constant~$C > 0$ depending only on $\mu_0$ and $\delta_0$.
\end{lemma}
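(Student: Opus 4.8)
The plan is to take $\tilde z_1$ to be a plain cut-off of the exact kernel $Z_{1,\mu,0}$; no logarithmic correction is needed here (unlike for $\tilde z_0$), because $Z_{1,\mu,0}(y)=\frac{2y}{\mu^2+y^2}$ already decays like $|y|^{-1}$ at infinity. Concretely, I would fix a radial $\varrho_\varepsilon\in C^\infty_c(\R^2)$ with $0\le\varrho_\varepsilon\le1$, $\varrho_\varepsilon\equiv1$ on $B_{\delta_0/(4\varepsilon)}$, $\supp\varrho_\varepsilon\subset B_{\delta_0/(3\varepsilon)}$, $|\nabla\varrho_\varepsilon|\le C\varepsilon$ and $|D^2\varrho_\varepsilon|\le C\varepsilon^2$ (with $C$ depending only on $\delta_0$), set $\chi_\varepsilon(y):=\varrho_\varepsilon(y,0)$, and define $\tilde z_1:=\chi_\varepsilon\,Z_{1,\mu,0}$. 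Then $\tilde z_1\in C^\infty_c(\R)$, and since $Z_{1,\mu,0}$ is odd while $\chi_\varepsilon$ is even with $0\le\chi_\varepsilon\le1$, $\chi_\varepsilon\equiv1$ on $[-\delta_0/(4\varepsilon),\delta_0/(4\varepsilon)]$ and $\chi_\varepsilon\equiv0$ outside $(-\delta_0/(3\varepsilon),\delta_0/(3\varepsilon))$, all the structural requirements in \eqref{tildez1props} are immediate (the bound $0\le\tilde z_1\le Z_{1,\mu,0}$ on $(\delta_0/(4\varepsilon),+\infty)$ just uses $Z_{1,\mu,0}>0$ there).

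For the estimate \eqref{L0tildez1est} I would repeat the Dirichlet-to-Neumann computation from the proof of Lemma~\ref{ztilde0lem}. The harmonic extension of $Z_{1,\mu,0}$ to $\R^2_+$ is explicit, namely $Z_1(y,\lambda):=\frac{2y}{y^2+(\lambda+\mu)^2}$ (it is $\partial_\eta$, at $\eta=0$, of the harmonic extension $\log(2\mu)-\log\bigl((y-\eta)^2+(\lambda+\mu)^2\bigr)$ of the bubble $\mathcal{U}_{\mu,\eta}$). One checks directly that $\partial_\lambda Z_1(\cdot,0)=-W_\mu Z_{1,\mu,0}$ — this is just the identity $L_\mu Z_{1,\mu,0}=0$ of \eqref{linearized-problem} — and that $|Z_1(Y)|\le C(1+|Y|)^{-1}$, $|\nabla Z_1(Y)|\le C(1+|Y|)^{-2}$ for $Y\in\overline{\R^2_+}$, with $C=C(\mu_0)$. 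Setting $\widetilde Z_1:=\varrho_\varepsilon Z_1$, the radial symmetry of $\varrho_\varepsilon$ gives $\partial_\lambda\widetilde Z_1(\cdot,0)=-W_\mu\tilde z_1$ on $\partial\R^2_+$, while $\Delta\widetilde Z_1=f$ in $\R^2_+$ with $f:=2\nabla\varrho_\varepsilon\cdot\nabla Z_1+(\Delta\varrho_\varepsilon)Z_1$ (because $Z_1$ is harmonic). The function $f$ is supported in the half-annulus $A_\varepsilon:=\bigl(B_{\delta_0/(3\varepsilon)}\setminus B_{\delta_0/(4\varepsilon)}\bigr)\cap\R^2_+$, where $|Y|\sim\delta_0/\varepsilon$, and there $|f|\le C\varepsilon^3$ (combine $|\nabla\varrho_\varepsilon|\le C\varepsilon$, $|D^2\varrho_\varepsilon|\le C\varepsilon^2$ with the decay bounds for $Z_1$, $\nabla Z_1$). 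Subtracting the harmonic extension $\widehat Z_1$ of $\tilde z_1$ and using the Green representation on $\R^2_+$ exactly as in the passage \eqref{L0asD2N}--\eqref{L0ztilde0repr}, one arrives at
$$
L_\mu\tilde z_1(y)=-\frac1\pi\int_{\R^2_+}\frac{f(y',\lambda)}{(y-y')^2+\lambda^2}\,\lambda\,dy'\,d\lambda,\qquad\mbox{for all }y\in\R.
$$

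It then remains to estimate this integral, which is considerably easier than in Lemma~\ref{ztilde0lem} precisely because there is no logarithmic cut-off and no lower bound of the form \eqref{energylowerbound} is required. Using $|f|\le C\varepsilon^3\mathds{1}_{A_\varepsilon}$ and passing to polar coordinates as in the chain of estimates following \eqref{L0ztilde0repr} (now with inner and outer radii $\delta_0/(4\varepsilon)$ and $\delta_0/(3\varepsilon)$), one finds
$$
|L_\mu\tilde z_1(y)|\le \frac{C\varepsilon^3}{\pi}\int_{A_\varepsilon}\frac{\lambda\,dy'\,d\lambda}{(y-y')^2+\lambda^2}= \frac{C\varepsilon^3}{\pi}\,|y|\int_{\frac{\delta_0}{4\varepsilon|y|}}^{\frac{\delta_0}{3\varepsilon|y|}} s\log\!\Bigl(\frac{s+1}{|s-1|}\Bigr)ds .
$$
Splitting according to the size of $|y|$: for $|y|\le\delta_0/\varepsilon$ the last integral is bounded by $C/\varepsilon$ (on the relevant $s$-range the integrand is integrable), so $|L_\mu\tilde z_1(y)|\le C\varepsilon^2\le C\varepsilon/(1+|y|)$, the last step because $1+|y|\le C/\varepsilon$ there; for $|y|>\delta_0/\varepsilon$ the interval of integration lies in $(0,1/2)$, where $\log\frac{s+1}{|s-1|}\le Cs$, whence $|y|\int s\log\frac{s+1}{|s-1|}\,ds\le C\varepsilon^{-3}|y|^{-2}$ and thus $|L_\mu\tilde z_1(y)|\le C|y|^{-2}\le C\varepsilon/(1+|y|)$, using $|y|^{-1}\le\varepsilon/\delta_0$. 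Combining the two cases gives \eqref{L0tildez1est}.

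I do not expect a genuine obstacle; the only point that requires care is keeping track of the scale $\delta_0/\varepsilon$ of the cut-off annulus, so as to recover simultaneously both the $\varepsilon$-smallness and the $|y|^{-1}$ tail of the bound in \eqref{L0tildez1est}. (Alternatively, one could avoid the extension altogether and estimate the commutator $L_\mu\tilde z_1=[\,(-\Delta)^{\frac12},\chi_\varepsilon\,]Z_{1,\mu,0}$ directly; but the half-plane computation above runs exactly in parallel with the proof of Lemma~\ref{ztilde0lem}, so it seems the cleanest route.)
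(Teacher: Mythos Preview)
Your construction and estimates are correct. The cutoff $\tilde z_1=\chi_\varepsilon Z_{1,\mu,0}$ is exactly what the paper uses, and your bound $|f|\le C\varepsilon^3$ on the half-annulus together with the polar-coordinate computation gives \eqref{L0tildez1est} as you claim; the two-case split in $|y|$ is handled correctly.

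The paper, however, takes precisely the alternative you mention at the end: it avoids the extension entirely and writes
\[
L_\mu\tilde z_1(y)=\frac{1}{\pi}\,\PV\int_\R\frac{Z_{1,\mu,0}(y')\bigl(\varrho_2(y)-\varrho_2(y')\bigr)}{(y-y')^2}\,dy',
\]
then estimates this one-dimensional commutator directly, splitting into the three ranges $y\in(0,1]$, $y\in\bigl(1,\frac{\delta_0}{2\varepsilon}\bigr)$, and $y\ge\frac{\delta_0}{2\varepsilon}$; the middle range requires a symmetrization trick (rewriting the kernel as $K(y,y')=\frac{4yy'}{(y-y')^2(y+y')^2}$ after exploiting the oddness of $Z_{1,\mu,0}$ and evenness of $\varrho_2$) and a further decomposition into three pieces. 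Your extension approach is more uniform with the proof of Lemma~\ref{ztilde0lem} and arguably cleaner here, since the absence of the logarithmic cutoff makes the source term $f$ small and compactly supported, so the Green-representation integral is straightforward. The paper's direct route trades the two-dimensional setup for a somewhat longer one-dimensional case analysis; neither has a real advantage in this lemma.
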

\begin{proof}
Let~$\varrho_2 \in C^\infty_c(\R)$ be the restriction to~$\R = \partial \R^2_+$ of the cutoff function considered in the proof of Lemma~\ref{ztilde0lem}, fulfilling in particular~$\varrho_2(- y) = \varrho_2(y)$ for all~$y \in \R$,
$$
\rho_2 = 1 \mbox{ in }  \left[ - \frac{\delta_0}{4 \varepsilon}, \frac{\delta_0}{4 \varepsilon} \right], \quad \rho_2 = 0 \mbox{ in } \R \setminus  \left( - \frac{\delta_0}{3 \varepsilon}, \frac{\delta_0}{3 \varepsilon} \right), \quad |\rho_2^{(j)}| \le C_2 \varepsilon^j \mbox{ in } \R,
$$
for~$j = 1, 2$ and for some constant~$C_2 > 0$ depending only on~$\delta_0$. Define~$\tilde{z}_1 := \varrho_2 Z_{1, \mu, 0}$. It is clear that~$\tilde{z}_1$ satisfies~\eqref{tildez1props}.

We now show the validity of~\eqref{L0tildez1est}. Of course, we can restrict ourselves to verify it only for~$y > 0$. First, recall that~$Z_{1, \mu, 0}$ is odd and observe that it satisfies
$$
|Z_{1, \mu, 0}(y)| \le \frac{C}{1 + |y|} \quad \mbox{ and } \quad \big| {Z_{1, \mu, 0}(y + \tilde{y}) - Z_{1, \mu, 0}(y - \tilde{y})} \big| \le \frac{C \, |\tilde{y}|}{\big( {1 + |y + \tilde{y}|} \big) \big( {1 + |y - \tilde{y}|} \big)},
$$
for all~$y, \tilde{y} \in \R$ and for some constant~$C > 0$ depending only on~$\mu_0$. Next, using that~$L_{\mu} Z_{1, \mu, 0} = 0$ in~$\R$, we write  
\begin{equation} \label{L0ztilde1}
L_\mu \tilde{z}_1(y) = \frac{1}{\pi}\,\PV \int_{\R} \frac{Z_{1, \mu, 0}(y') \big( {\varrho_2(y) - \varrho_2(y')} \big)}{(y - y')^2} \, dy', \quad \mbox{ for all } y > 0.
\end{equation}
We distinguish between the three cases~$y \in (0, 1]$,~$y \in \big( {1, \frac{\delta_0}{2 \varepsilon}} \big)$, and~$y \in \big[ {\frac{\delta_0}{2 \varepsilon}, +\infty} \big)$. To deal with the first situation, we change variables and notice that~$\varrho_2'(y) = 0$, expressing~$L_\mu \tilde{z}_1(y)$ as
$$
L_\mu \tilde{z}_1(y) = - \frac{1}{\pi}\, \int_\R \frac{Z_{1, \mu, 0}(y + \tilde{y}) \big( { \varrho_2(y + \tilde{y}) - \varrho_2(y) - \varrho_2'(y) \tilde{y} \mathds{1}_{(-2, 2)}(\tilde{y})} \big)}{\tilde{y}^2} \, d\tilde{y}.
$$
Then, using the properties of~$\varrho_2$ and~$Z_{1, \mu, 0}$ we estimate that
\begin{equation} \label{L0tildezest-1}
|L_\mu \tilde{z}_1(y)| \le C \varepsilon^2 \int_{-2}^2 d\tilde{y} + C \varepsilon \int_{\R \setminus (-2, 2)} \frac{d\tilde{y}}{|\tilde{y} + y| |\tilde{y}|} \le C \varepsilon^2 + C \varepsilon \int_2^{+\infty} \frac{d\tilde{y}}{\tilde{y}^2} \le C \varepsilon, \quad \mbox{ for all } y \in (0, 1].
\end{equation}

To address the latter two cases, we go back to~\eqref{L0ztilde1} and, taking advantage of the symmetry properties of~$Z_{1, \mu, 0}$ and~$\varrho_2$, we rewrite~$L_\mu \tilde{z}_1(y)$ as
\begin{align*}
L_\mu \tilde{z}_1(y) & = \frac{1}{\pi}\, \PV \int_{0}^{+\infty} \frac{Z_{1, \mu, 0}(y') \big( {\varrho_2(y) - \varrho_2(y')} \big)}{(y - y')^2} \, dy' + \frac{1}{\pi}\, \int_0^{+\infty} \frac{Z_{1, \mu, 0}(- y'') \big( {\varrho_2(y) - \varrho_2(-y'')} \big)}{(y + y'')^2} \, dy'' \\
& = \frac{1}{\pi}\, \PV \int_0^{+\infty} Z_{1, \mu, 0}(y') \big( {\varrho_2(y) - \varrho_2(y')} \big) K(y, y') \, dy'
\end{align*}
for every~$y > 0$ and with~$K(y, y') := \frac{4 y y'}{(y - y')^2 (y + y')^2}$. The case~$y \in \big[ {\frac{\delta_0}{2 \varepsilon}, +\infty} \big)$ is then easily handled from here. Indeed,~$\varrho_2(y) = 0$ and therefore
\begin{equation} \label{L0tildezest-2}
|L_\mu \tilde{z}_1(y)| = \frac{1}{\pi} \left| \int_0^{\frac{\delta_0}{3 \varepsilon}} Z_{1, \mu, 0}(y') \varrho_2(y') K(y, y') \, dy' \right| \le \frac{C}{y^3} \int_0^{\frac{\delta_0}{3 \varepsilon}} dy' \le \frac{C}{y^2}, \quad \mbox{ for all } y \in \left[ \frac{\delta_0}{2 \varepsilon}, +\infty \right) \!,
\end{equation}
where~$C$ depends now on~$\delta_0$ as well. To deal with~$y \in \big( {1, \frac{\delta_0}{2 \varepsilon}} \big)$, we change variables and symmetrize, obtaining that
$$
L_\mu \tilde{z}_1(y) = \frac{1}{\pi}\, \big( \ell_1(y) + \ell_2(y) + \ell_3(y) \big),
$$
with
\begin{align*}
\ell_1(y) & := - \int_{-\frac{y}{2}}^{\frac{y}{2}} Z_{1, \mu, 0}(y + \tilde{y}) \big( { \varrho_2(y + \tilde{y}) - \varrho_2(y) - \varrho_2'(y) \tilde{y}} \big) K(y, y + \tilde{y}) \, d\tilde{y}, \\
\ell_2(y) & := \int_{\big( {-y, - \frac{y}{2}} \big) \cup \big( \frac{y}{2}, +\infty \big)} Z_{1, \mu, 0}(y + \tilde{y}) \big( {\varrho_2(y)- \varrho_2(y + \tilde{y})} \big) K(y, y + \tilde{y}) \, d\tilde{y}, \\
\ell_3(y) & := \frac{\varrho_2'(y)}{2} \int_{-\frac{y}{2}}^{\frac{y}{2}} \Big( {Z_{1, \mu, 0}(y - \tilde{y}) K(y, y - \tilde{y}) - Z_{1, \mu, 0}(y + \tilde{y}) K(y, y + \tilde{y})} \Big) \tilde{y} \, d\tilde{y}.
\end{align*}
We readily estimate
$$
|\ell_1(y)| \le C \varepsilon^2 \| Z_{1, \mu, 0} \|_{L^\infty \big( \frac{y}{2}, \frac{3 y}{2} \big)} \int_{-\frac{y}{2}}^{\frac{y}{2}} \frac{y (y + \tilde{y})}{(2 y + \tilde{y})^2} \, d\tilde{y} \le C \varepsilon^2
$$
and
$$
|\ell_2(y)| \le C \varepsilon \int_{\big( {-y, - \frac{y}{2}} \big) \cup \big( \frac{y}{2}, +\infty \big)} \frac{y |y + \tilde{y}|}{\big( {1 + |y + \tilde{y}|} \big) |\tilde{y}| (2 y + \tilde{y})^2} \, d\tilde{y} \le C \varepsilon \int_{\frac{y}{2}}^{+\infty} \frac{d\tilde{y}}{\tilde{y}^2} \le \frac{C \varepsilon}{y}.
$$
To control~$\ell_3(y)$, we observe that, for all~$y \in \big( {1, \frac{\delta_0}{2 \varepsilon}} \big)$ and~$\tilde{y} \in \big( {- \frac{y}{2}}, \frac{y}{2} \big)$, it holds
\begin{align*}
& \big| {Z_{1, \mu, 0}(y - \tilde{y}) K(y, y - \tilde{y}) - Z_{1, \mu, 0}(y + \tilde{y}) K(y, y + \tilde{y})} \big| \\
& \hspace{20pt} \le |Z_{1, \mu, 0}(y - \tilde{y})| \big| {K(y, y - \tilde{y}) - K(y, y + \tilde{y})} \big| + K(y, y + \tilde{y}) \big| {Z_{1, \mu, 0}(y - \tilde{y}) - Z_{1, \mu, 0}(y + \tilde{y})} \big| \\
& \hspace{20pt} \le \frac{C y}{\big( {1 + y - \tilde{y}} \big) (2 y + \tilde{y})^2} \left( \frac{|\tilde{y}|}{(2 y - \tilde{y})^2} + \frac{y + \tilde{y}}{|\tilde{y}| \big( {1 + y + \tilde{y}} \big)} \right) \le \frac{C}{|\tilde{y}| y^2}.
\end{align*}
Consequently,
$$
|\ell_3(y)| \le \frac{C \varepsilon}{y^2} \int_{-\frac{y}{2}}^{\frac{y}{2}} d\tilde{y} \le \frac{C \varepsilon}{y},
$$
and we end up with
$$
|L_\mu \tilde{z}_1(y)| \le \frac{C \varepsilon}{y} \quad \mbox{for all } y \in \left( 1, \frac{\delta_0}{2 \varepsilon} \right).
$$
Combining this last estimate with~\eqref{L0tildezest-1} and~\eqref{L0tildezest-2}, we arrive at the claimed~\eqref{L0tildez1est}.
\end{proof}

We can now deal with the

\begin{proof}[Proof of Proposition~\ref{mainlinearprop}]
We begin by establishing the a priori estimate~\eqref{mainlinearaprioriest}. If~$\phi$ solves~\eqref{mainprojlinearprob}--\eqref{phiorthtoZ1j} for some~$c_1, \ldots, c_m \in \R$, then, by Lemma~\ref{2ndLinftyestlem}, it follows that
\begin{equation} \label{phiesttechprop}
\| \phi \|_{L^\infty(\R)} \le C_1 |{\log \varepsilon}| \left( \| g \|_{\star,\,\sigma} + \sum_{j = 1}^m |c_j| \, \big\| {\chi_j Z_{1 j}} \big\|_{\star,\,\sigma} \right) \le C |{\log \varepsilon}| \left( \| g \|_{\star,\,\sigma} + \sum_{j = 1}^m |c_j| \right),
\end{equation}
for some constant~$C \ge 1$, independent of $\epsilon$ and $\xi$, and provided~$\varepsilon$ is small enough.

We now estimate the constants~$|c_j|$. To do it, we let~$\tilde{z}_1^{(\mu_k)}$ be the function constructed in Lemma~\ref{ztilde1lem} and test the equation for~$\phi$ against~$\tilde{z}_{1 k} := \tilde{z}_1^{(\mu_k)}(\,\cdot\, - \eta_k)$. Taking advantage of the properties of~$\tilde{z}_{1 k}$--namely, that~$\supp(\tilde{z}_{1 k}) \cap \supp(\chi_j) = \varnothing$ for every~$k \ne j$ and that~$\tilde{z}_{1 k} = Z_{1 k}$ in~$\supp(\chi_k)$--, we get
$$
\int_\R \phi L \tilde{z}_{1 k} = \int_\R g \tilde{z}_{1 k} + \sum_{j = 1}^{m} c_j \int_\R \chi_j Z_{1 j} \tilde{z}_{1 k} = \int_\R g \tilde{z}_{1 k} + c_k \int_\R \chi_k Z_{1 k}^2.
$$
That is,
$$
c_k = \dfrac{\displaystyle \int_\R \phi L \tilde{z}_{1 k} - \int_\R g \tilde{z}_{1 k}}{\displaystyle \int_\R \chi_k Z_{1 k}^2}, \quad \mbox{ for all } k = 1, \ldots, m.
$$
Notice that
$$
\int_\R \chi_k Z_{1 k}^2 = \int_{- \bar{R} - 1}^{\bar{R}+ 1} \chi Z_{1, \mu_k, 0}^2 \ge \frac{1}{C}
$$
and
$$
\left| \int_\R g \tilde{z}_{1 k} \right| \le \| g \|_{\star,\,\sigma} \| \tilde{z}_{1 k} \|_{L^\infty(\R)} \int_{\eta_k - \frac{\delta_0}{3 \varepsilon}}^{\eta_k - \frac{\delta_0}{3 \varepsilon}} \left( \varepsilon + \sum_{j = 1}^m \frac{1}{\big( {1 + |y - \eta_j|} \big)^{1 + \sigma}} \right) dy \le C \| g \|_{\star,\,\sigma}.
$$
On the other hand,
$$
L \tilde{z}_{1 k}(y) = L_{\mu_k, \eta_k} \tilde{z}_{1 k}(y) + \left( \frac{2 \mu_k}{\mu_k^2 + (y - \eta_k)^2} - W(y) \right) \tilde{z}_{1 k}(y), \quad \mbox{ for all } y \in \R,
$$
so that, by~\eqref{Wasperturb}--\eqref{thetabounds}, and Lemma~\ref{ztilde1lem}, we have
\begin{align*}
|L \tilde{z}_{1 k}(y)| & \le |L_{\mu_k, \eta_k} \tilde{z}_{1 k}(y)| + \left| \frac{2 \mu_k}{\mu_k^2 + (y - \eta_k)^2} - W(y) \right| |\tilde{z}_{1 k}(y)| \\
& \le \big| {\big( {L_{\mu_k} \tilde{z}_1^{(\mu_k)}} \big)(y - \eta_k)} \big| + \left( \sum_{j \ne k} \frac{2 \mu_j}{\mu_j^2 + (y - \eta_j)^2} + |\theta(y)| \right) |\tilde{z}_{1}^{(\mu_k)}(y - \eta_k)| \le \frac{C \varepsilon}{1 + |y - \eta_k|}.
\end{align*}
As a result,
$$
\left| \int_\R \phi L \tilde{z}_{1 k} \right| \le C \varepsilon \| \phi \|_{L^\infty(\R)} \int_{- \frac{D}{\varepsilon}}^{\frac{D}{\varepsilon}} \frac{dy}{1 + |y - \eta_k|} \le C \varepsilon |{\log \varepsilon}| \| \phi \|_{L^\infty(\R)}.
$$
By virtue of these estimates, we obtain
\begin{equation} \label{ckestimates}
|c_k| \le C \left( \varepsilon |{\log \varepsilon}| \| \phi \|_{L^\infty(\R)} + \| g \|_{\star,\,\sigma} \right), \quad \mbox{ for all } k = 1, \ldots, m,
\end{equation}
which, when combined with~\eqref{phiesttechprop}, yields that
$$
\| \phi \|_{L^\infty(\R)} \le C \Big( { |{\log \varepsilon}|\| g \|_{\star,\,\sigma} + \varepsilon |{\log \varepsilon}|^2 \| \phi \|_{L^\infty(\R)}} \Big).
$$
By taking~$\varepsilon$ suitably small, we can reabsorb in the left-hand side the~$L^\infty$ norm of~$\phi$ appearing on the right and conclude the validity of~\eqref{mainlinearaprioriest}.

Having established~\eqref{mainlinearaprioriest}, the unique solvability of~\eqref{mainlinearprop}--\eqref{phiorthtoZ1j} is now a simple consequence of the Fredholm theory. Indeed, consider the Hilbert space
$$
H := \left\{ \phi \in H^{\frac{1}{2}}(\R) : \phi = 0 \mbox{ a.e.~in~} \R\setminus I_\varepsilon \mbox{ and } \int_\R \phi \chi_k Z_{1 k} = 0 \mbox{ for every } k = 1, \ldots, m  \right\},
$$
endowed with the inner product
$$
\langle \phi, \psi \rangle_{H} := \frac{1}{2 \pi} \int_\R \int_\R \frac{\big( {\phi(y) - \phi(y')} \big)\big( {\psi(y) - \psi(y')} \big)}{(y - y')^2} \, dy dy'.
$$
Thanks to the fractional Poincar\'e inequality, this product yields a norm equivalent on~$H$ to the full norm~$\| \cdot \|_{L^2(\R)} + \sqrt{\langle \, \cdot \,, \, \cdot \, \rangle_{H}}$.
It is easy to see that~$\phi \in L^{\infty}(\R)$ weakly solves~\eqref{mainprojlinearprob}--\eqref{phiorthtoZ1j} for some~$c_1, \ldots, c_m \in \R$ if and only it belongs to~$H$ and satisfies
\begin{equation} \label{weak-formulation-on-H}
\langle \phi, \psi \rangle_{H} = \int_{\R} \left( W \phi + g \right) \psi \, dy,  \quad \mbox{ for all } \psi \in H.
\end{equation}
Note that, if $h \in L^2(I_{\epsilon})$, then $F: H \to H$ given by
$$
F(\psi) = \int_{I_{\epsilon}} h \psi\,  dy
$$
is a linear continuous functional on~$H$. Thus, Riesz's representation theorem (see, e.g.,~\cite[Theorem 5.7]{gilbarg-trudinger}) implies that there exists a unique~$\phi \in H$ such that
\begin{equation} \label{inverse-half-Laplacian-Riesz}
\langle \phi, \psi \rangle_{H} = \int_{I_{\epsilon}} h \psi\, dy, \quad \textup{ for all } \psi \in H.
\end{equation}
It is then clear that $A:  L^2(I_{\epsilon}) \to H$ given by $A(h) = \phi$ with $\phi \in H$ the unique solution to \eqref{inverse-half-Laplacian-Riesz} is a continuous linear map. Also, using the compactness of the embedding of $H$ into $L^2(I_{\epsilon})$, one can check that $K: H \to H$ given by $K(\phi) = A(W\phi)$ is compact. Problem \eqref{weak-formulation-on-H} can then be formulated as
\begin{equation} \label{Fredholm-formulation}
\phi = K(\phi) + A(g), \quad \phi \in H,
\end{equation}
Since~$K$ is compact, Fredholm's alternative (see, e.g.,~\cite[Theorem 5.3]{gilbarg-trudinger}) guarantees the unique solvability of \eqref{Fredholm-formulation} for every $g \in L^2(I_{\epsilon})$, provided the homogeneous problem (corresponding to~$g \equiv 0$) has no non-trivial solutions. As this fact is guaranteed by the a priori estimate~\eqref{mainlinearaprioriest}, the proof is complete.
\end{proof}

\subsection{Dependence on the~$\xi_j$'s} $ $
\medbreak
\noindent
For~$\xi = \epsilon \eta \in \mathcal{I}_{\delta_0}$, we denote by~$L_{\eta}^{-1}$ the operator that associates to any~$g \in L^{\infty}_{\star,\,\sigma}(I_{\epsilon})$ the unique solution to~\eqref{mainprojlinearprob}-\eqref{phiorthtoZ1j}. Note that, by Proposition~\ref{mainlinearprop},~$L_{\eta}^{-1} : L^{\infty}_{\star,\,\sigma}(I_{\epsilon}) \to L^{\infty}(\R)$ is a linear continuous operator, provided~$\epsilon$ is sufficiently small.  

For later purposes, it is also important to understand the differentiability of~$L_{\eta}^{-1}$ with respect to~$\eta$. This is the content of the next result.

\begin{proposition}  \label{prop1derivative}
Let~$\delta_0, \sigma \in (0, 1)$. There exist two constants~$\epsilon_0 \in (0, 1)$ and~$C > 0$ for which the following holds true: given~$\epsilon \in (0, \epsilon_0)$, an open set~$\mathcal{V} \subset \epsilon^{-1} \mathcal{I}_{\delta_0}$, and a~$C^1$ map~$\mathcal{V} \ni \eta \mapsto g_\eta \in L^\infty_{\star, \sigma}$, the map~$\mathcal{V} \ni \eta \mapsto L_\eta^{-1}(g_\eta) \in L^\infty(\R)$ is also of class~$C^1$ and
\begin{equation} \label{boundDerivative}
\big\| {\partial_{\eta_{l}} ( L_\eta^{-1}(g_\eta))} \big\|_{L^{\infty}(\R)} \leq C \big( {\log \frac1\epsilon} \big) \left( \big( {\log \frac1\epsilon} \big) \|g_\eta\|_{\star,\sigma} + \big\| {\partial_{\eta_l} g_\eta} \big\|_{\star,\sigma} \right),
\end{equation}
for all~$\eta \in \mathcal{V}$ and~$l = 1, \ldots, m$.
\end{proposition}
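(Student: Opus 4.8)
The plan is to take as a candidate for $\partial_{\eta_l}\big(L_\eta^{-1}(g_\eta)\big)$ the solution of the projected linear problem obtained by \emph{formally} differentiating in $\eta_l$ the equation that defines $L_\eta^{-1}(g_\eta)$, to estimate it by means of the a priori bound of Proposition~\ref{mainlinearprop}, and to finally identify it with the true derivative through a difference-quotient argument. Throughout, $\xi = \epsilon\eta\in\mathcal{I}_{\delta_0}$ is fixed, and we write $\phi := L_\eta^{-1}(g_\eta)$ with associated constants $c_1,\ldots,c_m\in\R$, so that $L\phi = g_\eta + \sum_j c_j\chi_j Z_{1j}$ in $I_\epsilon$, $\phi = 0$ in $\R\setminus I_\epsilon$, and $\int_\R\phi\,\chi_j Z_{1j} = 0$; by Proposition~\ref{mainlinearprop} and~\eqref{ckestimates} we have $\|\phi\|_{L^\infty(\R)}\le C|{\log\epsilon}|\,\|g_\eta\|_{\star,\sigma}$ and $|c_j|\le C\,\|g_\eta\|_{\star,\sigma}$ for $\epsilon$ small. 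Recalling that $W$, hence $L$, depends on $\eta$ (through $\mathscr{V}$), differentiating these relations in $\eta_l$ suggests that the derivative should solve $L(\partial_{\eta_l}\phi) = \partial_{\eta_l}g_\eta + (\partial_{\eta_l}W)\phi + \sum_j c_j\,\partial_{\eta_l}(\chi_j Z_{1j}) + \sum_j(\partial_{\eta_l}c_j)\chi_j Z_{1j}$ in $I_\epsilon$, with $\partial_{\eta_l}\phi = 0$ outside $I_\epsilon$ and the \emph{defective} orthogonality $\int_\R(\partial_{\eta_l}\phi)\chi_j Z_{1j} = -\int_\R\phi\,\partial_{\eta_l}(\chi_j Z_{1j}) =: a_{jl}$. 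Observe that, since the coefficients of the $\chi_j Z_{1j}$ in~\eqref{mainprojlinearprob} are \emph{free}, the multipliers $\partial_{\eta_l}c_j$ need not be prescribed in advance.

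Accordingly, set $\zeta_k := \big(\int_\R\chi_k Z_{1k}^2\big)^{-1}\chi_k Z_{1k}$, so that $\int_\R\zeta_k\chi_j Z_{1j} = \delta_{kj}$, $\|\zeta_k\|_{L^\infty(\R)}\le C$, $\zeta_k$ is supported in $I_\epsilon$, and $\|L\zeta_k\|_{\star,\sigma}\le C$ (using $\int_\R\chi_k Z_{1k}^2\ge 1/C$, that $L_{\mu_k,\eta_k}Z_{1k} = 0$, and the quadratic decay of $\Lfrac(\chi_k Z_{1k})$ at infinity). Define the candidate
$$
\psi_l := L_\eta^{-1}\left( \partial_{\eta_l}g_\eta + (\partial_{\eta_l}W)\phi + \sum_j c_j\,\partial_{\eta_l}(\chi_j Z_{1j}) - \sum_k a_{kl}\,L\zeta_k \right) + \sum_k a_{kl}\,\zeta_k,
$$
which is well defined once $W$ and the $\chi_j Z_{1j}$ are known to be $C^1$ in $\eta$ (see below). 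The bound~\eqref{boundDerivative} then follows from Proposition~\ref{mainlinearprop}: one only has to check that $\|\partial_{\eta_l}W\|_{\star,\sigma}\le C$, $\|\partial_{\eta_l}(\chi_j Z_{1j})\|_{\star,\sigma}\le C$, and $|a_{jl}|\le C\|\phi\|_{L^\infty(\R)}$, and to combine these with the a priori bounds on $\phi$ and the $c_j$'s; this yields $\|\psi_l\|_{L^\infty(\R)}\le C|{\log\epsilon}|\big(\|\partial_{\eta_l}g_\eta\|_{\star,\sigma} + |{\log\epsilon}|\,\|g_\eta\|_{\star,\sigma}\big)$.

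The estimates on $\partial_{\eta_l}(\chi_j Z_{1j})$ and $a_{jl}$ are elementary: $\chi_j Z_{1j}$ is bounded, compactly supported, and depends on $\eta$ only through $\eta_j$ and through $\mu_j = \mu_j(\epsilon\eta)$, which is $C^1$ with $\partial_{\eta_l}\mu_j = O(\epsilon)$ by~\eqref{mu_j} and the regularity of $H$ and $G$ off the diagonal (Lemma~\ref{regularity-Green}). The bound on $\partial_{\eta_l}W$ rests on the splitting~\eqref{Wasperturb}--\eqref{thetabounds}: differentiating the main part $\sum_j 2\mu_j/(\mu_j^2 + (y-\eta_j)^2)$ produces, apart from $O(\epsilon)$ terms, only the summand $4\mu_l(y-\eta_l)/(\mu_l^2+(y-\eta_l)^2)^2$, which decays like $(1+|y-\eta_l|)^{-3}$ and is therefore bounded in $\|\cdot\|_{\star,\sigma}$. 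It thus remains to prove that $|\partial_{\eta_l}\theta(y)|\le C\epsilon\sum_j(1+|y-\eta_j|)^{-1}$ (i.e., that the $\eta_l$-derivative of $\theta$ enjoys the very bound~\eqref{thetabounds} already known for $\theta$ itself), together with the corresponding $C^1$-in-$\eta$ statements for $W$ and the $\chi_j Z_{1j}$ needed in the last step. This is the technical core of the argument, and the step we expect to be the main obstacle: it amounts to differentiating in $\eta_l$ the expansions of Lemma~\ref{Hj and uj expansions} and the computations~\eqref{eq2-error-measure}--\eqref{eq4-error-measure}, i.e.\ to a careful, if routine, bookkeeping of Taylor remainders, and it is precisely what is deferred to Appendix~\ref{App dependence on the xijs}.

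Finally, to identify $\psi_l$ with $\partial_{\eta_l}\phi$ we argue by difference quotients. As a preliminary, applying the scheme above to finite differences shows that $\eta\mapsto(\phi(\eta),c(\eta))$ is locally Lipschitz, with constant depending on $\epsilon$: if $\eta'$ differs from $\eta$ only in the $l$-th component, $\eta'_l = \eta_l + s$, and $\phi' := L_{\eta'}^{-1}(g_{\eta'})$, then $\phi'-\phi$ solves a problem of the form~\eqref{mainprojlinearprob}--\eqref{phiorthtoZ1j} with right-hand side of size $O(|s|\,|{\log\epsilon}|\,\|g_\eta\|_{\star,\sigma})$--using the Lipschitz-in-$\eta$ bounds for $g_\eta$ (by hypothesis), $W$, and the $\chi_j Z_{1j}$--and orthogonality defect of the same order, so that Proposition~\ref{mainlinearprop} and~\eqref{ckestimates} give the Lipschitz bound. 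Then, writing $\chi^s := s^{-1}(\phi'-\phi) - \psi_l$, subtracting the equations for $\phi'$, $\phi$, and $\psi_l$, and letting $s\to 0$ (using $\phi'\to\phi$ in $L^\infty(\R)$, the $C^1$-in-$\eta$ dependence of $g_\eta$, $W$, and the $\chi_j Z_{1j}$, and the Lipschitz bound on the $c_j$'s), one checks that $\chi^s$ solves a problem of the same type whose right-hand side tends to $0$ in $\|\cdot\|_{\star,\sigma}$ and whose orthogonality defect tends to $0$; by Proposition~\ref{mainlinearprop}, $\|\chi^s\|_{L^\infty(\R)}\to 0$, hence $\partial_{\eta_l}\phi = \psi_l$. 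The continuity of $\eta\mapsto\psi_l$, and thus the claimed $C^1$ regularity, follows along the same lines.
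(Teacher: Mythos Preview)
Your proposal is correct and follows the same overall architecture as the paper's proof: write down the formal equation for $\partial_{\eta_l}\phi$, correct the orthogonality defect by adding a suitable linear combination of auxiliary functions, apply Proposition~\ref{mainlinearprop}, and then justify via difference quotients. Two points of comparison are worth noting.

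First, your choice of corrector $\zeta_k = \big(\int_\R\chi_kZ_{1k}^2\big)^{-1}\chi_kZ_{1k}$ is more elementary than the paper's: the paper uses the functions $\tilde z_{1k}$ of Lemma~\ref{ztilde1lem}, which enjoy the sharper bound $\|L\tilde z_{1k}\|_{\star,\sigma}\le C\epsilon^{1-\sigma}$ rather than just $\|L\zeta_k\|_{\star,\sigma}\le C$. The paper's choice gives a tighter intermediate estimate on the correction term, but this gain is irrelevant for the final bound~\eqref{boundDerivative}, so your simpler choice is perfectly adequate.

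Second, and more importantly, the step you flag as the ``main obstacle''---differentiating the remainder $\theta$ in~\eqref{Wasperturb}--\eqref{thetabounds}---is bypassed entirely in the paper. Since $W = \kappa(\epsilon\,\cdot\,)e^{\mathscr V}$, one has directly $\partial_{\eta_l}W = W\,\partial_{\eta_l}\mathscr V$, and Lemma~\ref{derofVwrtetalem} gives $\partial_{\eta_l}\mathscr V = Z_{1l}+R_l$ with $\|R_l\|_{L^\infty(I_\epsilon)}\le C\epsilon$. Thus $|\partial_{\eta_l}W|\le C\,W$, and the already-known bound~\eqref{Wasperturb}--\eqref{thetabounds} on $W$ immediately yields $\|\partial_{\eta_l}W\|_{\star,\sigma}\le C$. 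No separate estimate on $\partial_{\eta_l}\theta$ is needed, and the ``careful bookkeeping of Taylor remainders'' you anticipate does not occur in the paper's argument.
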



We postpone the proof of this result to Appendix~\ref{App dependence on the xijs}.
 
\section{Nonlinear theory} \label{Nonlinear theory}

\noindent Let us consider the nonlinear projected problem
\begin{equation} \label{nonlinprojprobforphi}
\left\{
\begin{aligned}
L \phi & = - \mathscr{E} + \mathcal{N}(\phi) + \sum_{j = 1}^m c_j \chi_j Z_{1 j}, \quad && \mbox{ in } I_\varepsilon, \\
\phi & = 0, \quad && \mbox{ in } \R \setminus I_\varepsilon,
\end{aligned}
\right.
\end{equation}
along with the orthogonality conditions
\begin{equation} \label{phiortnonlin}
\int_\R \phi \chi_j Z_{1 j} = 0, \quad \mbox{ for all } j = 1, \ldots, m.
\end{equation}
The main goal of this section is to establish the existence of a bounded solution~$\phi$ to~\eqref{nonlinprojprobforphi}--\eqref{phiortnonlin} for suitable coefficients~$c_1, \ldots, c_m$. Note that we keep here the notation introduced in Sections~\ref{The approximate solution} and~\ref{Linear theory}. In particular, we recall that~$\mathscr{E}$ is given in \eqref{error},~$L$ and~$\mathcal{N}$ in~\eqref{L - W and N}, the~$Z_{1 j}$'s in~\eqref{Zij}, and the~$\chi_j$'s in~\eqref{chij}.

\medbreak
The main result of this section reads as follows.

\begin{proposition} \label{propnonlineartheory}
Let~$\delta_0, \sigma \in (0,1)$. There exist~$\epsilon_0 \in (0,1)$ and~$C > 0$ for which the following holds true: if~$\xi \in \mathcal{I}_{\delta_0}$ and~$\epsilon \in (0, \epsilon_0]$, then there exists a unique bounded solution~$\phi$ to \eqref{nonlinprojprobforphi}--\eqref{phiortnonlin}, for some uniquely determined~$c_1, \ldots, c_m \in \R$, which satisfies
\begin{equation} \label{estfornonlinearphi}
\|\phi\|_{L^{\infty}(\R)} \leq C \, \epsilon^{1-\sigma} \big( {\log \frac{1}{\epsilon}} \big).
\end{equation}
\end{proposition}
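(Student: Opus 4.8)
The plan is to solve \eqref{nonlinprojprobforphi}--\eqref{phiortnonlin} by a contraction argument in a small ball of $L^\infty(\R)$, resting on the linear theory of Section~\ref{Linear theory}.

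\textbf{Reformulation as a fixed point problem.} By Proposition~\ref{mainlinearprop}, for $\epsilon$ small the operator $L_\eta^{-1}$ is well defined on $L^\infty_{\star,\sigma}(I_\epsilon)$, obeys $\|L_\eta^{-1}(g)\|_{L^\infty(\R)}\le C\big({\log\frac1\epsilon}\big)\|g\|_{\star,\sigma}$, and $L_\eta^{-1}(g)$ vanishes in $\R\setminus I_\epsilon$ and satisfies the orthogonality conditions in \eqref{phiortnonlin}. Hence a bounded function $\phi$ vanishing in $\R\setminus I_\epsilon$ solves \eqref{nonlinprojprobforphi}--\eqref{phiortnonlin} for some (then uniquely determined) $c_1,\dots,c_m\in\R$ if and only if
$$\phi=\mathcal{A}(\phi):=L_\eta^{-1}\big({-\mathscr{E}+\mathcal{N}(\phi)}\big),$$
the $c_j$'s being precisely those produced by $L_\eta^{-1}$. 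The map $\mathcal{A}$ is well defined because $-\mathscr{E}+\mathcal{N}(\phi)\in L^\infty_{\star,\sigma}(I_\epsilon)$: on one hand $\|\mathscr{E}\|_{\star,\sigma}\le C\epsilon^{1-\sigma}$ by Proposition~\ref{error-measure}; on the other hand, writing $\mathcal{N}(\phi)=W\,(e^\phi-1-\phi)$ and using the decomposition \eqref{Wasperturb}--\eqref{thetabounds} one obtains $\|W\|_{\star,\sigma}\le C$ uniformly in $\epsilon$ and $\xi$ (here it is crucial that $\sigma\in(0,1)$, so that the weight $(1+|y-\eta_j|)^{-1-\sigma}$ decays strictly slower than the $|y-\eta_j|^{-2}$ behaviour of $W$ away from the $\eta_j$'s), whence, by $|e^t-1-t|\le\frac12 t^2e^{|t|}$,
$$\|\mathcal{N}(\phi)\|_{\star,\sigma}\le\|e^\phi-1-\phi\|_{L^\infty(\R)}\,\|W\|_{\star,\sigma}\le C\,\|\phi\|_{L^\infty(\R)}^2\qquad\textup{whenever }\|\phi\|_{L^\infty(\R)}\le1.$$
The same reasoning, combined with the mean value theorem applied to $s\mapsto e^s-1-s$, gives the Lipschitz bound $\|\mathcal{N}(\phi_1)-\mathcal{N}(\phi_2)\|_{\star,\sigma}\le C\big({\|\phi_1\|_{L^\infty(\R)}+\|\phi_2\|_{L^\infty(\R)}}\big)\|\phi_1-\phi_2\|_{L^\infty(\R)}$ for $\phi_1,\phi_2$ bounded by $1$.

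\textbf{Contraction.} Next I would fix $\rho:=C\,\epsilon^{1-\sigma}\big({\log\frac1\epsilon}\big)$ with $C$ the constant appearing in \eqref{estfornonlinearphi}, to be taken large, and work in the closed ball $\mathcal{B}_\rho:=\{\phi\in L^\infty(\R):\phi=0\textup{ a.e.\ in }\R\setminus I_\epsilon,\ \|\phi\|_{L^\infty(\R)}\le\rho\}$. For $\phi\in\mathcal{B}_\rho$, the a priori bound for $L_\eta^{-1}$ together with the displays above yields
$$\|\mathcal{A}(\phi)\|_{L^\infty(\R)}\le C\big({\textstyle\log\frac1\epsilon}\big)\big({\|\mathscr{E}\|_{\star,\sigma}+\|\mathcal{N}(\phi)\|_{\star,\sigma}}\big)\le C\big({\textstyle\log\frac1\epsilon}\big)\big({C\epsilon^{1-\sigma}+C\rho^2}\big),$$
and since $\rho^2\log\frac1\epsilon=o\big({\epsilon^{1-\sigma}\log\frac1\epsilon}\big)$ as $\epsilon\to0$, choosing first the constant in $\rho$ large enough (in terms of the structural constant) and then $\epsilon$ small forces $\mathcal{A}(\mathcal{B}_\rho)\subset\mathcal{B}_\rho$. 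Likewise, for $\phi_1,\phi_2\in\mathcal{B}_\rho$,
$$\|\mathcal{A}(\phi_1)-\mathcal{A}(\phi_2)\|_{L^\infty(\R)}\le C\big({\textstyle\log\frac1\epsilon}\big)\,\rho\,\|\phi_1-\phi_2\|_{L^\infty(\R)}=C\,\epsilon^{1-\sigma}\big({\textstyle\log\frac1\epsilon}\big)^2\|\phi_1-\phi_2\|_{L^\infty(\R)},$$
so $\mathcal{A}$ is a contraction on $\mathcal{B}_\rho$ once $\epsilon$ is small. Banach's fixed point theorem then provides a unique $\phi\in\mathcal{B}_\rho$ with $\phi=\mathcal{A}(\phi)$: this is the desired solution, it automatically satisfies \eqref{phiortnonlin} (being in the range of $L_\eta^{-1}$), it obeys \eqref{estfornonlinearphi} by the definition of $\rho$, and the associated $c_1,\dots,c_m$ are the unique ones (for given $\phi$ they can also be recovered by testing \eqref{nonlinprojprobforphi} against $\chi_jZ_{1j}$). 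Since $\mathcal{B}_\rho$ is exactly the set of bounded functions vanishing outside $I_\epsilon$ and obeying \eqref{estfornonlinearphi}, uniqueness of the fixed point gives the uniqueness asserted in the statement.

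\textbf{Main obstacle.} The scheme is the standard one; the only genuinely delicate point is making the superlinear term $\mathcal{N}(\phi)=W(e^\phi-1-\phi)$ and multiplication by the potential $W$ compatible with the weighted norm $\|\cdot\|_{\star,\sigma}$, i.e.\ establishing $\|W\|_{\star,\sigma}\le C$ uniformly in $\epsilon$ and $\xi$. This is precisely where the choice $\sigma\in(0,1)$ is exploited, and once it is in place the remaining (finitely many) smallness conditions on $\epsilon$ are arranged without difficulty.
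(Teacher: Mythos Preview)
Your proof is correct and follows essentially the same route as the paper: reformulate \eqref{nonlinprojprobforphi}--\eqref{phiortnonlin} as the fixed-point equation $\phi=L_\eta^{-1}(-\mathscr{E}+\mathcal{N}(\phi))$, combine the linear estimate~\eqref{mainlinearaprioriest} with Proposition~\ref{error-measure} and the quadratic bound $\|\mathcal{N}(\phi)\|_{\star,\sigma}\le C\|\phi\|_{L^\infty(\R)}^2$, and run a contraction on a ball of radius $C\epsilon^{1-\sigma}|{\log\epsilon}|$. Your explicit remark that $\|W\|_{\star,\sigma}\le C$ hinges on $\sigma<1$ (so that the $(1+|y-\eta_j|)^{-2}$ decay of $W$ is dominated by the weight) is the key observation behind the paper's estimate~\eqref{nstarestimate}, which the paper states without comment.
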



\begin{proof}
First of all, having at hand~$L^{-1}_\eta$ (see Proposition~\ref{prop1derivative}), we reformulate~\eqref{nonlinprojprobforphi}--\eqref{phiortnonlin} as the fixed-point problem
$$
\phi = T(\phi) := L_\eta^{-1} \big( {-\mathscr{E} + \mathcal{N}(\phi)} \big), \quad \phi \in L^{\infty}(\R).
$$
By Proposition~\ref{mainlinearprop}--in particular, by the a priori estimate~\eqref{mainlinearaprioriest}--, we know there exist two constants~$C_1 > 0$ and~$\epsilon_1 \in (0,1)$ such that, for all~$\epsilon \in (0,\epsilon_1]$,
\begin{equation*}
\|T(\phi)\|_{L^{\infty}(\R)} \leq C_1 |{\log \epsilon}| \Big( \|\mathcal{N}(\phi)\|_{\star,\,\sigma} + \|\mathscr{E}\|_{\star,\,\sigma} \Big).
\end{equation*}
On the one hand, observe that, for all~$\phi \in L^{\infty}(\R)$ with~$\|\phi\|_{L^{\infty}(\R)} \leq 1$,  
\begin{equation} \label{nstarestimate}
\|\mathcal{N}(\phi)\|_{\star,\,\sigma} \leq C_2 \|\phi\|_{L^{\infty}(\R)}^2,
\end{equation}
for some~$C_2 > 0$ (independent of~$\epsilon$ and~$\xi$). On the other hand, by Proposition \ref{error-measure}, we know there exists~$C_3 > 0$  (independent of~$\epsilon$,~$\sigma$, and~$\xi$) such that 
$$
\|\mathscr{E}\|_{\star,\,\sigma} \leq C_3\, \epsilon^{1-\sigma}.
$$
Hence, for all $\phi \in L^{\infty}(\R)$ with $\|\phi\|_{L^{\infty}(\R)} \leq 1$, it follows that 
\begin{equation} \label{Tphi-before-ball}
\|T(\phi)\|_{L^{\infty}(\R)} \leq C_1 |{\log\epsilon}| \big( C_2 \|\phi\|_{L^{\infty}(\R)}^2 + C_3\,  \epsilon^{1-\sigma} \big).
\end{equation}
We also point out that there exists~$C_4 > 0$ (independent of~$\epsilon$ and~$\xi$) such that, for all~$\phi_1, \phi_2 \in L^{\infty}(\R)$ with~$\|\phi_i\|_{L^{\infty}(\R)} \leq 1$ ($i = 1, 2$), it holds
\begin{equation} \label{Nphi2-Nphi1}
\|\mathcal{N}(\phi_2) - \mathcal{N}(\phi_1) \|_{\star,\,\sigma} \leq C_4 \big( \|\phi_1\|_{L^{\infty}(\R)} + \|\phi_2\|_{L^{\infty}(\R)} \big) \|\phi_2 - \phi_1\|_{L^{\infty}(\R)}.
\end{equation}
Now, we set
$$
B_{\epsilon}:= \left\{ \phi \in L^{\infty}(\R) : \|\phi\|_{L^{\infty}(\R)} \leq 2C_1 C_3\,  \epsilon^{1-\sigma} |{\log \epsilon}| \right\} \! ,
$$
and choose~$\epsilon_0 \in (0,\epsilon_1]$ in a way that
\begin{equation} \label{epsilon0nonlinear}
\max \Big\{ {2 C_1 C_3 \,\epsilon^{1-\sigma} |{\log \epsilon}|,\, 4 C_1^2 C_2 C_3 \, \epsilon^{1-\sigma} |{\log \epsilon}|^2,\, 8 C_1^2 C_3 C_4 \, \epsilon^{1-\sigma} |{\log \epsilon}|^2} \Big\} \leq 1, \quad \textup{ for all } \epsilon \in (0,\epsilon_0].
\end{equation}
We conclude the proof by showing that, for all~$\epsilon \in (0,\epsilon_0]$,~$T$ is a contraction mapping on~$B_{\epsilon}$. To this end, let~$\epsilon \in (0, \epsilon_0]$. First, we prove that~$T(B_{\epsilon}) \subset B_{\epsilon}$. Indeed, taking into account~\eqref{Tphi-before-ball}, the definition of~$B_{\epsilon}$, and~\eqref{epsilon0nonlinear}, it is immediate to check that
$$
\|T(\phi)\|_{L^{\infty}(\R)} \leq C_1 C_3\, \epsilon^{1-\sigma} |{\log \epsilon}| \big( {4C_1^2C_2C_3\, \epsilon^{1-\sigma} |{\log \epsilon}|^2 + 1} \big) \leq 2 C_1 C_3 \epsilon^{1-\sigma} |{\log \epsilon}|, \quad \textup{ for all } \phi \in B_{\epsilon}.
$$
Thus,~$T(B_{\epsilon}) \subset B_{\epsilon}$ and it only remains to show the existence of~$L \in (0,1)$ for which
\begin{equation*}
\|T(\phi_2) - T(\phi_1) \|_{L^{\infty}(\R)} \leq L \|\phi_1-\phi_2\|_{L^{\infty}(\R)}, \quad \textup{ for all } \phi_1, \phi_2 \in B_{\epsilon}.
\end{equation*}
Taking advantage of estimates~\eqref{mainlinearaprioriest} and~\eqref{Nphi2-Nphi1}, the definition of~$B_{\epsilon}$, and~\eqref{epsilon0nonlinear}, it is clear that
\begin{equation*}
\begin{aligned}
\|T(\phi_2) - T(\phi_1) \|_{L^{\infty}(\R)} & = \| L_\eta^{-1} \! \left( \mathcal{N}(\phi_2) - \mathcal{N}(\phi_1) \right) \|_{L^{\infty}(\R)} \leq C_1 |{\log \epsilon}| \,  \|\mathcal{N}(\phi_2) - \mathcal{N}(\phi_1)\|_{\star, \sigma} \\
& \leq 4C_1^2 C_3 C_4 \epsilon^{1-\sigma} |{\log \epsilon}|^2 \|\phi_2-\phi_1\|_{L^{\infty}(\R)} \\
& \leq \frac{1}{2} \, \|\phi_2-\phi_1\|_{L^{\infty}(\R)}, \qquad \textup{for all } \phi_1, \phi_2 \in B_{\epsilon}.
\end{aligned}
\end{equation*}
We thus conclude that, for all $\epsilon \in (0,\epsilon_0]$, $T$ is a contraction mapping on $B_{\epsilon}$ and the proof is complete. 
\end{proof}

\subsection{Dependence on the $\xi_j$'s} $ $
\medbreak
\noindent
For~$\xi = \epsilon \eta \in \mathcal{I}_{\delta_0}$, we consider the map~$\Phi: \eta \mapsto \Phi(\eta) = \phi$, where~$\phi$ is the unique bounded solution to~\eqref{nonlinprojprobforphi}--\eqref{phiortnonlin}. We analyze here its differentiability.

\begin{proposition} \label{prop2derivative}
Let~$\delta_0, \sigma \in (0, 1)$. There exist~$\epsilon_0 \in (0, 1)$ such that, if~$\epsilon \in (0, \epsilon_0)$, then the map~$\Phi$ is of class~$C^1$. Moreover, there exists a constant~$C > 0$ for which
$$
\big\|\partial_{\eta_l} (\Phi(\eta)) \big\|_{L^{\infty}(\R)} \leq C \epsilon^{1-\sigma} \big( {\log \frac1\epsilon}\big)^2,
$$
for all~$\eta \in \epsilon^{-1} \mathcal{I}_{\delta_0}$ and~$l = 1, \ldots, m$.
\end{proposition}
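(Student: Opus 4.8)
The plan is to deduce Proposition~\ref{prop2derivative} from Proposition~\ref{prop1derivative} and the fixed-point formulation of~\eqref{nonlinprojprobforphi}--\eqref{phiortnonlin} used in the proof of Proposition~\ref{propnonlineartheory}, by means of the implicit function theorem in Banach spaces. Making the $\eta$-dependence explicit, I write $\mathscr{E} = \mathscr{E}_\eta$, $W = W_\eta$, $\mathcal{N} = \mathcal{N}_\eta$, and I recall that $\phi = \Phi(\eta)$ is the unique fixed point in $B_\varepsilon$ of $\phi \mapsto S(\eta,\phi) := L_\eta^{-1}\big({-\mathscr{E}_\eta + \mathcal{N}_\eta(\phi)}\big)$. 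After replacing $\delta_0$ by a slightly smaller $\delta_0'$, so that $\varepsilon^{-1}\mathcal{I}_{\delta_0}$ lies in the interior of $\varepsilon^{-1}\mathcal{I}_{\delta_0'}$, I would consider the map $\mathcal{G}(\eta,\phi) := \phi - S(\eta,\phi)$ on $\mathcal{V} \times B$, where $\mathcal{V} \subset \varepsilon^{-1}\mathcal{I}_{\delta_0'}$ is open and $B \subset L^\infty(\R)$ is a fixed open ball containing all the $\Phi(\eta)$'s, and prove that $\mathcal{G}$ is of class $C^1$ with $D_\phi\mathcal{G}$ invertible. The uniqueness in Proposition~\ref{propnonlineartheory} then identifies $\Phi$ with the $C^1$ implicit function provided by the theorem.

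For the dependence on $\phi$, I would use $D_\phi\mathcal{N}_\eta(\phi)[\psi] = W_\eta(e^\phi - 1)\psi$ together with the bound $\|W_\eta v\|_{\star,\,\sigma} \le C\|v\|_{L^\infty(\R)}$, which follows at once from~\eqref{Wasperturb}--\eqref{thetabounds}. Thus $\phi \mapsto \mathcal{N}_\eta(\phi)$ is $C^1$ from $B$ into $L^\infty_{\star,\,\sigma}(I_\varepsilon)$ with $\|D_\phi\mathcal{N}_\eta(\phi)\| \le C\|\phi\|_{L^\infty(\R)}$, and composing with the bounded linear operator $L_\eta^{-1}$ (Proposition~\ref{mainlinearprop}) gives
$$
\big\| \partial_\phi S(\eta,\phi) \big\|_{\mathcal{L}(L^\infty(\R))} \le C\big({\log\tfrac1\varepsilon}\big)\|\phi\|_{L^\infty(\R)} \le C\,\varepsilon^{1-\sigma}\big({\log\tfrac1\varepsilon}\big)^2, \qquad (\eta,\phi) \in \mathcal{V}\times B .
$$
In particular, for $\varepsilon$ small the operator $D_\phi\mathcal{G}(\eta,\phi) = I - \partial_\phi S(\eta,\phi)$ is invertible with $\|D_\phi\mathcal{G}(\eta,\phi)^{-1}\|_{\mathcal{L}(L^\infty(\R))} \le 2$, uniformly in $(\eta,\phi)$.

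For the dependence on $\eta$, I would freeze $\phi \in B$ and verify that $\eta \mapsto g_\eta := -\mathscr{E}_\eta + \mathcal{N}_\eta(\phi)$ is of class $C^1$ into $L^\infty_{\star,\,\sigma}(I_\varepsilon)$, with $\|g_\eta\|_{\star,\,\sigma} + \|\partial_{\eta_l}g_\eta\|_{\star,\,\sigma} \le C\,\varepsilon^{1-\sigma}$; Proposition~\ref{prop1derivative} then yields that $\eta \mapsto S(\eta,\phi)$ is $C^1$ into $L^\infty(\R)$ with
$$
\big\| \partial_{\eta_l}S(\eta,\phi) \big\|_{L^\infty(\R)} \le C\big({\log\tfrac1\varepsilon}\big)\Big({\big({\log\tfrac1\varepsilon}\big)\|g_\eta\|_{\star,\,\sigma} + \|\partial_{\eta_l}g_\eta\|_{\star,\,\sigma}}\Big) \le C\,\varepsilon^{1-\sigma}\big({\log\tfrac1\varepsilon}\big)^2 .
$$
The bound on $\|g_\eta\|_{\star,\,\sigma}$ is immediate from Proposition~\ref{error-measure} and~\eqref{nstarestimate}. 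For $\partial_{\eta_l}g_\eta = -\partial_{\eta_l}\mathscr{E}_\eta + (\partial_{\eta_l}W_\eta)(e^\phi - 1 - \phi)$, the second summand is bounded by $C\|\phi\|_{L^\infty(\R)}^2 \le C\varepsilon^{2(1-\sigma)}|{\log\varepsilon}|^2$, using $|e^\phi - 1 - \phi| \le C\|\phi\|_{L^\infty(\R)}^2$ and the fact that, by differentiating~\eqref{Wasperturb}--\eqref{thetabounds}, even the $\eta_l$-derivative of the $l$-th bubble term of $W_\eta$ still decays like $(1+|y-\eta_l|)^{-1-\sigma}$; the remaining term requires $\|\partial_{\eta_l}\mathscr{E}_\eta\|_{\star,\,\sigma} \le C\varepsilon^{1-\sigma}$, which I would get by differentiating in $\eta_l$ the expansions of Lemma~\ref{Hj and uj expansions} and the two-case analysis in the proof of Proposition~\ref{error-measure}, taking into account that $\partial_{\eta_l}\mu_j = \varepsilon\,\partial_{\xi_l}\mu_j(\varepsilon\eta) = O(\varepsilon)$ and $\partial_{\eta_l}[H_j(\varepsilon\,\cdot\,)] = O(\varepsilon)$. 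The remaining mixed-continuity checks needed for the joint $C^1$ regularity of $\mathcal{G}$ are routine consequences of the estimates in Propositions~\ref{mainlinearprop} and~\ref{prop1derivative}.

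Once $\mathcal{G} \in C^1$ with $D_\phi\mathcal{G}$ invertible, the implicit function theorem applied near each $(\eta_0,\Phi(\eta_0))$, together with uniqueness in Proposition~\ref{propnonlineartheory}, gives $\Phi \in C^1(\varepsilon^{-1}\mathcal{I}_{\delta_0}; L^\infty(\R))$ and
$$
\partial_{\eta_l}\Phi(\eta) = -\, D_\phi\mathcal{G}(\eta,\Phi(\eta))^{-1}\big[{ \partial_{\eta_l}S(\eta,\phi)\big|_{\phi = \Phi(\eta)} }\big] ;
$$
combining $\|D_\phi\mathcal{G}(\eta,\Phi(\eta))^{-1}\| \le 2$ with the bound above for $\|\partial_{\eta_l}S(\eta,\phi)\|_{L^\infty(\R)}$ yields $\|\partial_{\eta_l}\Phi(\eta)\|_{L^\infty(\R)} \le C\,\varepsilon^{1-\sigma}\big({\log\tfrac1\varepsilon}\big)^2$, which is the claimed estimate~\eqref{estfornonlinearphi}-type bound. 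I expect the main obstacle to be exactly the $C^1$-dependence of $\mathscr{E}_\eta$ on $\eta$ in the weighted norm $\|\cdot\|_{\star,\,\sigma}$, together with the $O(\varepsilon^{1-\sigma})$ bound on its $\eta$-derivative: this is where the explicit form of the bubbles and of the correctors $H_j$ enters, and it is of the same technical nature as the estimates carried out in Appendix~\ref{App dependence on the xijs}. Everything else is bookkeeping around Proposition~\ref{prop1derivative}.
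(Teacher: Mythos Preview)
Your proposal is correct and reaches the same conclusion as the paper, but the organization is genuinely different. The paper first \emph{assumes} the $C^1$ regularity of $\Phi$ (citing \cite[Proposition~5.1]{D-dP-W14}) and then applies Proposition~\ref{prop1derivative} to the \emph{full} map $\eta \mapsto g_\eta := -\mathscr{E}_\eta + \mathcal{N}_\eta(\Phi(\eta))$, so that $\partial_{\eta_l} g_\eta$ already contains a $\partial_{\eta_l}\phi$ contribution through $\partial_{\eta_l}\mathcal{N}(\phi) = (\partial_{\eta_l}W)(e^\phi-1-\phi) + W(e^\phi-1)\partial_{\eta_l}\phi$; the resulting inequality has $\|\partial_{\eta_l}\phi\|_{L^\infty}$ on both sides and is closed by absorption, exactly mirroring the contraction estimate. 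You instead freeze $\phi$, apply Proposition~\ref{prop1derivative} to $\eta \mapsto -\mathscr{E}_\eta + \mathcal{N}_\eta(\phi)$ with $\phi$ fixed, and then let the implicit function theorem handle the coupling via the factor $D_\phi\mathcal{G}^{-1}$, whose norm is bounded by~$2$ thanks to the same smallness $\|D_\phi S\| \le C\varepsilon^{1-\sigma}|{\log\varepsilon}|^2$.

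Both routes rest on the same key technical ingredient, namely $\|\partial_{\eta_l}\mathscr{E}_\eta\|_{\star,\sigma} \le C\varepsilon^{1-\sigma}$ (the paper obtains it via Lemma~\ref{derofVwrtetalem} and the bound $\|\partial_{\eta_l}W\|_{\star,\sigma}\le C$, while you propose to redo the two-case analysis of Proposition~\ref{error-measure}; these amount to the same computation). Your approach has the mild advantage of producing the $C^1$ regularity of $\Phi$ \emph{inside} the argument rather than importing it from an external reference; the price is the extra bookkeeping of checking joint continuity of the partial derivatives of $S$ in $(\eta,\phi)$, which is indeed routine from the estimates of Propositions~\ref{mainlinearprop} and~\ref{prop1derivative} as you say. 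Either way the final bound $\|\partial_{\eta_l}\Phi(\eta)\|_{L^\infty(\R)} \le C\varepsilon^{1-\sigma}|{\log\varepsilon}|^2$ falls out identically.
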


We postpone the proof of this result to Appendix~\ref{App dependence on the xijs}.

\section{Variational reduction} \label{Variational reduction}

\noindent Our goal in this section is to show that~$\xi = (\xi_1, \ldots, \xi_m)$ can be chosen in such a way that the coefficients~$c_1, \ldots, c_m$ appearing in~\eqref{nonlinprojprobforphi} are all vanishing. In this way, we would have found a solution~$\phi$ to~\eqref{nonlinprobforphi} and thus a solution~$v$ to~\eqref{main-problem}. 

Given~$\xi \in \mathcal{I}_{\delta_0}$ and~$\epsilon \in (0,\epsilon_0]$, let~$\phi = \phi[\xi]$ be the solution to~\eqref{nonlinprojprobforphi}--\eqref{phiortnonlin} provided by Proposition \ref{propnonlineartheory}. Letting~$\psi := \phi \big( { \frac{\cdot}{\varepsilon} } \big)$, we consider the function~$F_\epsilon: \mathcal{I}_{\delta_0} \to \R$ defined by
$$
F_\varepsilon(\xi) := \mathcal{J}_\varepsilon(\mathscr{U} + \psi), \quad \mbox{ for } \xi \in \mathcal{I}_{\delta_0},
$$
where
$$
\mathcal{J}_\varepsilon(u) := \frac{1}{4\pi} \int_\R \int_\R \frac{|u(x) - u(x')|^2}{|x - x'|^2} \, dx dx' - \varepsilon \int_I \kappa(x) e^{u(x)} \, dx,
$$
and introduce the shortened notation $c = c[\xi] := \big( {c_1[\xi], \ldots, c_m[\xi]} \big)$.

First we have the following preliminary lemma, concerning the derivative of~$\mathscr{V}$ with respect to the~$\eta_j$'s.

\begin{lemma} \label{derofVwrtetalem}
For any~$j = 1, \ldots, m$, it holds that~$\partial_{\eta_j} \mathscr{V} = Z_{1 j} + R_j$, with~$\| R_j \|_{L^\infty(I_\varepsilon)} \le C \varepsilon$, for some constant~$C > 0$ independent of $\epsilon$ and $\xi$.
\end{lemma}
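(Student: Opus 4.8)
The plan is to compute $\partial_{\eta_j}\mathscr{V}$ directly from the definitions and to collect into $R_j$ everything other than the $Z_{1j}$-contribution; throughout we work with $\xi \in \mathcal{I}_{\delta_0}$, as in the rest of this section, so that \eqref{mu-bounded both sides} is available. Recalling \eqref{ansatz-expanded-variables}, \eqref{ansatz}, \eqref{ujdef}, and writing $\xi_i = \varepsilon\eta_i$ and $\mu_i = \mu_i(\xi)$ (see \eqref{mu_j}), one has, for $y$ fixed,
$$
\mathscr{V}(y) = \sum_{i = 1}^m \Big( \mathcal{U}_{\mu_i,0}(y - \eta_i) - \log\kappa(\varepsilon\eta_i) + H_i(\varepsilon y) \Big) + 2(1 - m)\log\varepsilon,
$$
the last summand being independent of $\eta$. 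I would then differentiate the explicit terms by the chain rule: since $\frac{\partial}{\partial\mu}\mathcal{U}_{\mu,0}(z) = \frac1\mu - \frac{2\mu}{\mu^2 + z^2}$ and $-\frac{\partial}{\partial z}\mathcal{U}_{\mu,0}(z) = \frac{2z}{\mu^2+z^2}$, evaluating at $z = y - \eta_i$ and $\mu = \mu_i$ gives precisely $Z_{0i}(y)$ and $Z_{1i}(y)$ (cf.~\eqref{Z0Z1defs}, \eqref{Zij}), while $\partial_{\eta_j}\mu_i = \varepsilon\,\partial_{\xi_j}\mu_i$ (as $\mu_i$ depends on $\eta$ only through $\xi = \varepsilon\eta$). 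Isolating the index $i = j$ in the $\eta_i$-derivative produces the leading term $Z_{1j}$, and one arrives at $\partial_{\eta_j}\mathscr{V} = Z_{1j} + R_j$ with
$$
R_j(y) := \varepsilon\sum_{i = 1}^m \big(\partial_{\xi_j}\mu_i\big)\, Z_{0i}(y) \;-\; \varepsilon\,\frac{\kappa'(\varepsilon\eta_j)}{\kappa(\varepsilon\eta_j)} \;+\; \sum_{i = 1}^m \big(\partial_{\eta_j}H_i\big)(\varepsilon y).
$$

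It then remains to bound $\|R_j\|_{L^\infty(I_\varepsilon)}$. The first two terms are immediately $O(\varepsilon)$: one has $|Z_{0i}| \le 1/\mu_i \le 1/c_\star$ on $\R$ by \eqref{mu-bounded both sides}; the quantities $\partial_{\xi_j}\mu_i$ are uniformly bounded on $\mathcal{I}_{\delta_0}$, because $\mu_i$ is a $C^1$ function of $\xi$ there (by the regularity of $H$ on the diagonal and of $G$ off the diagonal, cf.~Lemma \ref{regularity-Green}) and $\mathcal{I}_{\delta_0}$ is compact; and $|\kappa'/\kappa| \le \|\kappa'\|_{L^\infty(\overline I)}/\inf_I\kappa$ since $\kappa \in C^2(\overline I)$ with $\inf_I\kappa > 0$. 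For the third term I would differentiate \eqref{probforHj}: $\partial_{\eta_j}H_i$ solves $\Lfrac\big(\partial_{\eta_j}H_i\big) = 0$ in $I$ with $\partial_{\eta_j}H_i = -\,\partial_{\eta_j}u_i$ in $\R\setminus I$, so Proposition \ref{boundary regularity half-laplacian} (applied exactly as in the proof of Lemma \ref{Hj and uj expansions}) yields $\|\partial_{\eta_j}H_i\|_{L^\infty(I)} \le C\big(\|\partial_{\eta_j}u_i\|_{C^1(\overline K\setminus I)} + \|\partial_{\eta_j}u_i\|_{L^\infty(\R\setminus I)}\big)$. Finally, from the explicit form \eqref{ujdef} of $u_i$ and the bound $|x - \xi_i| \ge \delta_0$ valid for all $x \in \R\setminus I$ when $\xi \in \mathcal{I}_{\delta_0}$, a direct differentiation shows that $\partial_{\eta_j}u_i$ and $\partial_x\partial_{\eta_j}u_i$ are $O(\varepsilon)$ on $\overline K\setminus I$ and that $\partial_{\eta_j}u_i$ is $O(\varepsilon)$ on all of $\R\setminus I$: each $\eta_j$-derivative brings a factor $\varepsilon$ (from $\partial_{\eta_j}\xi_i = \varepsilon\delta_{ij}$ or $\partial_{\eta_j}\mu_i = \varepsilon\partial_{\xi_j}\mu_i$), and the only non-decaying contribution, $\varepsilon(x - \xi_i)/(\mu_i^2\varepsilon^2 + (x-\xi_i)^2)$, is controlled by $\varepsilon/|x-\xi_i| \le \varepsilon/\delta_0$. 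Combining these estimates gives $\|R_j\|_{L^\infty(I_\varepsilon)} \le C\varepsilon$.

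The step I expect to be the main obstacle is the rigorous justification that $H_i$ (hence $\mathscr{V}$) is differentiable in $\eta_j$ and that $\partial_{\eta_j}H_i$ indeed solves the differentiated Dirichlet problem; the boundary datum $-u_i$ grows logarithmically at infinity, so this is not literally the differentiation of a bounded linear extension operator. I would settle it through difference quotients: for small $h$, the function $h^{-1}(H_i^{(h)} - H_i)$ (the superscript denoting the value of the parameter $\eta_j + h$) is the unique bounded solution of $\Lfrac w = 0$ in $I$ with exterior datum $-h^{-1}(u_i^{(h)} - u_i)|_{\R\setminus I}$, the latter being uniformly bounded (by the computation above) and converging uniformly on $\R\setminus I$ to $-\,\partial_{\eta_j}u_i$ as $h \to 0$; passing to the limit via the a priori estimate of Proposition \ref{boundary regularity half-laplacian} for this linear problem identifies $\partial_{\eta_j}H_i$ and gives continuity of the derivative. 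Apart from this point, the proof is pure bookkeeping with the chain rule and the uniform bounds \eqref{mu-bounded both sides}.
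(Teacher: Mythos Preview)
Your proposal is correct and follows essentially the same route as the paper's proof: both compute $\partial_{\eta_j}\mathscr{V}$ by the chain rule, isolate $Z_{1j}$, bound $\partial_{\xi_j}\mu_i$ uniformly, and control the $H_i$-contribution by differentiating the Dirichlet problem \eqref{probforHj} and applying Proposition~\ref{boundary regularity half-laplacian} to the resulting exterior datum (the paper works with $\partial_{\xi_j}H_k$ and shows it is $O(1)$, you work with $\partial_{\eta_j}H_i=\varepsilon\,\partial_{\xi_j}H_i$ and show it is $O(\varepsilon)$---the same computation). The only difference is that you explicitly justify the differentiability of $H_i$ in $\eta_j$ via difference quotients, a point the paper takes for granted.
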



\begin{proof}
Recalling~\eqref{bubbles},~\eqref{ansatz},~\eqref{ansatz-expanded-variables}, and the fact that~$\xi = \varepsilon \eta$, we have
$$
\mathscr{V}(y) = \sum_{k = 1}^m \Big( {\mathcal{U}_{\mu_k(\xi), \eta_k}(y) - \log \kappa(\xi_k) + H_k(\varepsilon y)} \Big) - 2 (m - 1) \log \varepsilon,
$$
for all~$y \in I_\varepsilon$. As a result, taking into account the definitions in~\eqref{Z0Z1defs}, we find
\begin{align*}
\partial_{\eta_j} \mathscr{V}(y) & = \partial_{b} \, \mathcal{U}_{\mu_j(\xi), b}(y) \Big|_{b = \eta_j} - \varepsilon \frac{\kappa'(\xi_j)}{\kappa(\xi_j)} + \varepsilon \sum_{k = 1}^m \Big( {\partial_{a} \, \mathcal{U}_{a, \eta_k}(y) \Big|_{a = \mu_k(\xi)} \partial_{\xi_j} \mu_k(\xi) + \partial_{\xi_j} H_k(\varepsilon y)} \Big) \\
& = Z_{1, \mu_j(\xi), \eta_j}(y) - \varepsilon \frac{\kappa'(\xi_j)}{\kappa(\xi_j)} + \varepsilon \sum_{k = 1}^m \Big( {Z_{0, \mu_k(\xi), \eta_k}(y) \partial_{\xi_j} \mu_k(\xi) + \partial_{\xi_j} H_k(\varepsilon y)} \Big).
\end{align*}
We go back to definition~\eqref{mu_j} of~$\mu_k$ and compute
$$
\partial_{\xi_j} \mu_k(\xi) = \Bigg\{ {\delta_{j k} \Bigg( {\frac{\kappa'(\xi_k)}{\kappa(\xi_k)} + 2 \partial_a H(a, \xi_k) \Big|_{a = \xi_k} + \sum_{i \ne k} \partial_a G(a, \xi_i) \Big|_{a = \xi_k}} \Bigg) + (1 - \delta_{j k}) \partial_b G(\xi_k, b) \Big|_{b = \xi_j}} \Bigg\} \mu_k(\xi).
$$
From the regularity properties of~$\kappa$,~$H$,~$G$, the fact that~$\inf_I \kappa > 0$, and the uniform bounds~\eqref{mu-bounded both sides} on~$\mu_k$, we infer that~$|\partial_{\xi_j} \mu_k(\xi)| \le C$ for all~$\xi \in \mathcal{I}_{\delta_0}$ and for some constant~$C > 0$ (indep. of $\epsilon$ and $\xi$). We now have a look at the term~$\partial_{\xi_j} H_k$. Recalling~\eqref{ujdef},~\eqref{probforHj} and arguing as before, we see that it solves 
$$
\left\{
\begin{aligned}
\Lfrac \big( {\partial_{\xi_j} H_k} \big) & = 0, \quad && \textup{ in } I,\\
\partial_{\xi_j} H_k & = f_{j k}, \quad && \textup{ in } \R \setminus I,
\end{aligned}
\right.
$$
with
$$
f_{j k}(x) := - Z_{0, \mu_k(\xi), 0}\left( \frac{x - \xi_k}{\varepsilon} \right) \partial_{\xi_j} \mu_k(\xi) - \delta_{j k} \left\{ \frac{1}{\varepsilon} \, Z_{1, \mu_k(\xi), 0} \left( \frac{x - \xi_k}{\varepsilon} \right) + \frac{\kappa'(\xi_k)}{\kappa(\xi_k)} \right\}, \quad \mbox{ for } x \in \R \setminus I.
$$
By~\eqref{Z0Z1defs} and the fact that~$\xi \in \mathcal{I}_{\delta_0}$, we estimate
$$
\left| \frac{1}{\varepsilon} \, Z_{1, \mu_k(\xi), 0} \left( \frac{x - \xi_k}{\varepsilon} \right) \right| = \frac{1}{|x - \xi_k|} \left| \frac{x - \xi_k}{\varepsilon} \, Z_{1, \mu_k(\xi), 0} \left( \frac{x - \xi_k}{\varepsilon} \right) \right| \le \frac{2}{|x - \xi_k|} \le \frac{2}{\delta_0}, \quad \mbox{ for all } x \in \R \setminus I.
$$
Consequently, we easily deduce that~$|f_{j k}| \le C$ in~$\R \setminus I$. Hence, Proposition \ref{boundary regularity half-laplacian} gives that~$|\partial_{\xi_j} H_k(x)| \le C$ for all~$x \in I$. The combination of all these facts leads us to the claim of the lemma.
\end{proof}

The next result shows the connection between the coefficients~$c_1, \ldots, c_m$ and the function~$F_\varepsilon$.

\begin{lemma} \label{c=0iffstationarylem}
The function~$F_\varepsilon$ is of class~$C^1$ in~$\mathcal{I}_{\delta_0}$ and
$$
c[\xi] = 0 \quad \mbox{if and only if} \quad \nabla F_\varepsilon(\xi) = 0,
$$
provided~$\varepsilon$ is small enough.
\end{lemma}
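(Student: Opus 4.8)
The plan is to differentiate $F_\varepsilon$ in each direction $\xi_l$, to express the derivative through the Lagrange multipliers $c_j = c_j[\xi]$ of the projected problem~\eqref{nonlinprojprobforphi}, and to recognise that the linear map $c[\xi] \mapsto \nabla F_\varepsilon(\xi)$ is invertible for $\varepsilon$ small. First, I would note that $F_\varepsilon$ is $C^1$ on $\mathcal{I}_{\delta_0}$: the ansatz $\mathscr{U}$ depends smoothly on $\xi$ — it is built from the bubbles $\mathcal{U}_{\mu_j,0}((\,\cdot\,-\xi_j)/\varepsilon)$, the weights $\mu_j(\xi)$ of~\eqref{mu_j}, and the correctors $H_j$, all smooth in $\xi$ — while $\xi \mapsto \psi := \phi[\xi](\,\cdot\,/\varepsilon)$ is $C^1$ by Proposition~\ref{prop2derivative}, and $\mathcal{J}_\varepsilon$ is smooth on $H^{\frac12}(\R)$. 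By the chain rule, $\partial_{\xi_l} F_\varepsilon(\xi) = D\mathcal{J}_\varepsilon(\mathscr{U}+\psi)[\partial_{\xi_l}(\mathscr{U}+\psi)]$ for $l = 1, \dots, m$.

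Next I would reduce this derivative using the projected equation. Both $\mathscr{U}$ and $\psi$ vanish in $\R \setminus I$ (for $\mathscr{U}$ since $H_j = -u_j$ there, for $\psi$ since $\phi = 0$ in $\R \setminus I_\varepsilon$), hence so does $w := \partial_{\xi_l}(\mathscr{U}+\psi)$. Rewriting~\eqref{nonlinprojprobforphi} in the original variable, $u := \mathscr{U}+\psi$ solves $\Lfrac u - \varepsilon \kappa e^u = \varepsilon^{-1}\sum_j c_j[\xi]\,\chi_j(\,\cdot\,/\varepsilon)\,Z_{1j}(\,\cdot\,/\varepsilon)$ in $I$. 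Since $D\mathcal{J}_\varepsilon(u)[w] = \int_I(\Lfrac u - \varepsilon \kappa e^u)\,w$ whenever $w = 0$ outside $I$, rescaling $x = \varepsilon y$ and using $\partial_{\xi_l} = \varepsilon^{-1}\partial_{\eta_l}$ gives
\[
\partial_{\xi_l} F_\varepsilon(\xi) = \frac{1}{\varepsilon}\sum_{j=1}^m c_j[\xi]\, b_{jl}, \qquad b_{jl} := \int_{I_\varepsilon} \chi_j\, Z_{1j}\, \partial_{\eta_l}(\mathscr{V}+\phi)\, dy.
\]

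It then remains to show that $B := (b_{jl})_{j,l}$ is invertible for $\varepsilon$ small. I would estimate the $b_{jl}$ using Lemma~\ref{derofVwrtetalem} (which gives $\partial_{\eta_l}\mathscr{V} = Z_{1l} + R_l$ with $\|R_l\|_{L^\infty(I_\varepsilon)} \le C\varepsilon$), the bound $\|\phi\|_{L^\infty(\R)} \le C\varepsilon^{1-\sigma}|{\log\varepsilon}|$ from~\eqref{estfornonlinearphi}, and the orthogonality~\eqref{phiortnonlin}, which upon differentiation in $\eta_l$ yields $\int_\R \chi_j Z_{1j}\,\partial_{\eta_l}\phi = -\int_\R \phi\,\partial_{\eta_l}(\chi_j Z_{1j}) = O(\varepsilon^{1-\sigma}|{\log\varepsilon}|)$, since $\partial_{\eta_l}(\chi_j Z_{1j})$ is bounded and supported in a fixed-size neighbourhood of $\eta_j$. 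Because $\chi_j Z_{1j}$ is supported in $(\eta_j - \bar R - 1, \eta_j + \bar R + 1) \subset I_\varepsilon$ for $\varepsilon$ small, and there $|Z_{1l}(y)| \le 2|y - \eta_l|^{-1} \le C\varepsilon$ when $l \ne j$ (as $|\eta_j - \eta_l| \ge \delta_0/\varepsilon$), one gets
\[
b_{ll} = \int_\R \chi\, Z_{1,\mu_l,0}^2 + O\big(\varepsilon^{1-\sigma}|{\log\varepsilon}|\big) \ge \frac{1}{C} > 0, \qquad b_{jl} = O\big(\varepsilon^{1-\sigma}|{\log\varepsilon}|\big) \quad (j \ne l),
\]
with the lower bound on $\int_\R \chi Z_{1,\mu_l,0}^2$ being the one already recorded in the proof of Proposition~\ref{mainlinearprop}. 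Hence $B$ is invertible for $\varepsilon$ small and, since $\nabla F_\varepsilon(\xi) = \varepsilon^{-1} B^{\!\top} c[\xi]$, this forces $\nabla F_\varepsilon(\xi) = 0 \iff c[\xi] = 0$.

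The delicate point is the very first step: to apply the chain rule rigorously one needs $\xi \mapsto \mathscr{U}+\psi$ to be $C^1$ as a map into $H^{\frac12}(\R)$, and not merely into $L^\infty(\R)$ as furnished by Proposition~\ref{prop2derivative}. This is obtained by upgrading the $L^\infty$ bounds on $\phi$ and $\partial_{\eta_l}\phi$ via the equations they solve and standard interior/boundary estimates for the half-Laplacian, together with the identification of $D\mathcal{J}_\varepsilon(u)$ with pairing against $\Lfrac u - \varepsilon\kappa e^u$ for test functions supported in $\overline I$. Granting this, the rest is bookkeeping of error terms; the only structural input is the almost-diagonal form of $B$, which rests on Lemma~\ref{derofVwrtetalem} and on differentiating the orthogonality conditions. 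We stress that the known smallness of the $c_j[\xi]$ (cf.~\eqref{estfornonlinearphi} and~\eqref{ckestimates}) is irrelevant here: only the invertibility of $B$ matters.
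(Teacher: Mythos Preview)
Your proposal is correct and follows essentially the same route as the paper: differentiate $F_\varepsilon$ via the chain rule, use the projected equation to express $\partial_{\xi_l}F_\varepsilon$ as $\varepsilon^{-1}$ times a linear combination of the $c_j$'s, and show the resulting matrix is nearly diagonal by Lemma~\ref{derofVwrtetalem}. The only notable difference is that the paper bounds the contribution of $\partial_{\eta_l}\phi$ directly via the $L^\infty$ estimate of Proposition~\ref{prop2derivative} (giving an $O(\varepsilon^{1-\sigma}|\log\varepsilon|^2)$ error), whereas you differentiate the orthogonality relation~\eqref{phiortnonlin} to get $\int\chi_jZ_{1j}\partial_{\eta_l}\phi=-\int\phi\,\partial_{\eta_l}(\chi_jZ_{1j})$, which is a slightly sharper and more elegant way to handle that term.
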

\begin{proof}
Through a change of variables and recalling the definition~\eqref{ansatz-expanded-variables} of~$\mathscr{V}$, it is immediate to see that~$F_\varepsilon(\xi) = \widehat{\mathcal{J}}_\varepsilon(\mathscr{V} + \phi)$, with
$$
\widehat{\mathcal{J}}_\varepsilon(v) := \frac{1}{4\pi} \int_\R \int_\R  \frac{|v(y) - v(y')|^2}{|y - y'|^2} \, dy dy' - \int_{I_\varepsilon} \kappa(\varepsilon y) e^{v(y)} \, dy.
$$
By this and the fact that~$\eta = \varepsilon^{-1} \xi$, setting~$v := \mathscr{V} + \phi$ we have that, for~$j = 1, \ldots, m$, 
\begin{align*}
\partial_{\xi_j} F_\varepsilon(\xi) & = \frac{1}{\varepsilon} \, \partial_{\eta_j} \widehat{\mathcal{J}}_\varepsilon(\mathscr{V} + \phi) \\
& = \frac{1}{\varepsilon} \left\{ \frac{1}{2\pi} \int_\R \int_\R  \frac{\left( v(y) - v(y') \right) \left( \partial_{\eta_j} v(y) - \partial_{\eta_j} v(y') \right)}{|y - y'|^2} \, dy dy' - \int_{I_\varepsilon} \kappa(\varepsilon y) e^{v(y)} \partial_{\eta_j} v(y) \, dy \right\}.
\end{align*}
Using that~$v$ solves
$$
\left\{
\begin{aligned}
(- \Delta)^{\frac{1}{2}} v & = \kappa(\varepsilon \, \cdot \,) e^v + \sum_{k = 1}^m c_k \chi_k Z_{1 k}, \quad && \mbox{ in } I_\varepsilon, \\
v & = 2 \log \varepsilon, \quad && \mbox{ in } \R \setminus I_\varepsilon,
\end{aligned}
\right.
$$
we deduce that
$$
\partial_{\xi_j} F_\varepsilon(\xi) = \frac{1}{\varepsilon} \sum_{k = 1}^m c_k \int_{I_\varepsilon} \chi_k Z_{1 k} \left( \partial_{\eta_j} \mathscr{V} + \partial_{\eta_j} \phi \right).
$$
Applying Proposition~\ref{prop2derivative} and Lemma~\ref{derofVwrtetalem}, we see that~$\big\| {\partial_{\eta_j} \mathscr{V} + \partial_{\eta_j} \phi - Z_{1 j}} \big\|_{L^\infty(I_\varepsilon)} \le C \varepsilon^{1 - \sigma} |{\log \varepsilon}|^2$. Hence,
$$
\partial_{\xi_j} F_\varepsilon(\xi) = \frac{1}{\varepsilon} \sum_{k = 1}^m M_{j k} c_k, \quad \mbox{ with } \quad M_{j k} := \int_{I_\varepsilon} \chi_k Z_{1 k} Z_{1 j} + O \big( {\varepsilon^{1 - \sigma} |{\log \varepsilon}|^2} \big).
$$
Note that
\begin{equation} \label{Mjj}
M_{j j} = \int_{\R} \chi Z_{1, \mu_j, 0}^2 - C \varepsilon^{1 - \sigma} |{\log \varepsilon}|^2 \ge 8 \int_0^{\bar{R}} \frac{t^2}{\big( {\mu_j^2 + t^2} \big)^2} \, dt  - C \varepsilon^{1 - \sigma} |{\log \varepsilon}|^2 \ge \frac{1}{C},
\end{equation}
while, for~$k \ne j$,
\begin{align*}
|M_{j k}| & \le 4 \int_{- \bar{R} - 1}^{\bar{R} + 1} \frac{|t|}{\big( {\mu_k^2 + t^2} \big)} \frac{|t + \eta_k - \eta_j|}{\big( {\mu_j^2 + (t + \eta_k - \eta_j)^2} \big)} \, dt + C \varepsilon^{1 - \sigma} |{\log \varepsilon}|^2 \\
& \le C \left( \varepsilon + \varepsilon^{1 - \sigma} |{\log \varepsilon}|^2 \right) \le C \varepsilon^{1 - \sigma} |{\log \varepsilon}|^2,
\end{align*}
provided~$\varepsilon$ is sufficiently small. Consequently, the matrix~$M = \big( {M_{j k}} \big)$ is invertible for~$\varepsilon$ small enough and the conclusion of the lemma follows.
\end{proof}

Thanks to the previous result, the problem of making the~$c_j$'s vanish is reduced to finding the stationary points of~$F_\epsilon$. As~$\psi = \phi(\frac{\cdot}{\epsilon})$ is small in~$\epsilon$, the natural first step is to understand~$F_\varepsilon$ as a perturbation of~$\mathcal{J}_\epsilon(\mathscr{U})$--which is the content of the following lemma.

\begin{lemma} \label{FepsasJepspertlem}
It holds
$$
F_\varepsilon(\xi) = \mathcal{J}_\varepsilon(\mathscr{U}) +  \theta_{1, \varepsilon}(\xi), \quad \mbox{ for } \xi \in \mathcal{I}_{\delta_0},
$$
with~$\theta_{1, \varepsilon}: \mathcal{I}_{\delta_0} \to \R$ satisfying
$$
|\theta_{1, \varepsilon}(\xi)| \le C \varepsilon^{2 (1 - \sigma)} |{\log \varepsilon}|, \quad \mbox{ for all } \xi \in \mathcal{I}_{\delta_0},
$$
for some constant~$C > 0$ independent of~$\epsilon$ and~$\xi$, provided~$\epsilon$ is sufficiently small.
\end{lemma}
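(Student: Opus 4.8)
The plan is to exploit the variational structure of the problem. As in the proof of Lemma~\ref{c=0iffstationarylem}, after the change of variables $y = x/\varepsilon$ one has $F_\varepsilon(\xi) = \widehat{\mathcal{J}}_\varepsilon(\mathscr{V} + \phi)$ and $\mathcal{J}_\varepsilon(\mathscr{U}) = \widehat{\mathcal{J}}_\varepsilon(\mathscr{V})$, where $\phi = \phi[\xi]$ is the solution to~\eqref{nonlinprojprobforphi}--\eqref{phiortnonlin} supplied by Proposition~\ref{propnonlineartheory} and $\widehat{\mathcal{J}}_\varepsilon$ is the functional introduced in that proof. Hence $\theta_{1,\varepsilon}(\xi) = \widehat{\mathcal{J}}_\varepsilon(\mathscr{V}+\phi) - \widehat{\mathcal{J}}_\varepsilon(\mathscr{V})$. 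Note that $\mathscr{V} - 2\log\varepsilon = \mathscr{U}(\varepsilon\,\cdot\,)$ is bounded and compactly supported, hence in $H^{\frac12}(\R)$, and so is $\phi$; the additive constant $2\log\varepsilon$ is irrelevant for the Gagliardo seminorm, so all the objects below are well defined. The idea is to Taylor expand $\widehat{\mathcal{J}}_\varepsilon$ around $\mathscr{V}$ to second order,
\[
\theta_{1,\varepsilon}(\xi) = D\widehat{\mathcal{J}}_\varepsilon(\mathscr{V})[\phi] + \tfrac12\, D^2\widehat{\mathcal{J}}_\varepsilon(\mathscr{V})[\phi,\phi] + \mathcal{R}_\varepsilon,
\]
with $\mathcal{R}_\varepsilon$ the integral Taylor remainder, and to estimate the three contributions separately, relying on the a priori bounds $\|\mathscr{E}\|_{\star,\sigma}\le C\varepsilon^{1-\sigma}$ (Proposition~\ref{error-measure}) and $\|\phi\|_{L^\infty(\R)}\le C\varepsilon^{1-\sigma}|{\log\varepsilon}|$ (Proposition~\ref{propnonlineartheory}).

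For the first-order term, a direct computation of $D\widehat{\mathcal{J}}_\varepsilon(\mathscr{V})[\phi]$, combined with the weak formulation of $(-\Delta)^{1/2}$ and the fact that $\phi$ vanishes outside $I_\varepsilon$, gives $D\widehat{\mathcal{J}}_\varepsilon(\mathscr{V})[\phi] = \int_{I_\varepsilon}\big((-\Delta)^{1/2}\mathscr{V} - \kappa(\varepsilon\,\cdot\,)e^{\mathscr{V}}\big)\phi = \int_{I_\varepsilon}\mathscr{E}\phi$, by the very definition~\eqref{error} of $\mathscr{E}$. Bounding $|\mathscr{E}(y)|$ by $\|\mathscr{E}\|_{\star,\sigma}$ times the weight appearing in~\eqref{norm-star}, and noting that the integral of that weight over $I_\varepsilon$ is $O(1)$---since $\varepsilon\,|I_\varepsilon| = |I|$ and $\int_\R(1+|z|)^{-1-\sigma}\,dz < \infty$---we obtain $|D\widehat{\mathcal{J}}_\varepsilon(\mathscr{V})[\phi]| \le C\,\|\mathscr{E}\|_{\star,\sigma}\,\|\phi\|_{L^\infty(\R)} \le C\varepsilon^{2(1-\sigma)}|{\log\varepsilon}|$.

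The quadratic term is the delicate one. The key point is that $D^2\widehat{\mathcal{J}}_\varepsilon(\mathscr{V})[\phi,\phi] = \int_\R(-\Delta)^{1/2}\phi\cdot\phi - \int_{I_\varepsilon}W\phi^2 = \int_{I_\varepsilon}(L\phi)\phi$, so one should \emph{not} estimate the Gagliardo seminorm of $\phi$ by itself (that would only yield $\varepsilon^{2(1-\sigma)}|{\log\varepsilon}|^2$): rather, the dangerous quantity $\int_{I_\varepsilon}W\phi^2$ is absorbed into the seminorm, and inserting the equation~\eqref{nonlinprojprobforphi} together with the orthogonality conditions~\eqref{phiortnonlin} (which annihilate the terms $c_j\chi_j Z_{1j}$) leaves $D^2\widehat{\mathcal{J}}_\varepsilon(\mathscr{V})[\phi,\phi] = -\int_{I_\varepsilon}\mathscr{E}\phi + \int_{I_\varepsilon}\mathcal{N}(\phi)\phi$. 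The first integral is controlled exactly as before. For the second, we use $|\mathcal{N}(\phi)| = |W(e^\phi - 1 - \phi)| \le C W\phi^2$ (valid since $\|\phi\|_{L^\infty}\le 1$) together with the fact that $\int_{I_\varepsilon}W \le C$---which follows from the decomposition~\eqref{Wasperturb}--\eqref{thetabounds}, because $\int_\R\frac{2\mu_j}{\mu_j^2+z^2}\,dz = 2\pi$ and $\int_{I_\varepsilon}|\theta| \le C\varepsilon|{\log\varepsilon}| \le C$---to conclude that $\big|\int_{I_\varepsilon}\mathcal{N}(\phi)\phi\big| \le C\|\phi\|_{L^\infty}^3 \le C\varepsilon^{3(1-\sigma)}|{\log\varepsilon}|^3$. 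Hence $|D^2\widehat{\mathcal{J}}_\varepsilon(\mathscr{V})[\phi,\phi]| \le C\varepsilon^{2(1-\sigma)}|{\log\varepsilon}|$.

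Finally, since $\widehat{\mathcal{J}}_\varepsilon$ is a quadratic form minus $\int_{I_\varepsilon}\kappa(\varepsilon\,\cdot\,)e^v$, its third variation is $D^3\widehat{\mathcal{J}}_\varepsilon(v)[\phi,\phi,\phi] = -\int_{I_\varepsilon}\kappa(\varepsilon\,\cdot\,)e^v\phi^3$, so $\mathcal{R}_\varepsilon = -\tfrac12\int_0^1(1-t)^2\int_{I_\varepsilon}\kappa(\varepsilon y)e^{\mathscr{V}+t\phi}\phi^3\,dy\,dt$; since $\kappa(\varepsilon y)e^{\mathscr{V}+t\phi} = W e^{t\phi} \le CW$, we get $|\mathcal{R}_\varepsilon| \le C\int_{I_\varepsilon}W|\phi|^3 \le C\|\phi\|_{L^\infty}^3 \le C\varepsilon^{3(1-\sigma)}|{\log\varepsilon}|^3$. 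Collecting the three estimates yields $|\theta_{1,\varepsilon}(\xi)| \le C\varepsilon^{2(1-\sigma)}|{\log\varepsilon}|$, uniformly for $\xi\in\mathcal{I}_{\delta_0}$ and $\varepsilon$ small. The main (mild) obstacles are the bookkeeping around the constant $2\log\varepsilon$ in $\mathscr{V}$, which is harmless once one works modulo constants in $H^{\frac12}$, and the realization that the sharp power of $|{\log\varepsilon}|$ hinges on the cancellation of $\int_{I_\varepsilon}W\phi^2$ against the Gagliardo seminorm, rather than on a term-by-term brute-force bound.
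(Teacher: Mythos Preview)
Your proof is correct and follows essentially the same approach as the paper: both arguments reduce to $\widehat{\mathcal{J}}_\varepsilon(\mathscr{V}+\phi)-\widehat{\mathcal{J}}_\varepsilon(\mathscr{V})$, identify the first variation with $\int_{I_\varepsilon}\mathscr{E}\phi$, and handle the quadratic part by rewriting it as $\int_{I_\varepsilon}(L\phi)\phi$ via the equation~\eqref{nonlinprojprobforphi} and the orthogonality~\eqref{phiortnonlin}. The only cosmetic difference is that the paper packages the computation as $\int_0^1(1-t)\,\frac{d^2}{dt^2}\widehat{\mathcal{J}}_\varepsilon(\mathscr{V}+t\phi)\,dt$ (so the cubic remainder appears as $-\int_{I_\varepsilon}W(e^{t\phi}-1)\phi^2$ inside the second derivative), whereas you separate $D^2\widehat{\mathcal{J}}_\varepsilon(\mathscr{V})[\phi,\phi]$ from an explicit third-order Taylor remainder; the estimates and the key cancellation are identical.
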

\begin{proof}
Adopting the notation introduced in the proof of Lemma~\ref{c=0iffstationarylem}, we write
$$
F_\varepsilon(\xi) - \mathcal{J}_\varepsilon(\mathscr{U}) = \widehat{\mathcal{J}}_\varepsilon(\mathscr{V}+ \phi) - \widehat{\mathcal{J}}_\varepsilon(\mathscr{V}) = \int_0^1 \frac{d}{d t} \widehat{\mathcal{J}}_\varepsilon(\mathscr{V} + t \phi) \, dt.
$$
As, by Lemma~\ref{error-measure} and Proposition \ref{propnonlineartheory},
\begin{align*}
\left| \frac{d}{d t} \widehat{\mathcal{J}}_\varepsilon(\mathscr{V} + t \phi) \Big|_{t = 0} \right| & = \left| \frac{1}{2\pi} \int_\R \int_\R  \frac{\big( {\mathscr{V}(y)- \mathscr{V}(y')} \big) \big( {\phi(y) - \phi(y')} \big)}{|y - y'|^2} \, dy dy' - \int_{I_\varepsilon} \kappa(\varepsilon y) e^{\mathscr{V}(y)} \phi(y) \, dy \right| \\
& = \left| \int_{I_\varepsilon} \mathscr{E}(y) \phi(y) \, dy \right| \le \| \mathscr{E} \|_{\star, \sigma} \| \phi \|_{L^\infty(\R)} \int_{I_\varepsilon} \left( \varepsilon + \sum_{j = 1}^m \frac{1}{\big( {1 + |y - \eta_j|} \big)^{1 + \sigma}} \right) \, dy \\
& \le C \varepsilon^{2 (1 - \sigma)} |{\log \varepsilon}|,
\end{align*}
we obtain that
$$
F_\varepsilon(\xi) - \mathcal{J}_\varepsilon(\mathscr{U}) = \int_0^1 (1 - t) \, \frac{d^2}{dt^2} \widehat{\mathcal{J}}_\varepsilon(\mathscr{V} + t \phi) \, dt + O \! \left( \varepsilon^{2 (1 - \sigma)} |{\log \varepsilon}| \right).
$$
Now, using that~$\phi$ solves~\eqref{nonlinprojprobforphi}--\eqref{phiortnonlin}, we find
\begin{align*}
\frac{d^2}{dt^2} \widehat{\mathcal{J}}_\varepsilon(\mathscr{V} + t \phi) & = \frac{1}{2\pi} \int_\R \int_\R  \frac{|\phi(y) - \phi(y')|^2}{|y - y'|^2} \, dy dy' - \int_{I_\varepsilon} \kappa(\varepsilon y) e^{\mathscr{V}(y) + t \phi(y)} \phi(y)^2 \, dy \\
& = \int_{\R} \bigg( {- \mathscr{E} + \mathcal{N}(\phi) + \sum_{j = 1}^m c_j \chi_j Z_{1 j}} \bigg) \phi  - \int_{I_\varepsilon} W \big( {e^{t \phi} - 1} \big) \phi^2 \\
& = \int_{I_\varepsilon} \big( {- \mathscr{E} + \mathcal{N}(\phi)} \big) \phi - \int_{I_\varepsilon} W \big( {e^{t \phi} - 1} \big) \phi^2.
\end{align*}
Hence, by Lemma~\ref{error-measure},~\eqref{Wasperturb}--\eqref{thetabounds}, Proposition~\ref{propnonlineartheory}, and~\eqref{nstarestimate}, we get
\begin{align*}
& \left| \frac{d^2}{dt^2} \widehat{\mathcal{J}}_\varepsilon(\mathscr{V} + t \phi) \right| \\
& \hspace{20pt} \le C \| \phi \|_{L^\infty(\R)} \left\{ \| \mathscr{E} \|_{\star,\,\sigma} + \| \mathcal{N}(\phi) \|_{\star,\,\sigma} + \| \phi \|_{L^\infty(\R)}^2 \sum_{j = 1}^m \int_{I_\varepsilon} \frac{1}{1 + |y - \eta_j|} \left( \frac{1}{1 + |y - \eta_j|} + \varepsilon \right) dy \right\} \\
& \hspace{20pt} \le C \varepsilon^{1 - \sigma} |{\log \varepsilon}| \Big( {\varepsilon^{1 - \sigma} + \varepsilon^{2 (1 - \sigma)} |{\log \varepsilon}|^2} \Big) \le C \varepsilon^{2(1 - \sigma)} |{\log \varepsilon}|,
\end{align*}
which gives the claim of the lemma.
\end{proof}

The next lemma takes care of the expansion in~$\epsilon$ of~$\mathcal{J}_\varepsilon(\mathscr{U})$

\begin{lemma} \label{JepsasPhipertlem}
It holds
$$
\mathcal{J}_\varepsilon(\mathscr{U}) = - 2 \pi m \big( {1 + \log \varepsilon} \big) + \pi \, \Xi(\xi) + \theta_{2, \varepsilon}(\xi), \quad \mbox{ for } \xi \in \mathcal{I}_{\delta_0},
$$
with
\begin{equation} \label{Phidef}
\Xi(\xi) := - \sum_{j = 1}^m \Big( {2 \log \kappa(\xi_j) + H(\xi_j, \xi_j)}  +\sum_{i\neq j} G(\xi_j, \xi_i) \Big),
\end{equation}
and~$\theta_{2, \varepsilon}: \mathcal{I}_{\delta_0} \to \R$ satisfying
$$
|\theta_{2, \varepsilon}(\xi)| \le C \varepsilon |{\log \varepsilon}|, \quad \mbox{ for all } \xi \in \mathcal{I}_{\delta_0},
$$
for some constant~$C > 0$ independent of~$\epsilon$ and~$\xi$, provided~$\varepsilon$ is sufficiently small.
\end{lemma}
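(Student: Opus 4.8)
\emph{Plan.} The strategy is to decompose $\mathcal{J}_\varepsilon(\mathscr{U})$ into its quadratic and exponential parts and evaluate each asymptotically, using that $\mathscr{U}$ solves an approximate Liouville equation. Since $H_j=-u_j$ in $\R\setminus I$ by~\eqref{probforHj}, each $\mathscr{U}_j=u_j+H_j$ (hence $\mathscr{U}$) vanishes in $\R\setminus I$, and in $I$ one has $\Lfrac\mathscr{U}_j=\Lfrac u_j=\varepsilon\kappa(\xi_j)e^{u_j}$ by~\eqref{probforuj} and~\eqref{probforHj}. Using that $\tfrac{1}{4\pi}\int_\R\int_\R\frac{|\mathscr{U}(x)-\mathscr{U}(x')|^2}{|x-x'|^2}\,dx\,dx'=\tfrac12\int_\R\mathscr{U}\,\Lfrac\mathscr{U}\,dx$ (legitimate since $\mathscr{U}\in H^{\frac12}(\R)$, being bounded, H\"older and supported in $\overline I$), we get
\begin{equation*}
\mathcal{J}_\varepsilon(\mathscr{U})=\frac{\varepsilon}{2}\sum_{j=1}^m\kappa(\xi_j)\int_I\mathscr{U}(x)\,e^{u_j(x)}\,dx-\varepsilon\int_I\kappa(x)e^{\mathscr{U}(x)}\,dx=:A-B .
\end{equation*}
Both $A$ and $B$ are integrals against weights that, in the expanded variable $y=(x-\xi_j)/\varepsilon$, concentrate at $\eta_j=\xi_j/\varepsilon$ with profile $\tfrac{2\mu_j}{\mu_j^2+(y-\eta_j)^2}$ of total mass $2\pi$.

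The core point is the \emph{inner} expansion of $\mathscr{U}$: for $\xi\in\mathcal{I}_{\delta_0}$, $j$ fixed and $|y|\le\tfrac{\delta_0}{2\varepsilon}$,
\begin{equation*}
\mathscr{U}(\xi_j+\varepsilon y)=\mathcal{U}_{\mu_j,0}(y)-2\log\varepsilon-\log\kappa(\xi_j)+O(\varepsilon|y|)+O(\varepsilon^2),
\end{equation*}
uniformly in $\xi$. This follows from the exact identity $u_j(\xi_j+\varepsilon y)=\mathcal{U}_{\mu_j,0}(y)-2\log\varepsilon-\log\kappa(\xi_j)$ of~\eqref{ujdef}; from Lemma~\ref{Hj and uj expansions}~\ref{Hjandujlem(i)} and a first-order Taylor expansion of $H(\cdot,\xi_j)$ (legitimate by the uniform $C^1$ bound where $\xi_j+\varepsilon y$ stays at distance $\ge\delta_0/2$ from $\partial I$, via Lemma~\ref{regularity-Green}); from Lemma~\ref{Hj and uj expansions}~\ref{Hjandujlem(ii)} and a Taylor expansion of $G(\cdot,\xi_k)$ for $k\ne j$ (legitimate since $|\xi_j+\varepsilon y-\xi_k|\ge\delta_0/2$); and crucially from the choice~\eqref{mu_j} of $\mu_j$, which makes the surviving additive constant collapse to exactly $-\log\kappa(\xi_j)$. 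Inserting this into $A$, the region $|y|>\tfrac{\delta_0}{2\varepsilon}$ contributes only $O(\varepsilon|\log\varepsilon|)$ (there $|\mathscr{U}|\le C|\log\varepsilon|$ and the weight decays like $|y|^{-2}$), so
\begin{equation*}
\varepsilon\kappa(\xi_j)\int_I\mathscr{U}\,e^{u_j}\,dx=\int_\R\Big(\mathcal{U}_{\mu_j,0}(y)-2\log\varepsilon-\log\kappa(\xi_j)\Big)\frac{2\mu_j}{\mu_j^2+y^2}\,dy+O(\varepsilon|\log\varepsilon|),
\end{equation*}
where the truncation tails $\int_{|y|>\delta_0/(2\varepsilon)}|\mathcal{U}_{\mu_j,0}|\tfrac{2\mu_j}{\mu_j^2+y^2}$, the linear-in-$y$ remainders $\int\tfrac{2\mu_j}{\mu_j^2+y^2}O(\varepsilon|y|)$, and $(-2\log\varepsilon)$ times the $O(\varepsilon)$ truncation error of $\int\tfrac{2\mu_j}{\mu_j^2+y^2}$ are all $O(\varepsilon|\log\varepsilon|)$. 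With the elementary integrals $\int_\R\tfrac{2\mu}{\mu^2+y^2}\,dy=2\pi$ and $\int_\R\mathcal{U}_{\mu,0}(y)\tfrac{2\mu}{\mu^2+y^2}\,dy=-2\pi\log(2\mu)$ (the latter reducing, after $y=\mu t$, to $\int_\R\tfrac{\log(1+t^2)}{1+t^2}\,dt=2\pi\log2$), the right-hand side equals $-2\pi\log(2\mu_j)-4\pi\log\varepsilon-2\pi\log\kappa(\xi_j)+O(\varepsilon|\log\varepsilon|)$; substituting $\log(2\mu_j)$ from~\eqref{mu_j}, summing in $j$ and halving yields $A=-2\pi m\log\varepsilon+\pi\,\Xi(\xi)+O(\varepsilon|\log\varepsilon|)$ with $\Xi$ as in~\eqref{Phidef}.

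The term $B$ is treated analogously but more cheaply. Writing $B=\int_{I_\varepsilon}\kappa(\varepsilon y)e^{\mathscr{V}(y)}\,dy$ and splitting $I_\varepsilon$ into the balls $\{|y-\eta_j|<\tfrac{\delta_0}{2\varepsilon}\}$ and the complement: on the complement, estimate~\eqref{eq4-error-measure} of Proposition~\ref{error-measure} gives $\kappa(\varepsilon y)e^{\mathscr{V}(y)}=O(\varepsilon^2)$, hence an $O(\varepsilon)$ contribution after integration over a set of measure $O(1/\varepsilon)$; on each ball, the expansion~\eqref{eq2-error-measure} gives $\kappa(\varepsilon y)e^{\mathscr{V}(y)}=\tfrac{2\mu_j}{\mu_j^2+(y-\eta_j)^2}\big(1+O(\varepsilon|y-\eta_j|)+O(\varepsilon^2)\big)$, which integrates to $2\pi+O(\varepsilon|\log\varepsilon|)$. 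Hence $B=2\pi m+O(\varepsilon|\log\varepsilon|)$ and
\begin{equation*}
\mathcal{J}_\varepsilon(\mathscr{U})=A-B=-2\pi m\big(1+\log\varepsilon\big)+\pi\,\Xi(\xi)+\theta_{2,\varepsilon}(\xi),\qquad |\theta_{2,\varepsilon}(\xi)|\le C\varepsilon|\log\varepsilon|,
\end{equation*}
uniformly in $\xi\in\mathcal{I}_{\delta_0}$ for $\varepsilon$ small, which is the claim.

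The main obstacle is not conceptual but lies in the error bookkeeping: because $\mathscr{U}$ is itself of size $|\log\varepsilon|$ near each $\xi_j$ and the leading additive constant in the inner expansion is $-2\log\varepsilon$, one must verify that every remainder surviving the cancellations is genuinely of order $\varepsilon$, so that even after multiplication by a possible $\log\varepsilon$ it stays of the claimed order $\varepsilon|\log\varepsilon|$ — this concerns, in particular, the truncation tails of the improper integrals against $\tfrac{2\mu_j}{\mu_j^2+y^2}$ and the linear-in-$y$ Taylor remainders, which must both be estimated at this precision rather than crudely. A secondary point requiring care is that the inner expansion has to hold uniformly up to scale $|y|\sim\delta_0/\varepsilon$, which is exactly why one needs $\xi\in\mathcal{I}_{\delta_0}$ (to keep $\xi_j+\varepsilon y$ away from $\partial I$ and from the other $\xi_k$'s) together with the uniform $C^1$ bounds on $H(\cdot,z)$ and $G(\cdot,z)$ from Lemma~\ref{regularity-Green} and the bounds~\eqref{mu-bounded both sides} on the $\mu_j$.
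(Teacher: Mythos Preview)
Your argument is correct and follows the same underlying idea as the paper's proof: convert the Gagliardo seminorm into $\tfrac{\varepsilon}{2}\sum_j\kappa(\xi_j)\int_I(\cdot)\,e^{u_j}$ via the equation~\eqref{probforuj}--\eqref{probforHj}, expand using Lemma~\ref{Hj and uj expansions}, and combine with the elementary identities $\int_\R\tfrac{2}{1+t^2}\,dt=2\pi$, $\int_\R\tfrac{\log(1+t^2)}{1+t^2}\,dt=2\pi\log 2$.

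There is, however, a small but genuine organizational difference worth noting. The paper decomposes the quadratic part as $\sum_{i,j}J^{(1)}_{ij}$ with $J^{(1)}_{ij}=\tfrac{\varepsilon}{2}\kappa(\xi_j)\int_I\mathscr{U}_i\,e^{u_j}$ and treats the diagonal and off-diagonal terms separately; the off-diagonal case $i\ne j$ requires a three-region decomposition $I=A_i\cup A_j\cup B_{ij}$ and a page of estimates to extract $J^{(1)}_{ij}=\pi G(\xi_j,\xi_i)+O(\varepsilon|\log\varepsilon|)$. By contrast, you keep $\mathscr{U}=\sum_i\mathscr{U}_i$ together and invoke the choice~\eqref{mu_j} \emph{inside} the inner expansion, so that the constant collapses to $-\log\kappa(\xi_j)$ and all the cross-Green contributions are absorbed at once; this makes the separate $i\ne j$ computation unnecessary. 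Likewise, for the exponential term you recycle the expansions~\eqref{eq2-error-measure}--\eqref{eq4-error-measure} from Proposition~\ref{error-measure}, whereas the paper redoes that computation. The price of your shortcut is that the intermediate identities $J^{(1)}_{jj}=-2\pi\log(\mu_j\varepsilon)+\pi H(\xi_j,\xi_j)-2\pi\log2+O(\varepsilon|\log\varepsilon|)$ and $J^{(1)}_{ij}=\pi G(\xi_j,\xi_i)+O(\varepsilon|\log\varepsilon|)$ are never isolated, but since they are not used elsewhere in the paper this is harmless.
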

\begin{proof}
Recalling \eqref{ansatz}, we write
$$
\mathcal{\mathcal{J}}_\varepsilon(\mathscr{U}) = \sum_{i, j = 1}^m J^{(1)}_{i j} - J^{(2)},
$$
where
\begin{align*}
J^{(1)}_{i j} & := \frac{1}{4\pi} \int_\R \int_\R \frac{\big( {\mathscr{U}_i(x) - \mathscr{U}_i(x')} \big) \big( {\mathscr{U}_j(x) - \mathscr{U}_j(x')} \big)}{|x - x'|^2} \, dx dx', && \hspace{-30pt} \mbox{for } i, j \in \{ 1, \ldots, m\}, \\
J^{(2)} & := \varepsilon \int_I \kappa(x) \prod_{i = 1}^m e^{\mathscr{U}_i(x)} \, dx. &&
\end{align*}

We begin by expanding the~$J^{(1)}_{i j}$'s. Using that~$\mathscr{U}_j$ solves
$$
\left\{
\begin{aligned}
(-\Delta)^{\frac{1}{2}} \mathscr{U}_j & = \varepsilon \kappa(\xi_j) e^{u_j}, \quad && \mbox{ in } I, \\
\mathscr{U}_j & = 0, \quad && \mbox{ in } \R \setminus I,
\end{aligned}
\right.
$$
we obtain that
$$
J^{(1)}_{i j} = \frac{\varepsilon}{2} \, \kappa(\xi_j) \int_I \mathscr{U}_i(x) e^{u_j(x)} \, dx.
$$
We first consider the case~$i = j$. By definitions~\eqref{bubbles} and~\eqref{ujdef}, Lemma~\ref{Hj and uj expansions}~\ref{Hjandujlem(ii)}, and a change of variables, we have
\begin{align*}
\frac{\varepsilon}{2} \, \kappa(\xi_j) \int_{I \setminus \big( {\xi_j - \frac{\delta_0}{2}, \xi_j + \frac{\delta_0}{2}} \big)} \mathscr{U}_j(x) e^{u_j(x)} \, dx & = \frac{1}{2 \mu_j \varepsilon} \int_{I \setminus \big( {\xi_j - \frac{\delta_0}{2}, \xi_j + \frac{\delta_0}{2}} \big)} \Big( G(x, \xi_j) + O(\varepsilon^2) \Big) e^{\mathcal{U}_{1, 0} \left( \frac{x - \xi_j}{\mu_j \varepsilon} \right)} dx \\
& = \int_{\frac{I - \xi_j}{\mu_j \varepsilon} \setminus {\big(- \frac{\delta_0}{2 \mu_j \varepsilon}, \frac{\delta_0}{2 \mu_j \varepsilon}} \big)} \Big( G(\xi_j + \mu_j \varepsilon y, \xi_j) + O(\varepsilon^2) \Big) \frac{dy}{1 + y^2}.
\end{align*}
As~$G(\xi_j + x, \xi_j) = \Gamma(x) + H(\xi_j + x, \xi_j) = - 2 \log |x| + O(1)$ uniformly with respect to~$x \in I - \xi_j$, we get
\begin{equation} \label{J1tech1}
\begin{aligned}
\left| \frac{\varepsilon}{2} \, \kappa(\xi_j) \int_{I \setminus \big( {\xi_j - \frac{\delta_0}{2}, \xi_j + \frac{\delta_0}{2}} \big)} \mathscr{U}_j(x) e^{u_j(x)} \, dx \right| & = \left| \int_{\frac{I - \xi_j}{\mu_j \varepsilon} \setminus {\big(- \frac{\delta_0}{2 \mu_j \varepsilon}, \frac{\delta_0}{2 \mu_j \varepsilon}} \big)} \Big( {- 2 \log \big( {\mu_j \varepsilon |y|} \big) + O(1)} \Big) \frac{dy}{1 + y^2} \right| \\
& \le C \int_{\frac{\delta_0}{2 \mu_j \varepsilon}}^{+\infty} \Big( {\big| {\log \big( {\mu_j \varepsilon y} \big)} \big| + 1} \Big) \frac{dy}{y^2} \le C \varepsilon.
\end{aligned}
\end{equation}
To deal with the integral over~$\big( {\xi_j - \frac{\delta_0}{2}, \xi_j + \frac{\delta_0}{2}} \big)$, we write~$\mathscr{U}_j = u_j + H_j$ and treat the resulting two integrals separately. On the one hand, by arguing as before we see that
\begin{align*}
\frac{\varepsilon}{2} \, \kappa(\xi_j) \int_{\xi_j - \frac{\delta_0}{2}}^{\xi_j + \frac{\delta_0}{2}} u_j(x) e^{u_j(x)} \, dx & = \frac{1}{2 \mu_j \varepsilon} \int_{\xi_j - \frac{\delta_0}{2}}^{\xi_j + \frac{\delta_0}{2}} \left\{ \mathcal{U}_{1, 0} \left( \frac{x - \xi_j}{\mu_j \varepsilon} \right) - \log \big( {\kappa(\xi_j) \mu_j \varepsilon^2} \big) \right\} e^{\mathcal{U}_{1, 0} \left( \frac{x - \xi_j}{\mu_j \varepsilon} \right)} dx \\
& = - \int_{- \frac{\delta_0}{2 \mu_j \varepsilon}}^{\frac{\delta_0}{2 \mu_j \varepsilon}} \left\{ \log (1 + y^2) + 2 \log (\mu_j \varepsilon) + \log \left( \frac{\kappa(\xi_j)}{2 \mu_j} \right) \right\} \frac{dy}{1 + y^2}.
\end{align*}
Since
\begin{equation} \label{integralMexp}
\int_{-M}^M \! \frac{dy}{1 + y^2} = \pi + O \! \left( \frac{1}{M} \right) \quad \mbox{and} \quad \int_{-M}^M \frac{\log(1 + y^2)}{1 + y^2} \, dy = 2 \pi \log 2 + O \! \left( \frac{\log M}{M} \right)
\end{equation}
as~$M \rightarrow +\infty$--the latter fact can be readily established via the residue theorem--, we get
\begin{equation} \label{J1tech2}
\frac{\varepsilon}{2} \, \kappa(\xi_j) \int_{\xi_j - \frac{\delta_0}{2}}^{\xi_j + \frac{\delta_0}{2}} u_j(x) e^{u_j(x)} \, dx = - 2 \pi \log (\mu_j \varepsilon) - 2 \pi \log 2 - \pi \log \left( \frac{\kappa(\xi_j)}{2 \mu_j} \right) + O \big( {\varepsilon |{\log \varepsilon}|} \big).
\end{equation}
On the other hand, by Lemma~\ref{Hj and uj expansions}~\ref{Hjandujlem(i)},
\begin{align*}
\frac{\varepsilon}{2} \, \kappa(\xi_j) \int_{\xi_j - \frac{\delta_0}{2}}^{\xi_j + \frac{\delta_0}{2}} H_j(x) e^{u_j(x)} \, dx & = \frac{1}{2 \mu_j \varepsilon} \int_{\xi_j - \frac{\delta_0}{2}}^{\xi_j + \frac{\delta_0}{2}} \left\{ H(x,\xi_j) - \log \left( \frac{2\mu_j}{\kappa(\xi_j)} \right) + O(\epsilon^2) \right\} e^{\mathcal{U}_{1, 0} \left( \frac{x - \xi_j}{\mu_j \varepsilon} \right)} dx \\
& = \int_{- \frac{\delta_0}{2 \mu_j \varepsilon}}^{\frac{\delta_0}{2 \mu_j \varepsilon}} \left\{ H(\xi_j + \mu_j \varepsilon y, \xi_j) + \log \left( \frac{\kappa(\xi_j)}{2\mu_j} \right) + O(\epsilon^2) \right\} \frac{dy}{1 + y^2}.
\end{align*}
As~$H(\xi_j + x, \xi_j) = H(\xi_j, \xi_j) + O(|x|)$ for all~$x \in \big[ {- \frac{\delta_0}{2}, \frac{\delta_0}{2}} \big]$, the previous identity becomes
\begin{align*}
\frac{\varepsilon}{2} \, \kappa(\xi_j) \int_{\xi_j - \frac{\delta_0}{2}}^{\xi_j + \frac{\delta_0}{2}} H_j(x) e^{u_j(x)} \, dx & = \int_{- \frac{\delta_0}{2 \mu_j \varepsilon}}^{\frac{\delta_0}{2 \mu_j \varepsilon}} \left\{ H(\xi_j, \xi_j) + O(\varepsilon |y|) + \log \left( \frac{\kappa(\xi_j)}{2\mu_j} \right) + O(\epsilon^2) \right\} \frac{dy}{1 + y^2} \\
& = \pi H(\xi_j, \xi_j) + \pi \log \left( \frac{\kappa(\xi_j)}{2\mu_j} \right) + O \big( {\varepsilon |{\log \varepsilon}|} \big).
\end{align*}
By putting together this,~\eqref{J1tech1}, and~\eqref{J1tech2}, we conclude that
\begin{equation} \label{J1expansion}
J^{(1)}_{j j} = - 2 \pi \log (\mu_j \varepsilon) + \pi H(\xi_j, \xi_j) - 2 \pi \log 2 + O \big( {\varepsilon |{\log \varepsilon}|} \big), \quad \mbox{ for all } j = 1, \ldots, m.
\end{equation}

We now expand~$J^{(1)}_{i j}$ for~$i \ne j$. We write~$I = A_{i} + A_j + B_{i j}$, where~$A_k := \big( \xi_k - \frac{\delta_0}{2}, \xi_k + \frac{\delta_0}{2} \big)$ for~$k = i, j$ and~$B_{i j} := I \setminus \big( {A_i \cup A_j} \big)$ are disjoint sets. We first deal with the integral over~$B_{i j}$. Recalling~\eqref{bubbles},~\eqref{ujdef}, and Lemma~\ref{Hj and uj expansions}~\ref{Hjandujlem(ii)}, we have
$$
\frac{\varepsilon}{2} \, \kappa(\xi_j) \int_{B_{i j}} \mathscr{U}_i(x) e^{u_j(x)} \, dx = \mu_j \varepsilon \int_{B_{i j}} \frac{G(x, \xi_i) + O(\varepsilon^2)}{\mu_j^2 \varepsilon^2 + (x - \xi_j)^2} \, dx.
$$
As~$G(x, \xi_i) = - 2 \log |x - \xi_i| + O(1)$ uniformly with respect to~$x \in I$, translating variables we compute
\begin{equation} \label{Jijtech1}
\left| \frac{\varepsilon}{2} \, \kappa(\xi_j) \int_{B_{i j}} \mathscr{U}_i(x) e^{u_j(x)} \, dx \right| \le C \varepsilon \int_{B_{i j}} \frac{1 + \big| {\log |x - \xi_i|} \big|}{\mu_j^2 \varepsilon^2 + (x - \xi_j)^2} \, dx \le C \varepsilon \int_{\frac{\delta_0}{2}}^{2 D} \Big( {1 + \big| {\log |x'|} \big|} \Big) \, dx' \le C \varepsilon.
\end{equation}
Next, we estimate the integral over~$A_i$. Using this time Lemma~\ref{Hj and uj expansions}~\ref{Hjandujlem(i)}, the fact that~$H(x, \xi_i) = O(1)$ uniformly with respect to~$x \in I$, and changing variables appropriately, we easily obtain
$$
\frac{\varepsilon}{2} \, \kappa(\xi_j) \int_{A_i} \mathscr{U}_i(x) e^{u_j(x)} \, dx = -
\int_{\frac{A_i - \xi_j}{\mu_j \varepsilon}} \left\{ \log \left( \mu_i^2 \varepsilon^2 + \big( {\xi_j - \xi_i + \mu_j \varepsilon y} \big)^2 \right) + O(1) \right\} \frac{dy}{1 + y^2}.
$$
From here, we let~$y' := \frac{\xi_j - \xi_i}{\mu_i \varepsilon} + \frac{\mu_j}{\mu_i} y$ and estimate
\begin{equation} \label{Jijtech2}
\begin{aligned}
\left| \frac{\varepsilon}{2} \, \kappa(\xi_j) \int_{A_i} \mathscr{U}_i(x) e^{u_j(x)} \, dx \right| & = \frac{\mu_i}{\mu_j} \Bigg| {\int_{- \frac{\delta_0}{2 \mu_i \varepsilon}}^{\frac{\delta_0}{2 \mu_i \varepsilon}} \frac{\log \left( \mu_i^2 \varepsilon^2 (1 + y'^2) \right) + O(1)}{1 + \left( \frac{\xi_i - \xi_j}{\mu_j \varepsilon} + \frac{\mu_i}{\mu_j} y' \right)^2} \, dy'} \Bigg| \\
& \le C \varepsilon^2 \int_{- \frac{\delta_0}{2 \mu_i \varepsilon}}^{\frac{\delta_0}{2 \mu_i \varepsilon}} \frac{|{\log \varepsilon}| + \log (1 + y'^2)}{\left( \xi_i - \xi_j + \mu_i \varepsilon y' \right)^2} \, dy' \le C \varepsilon |{\log \varepsilon}|,
\end{aligned}
\end{equation}
where for the last inequality we used the fact that~$|\xi_i - \xi_j + \mu_i \varepsilon y'| \ge \frac{\delta_0}{2}$ for all~$y' \in \big[ {- \frac{\delta_0}{2 \mu_i \varepsilon}, \frac{\delta_0}{2 \mu_i \varepsilon}} \big]$. Finally, we address the integral over~$A_j$. By arguing similarly as before, we express it as
\begin{align*}
& \frac{\varepsilon}{2} \, \kappa(\xi_j) \int_{A_j} \mathscr{U}_i(x) e^{u_j(x)} \, dx \\
& \hspace{30pt} = \int_{- \frac{\delta_0}{2 \mu_j \varepsilon}}^{\frac{\delta_0}{2 \mu_j \varepsilon}} \left\{ H(\xi_j + \mu_j \varepsilon y, \xi_i) - \log \left( \mu_i^2 \varepsilon^2 + \big( {\xi_j - \xi_i + \mu_j \varepsilon y} \big)^2 \right) + O(\varepsilon^2) \right\} \frac{dy}{1 + y^2}.
\end{align*}
As $H(\cdot, \xi_i)$ is $C^1$ away from the boundary of $I$, we have that~$H(\xi_j + x, \xi_i) = H(\xi_j, \xi_i) + O(|x|) = G(\xi_j, \xi_i) + 2 \log |\xi_i - \xi_j| + O(|x|)$ uniformly with respect to~$x \in \big( {- \frac{\delta_0}{2}, \frac{\delta_0}{2}} \big)$. Hence, also recalling~\eqref{integralMexp},
\begin{equation} \label{Jijtech3}
\begin{aligned}
& \frac{\varepsilon}{2} \, \kappa(\xi_j) \! \int_{A_j} \mathscr{U}_i(x) e^{u_j(x)} \, dx \\
& \hspace{30pt} = \int_{- \frac{\delta_0}{2 \mu_j \varepsilon}}^{\frac{\delta_0}{2 \mu_j \varepsilon}} \left\{ G(\xi_j, \xi_i) - \log \left( \frac{\mu_i^2 \varepsilon^2}{(\xi_i - \xi_j)^2} + \Big( {1 + \frac{\mu_j \varepsilon y}{\xi_j - \xi_i}} \Big)^2 \right) + O(\varepsilon |y|) + O(\varepsilon^2) \right\} \frac{dy}{1 + y^2} \\
& \hspace{30pt} = \pi G(\xi_j, \xi_i) + O \big( {\varepsilon |{\log \varepsilon}|} \big) - \int_{- \frac{\delta_0}{2 \mu_j \varepsilon}}^{\frac{\delta_0}{2 \mu_j \varepsilon}} \log \left( \frac{\mu_i^2 \varepsilon^2}{(\xi_i - \xi_j)^2} + \Big( {1 + \frac{\mu_j \varepsilon y}{\xi_j - \xi_i}} \Big)^2 \right) \frac{dy}{1 + y^2}.
\end{aligned}
\end{equation}
Since
$$
\left| \int_{- \frac{\delta_0}{2 \mu_j \varepsilon}}^{\frac{\delta_0}{2 \mu_j \varepsilon}} \log \left( \frac{\mu_i^2 \varepsilon^2}{(\xi_i - \xi_j)^2} + \Big( {1 + \frac{\mu_j \varepsilon y}{\xi_j - \xi_i}} \Big)^{\! 2} \right) \frac{dy}{1 + y^2} \right| \le C \int_{0}^{\frac{\delta_0}{2 \mu_j \varepsilon}} \frac{\varepsilon^2 (1 + y^2) + \varepsilon y}{1 + y^2} \, dy \le C \varepsilon |{\log \varepsilon}|,
$$
we conclude from~\eqref{Jijtech1},~\eqref{Jijtech2}, and~\eqref{Jijtech3} that
\begin{equation} \label{Jijexpansion}
J^{(1)}_{i j} = \pi G(\xi_j, \xi_i) + O \big( {\varepsilon |{\log \varepsilon}|} \big), \quad \mbox{ for all } i, j \in \{1, \ldots, m\} \mbox{ such that } i \ne j.
\end{equation}

At last, we expand~$J^{(2)}$. We decompose~$I$ as~$I = B \cup \bigcup_{j = 1}^m A_j$, with~$A_j := \big( {\xi_j - \frac{\delta_0}{2}, \xi_j + \frac{\delta_0}{2}} \big)$ and~$B := I \setminus \bigcup_{j = 1}^m A_j$. The integral over~$B$ is readily estimated. Indeed, by~\eqref{decomposition-Green} and Lemma~\ref{Hj and uj expansions}~\ref{Hjandujlem(ii)},
$$
\mathscr{U}_i(x) = G(x, \xi_i) + O(\varepsilon^2) = H(x, \xi_i) - 2 \log |x - \xi_i| + O(\varepsilon^2) = O(1)
$$
for every~$i = 1, \ldots, m$, uniformly with respect to~$x \in B$. Hence, we have
\begin{equation} \label{J2techest}
\Bigg| {\varepsilon \int_B \kappa(x) \prod_{i = 1}^m e^{\mathscr{U}_i(x)} \, dx} \Bigg| \le C \varepsilon.
\end{equation}
We now consider the integral over~$A_j$, for some~$j \in\{ 1, \ldots, m\}$. By Lemma~\ref{Hj and uj expansions}, it holds
\begin{align*}
\mathscr{U}_i(x) & = G(x, \xi_i) + O(\varepsilon^2) \qquad\qquad \mbox{for every } i \ne j, \\
\mathscr{U}_j(x) & = - \log \left( 1 + \left( \frac{x - \xi_j}{\mu_j \varepsilon} \right)^{\! 2} \right) + H(x, \xi_j) - 2 \log (\mu_j \varepsilon) + O(\varepsilon^2),
\end{align*}
uniformly with respect to~$x \in A_j$. Hence, changing variables,
$$
\varepsilon \int_{A_j} \kappa(x) \prod_{i = 1}^m e^{\mathscr{U}_i(x)} \, dx = \frac{1}{\mu_j} \int_{- \frac{\delta_0}{2 \mu_j \varepsilon}}^{\frac{\delta_0}{2 \mu_j \varepsilon}} \big( {1 + O(\varepsilon^2)} \big) \kappa(\xi_j + \mu_j \varepsilon y) e^{H(\xi_j + \mu_j \varepsilon y, \xi_j)} \prod_{i \ne j} e^{G(\xi_j + \mu_j \varepsilon y, \xi_i)} \frac{dy}{1 + y^2}.
$$
Now, as~$\kappa$,~$H(\cdot, \xi_j)$, and~$G(\cdot, \xi_i)$ (for~$i \ne j$) are of class~$C^1$ in~$A_j$, we see that
\begin{align*}
\kappa(\xi_j + \mu_j \varepsilon y) & = \kappa(\xi_j) + O(\varepsilon |y|) = \kappa(\xi_j) \big( {1 + O(\varepsilon |y|)} \big), \\
H(\xi_j + \mu_j \varepsilon y, \xi_j) & = H(\xi_j, \xi_j) + O(\varepsilon |y|), \\
G(\xi_j + \mu_j \varepsilon y, \xi_i) & = G(\xi_j, \xi_i) + O(\varepsilon |y|),
\end{align*}
uniformly with respect to~$y \in \big( {- \frac{\delta_0}{2 \mu_j \varepsilon}, \frac{\delta_0}{2 \mu_j \varepsilon}} \big)$. As a result,
$$
\varepsilon \int_{A_j} \kappa(x) \prod_{i = 1}^m e^{\mathscr{U}_i(x)} \, dx = \frac{\kappa(\xi_j) e^{H(\xi_j, \xi_j) + \sum_{i \ne j} G(\xi_j, \xi_i)}}{\mu_j} \int_{- \frac{\delta_0}{2 \mu_j \varepsilon}}^{\frac{\delta_0}{2 \mu_j \varepsilon}} \frac{1 + O(\varepsilon^2) + O(\varepsilon |y|)}{1 + y^2} \, dy.
$$
Recalling the definition~\eqref{mu_j} of~$\mu_j$ and~\eqref{integralMexp}, we arrive at
$$
\varepsilon \int_{A_j} \kappa(x) \prod_{i = 1}^m e^{\mathscr{U}_i(x)} \, dx = 2 \int_{- \frac{\delta_0}{2 \mu_j \varepsilon}}^{\frac{\delta_0}{2 \mu_j \varepsilon}} \frac{1 + O(\varepsilon^2) + O(\varepsilon |y|)}{1 + y^2} \, dy = 2 \pi + O \big( {\varepsilon |{\log \varepsilon}|} \big).
$$
By this and~\eqref{J2techest}, we obtain
$$
J^{(2)} = 2 m \pi + O \big( {\varepsilon |{\log \varepsilon}|} \big).
$$
The conclusion of the lemma follows then by combining this last expansion with~\eqref{J1expansion},~\eqref{Jijexpansion}, and recalling once again the definition~\eqref{mu_j} of the~$\mu_j$'s.
\end{proof}

\begin{proof}[Proof of Theorem \ref{mainTheoremExistence}]
Let~$d \ge m \ge 1$ be two fixed integers and~$\mathcal{D} := \big\{ {\xi \in I^m : \xi_i = \xi_j \mbox{ for some } i \ne j} \big\}$. Consider the~$C^1$ function~$\Xi : I^m \setminus \mathcal{D} \to \R$ defined as in~\eqref{Phidef}. We extend it to a function~$\Xi : I^m \to \R \cup \{ - \infty \}$ by setting~$\Xi(\xi) := -\infty$ for all~$\xi \in \mathcal{D}$. Observe that~$I^m = \bigcup_{\beta \in \Sigma_{m, d}} S_\beta$, where
$$
\Sigma_{m, d} := \Big\{ {\beta = (\beta_1, \ldots, \beta_m) : \beta_j \in \left\{ 1, \ldots, d \right\} \mbox{ for all } j = 1, \ldots, m} \Big\} \quad \mbox{and} \quad S_\beta := \prod_{j = 1}^m I_{\beta_j}.
$$
Since~$d \ge m$, there exists at least one~$\hat{\beta} \in \Sigma_{m, d}$ such that~$\hat{\beta}_i \ne \hat{\beta}_j$ for every~$i, j \in\{ 1, \ldots, m\}$ such that~$i \ne j$. Clearly, the restriction of~$\Xi$ to~$S_{\hat{\beta}}$ is real-valued, as~$S_{\hat{\beta}} \cap \mathcal{D} = \varnothing$. In addition, setting, for~$\delta > 0$ small,
$$
Q_\delta := \Big\{ { \xi \in S_{\hat{\beta}} : \dist \! \big( {\xi, \partial S_{\hat{\beta}}} \big) > \delta} \Big\},
$$
it holds
$$
\lim_{\delta \searrow 0} \, \inf_{S_{\hat{\beta}} \setminus Q_\delta} \Xi = +\infty.
$$
From this and the continuity of~$\Xi$ in~$S_{\hat{\beta}}$, it follows that there exists~$\widehat{\xi} \in S_{\hat{\beta}}$ such that~$\Xi(\widehat{\xi}) = \min_{S_{\hat{\beta}}} \Xi < \min_{\partial Q_\delta} \Xi$, for every~$\delta > 0$ sufficiently small (see Figure \ref{figure}).

\begin{figure}[h]
\centering
\fbox{
\includegraphics[width=0.6\textwidth]{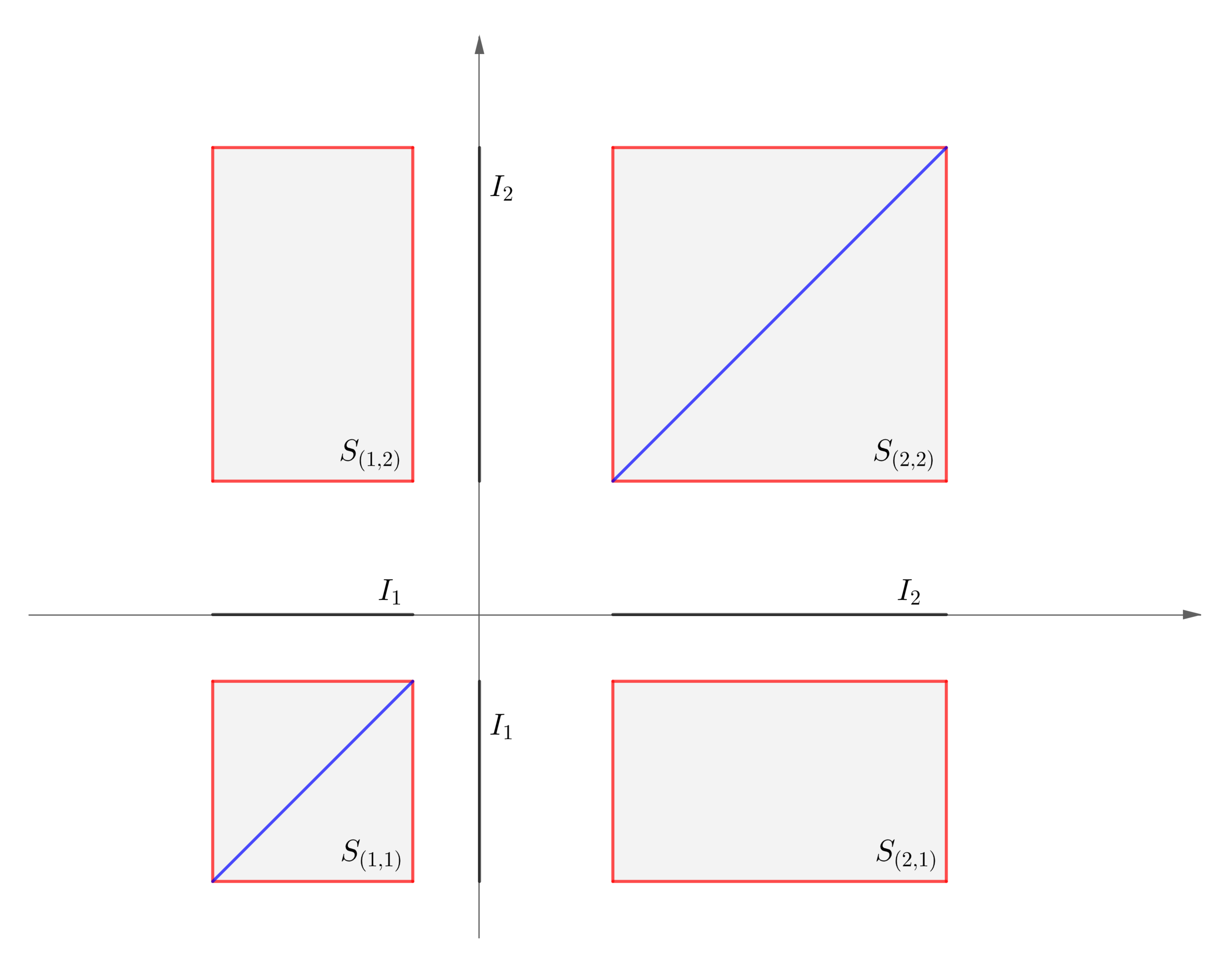}}
\caption{The geometry involved in the proof of Theorem \ref{mainTheoremExistence}, for~$d = m = 2$. The function~$\Xi$ takes value~$+\infty$ on the boundary of~$I^2$ (colored in red),~$-\infty$ on its diagonal (colored in blue), and is finite elsewhere. The sets~$S_{(1, 2)}$ and~$S_{(2, 1)}$ contain therefore local minima of~$\Xi$ (symmetric with respect to the diagonal).}
 \label{figure}
\end{figure}

Let now~$\delta_0 \in (0, 1/2)$ be small enough to have that~$\widehat{\xi} \in \mathcal{I}_{2 \delta_0}$ and $\sigma \in (0,1/2)$. Define~$\widetilde{F}_\varepsilon(\xi) := F_\varepsilon(\xi) + 2 \pi m (1 + \log \varepsilon)$ for all~$\xi \in \mathcal{I}_{\delta_0}$. Also, let~$\epsilon_0 \in (0,1)$ be as in Proposition~\ref{propnonlineartheory}. From Lemmas~\ref{FepsasJepspertlem} and~\ref{JepsasPhipertlem} it follows that
$$
\widetilde{F}_\varepsilon(\xi) = \pi \, \Xi(\xi) + \theta_\varepsilon(\xi), \quad \mbox{ for } \xi \in \mathcal{I}_{\delta_0},
$$
provided that~$\epsilon$ is sufficiently small. Here,~$\theta_\varepsilon : \mathcal{I}_{\delta_0} \to \R$ is a $C^1$ function satisfying~$|\theta_\varepsilon(\xi)| \le C \varepsilon |{\log \varepsilon}|$ for all~$\xi \in \mathcal{I}_{\delta_0}$. For every~$\epsilon$ small enough we then have that~$\widetilde{F}_\varepsilon(\widehat{\xi}) < \min_{\partial Q_{\delta_0}} \widetilde{F}_\varepsilon$. As a result, there exists~$\widehat{\xi}_\varepsilon \in Q_{\delta_0}$ such that~$\widetilde{F}_\varepsilon(\widehat{\xi}_\varepsilon) = \inf_{Q_{\delta_0}} \widetilde{F}_\varepsilon$. The point~$\widehat{\xi}_\varepsilon$ is a local minimum of~$F_\varepsilon$ too and thus, by Lemma~\ref{c=0iffstationarylem}, we conclude that~$c_j[\widehat{\xi}_\varepsilon] = 0$ for every~$j = 1, \ldots, m$. 

Finally, let~$\epsilon_\star \in (0,\epsilon_0)$ small enough to ensure that the previous argument holds for every~$\epsilon \in (0,\epsilon_\star)$. For~$\epsilon \in (0,\epsilon_\star)$, let~$\phi_{\epsilon}$ be the solution of~\eqref{nonlinprojprobforphi}--\eqref{phiortnonlin} given by Proposition~\ref{propnonlineartheory} applied with~$\xi = \widehat{\xi}_{\varepsilon} \in \mathcal{I}_{\delta_0}$. Since~$c_j[\widehat{\xi}_\varepsilon] = 0$ for every~$j = 1, \ldots, m$, the function~$\phi_{\epsilon}$ actually solves~\eqref{nonlinprobforphi}. Hence, denoting with~$\mathscr{U}[\widehat{\xi}_{\epsilon}]$ the function given in~\eqref{ansatz} with~$\xi = \widehat{\xi}_{\epsilon}$ and defining~$u_{\epsilon} := \mathscr{U}[\widehat{\xi}_{\epsilon}]+ \phi_{\epsilon} (\frac{\cdot}{\epsilon})$ for every~$\epsilon \in (0,\epsilon_\star)$, the result immediately follows.
\end{proof}

\section{The non-existence result} \label{The non-existence result}

\noindent This section is devoted to prove Proposition \ref{nonExistence}. The proof of this result relies on the generalized fractional Pohozaev identity recently established in \cite{D-F-W21}. More precisely, we make use of \cite[Theorem 1.1]{D-F-W21}. Let us state this result adapted to our framework. To that end, we consider the problem
\begin{equation} \label{problemPohozaev}
\left\{
\begin{aligned}
(-\Delta)^{\frac12} u & = f(u), \quad && \textup{ in } I, \\
u & = 0, && \textup{ in } \R \setminus I,
\end{aligned}
\right.
\end{equation}
with $f : \R \to \R$ locally Lipschitz, and define
$$
F(t):= \int_0^t f(s) ds.
$$
Also, given~$\Upsilon \in C^{\,0,1}(\R)$, we define the \textit{fractional deformation kernel associated to}~$\Upsilon$ as 
$$
K_{\Upsilon}(x,y):= \frac{1}{2\pi} \left( \Upsilon'(x) + \Upsilon'(y) - 2 \, \frac{(\Upsilon(x)-\Upsilon(y))(x-y)}{|x-y|^2} \right) \frac{1}{(x-y)^2}, \quad \textup{ for a.e.~} x,y \in \R,
$$
and denote by~$\mathcal{E}_{\Upsilon}$ the bilinear form associated to~$K_{\Upsilon}$, that is
$$
\mathcal{E}_{\Upsilon}(v,w):= \int_{\R} \int_{\R} (v(x) - v(y))(w(x)-w(y)) K_{\Upsilon}(x,y) dx dy, \quad \textup{for all } v,w \in H^{\frac12}(\R).
$$

\begin{theorem}{\rm{(\hspace{-0.003cm}\cite[Theorem 1.1]{D-F-W21}).}} \label{pohozaev}
Let $u \in H^{\frac12}(\R) \cap L^{\infty}(I)$ be a solution to \eqref{problemPohozaev}. Then, we have
\begin{equation} \label{pohozaevIdentity}
\frac{\pi}{4} \sum_{j = 1}^d \left( \Upsilon(b_j) \lim_{x \to b_j^{-}} \frac{u^2(x)}{b_j-x} - \Upsilon(a_j) \lim_{x \to a_j^+} \frac{u^2(x)}{x-a_j} \right)  = 2 \int_I F(u) \Upsilon' dx - \mathcal{E}_{\Upsilon}(u,u),
\end{equation}
for all~$\Upsilon \in C^{\,0,1}(\R)$.
\end{theorem}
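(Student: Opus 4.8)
The plan is to obtain Theorem~\ref{pohozaev} as a direct specialization of the general fractional Pohozaev identity of~\cite[Theorem 1.1]{D-F-W21}. In the form relevant here, that result asserts that, given a bounded open set~$\Omega \subset \R^n$ with sufficiently regular boundary, a locally Lipschitz nonlinearity, a solution~$U$ (in the natural energy class, bounded in~$\Omega$) to~$(-\Delta)^s U = f(U)$ in~$\Omega$ with~$U \equiv 0$ in~$\R^n \setminus \Omega$, and any Lipschitz vector field~$Z$, one has an identity of the shape
\[
\mathcal{E}_Z(U, U) = 2 \int_\Omega F(U) \, \dive Z \, dx - c_{n, s} \int_{\partial \Omega} (Z \cdot \nu) \left( \frac{U}{d^s} \right)^{\! 2} d\sigma,
\]
where~$\mathcal{E}_Z$ is the bilinear form built from the deformation kernel~$K_Z$ generated by~$Z$,~$d(x) := \dist(x, \R^n \setminus \Omega)$,~$\nu$ is the outer unit normal, and~$c_{n,s}$ is an explicit positive constant. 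I would then specialize to~$n = 1$,~$s = \tfrac12$,~$\Omega = I = \bigcup_{k=1}^d (a_k, b_k)$, and~$Z = \Upsilon$ viewed as a scalar field, so that~$\dive Z = \Upsilon'$ and~$K_Z = K_\Upsilon$, exactly as in the statement. Since~$\partial I = \{a_1, b_1, \ldots, a_d, b_d\}$ with~$\nu = +1$ at each~$b_j$ and~$\nu = -1$ at each~$a_j$, the boundary integral collapses to the finite signed sum appearing on the left-hand side of~\eqref{pohozaevIdentity}.

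The second step is to check that the hypotheses of the cited theorem are met by our~$u$. Since~$u \in H^{\frac12}(\R) \cap L^\infty(I)$ and~$f$ is locally Lipschitz, we have~$f(u) \in L^\infty(I)$, so~\eqref{problemPohozaev} holds in the weak (equivalently, distributional) sense and the relevant energy quantities are finite. The only delicate structural ingredient is the boundary behaviour: the general identity requires the function~$u/d^{\frac12}$ to extend continuously up to~$\partial I$, which is the content of the boundary regularity theory for the half-Laplacian recalled in Appendix~\ref{The one dimensional half-Laplacian} (cf.\ Proposition~\ref{boundary regularity half-laplacian}). Granted this, the one-sided limits~$\lim_{x \to b_j^-} u^2(x)/(b_j - x)$ and~$\lim_{x \to a_j^+} u^2(x)/(x - a_j)$ exist and coincide with the square of the fractional normal derivative~$u/d^{\frac12}$ evaluated at the corresponding endpoint, which is precisely the quantity entering the boundary term above.

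The remaining step is a bookkeeping of normalizations, and this is where I expect the only real care to be needed: our convention~$(-\Delta)^{\frac12} u(x) = \tfrac1\pi \, \PV \! \int_\R (u(x) - u(x+z)) z^{-2} \, dz$ fixes the constants in front of both~$\mathcal{E}_\Upsilon$ and~$K_\Upsilon$, and one must reconcile it with the~$\Gamma$-function constants~$c_{n,s}$ appearing in~\cite{D-F-W21} in order to identify~$c_{1, 1/2}$ with the factor~$\pi/4$ multiplying the boundary sum in~\eqref{pohozaevIdentity}. Once the constants are matched, the statement follows by transcription. (Should a self-contained argument be preferred, one could instead test~\eqref{problemPohozaev} against the deformation field~$\Upsilon u'$, split and symmetrize the resulting double integral to produce~$\mathcal{E}_\Upsilon(u,u)$, integrate by parts as~$\int_I f(u) \Upsilon u' \, dx = -\int_I F(u) \Upsilon' \, dx + (\text{endpoint terms})$, and evaluate the endpoint terms using the square-root asymptotics of~$u$ near~$\partial I$; the cited theorem, however, makes this unnecessary.)
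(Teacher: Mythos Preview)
The paper does not give its own proof of this theorem: it is stated verbatim as \cite[Theorem 1.1]{D-F-W21} and used as a black box. Your proposal---specializing the general identity to $n=1$, $s=\tfrac12$, $\Omega=I$, $Z=\Upsilon$, checking the boundary regularity needed for the limits, and matching the constant $c_{1,1/2}=\pi/4$---is exactly the right way to read the citation, and goes somewhat further than the paper itself in spelling out the translation.
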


\bigbreak
The second key ingredient to prove Proposition \ref{nonExistence} is the following quantitative version of the fractional Hopf's Lemma:

\begin{lemma}  \label{BrezisCabre}
Assume that~$u \in H^{\frac12}(\R) \cap C((0,1))$ satisfies~$(-\Delta)^{\frac12} u \geq 0$ in~$(0,1)$ with~$u \geq 0$ in~$\R$. Then, there exists a universal constant~$c_0 > 0$ such that
$$
u(x) \geq c_0 \, \|u\|_{L^1((0,1))} (\min\{x,1-x\})^{\frac12}, \quad \textup{ for all } x \in (0,1).
$$
\end{lemma}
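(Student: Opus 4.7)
The plan is to recast the Hopf-type inequality through the Poisson--Green representation formula of $(-\Delta)^{\frac{1}{2}}$ on $I = (0,1)$, and then reduce it to two pointwise lower bounds on the Green function and the Poisson kernel. Denote by $G(x,y)$ and $P(x,y)$ the Green function and the Poisson kernel of $(-\Delta)^{\frac{1}{2}}$ on $I$, and set $f := (-\Delta)^{\frac{1}{2}} u\big|_{(0,1)} \ge 0$, $g := u\big|_{\R \setminus (0,1)} \ge 0$. The standard decomposition of $u$ as the sum of the solution of the Dirichlet problem with source $f$ and the harmonic extension of $g$ yields
$$
u(x) = \int_0^1 G(x,y) f(y)\, dy + \int_{\R \setminus (0,1)} P(x,y) g(y)\, dy, \qquad x \in (0,1).
$$
Integrating this identity in $x \in (0,1)$ and applying Fubini,
$$
\|u\|_{L^1((0,1))} = \int_0^1 \phi(y) f(y)\, dy + \int_{\R \setminus (0,1)} h(y) g(y)\, dy,
$$
where $\phi(y) := \int_0^1 G(x,y)\, dx$ is the torsion function of $I$, known explicitly to satisfy $\phi(y) = c_\star (y(1-y))^{1/2}$, and $h(y) := \int_0^1 P(x,y)\, dx$.

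The conclusion will then follow from the two pointwise bounds:
\begin{itemize}
\item[$(\mathrm{A})$] $G(x,y) \ge c_1 (x(1-x))^{1/2} (y(1-y))^{1/2}$ for all $x,y \in (0,1)$,
\item[$(\mathrm{B})$] $P(x,y) \ge c_2 (x(1-x))^{1/2}\, h(y)$ for all $x \in (0,1)$ and $y \in \R \setminus (0,1)$,
\end{itemize}
for some universal $c_1, c_2 > 0$. Indeed, inserting these two bounds into the representations and recalling $\phi(y) \asymp (y(1-y))^{1/2}$, one obtains $u(x) \ge c_0 (x(1-x))^{1/2} \|u\|_{L^1((0,1))}$, which yields the lemma by the trivial estimate $(x(1-x))^{1/2} \ge 2^{-1/2} (\min\{x,1-x\})^{1/2}$.

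For $(\mathrm{A})$, I would invoke the sharp two-sided estimate for the Green function of $(-\Delta)^{\frac{1}{2}}$ on a $C^{1,1}$ domain (due to Chen--Song and Kulczycki), which in the critical case $d = 2s = 1$ reads $G(x,y) \asymp \log\bigl(1 + \delta(x)^{1/2}\delta(y)^{1/2}/|x-y|\bigr)$ with $\delta(x) := \min\{x,1-x\}$. Splitting into the two subcases $|x-y| \le \delta(x)^{1/2}\delta(y)^{1/2}$ (in which the logarithm is bounded below by $\log 2$ and thus $G \gtrsim 1 \ge \delta(x)^{1/2}\delta(y)^{1/2}$, since $\delta \le 1/2$) and $|x-y| > \delta(x)^{1/2}\delta(y)^{1/2}$ (in which $\log(1+t) \ge t/2$ for $t \in [0,1]$ gives $G \gtrsim \delta(x)^{1/2}\delta(y)^{1/2}/|x-y| \ge \delta(x)^{1/2}\delta(y)^{1/2}$ because $|x-y| \le 1$) yields $(\mathrm{A})$.

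For $(\mathrm{B})$, I would rely on the explicit Poisson kernel (obtained, e.g., from the classical formula on $(-1,1)$ by translation)
$$
P(x,y) = \frac{1}{\pi}\left(\frac{x(1-x)}{|y(y-1)|}\right)^{\!1/2}\frac{1}{|x-y|},
$$
which factorises as $P(x,y) = \delta(x)^{1/2} K(y)/|x-y|$ with $K(y)$ independent of $x$. The bound then reduces to the purely elementary inequality $|x-y|^{-1} \ge c_2 \int_0^1 \delta(x')^{1/2}/|x'-y|\, dx'$, which I would establish by combining the uniform comparison $|x-y| \le 2|\tfrac{1}{2}-y|$ (valid for $x \in (0,1)$, $y \notin (0,1)$ since then $|\tfrac{1}{2}-y| \ge \tfrac{1}{2}$) with the upper bound $\int_0^1 \delta(x')^{1/2}/|x'-y|\, dx' \le C/|\tfrac{1}{2}-y|$, proved by splitting $(0,1) = (0,\tfrac{1}{2}) \cup (\tfrac{1}{2},1)$ and carrying out an elementary change of variables near $x'=1$ when $y \to 1^+$ to absorb the integrable singularity of $1/|x'-y|$ against the vanishing factor $(1-x')^{1/2}$. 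The main technical obstacle is $(\mathrm{A})$, since it appeals to a non-trivial potential-theoretic estimate (or, alternatively, a direct verification on the explicit, somewhat unwieldy, formula for $G$ on an interval); the remaining steps are elementary.
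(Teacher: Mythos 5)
Your argument is correct in outline but takes a genuinely different route from the paper. The paper's proof is in two steps: first it applies the weak Harnack inequality to get the interior bound $u \geq \tfrac{\overline{c}}{4}\|u\|_{L^1((0,1))}$ on the middle interval $[3/8,5/8]$, and then it propagates this to the boundary at the rate $(\min\{x,1-x\})^{1/2}$ by comparing $u$ with an explicit barrier from \cite{RO-S14-2} which equals $1$ on $[3/8,5/8]$, satisfies $(-\Delta)^{\frac12}\varphi\le 0$ on the two lateral pieces, and is bounded below by $\widetilde{c}\,(\min\{x,1-x\})^{1/2}$. You instead go through the Green--Poisson representation together with the kernel lower bounds $(\mathrm{A})$ and $(\mathrm{B})$; both of these are correct as you derive them (the two-sided estimate you invoke for $(\mathrm{A})$ is exactly the one from \cite{C-K-S08} that the paper itself uses in the proof of Lemma~\ref{ansatzL1norm}, and $(\mathrm{B})$ follows from the explicit Poisson kernel as you indicate, including the elementary bound $\int_0^1 (x'(1-x'))^{1/2}|x'-y|^{-1}\,dx' \le C\,|\tfrac12-y|^{-1}$). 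What your approach buys is a one-shot quantitative proof with a transparent constant; what it costs is the very first line. For a function that is merely a supersolution, $(-\Delta)^{\frac12}u$ restricted to $(0,1)$ is a priori only a non-negative Radon measure $\mu$, not an $L^1$ density $f$, and the identity $u=\int G(\cdot,y)\,d\mu(y)+\int P(\cdot,y)g(y)\,dy$ is the Riesz decomposition for $\tfrac12$-superharmonic functions; it needs justification (e.g., first show $u\ge P[g]$ by the maximum principle, then represent the non-negative, exterior-vanishing supersolution $u-P[g]$ as a Green potential). This is classical potential theory for the Cauchy process and not a fatal gap, but it is the least elementary ingredient of your proof and should be stated and sourced, whereas the paper's weak-Harnack-plus-barrier argument avoids it entirely.
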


\begin{proof}
If~$u = 0$ in~$(0, 1)$, there is nothing to prove. Hence, we assume~$\| u \|_{L^1((0, 1))} > 0$. Using the weak Harnack inequality (see, e.g.,~\cite[Theorem 2.2]{RO-S19}), we get the existence of a universal constant~$\overline{c} > 0$ for which
\begin{equation} \label{BCinterior}
u(x) \geq \overline{c} \int_{\R} \frac{u(x)}{(1+|x|)^2} \, dx \geq \overline{c} \int_0^1 \frac{u(x)}{(1+|x|)^2} \, dx \geq \frac{\overline{c}}{4} \, \|u\|_{L^1((0,1))}, \quad \textup{ for all } x \in \Big[ \frac38, \frac58 \Big].
\end{equation}
Next, we consider~$\varphi(x):= \varphi_2(2x-1)$ with~$\varphi_2 \in H^{\frac12}(\R) \cap C(\R)$ as in~\cite[Lemma 3.2]{RO-S14-2} and observe that
\begin{equation} \label{barrierROSLemma32}
\left\{
\begin{aligned}
(-\Delta)^{\frac12} \varphi & \leq 0, \quad && \textup{ in } (0,3/8)\cup(5/8,1),\\
\varphi & = 1, && \textup{ in } [3/8,5/8], \\
\varphi & \geq \widetilde{c}\, (\min\{x,1-x\})^{\frac12},\quad && \textup{ in } (0,1),\\
\varphi & = 0, && \textup{ in } \R \setminus (0,1),
\end{aligned}
\right.
\end{equation}
for some universal constant~$\widetilde{c} > 0$. Then, we define 
$$
\widetilde{u} := \frac{4 u}{\overline{c} \|u\|_{L^1((0,1))}},
$$ 
and, combining~\eqref{BCinterior} with~\eqref{barrierROSLemma32}, we easily see that
$$
\left\{
\begin{aligned}
(-\Delta)^{\frac12}(\widetilde{u} - \varphi) & \geq 0, \quad && \textup{ in } (0,3/8)\cup (5/8,1),\\
\widetilde{u} - \varphi & \geq 0, && \textup{ in } \R \setminus \big((0,3/8)\cup (5/8,1)\big).
\end{aligned}
\right.
$$
From the maximum principle and the properties of~$\varphi$ we infer that
\begin{equation} \label{BCboundary}
u(x) \geq \frac{\overline{c}}{4} \, \|u\|_{L^1((0,1))}\, \varphi(x) \geq  \frac{\widetilde{c}\,\overline{c}}{4} \, \|u\|_{L^1((0,1))} (\min\{x,1-x\})^{\frac12}, \quad \textup{ for all } x \in \Big(0,\frac38 \Big) \cup \Big(\frac58,1\Big).
\end{equation}
The result immediately follows from the combination of \eqref{BCinterior} and \eqref{BCboundary}.
\end{proof}

Having at hand these two ingredients we proceed to prove Proposition~\ref{nonExistence}. For~$b \ge 1$, let
$$
I = I_b := (-b-1,-b) \cup (b,b+1),
$$ 
A very useful preliminary observation is that, if $u$ is a solution to \eqref{main-problem} with $\kappa \equiv 1$ and
\begin{equation*}  
\lambda := \epsilon \int_{I_b} e^{u} dx,
\end{equation*}
then $v := \lambda^{-1} u$ is a solution to 
\begin{equation} \label{meanField}
\left\{
\begin{aligned}
(-\Delta)^{\frac12} v & = \frac{e^{\lambda v}}{\int_{I_b} e^{\lambda v} dz}, \quad && \textup{ in } I_b, \\
v & > 0, && \textup{ in } I_b, \\
v & = 0, && \textup{ in } \R \setminus I_b.
\end{aligned}
\right.
\end{equation}
Also, note that if~$v$ is a solution to~\eqref{meanField}, then~$\widehat{v}(x) = v(-x)$ is also a solution. Hence, without loss of generality, we may assume that, if $v$ is a solution to \eqref{meanField}, then
$$
\|v\|_{L^1((b,b+1))} \geq \|v\|_{L^1((-b-1,-b))}.
$$
Finally, observe that, if $v$ is a solution to \eqref{meanField}, then $w(x) = v(x+b)$ is a solution to
\begin{equation} \label{meanField2}
\left\{
\begin{aligned}
(-\Delta)^{\frac12} w & = \frac{e^{\lambda w}}{\int_{J_b} e^{\lambda w} dz}, \quad && \textup{ in } J_b, \\
w & > 0, && \textup{ in } J_b, \\
w & = 0, && \textup{ in } \R \setminus J_b.
\end{aligned}
\right.
\end{equation}
with 
\begin{equation} \label{Jb}
J_b:= (-2b-1,-2b) \cup (0,1).
\end{equation}
Moreover, taking into account the previous discussion, without loss of generality, we may assume that, if $w$ is a solution to \eqref{meanField2}, then
\begin{equation} \label{norm01}
\|w\|_{L^1((0,1))} \geq \|w\|_{L^1((-2b-1,-2b))}.
\end{equation}

\noindent The proof of Proposition~\ref{nonExistence} is then reduced to showing the validity of the following result.


\begin{proposition} \label{nonExistenceReformulated}
Let~$\delta_0 \in (0,1/10]$. There exist~$b_{\star} \geq 1$ and~$m_{\star} > 2$ for which the following holds true: for all~$m \geq m_{\star}$ and all~$\delta \in (0,1/10]$, there exists~$\epsilon_0 \in (0,1)$ such that, if~$\epsilon \in (0,\epsilon_0)$, then
\begin{equation} \label{meanField3}
\left\{
\begin{aligned}
(-\Delta)^{\frac12} w & = \frac{e^{\lambda w}}{\int_{J_{b_{\star}}} e^{\lambda w} dz}, \quad && \textup{ in } J_{b_{\star}}, \\
w & > 0, && \textup{ in } J_{b_{\star}}, \\
w & = 0, && \textup{ in } \R \setminus J_{b_{\star}},
\end{aligned}
\right.
\end{equation}
has no solution of the form $w = \lambda^{-1}(\mathscr{U} + \psi_{\epsilon})$ with:
\begin{itemize}
\item[$\circ$] $\displaystyle \lambda = \lambda(\epsilon) :=  \epsilon \int_{J_{b_{\star}}} e^{\mathscr{U}+\psi_{\epsilon}} dx,$
\item[$\circ$] $\dist(\xi_j, \R \setminus J_{b_{\star}}) \geq \delta_0$ and $|\xi_j-\xi_i| \geq \delta$, for all $i,j \in \{1,\ldots,m\}$ with $i\neq j$, \medbreak
\item[$\circ$] $\mu_j(\xi)$ as in \eqref{mu_j} for all $j \in \{1,\ldots,m\}$, \medbreak
\item[$\circ$] $\psi_{\epsilon} \in L^{\infty}(\R)$ satisfying $\psi_{\epsilon} = 0$ a.e.~in $\R \setminus J_{b_{\star}}$ and $\|\psi_{\epsilon}\|_{L^{\infty}(J_{b_{\star}})} \to 0$, as $\epsilon \to 0$.
\end{itemize}
\end{proposition}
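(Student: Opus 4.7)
My plan is to combine the fractional Pohozaev identity (Theorem~\ref{pohozaev}) with the quantitative Hopf lemma (Lemma~\ref{BrezisCabre}), applied to the rescaled function $u := \lambda w = \mathscr{U} + \psi_{\epsilon}$, which satisfies $(-\Delta)^{\frac{1}{2}} u = \epsilon e^u$ in $J_{b_\star}$ with $u = 0$ outside, so as to produce an inequality that is violated once $m$ is large enough.

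The first preparatory step is to translate the hypothesis $\|w\|_{L^1(0,1)} \ge \|w\|_{L^1(-2b_\star - 1, -2 b_\star)}$ into a lower bound on $m_+$, the number of $\xi_j$'s in $I_2 := (0,1)$ (the complementary count in $I_1 := (-2b_\star - 1, -2b_\star)$ being $m_-$, with $m_+ + m_- = m$). The ansatz of Section~\ref{The approximate solution} and Lemma~\ref{Hj and uj expansions}, together with the positivity of $G$, give
$$
\|u\|_{L^1(I_k)} = c(\delta_0) \, m_k + O(m_{3-k}/b_\star) + o_{\epsilon}(1),
$$
the cross term quantifying the nonlocal leakage of the Green function between the two components and vanishing as $b_\star \to \infty$. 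Hence, for $b_\star$ large, the hypothesis forces $m_+ \ge m/2$. The concentration analysis of the same section also yields $\epsilon \int_{0}^{1} e^u \, dx \to 2 m_+ \pi$ as $\epsilon \to 0$.

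I would then apply Theorem~\ref{pohozaev} with the Lipschitz test function $\Upsilon(x) := \max\{0, \min\{x, 1\}\}$, whose derivative $\Upsilon' = \mathds{1}_{(0,1)}$ localises on $I_2$ and which vanishes at every endpoint of $J_{b_\star}$ except $b_2 = 1$. The identity collapses to
$$
\frac{\pi}{4} \alpha_2 = 2 \epsilon \int_0^1 (e^u - 1) \, dx - \mathcal{E}_\Upsilon(u, u), \qquad \alpha_2 := \lim_{x \to 1^-} \frac{u^2(x)}{1 - x}.
$$
Lemma~\ref{BrezisCabre} (whose hypotheses hold on $(0,1)$ since $(-\Delta)^{\frac{1}{2}} u \ge 0$ there and $u \ge 0$ in $\R$), combined with the lower bound $\|u\|_{L^1(0,1)} \ge c(\delta_0) m_+/2$, gives $\alpha_2 \ge c_0^2 c(\delta_0)^2 m_+^2 / 4$. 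Plugged into Pohozaev and using the mass expansion, this yields
$$
\frac{\pi c_0^2 c(\delta_0)^2}{16} \, m_+^2 \le 4 m_+ \pi - \mathcal{E}_\Upsilon(u, u) + o_{\epsilon}(1).
$$

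The remaining task, and the main technical obstacle, is to bound $|\mathcal{E}_\Upsilon(u, u)|$ by a constant that does not disrupt the quadratic-versus-linear imbalance in $m_+$. The favorable structural feature is that $K_\Upsilon$ vanishes identically on the three diagonal squares $(-\infty,0)^2$, $(0,1)^2$, $(1, +\infty)^2$, so the strong interior peaks of $u$ at the $\xi_j \in I_2$ never enter $\mathcal{E}_\Upsilon$. The surviving cross contributions split into: (i) the \emph{far} region $I_2 \times I_1$, where $|x - y| \ge 2 b_\star$ gives $|K_\Upsilon| \le C/b_\star^2$ and a contribution of order $m^2/b_\star^2$; (ii) the \emph{corner} neighborhoods of $(0,0)$ and $(1,1)$, where $K_\Upsilon$ develops inverse-square singularities from the kinks of $\Upsilon'$ but where the sharp fractional boundary behavior $u(x) = O(\dist(x, \partial J_{b_\star})^{\frac{1}{2}})$ restores integrability through a polar-coordinate expansion; and (iii) the \emph{bulk} cross tails where $u$ is bounded away from its peaks and $|K_\Upsilon|$ has integrable decay. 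Balancing the resulting estimates by choosing $b_\star$ large enough (depending on $\delta_0$) and then $m_\star$ large enough (depending on $\delta_0$ and $b_\star$) so that the quadratic left-hand side overtakes the linear and bounded right-hand side for every $m_+ \ge m_\star/2$ delivers the desired contradiction. The polar analysis in step (ii)—carefully matching the singularities of $K_\Upsilon$ against the boundary decay of $u$ to keep the corner contribution strictly below the Hopf-generated coefficient of $m_+^2$—is the most delicate piece.
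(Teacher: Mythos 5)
Your overall strategy coincides with the paper's: a piecewise-linear deformation field in the generalized Pohozaev identity that isolates the endpoint $x=1$, the quantitative Hopf lemma to bound the boundary term from below by $\|u\|_{L^1((0,1))}^2\gtrsim m^2$, a Green-representation lower bound for the $L^1$ norm of the ansatz, and smallness of the deformation energy obtained from the separation $2b_\star$ between the components. (Your preliminary component-counting step $m_+\ge m/2$ is avoidable: the paper bounds $\|u\|_{L^1(J_{b_\star})}\ge c_1 m\log(1+\delta_0)$ over the whole of $J_{b_\star}$ and uses the symmetry normalization only through $\|u\|_{L^1((0,1))}\ge\frac12\|u\|_{L^1(J_{b_\star})}$.)

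There is, however, a genuine gap at the point you yourself flag as most delicate: the corner $(1,1)$ created by truncating $\Upsilon$ at $t=1$. Your proposed mechanism -- that the boundary decay $u(x)=O(\dist(x,\partial J_{b_\star})^{1/2})$ ``restores integrability'' against the inverse-square singularity of $K_\Upsilon$ -- produces an absolutely convergent integral, but of size comparable to $\int_0^1 \frac{u(x)^2}{1-x}\,dx$, i.e.\ of the \emph{same order} $m_+^2$ as the Hopf term $\frac{\pi}{4}\lim_{x\to1^-}\frac{u^2(x)}{1-x}$ you are trying to beat, and with no control on its sign or on its constant relative to $\frac{\pi}{4}$. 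Integrability is not the issue; smallness is, and no size estimate delivers it. The step closes only through an exact cancellation: for $x\in(0,1)$ one computes $\int_1^\infty K_\Upsilon(x,y)\,dy=\frac{1}{2\pi}\int_{1-x}^\infty\big(t^{-2}-2(1-x)t^{-3}\big)\,dt=0$, so the entire contribution of $(0,1)\times(1,\infty)$ vanishes (recall $u\equiv0$ on $(1,\infty)$). Equivalently -- and this is the paper's route -- take the untruncated $\Upsilon(t)=\max\{t,0\}$, for which $K_\Upsilon\equiv0$ on $\{xy>0\}$, so the corner at $(1,1)$ never arises; since the two choices of $\Upsilon$ differ by $\max\{t-1,0\}$, which vanishes together with its derivative on $\overline{J_{b_\star}}$, they yield the same identity. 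With that choice the only surviving region is $\{xy<0\}$, where $u=0$ on $(-2b_\star,0)$ keeps the integrand $2b_\star$ away from the singularity; an explicit computation (using $\int_0^\infty\frac{x-s}{(x+s)^3}\,ds=0$) then kills the ``square'' terms $u(x)^2$, $u(y)^2$ and leaves only the genuine cross term, bounded by $\frac{1}{\pi b_\star^2}\|u\|_{L^1((0,1))}\|u\|_{L^1((-2b_\star-1,-2b_\star))}$ -- which is what allows absorption into the Hopf term after fixing $b_\star$. Your far-region bound via $|K_\Upsilon|\le C/b_\star^2$ and $\|u\|_{L^2}^2=O(m^2)$ is correct but cruder; either version suffices there. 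In short: replace the truncated $\Upsilon$ by $\max\{t,0\}$ (or supply the exact corner cancellation), and the rest of your outline matches the paper's proof.
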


\medbreak
Before going further, let us recall that the definition of~$\mathscr{U}$ is given in \eqref{ansatz}. To prove Proposition~\ref{nonExistenceReformulated}, we plan to apply Theorem~\ref{pohozaev} in the framework of~\eqref{meanField2} with~$\Upsilon(t) := \max\{t,0\}$. Note that in this case, the fractional deformation kernel associated to~$\Upsilon$ is given by
\begin{equation} \label{kernelReduced}
K(x,y):= K_{\Upsilon}(x,y) = \left\{
\begin{aligned}
& \,0, \quad && \mbox{ if } xy > 0, \\
& \frac{1}{2\pi} \left( 1 - \frac{2 \max\{x,y\}}{|x-y|} \right) \frac{1}{(x-y)^2}, \quad && \mbox{ if } xy < 0,
\end{aligned}
\right.
\end{equation}
and the Pohozaev identity~\eqref{pohozaevIdentity} is reduced to
\begin{equation}\label{pohozaevReduced}
\frac{\pi}{4} \lim_{x \to 1^{-}} \frac{w^2(x)}{1-x} = \frac{2}{\lambda} \int_0^1 \frac{e^{\lambda w}-1}{\int_{J_b} e^{\lambda w} dz} dx -\mathcal{E}(w),
\end{equation}
with
$$
\mathcal{E}(w) := \mathcal{E}_{\Upsilon}(w,w) = \int_{\R} \int_{\R} (w(x)-w(y))^2 K(x,y) dx dy.
$$
We aim to use~\eqref{pohozaevReduced} to deduce our non-existence result. As a first step, we analyze the term~$\mathcal{E}$. 

\begin{lemma}
Let~$b \geq 1$. For all~$v \in H^{\frac12}(\R)$ with~$v \equiv 0$ in~$\R \setminus J_b$ and~$v \geq 0$ in~$J_b$, it holds that
$$
-\frac{1}{\pi b^2} \|v\|_{L^1((0,1))} \|v\|_{L^{1}((-2b-1,-2b))} \leq \mathcal{E}(v) \leq \frac{1}{2\pi}[v]_{H^{\frac12}(\R)}.
$$ 
\end{lemma}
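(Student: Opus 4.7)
The plan is to prove the two bounds separately. For the upper bound---which I read as $\mathcal{E}(v) \leq \frac{1}{2\pi}[v]_{H^{1/2}(\R)}^2$, the squared Gagliardo seminorm being the quadratic-in-$v$ object---the starting point is the algebraic fact that on the support $\{xy < 0\}$ of $K$ one has
\begin{equation*}
|K(x,y)| \leq \frac{1}{2\pi (x-y)^2}.
\end{equation*}
This follows at once from~\eqref{kernelReduced}: assuming WLOG $x > 0 > y$, we have $K(x,y) = \frac{-(x+y)}{2\pi(x-y)^3}$, and the elementary identity $|x+y| = ||x|-|y|| \leq |x|+|y| = |x-y|$ yields the bound. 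Hence
\begin{equation*}
|\mathcal{E}(v)| \leq \int_\R\int_\R \frac{(v(x)-v(y))^2}{2\pi(x-y)^2}\, dx\, dy = \frac{1}{2\pi}[v]_{H^{1/2}(\R)}^2,
\end{equation*}
which is the upper bound (and also a weaker symmetric lower bound).

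For the sharper lower bound I would exploit the support of $v$. Writing $K_0(x,y) := \frac{-(x+y)}{(y-x)^3}$ and using the symmetry of $K$, I would first rearrange
\begin{equation*}
\mathcal{E}(v) = \frac{1}{\pi} \int_0^\infty \! \int_{-\infty}^0 (v(x)-v(y))^2\, K_0(x,y)\, dx\, dy.
\end{equation*}
The crucial algebraic observation is the cancellation
\begin{equation*}
\int_0^\infty K_0(x, y)\, dy = 0 \quad \forall\, x < 0, \qquad \int_{-\infty}^0 K_0(x, y)\, dx = 0 \quad \forall\, y > 0,
\end{equation*}
verified via the substitution $u = y - x$ and an explicit antiderivative (the integrand changes sign exactly at $y = -x$ and the positive and negative portions cancel). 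Expanding $(v(x)-v(y))^2 = v(x)^2 + v(y)^2 - 2 v(x) v(y)$ and splitting the integral, the $v(x)^2$ and $v(y)^2$ contributions vanish by the cancellation identity, leaving
\begin{equation*}
\mathcal{E}(v) = - \frac{2}{\pi} \int_{-2b-1}^{-2b}\!\int_0^1 v(x)\, v(y)\, K_0(x,y)\, dy\, dx,
\end{equation*}
the integration region having been reduced to $(-2b-1, -2b) \times (0, 1)$ by the support assumption on $v$. On this region $-(x+y) \in [2b-1, 2b+1]$ and $y - x \geq 2b$, so $0 \leq K_0 \leq (2b+1)/(2b)^3$; since $v \geq 0$, I conclude
\begin{equation*}
-\mathcal{E}(v) \leq \frac{2(2b+1)}{\pi(2b)^3}\, \|v\|_{L^1((0,1))}\|v\|_{L^1((-2b-1,-2b))} \leq \frac{1}{\pi b^2}\, \|v\|_{L^1((0,1))}\|v\|_{L^1((-2b-1,-2b))},
\end{equation*}
where the last step uses $2b + 1 \leq 4b$ for $b \geq 1$.

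The main subtle point is to justify Fubini when splitting the integrand $(v(x)-v(y))^2 K_0$ into three pieces: after integrating in $x$, the piece $\iint v(y)^2 |K_0|$ reduces to $\int_0^1 v(y)^2/(2y)\, dy$, whose finiteness near $y = 0$ is not automatic from $v \in H^{1/2}(\R)$ alone. This is resolved by the fractional Hardy inequality for the Lions--Magenes space $H^{1/2}_{00}((0,1))$: the hypothesis $v \equiv 0$ on $\R \setminus J_b$ forces $v$ to belong to $H^{1/2}_{00}$ on each component of $J_b$, which is precisely the space on which $\int v(y)^2 / \operatorname{dist}(y, \partial)\, dy$ is controlled by $[v]_{H^{1/2}(\R)}^2$. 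The $v(x)^2$ piece is easier since $(-2b-1, -2b)$ stays uniformly away from $0$, and the absolute integrability ensured by the upper-bound step legitimizes the splitting.
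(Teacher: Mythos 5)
Your proof is correct and follows essentially the same route as the paper's: the same symmetrization reducing $\mathcal{E}(v)$ to an integral of $(v(x)-v(y))^2$ against $-(x+y)/(\pi|x-y|^3)$ over $\{xy<0\}$, the same cancellation identity that kills the $v(x)^2$ and $v(y)^2$ terms, and the same final kernel bound $\frac{2b+1}{4\pi b^3}\le\frac{1}{\pi b^2}$; your reading of the right-hand side as the \emph{squared} Gagliardo seminorm is also the intended one. The only addition is your explicit Fubini justification, which the paper omits --- and which can be obtained more directly by observing that $2\int_0^1 v(y)^2 y^{-1}\,dy$ is precisely the cross term of $[v]^2_{H^{\frac12}(\R)}$ between $(0,1)$ and $(-\infty,0)$ for a function vanishing on the negative half-line, so no appeal to $H^{\frac12}_{00}$ theory is actually needed.
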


\begin{proof}
From the definition of~$K$ (cf.~\eqref{kernelReduced}), it is immediate to see that
$$
\mathcal{E}(v) \leq \frac{1}{2\pi} [v]_{H^{\frac12}(\R)}, \quad \textup{ for all } v \in H^{\frac12}(\R).
$$
Hence, we focus on proving the leftmost inequality. Let~$v \in H^{\frac12}(\R)$ be an arbitrary function satisfying~$v \equiv 0$ in~$\R \setminus J_b$ and~$v \geq 0$ in~$J_b$. Taking into account the definitions of~$J_b$ (cf.~\eqref{Jb}) and~$K$~(cf.~\eqref{kernelReduced}), doing suitable changes of variable, and using Fubini's Theorem, it follows that
\begin{align*}
\mathcal{E}(v) & = \int_{\R} \int_{\R} (v(x)-v(y))^2 K(x,y) \, dx dy \\
&  = \int_0^{\infty} \left( \int_{-\infty}^0 (v(x)-v(y))^2 K(x,y) \, dy \right) dx + \int_0^{\infty} \left(  \int_{-\infty}^0  (v(x)-v(y))^2 K(x,y) \, dx \right) dy \\
& = \int_0^{\infty} \left( \int_{-\infty}^0 \big(v(x)-v(y)\big)^2 \big(K(x,y) + K(y,x)\big) dy \right) dx \\
& = -\frac{1}{\pi} \int_0^{\infty} \left( \int_{-\infty}^0 \frac{(v(x)-v(y))^2}{(x-y)^2} \,\frac{x+y}{x-y}\, dy \right) dx = -\frac{1}{\pi} \int_0^{\infty} \left( \int_0^{\infty}  \frac{(v(x)-v(-s))^2}{(x+s)^2} \,\frac{x-s}{x+s} \, ds \right) dx\\
& = -\frac{1}{\pi} \Bigg\{ \int_0^{\infty} v(x)^2 \left( \int_0^{\infty} \frac{x-s}{(x+s)^3} \, ds \right) dx + \int_0^{\infty} v(-s)^2 \left( \int_0^{\infty} \frac{x-s}{(x+s)^3} \, dx \right)ds  \\
& \hspace{1.2cm} - 2 \int_0^{\infty} \left( \int_{0}^{\infty} v(x) v(-s)\,\frac{x-s}{(x+s)^3}\, \, ds \right) dx \Bigg\}.
\end{align*}  
Now, note that
$$
\int_0^{\infty} \frac{x-s}{(x+s)^3}\, ds = 0, \textup{ for all } x > 0, \quad \textup{ and } \quad \int_0^{\infty} \frac{x-s}{(x+s)^3}\, dx = 0, \textup{ for all } s > 0.
$$
Hence, taking into account the definition of $J_b$ and the properties of $v$, we conclude that
$$
\begin{aligned}
\mathcal{E}(v) & = - \frac{2}{\pi} \int_0^1 \left( \int_{2b}^{2b+1} v(x)v(-s) \frac{s-x}{(x+s)^3}\, ds \right) dx \geq -\frac{1}{\pi b^2} \|v\|_{L^1((0,1))} \|v\|_{L^1((-2b-1,-2b))},
\end{aligned}
$$
which is the desired inequality.
\end{proof}

Combining the previous lemma with the generalized Pohozaev identity~\eqref{pohozaevReduced}, we infer that, for any~$b \geq 1$, if~$w$ is a solution to~\eqref{meanField2}, then
\begin{align*} 
 \frac{\pi}{4} \lim_{x \to 1^{-}} \frac{w^2(x)}{1-x}  & \leq \frac{2}{\lambda} \int_0^1 \frac{e^{\lambda w}-1}{\int_{J_b} e^{\lambda w} dz} \, dx  + \frac{1}{\pi b^2} \|w\|_{L^1((0,1))}\|w\|_{L^1((-2b-1,-2b))} \\
 & < \frac{2}{\lambda} + \frac{1}{\pi b^2} \|w\|_{L^1((0,1))}\|w\|_{L^1((-2b-1,-2b))}
\end{align*}
Moreover, recall that, if~$w$ is a solution of~\eqref{meanField2}, we may assume without loss of generality that~\eqref{norm01} holds. Thus, combining the above chain of inequalities with Lemma~\ref{BrezisCabre} we actually get that, for any~$b \geq 1$,
$$
\frac{c_0^2\, \pi}{4} \|w\|^2_{L^1((0,1))} < \frac{2}{\lambda} + \frac{1}{\pi b^2} \|w\|_{L^1((0,1))}^2
$$
Recall that $c_0 > 0$ is a universal constant--in particular independent of~$b$. Then, by choosing
$$
b = b_{\star}:= \max\Big\{1, \frac{2\sqrt{2}}{c_0 \pi} \Big\},
$$
we get that
\begin{equation} \label{pohozaevDefinitive}
\frac{c_0^2\, \pi}{32} \|w\|^2_{L^1(J_{b_{\star}})} \leq \frac{c_0^2\, \pi}{8} \|w\|^2_{L^1((0,1))} < \frac{2}{\lambda}.
\end{equation}
We will use this inequality to prove Proposition~\ref{nonExistenceReformulated}. First, we need one more technical lemma.

\begin{lemma} \label{ansatzL1norm}
Let~$\delta_0 \in (0,1/10]$,~$m \geq 2$, and assume that~$\mu_j$ satisfies~\eqref{mu-bounded both sides}, for all~$j = 1,\ldots,m$. There exist two constants~$c_1 > 0$, independent of~$m$ and~$\delta_0$, and~$\epsilon_1 \in (0,1)$ such that, if~$\dist(\xi_j,\R\setminus J_{b_{\star}}) \geq \delta_0$, for all $j = 1,\ldots,m$, and~$\epsilon \in (0,\epsilon_1)$, then
$$
\int_{J_{b_{\star}}} \mathscr{U}dx \geq c_1\, m \log (1+\delta_0 ).
$$
\end{lemma}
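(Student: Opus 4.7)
The plan is to lower-bound each individual bubble integral~$\int_{J_{b_\star}} \mathscr{U}_j \, dx$ by a positive quantity uniform in~$\epsilon$, and then sum. The central tool will be the torsion function~$\tau$ of~$J_{b_\star}$: namely, the unique solution to~$(-\Delta)^{\frac{1}{2}}\tau = 1$ in~$J_{b_\star}$ with~$\tau = 0$ in~$\R \setminus J_{b_\star}$. Standard boundary regularity (Proposition~\ref{boundary regularity half-laplacian}) yields~$\tau \in C^{\,0,1/2}(\R) \cap L^{\infty}(\R)$, while the maximum principle implies~$\tau > 0$ in~$J_{b_\star}$. Analogously, since~$\mathscr{U}_j = u_j + H_j$ satisfies~$(-\Delta)^{\frac{1}{2}} \mathscr{U}_j = \epsilon e^{u_j} > 0$ in~$J_{b_\star}$ and vanishes outside, one has~$\mathscr{U}_j > 0$ in~$J_{b_\star}$, so no cancellation occurs in the sum~$\mathscr{U} = \sum_j \mathscr{U}_j$.

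Representing both~$\tau$ and~$\mathscr{U}_j$ through the Green function~$G$ introduced in Section~\ref{The approximate solution} and exploiting its symmetry~$G(x, z) = G(z, x)$, I would derive the identity
$$
\int_{J_{b_\star}} \mathscr{U}_j(x) \, dx \;=\; \epsilon \int_{J_{b_\star}} \tau(y) e^{u_j(y)} \, dy \;=\; 2 \int_{(J_{b_\star} - \xi_j)/(\mu_j \epsilon)} \tau(\xi_j + \mu_j \epsilon t) \, \frac{dt}{1 + t^2},
$$
where the last step is the change of variables~$y = \xi_j + \mu_j \epsilon t$ combined with the explicit form of~$e^{u_j(y)}$ (with~$\kappa \equiv 1$). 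To pass to the limit~$\epsilon \to 0$ uniformly in~$\xi_j$, I would split the integration domain into~$\{|t| \leq M\}$ and its complement. The inner part converges to~$2\tau(\xi_j) \arctan M$ at a rate controlled by the H\"older modulus of~$\tau$ together with the uniform bound~\eqref{mu-bounded both sides} on~$\mu_j$, while the outer part is non-negative and bounded above by~$\|\tau\|_{L^{\infty}(\R)}(\pi - 2\arctan M)$. Choosing~$M$ large and then~$\epsilon$ sufficiently small (in dependence of~$\delta_0$), this yields~$\int_{J_{b_\star}} \mathscr{U}_j \, dx \geq \pi \tau(\xi_j)$.

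Finally, I would bound~$\tau(\xi_j)$ from below by applying (a translated version of) Lemma~\ref{BrezisCabre} to~$\tau$ on each of the two unit-length components of~$J_{b_\star}$, noting that~$\tau$ satisfies the hypotheses there. This gives~$\tau(\xi_j) \geq c_\tau\, \delta_0^{1/2}$ with~$c_\tau > 0$ depending only on~$b_\star$ through the $L^1$-norms of~$\tau$ on its two components. Combining this with the elementary inequality~$\log(1 + \delta_0) \leq \delta_0 \leq \delta_0^{1/2}$ valid on~$(0, 1/10]$ and summing over~$j$ yields the result with~$c_1 := \pi c_\tau$, manifestly independent of~$m$ and~$\delta_0$. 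The main technical step is the~$\xi_j$-uniform convergence in the central identity; it amounts to coupling the H\"older continuity of~$\tau$ up to~$\partial J_{b_\star}$ with the two-sided control on the concentration parameters~$\mu_j$, and should go through with only elementary estimates.
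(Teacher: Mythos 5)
Your argument is correct, and it differs in a genuine way from the paper's. Both proofs start from the Green representation $\mathscr{U}_j(x) = \frac{\epsilon}{2\pi}\int_{J_{b_\star}} G(x,y)\,e^{u_j(y)}\,dy$ and exploit that the bubble measure $\epsilon e^{u_j}\,dy$ carries mass of order one near $\xi_j$ after the rescaling $y = \xi_j + \mu_j\epsilon t$. The paper then bounds the resulting double integral directly, restricting to $L_j\times L_j$ with $L_j$ the $\delta_0/2$--shrinkage of the component containing $\xi_j$, and invoking the sharp pointwise lower bound $G(x,y)\geq \widetilde{c}\,\log\bigl(1+ d(x)^{1/2}d(y)^{1/2}|x-y|^{-1}\bigr)\geq \widetilde{c}\log(1+\delta_0)$ from~\cite{C-K-S08}. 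You instead apply Fubini first, which collapses the $x$--integration into the torsion function $\tau(y)=\frac{1}{2\pi}\int G(x,y)\,dx$, and then lower-bound $\tau(\xi_j)$ via the quantitative Hopf lemma (Lemma~\ref{BrezisCabre}), already proved in the paper. This buys two things: your route is more self-contained (it bypasses the two-sided Green function estimates of~\cite{C-K-S08}, relying only on tools developed in the paper plus boundary $C^{0,1/2}$ regularity of $\tau$), and it actually yields the stronger lower bound $c\,m\,\delta_0^{1/2}$, which dominates $m\log(1+\delta_0)$ on $(0,1/10]$. The price is the extra uniform-in-$\xi_j$ limiting argument, which does go through as you indicate: the H\"older error on $\{|t|\leq M\}$ is $O((\epsilon M)^{1/2})$ and is absorbed by $\tau(\xi_j)\geq c_\tau\delta_0^{1/2}$ once $\epsilon\lesssim\delta_0/M$, consistent with $\epsilon_1$ depending on $\delta_0$. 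One harmless slip: the inner integral tends to $4\tau(\xi_j)\arctan M$, not $2\tau(\xi_j)\arctan M$ (note $\int_{-M}^{M}(1+t^2)^{-1}dt=2\arctan M$); with your constant the target $\pi\tau(\xi_j)$ would sit exactly at the $M\to\infty$ limit and leave no room for the error term, whereas with the correct factor $4\arctan M>\pi$ for $M>1$ the margin is there.
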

 
\begin{proof}
First of all, combining~\eqref{probforuj}--\eqref{probforHj} with the fact that~$\kappa \equiv 1$, we see that, for all~$j \in \{1,\ldots, m\}$, 
$$
\left\{
\begin{aligned}
\Lfrac (u_j+H_j)&  = \epsilon e^{u_j}, \quad && \textup{ in } J_{b_{\star}}, \\
u_j + H_j & = 0, \quad && \textup{ in } \R \setminus J_{b_{\star}}.
\end{aligned}
\right.
$$
Thus, using Green's representation formula, we get that
$$
u_j(x)+H_j(x) =  \frac{\epsilon}{2\pi} \int_{J_{b_{\star}}} G(x,y)  e^{u_j(y)} \, dy, \quad \textup{ for all } x \in J_{b_{\star}} \textup{ and all } j \in \{1,\ldots,m\}.
$$
Having at hand this representation, we infer that
\begin{align*}
\|u_j + H_j\,\|_{L^1(J_{b_{\star}})} = \frac{\epsilon }{2\pi} \int_{J_{b_{\star}}} \int_{J_{b_{\star}}} G(x,y) e^{u_j(y)} \, dx  dy, \quad \textup{ for all } j \in \{1,\ldots,m\}.
\end{align*}
Next, let~$(a_j,b_j)$ be the connected component of~$J_{b_\star}$ for which~$\xi_j \in (a_j,b_j)$--i.e.,~$(a_j,b_j)$ is either $(-2b_{\star}-1,-2b_{\star})$ or~$(0,1)$. We set~$L_j:= (a_j + \frac{\delta_0}{2}, b_j-\frac{\delta_0}{2})$ and observe that~$L_j \times L_j \subset J_{b_{\star}} \times J_{b_{\star}}$. Thanks to~\cite[Corollary 1.2]{C-K-S08}--or to a direct comparison with the explicit Green function of the interval~$(a_j, b_j)$--, there exists a universal constant~$\widetilde{c} > 0$ such that 
$$
G(x,y) \geq \widetilde{c}\, \log \left( 1+ \frac{d(x)^{\frac{1}{2}} d(y)^{\frac{1}{2}}}{|x-y|} \right) \geq \widetilde{c}\, \log \left( 1 + \delta_0 \right), \quad \textup{ for all } (x,y) \in L_j \times L_j \textup{ with } x \neq y,
$$
where~$d(x) := \dist(x, \R \setminus J_{b_\star})$. From this and the non-negativity of~$G$, it follows that
\begin{align*}
\|u_j + H_j\,\|_{L^1(J_{b_{\star}})} & \geq \frac{\epsilon}{2\pi} \int_{L_j} \int_{L_j} G(x,y) e^{u_j(y)} \, dx  dy \geq \frac{\widetilde{c} \epsilon}{\pi}  (1-\delta_0) \log \left( 1 + \delta_0 \right) \int_{L_j} e^{u_j(y)} \, dy \\
& =  \frac{\widetilde{c}}{2 \pi} \log \left( 1 + \delta_0 \right) \int_{\frac{L_j-\xi_j}{\mu_j \epsilon}} \frac{2}{1+z^2}\, dz, \qquad \mbox{ for all } j \in \{1,\ldots,m\}.
\end{align*}
Note that the last identity follows from the change of variable $z = (y-\xi_j)(\mu_j \epsilon)^{-1}$. Finally, taking into account \eqref{mu-bounded both sides}, we get the existence of $\epsilon_1 \in (0,1)$ (depending only on $m$ and $\delta_0$) such that $(-1,1) \subseteq \frac{L_j-\xi_j}{\mu_j \epsilon}$ for all $\epsilon \in (0,\epsilon_1)$. Hence, for all $\epsilon \in (0,\epsilon_1)$ and all $j \in \{1,\ldots,m\}$, it follows that
\[
\|u_j + H_j\|_{L^1(J_{b_{\star}})} \geq  \frac{\widetilde{c}}{2 \pi} \log ( 1 + \delta_0 ) \int_{-1}^1 \frac{2}{1+z^2}\, dz = \frac{\widetilde{c}}{2} \log ( 1 +\delta_0), 
\] 
and the result immediately follows from the definition of $\mathscr{U}$ (cf. \eqref{ansatz}).
\end{proof} 

We can now deal with the

\begin{proof}[Proof of Proposition \ref{nonExistenceReformulated}]
Let~$\delta_0 \in (0,1/10]$. For~$c_0 > 0$ as in Lemma~\ref{BrezisCabre} and~$c_1 > 0$ as in Lemma~\ref{ansatzL1norm}, we define 
$$
m_{\star} := \left\lceil 3 \bigg( \frac{16}{c_0 c_1 \log(1+\delta_0)} \bigg)^{\! 2} \right\rceil = \min \bigg\{ n \in \N\, : \, n \geq 3 \bigg( \frac{16}{c_0 c_1 \log(1+\delta_0)} \bigg)^{\! 2} \, \bigg\}.
$$
Having at hand~$m_{\star}$, we fix arbitrary~$m \in \N$,~$m \geq m_{\star}$, and~$\delta \in (0,1/10]$. Corresponding to such~$m$ and~$\delta$, we choose~$\epsilon_0 \in (0,1)$ such that:
\begin{align}
& \circ\ \epsilon_0 \leq \epsilon_1(m), \textup{ with $\epsilon_1$ given by Lemma \ref{ansatzL1norm}}; \nonumber \\
& \circ\ \|\psi_{\epsilon}\|_{L^{\infty}(J_{b_{\star}})} \leq \frac{c_1\,m}{2} \log ( 1+\delta_0), \quad \textup{ for all } \epsilon \in (0,\epsilon_0); \label{epsilon2} \\
& \circ\ m\pi \leq \lambda(\epsilon) \leq 3m\pi, \quad \textup{ for all } \epsilon \in (0,\epsilon_0). \label{epsilon3}
\end{align}
Note that, since 
$$
\lim_{\epsilon \to 0} \|\psi_{\epsilon}\|_{L^{\infty}(J_{b_{\star}})} = 0
$$
and
\begin{equation} \label{convergence2mpi}
\lim_{\epsilon \to 0} \epsilon \int_{J_{b_{\star}}} e^{\mathscr{U} + \psi_{\epsilon}} dx = \lim_{\epsilon \to 0} \lambda(\epsilon) = 2m\pi,
\end{equation}
the existence of such an $\epsilon_0$ is guaranteed. Then, we assume by contradiction that, for some~$\epsilon \in (0,\epsilon_0)$, there exists a solution to~\eqref{meanField3} of the form~$w = \lambda^{-1} (\mathscr{U}+\psi_{\epsilon})$.  By Lemma~\ref{ansatzL1norm} and \eqref{epsilon2}, we have 
\begin{equation} \label{prop631}
\begin{aligned}
\int_{J_{b_{\star}}} (\mathscr{U} + \psi) \,dx & \geq \int_{J_{b_{\star}}} \mathscr{U} dx - \|\psi\|_{L^1({J_{b_{\star}}})}  \geq \frac{c_1}{2} \, m \log (1+\delta_0).
\end{aligned}
\end{equation}
Thus, combining \eqref{pohozaevDefinitive} with \eqref{epsilon3} and \eqref{prop631},  we obtain that
$$
m < 3 \bigg( \frac{16}{c_0 c_1 \log(1+\delta_0)} \bigg)^{\! 2}  \leq m_{\star}.
$$
The above inequality gives a contradiction with our choice $m$. The proposition is thus proved. 
\end{proof}

\begin{remark}
As one can see from the proof, we do not actually need the~$\mu_j$'s to be as prescribed by~\eqref{mu_j}, but only that they satisfy the two-sided bound~\eqref{mu-bounded both sides}. In such a case, the convergence~\eqref{convergence2mpi} would be replaced by a two-sided inequality of the form
$
\overline{c}m\pi \leq \lambda(\epsilon) \leq \overline{C}m\pi$ (for $\epsilon$ small enough) and we could then finish the proof arguing as before. 
\end{remark}

\appendix

\section{The one-dimensional half-Laplacian} \label{The one dimensional half-Laplacian}

\noindent In this appendix we recall some already known results concerning the fractional Laplacian adapted to our framework. Throughout the section~$J \subset \R$ is a general open subset of~$\R$ that can be written as the union of a finite number of bounded open intervals having positive distance from each other. More precisely, for $- \infty < a_1 < b_1 < \cdots < a_d < b_d < + \infty$ with $d \geq 1$, we consider
\[
J:= \bigcup_{j = 1}^{d} J_j, \quad \textup{ with } \quad J_j:= (a_j,b_j).
\]



\noindent In order to state the next result, we need also to introduce the following space of functions with moderate growth. Given a measurable set~$A \subset \R$, we define
$$
L^1_{1/2}(A) := \Big\{ v \in L^1(A) : \| v \|_{L^1_{1/2}(A)} < +\infty \Big\},
$$
where
$$
\| v \|_{L^1_{1/2}(A)} := \int_A \frac{|v(x)|}{1 + x^2} \, dx.
$$

\begin{proposition}
\label{boundary regularity half-laplacian}
Let~$h \in L^{\infty}(J)$ and~$g \in L^1_{1/2}(\R \setminus J) \cap C^\alpha(\overline{K} \setminus J)$ for some~$\alpha \in \left( \frac{1}{2}, 1 \right]$ and some bounded open interval~$K \subset \R$ such that~$\overline{J} \subset K$. Let~$u$ be the bounded solution of the Dirichlet problem
\begin{equation*}
\left\{
\begin{aligned}
\Lfrac u & = h, \quad && \textup{ in } J, \\
u & = g, && \textup{ in } \R \setminus J.
\end{aligned}
\right.
\end{equation*}
Then, $u \in C^{0,\frac{1}{2}}(\overline{K})$ and
$$
\|u\|_{C^{0,\frac{1}{2}}(\overline{K})} \le C \Big( \|h\|_{L^{\infty}(J)} + \|g\|_{C^{\alpha}(\overline{K} \setminus J)} + \| g \|_{L^1_{1/2}(\R \setminus J)} \Big),
$$
for some constant~$C > 0$ depending only on~$J$,~$K$, and~$\alpha$.
\end{proposition}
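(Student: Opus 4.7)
The plan is to decompose $u = u_1 + u_2$, where $u_1$ absorbs the inhomogeneity~$h$ with vanishing exterior data and $u_2$ is the fractional harmonic extension of~$g$. Concretely, I would let $u_1$ be the bounded weak solution of $(-\Delta)^{\frac12} u_1 = h$ in $J$ with $u_1 = 0$ in $\R\setminus J$, and set $u_2 := u - u_1$, which then solves $(-\Delta)^{\frac12} u_2 = 0$ in $J$ with $u_2 = g$ in $\R \setminus J$. Both pieces will be controlled separately in $C^{0,\frac12}(\overline K)$ by the indicated norms of~$h$ and~$g$.

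For $u_1$, the plan is to invoke the optimal $C^{0,\frac12}$ boundary regularity for the half-Laplacian with zero exterior data and bounded right-hand side (à la Ros-Oton--Serra), which directly yields $\|u_1\|_{C^{0,\frac12}(\R)} \le C \|h\|_{L^\infty(J)}$ with $C$ depending only on~$J$. Since $u_1$ vanishes outside $J \subset K$, this estimate transfers to $\overline K$.

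For $u_2$, I would further split $g = g_1 + g_2$ using a smooth cutoff~$\eta$ equal to~$1$ on a neighborhood of~$\overline J$ and compactly supported in~$K$: set $g_1 := \eta g$ (Hölder continuous, compactly supported in $\overline K \setminus J$) and $g_2 := (1-\eta) g$ (supported at positive distance from $\overline J$), and write accordingly $u_2 = v_1 + v_2$. The ``far'' piece $v_2$ admits a Poisson integral representation $v_2(x) = \int_{\R\setminus J} P_J(x,y) g_2(y) \, dy$ for $x \in J$ that is smooth in $x$ (since $g_2$ is supported away from $\overline J$), and standard estimates on the Poisson kernel $P_J$ of $J$ give the pointwise bound $|v_2(x)| \le C \|g\|_{L^1_{1/2}(\R\setminus J)}$. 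Together with $v_2 \equiv 0$ on $\overline K\setminus J$, this yields $\|v_2\|_{C^{0,\frac12}(\overline K)} \le C \|g\|_{L^1_{1/2}(\R\setminus J)}$.

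The main obstacle will be the ``near'' piece $v_1$, whose exterior data is $C^\alpha$ with $\alpha > 1/2$ and compactly supported in $\overline K\setminus J$. The naïve strategy of subtracting a Hölder extension $\tilde g_1$ of $g_1$ across $J$ fails, because $(-\Delta)^{\frac12} \tilde g_1$ is generally not bounded on $J$ when $\tilde g_1$ is merely $C^\alpha$ with $\alpha \le 1$: the singular integral picks up a non-integrable contribution across the interface $\partial J$. My plan is to pass to the Caffarelli--Silvestre extension instead: since $s = 1/2$, this is just a harmonic extension $V_1$ of $v_1$ to $\R^2_+$, which solves a mixed boundary value problem with Dirichlet datum $g_1$ on $(\R\setminus J) \times \{0\}$ and vanishing Neumann datum on $J \times \{0\}$. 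At each endpoint $a_k$, $b_k$, Dirichlet and Neumann regions meet, and classical regularity for such mixed problems on the half-plane (the characteristic $r^{1/2}$ behavior of harmonic functions with alternating conditions across a diameter) yields $V_1 \in C^{0,\frac12}$ up to $\R\times\{0\}$, with norm bounded by $\|g_1\|_{C^\alpha}$. Taking the trace gives $\|v_1\|_{C^{0,\frac12}(\overline K)} \le C \|g\|_{C^\alpha(\overline K\setminus J)}$, and summing the three contributions completes the argument.
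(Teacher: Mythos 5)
Your decomposition is sound and your handling of the right-hand side is a legitimate shortcut: citing the optimal $C^{0,\frac12}$ boundary regularity of Ros-Oton--Serra for $(-\Delta)^{\frac12}u_1 = h$ with zero exterior datum replaces the paper's explicit Green-function computation on $(-1,1)$ (the paper's $u_2$), and a union of intervals certainly satisfies the exterior ball condition. The treatment of the far part $g_2$ is also fine in spirit, though as written the inference is a non sequitur: an $L^\infty$ bound on $v_2$ in $J$ together with $v_2 = 0$ on $\overline{K}\setminus J$ does not give Hölder continuity. What you actually need (and can get, since $g_2$ is supported at positive distance from $\overline{J}$) is the boundary decay $|v_2(x)| \le C\sqrt{d(x)}\,\|g\|_{L^1_{1/2}(\R\setminus J)}$ from the $\sqrt{d(x)}$ factor in the Poisson kernel, plus the matching interior gradient bound $|v_2'(x)| \le C d(x)^{-1/2}\|g\|_{L^1_{1/2}(\R\setminus J)}$; the paper sidesteps this by cutting off $g$ and absorbing the far field into the right-hand side as a term bounded by $\|g\|_{L^1_{1/2}(\R\setminus J)}$.

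The genuine gap is in the treatment of $v_1$, which is precisely where the entire content of the proposition lives. You correctly identify that subtracting a H\"older extension fails, but then you defer the crux to ``classical regularity for mixed Dirichlet--Neumann problems,'' asserting a $C^{0,\frac12}$ bound with norm controlled by $\|g_1\|_{C^\alpha}$. No classical Zaremba-type result gives this for Dirichlet data that is merely $C^\alpha$: the standard $r^{1/2}$ asymptotics at a Dirichlet--Neumann junction are derived for smooth (or at least Lipschitz) data, and the dependence on the H\"older exponent $\alpha$ of the Dirichlet datum near the junction is exactly the delicate point. Indeed the statement is \emph{false} for $\alpha = \frac12$ --- the paper remarks after the proof that logarithmic singularities arise --- so the hypothesis $\alpha > \frac12$ must enter quantitatively somewhere, and in your argument it never does. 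To close the gap you would have to prove the mixed-problem estimate yourself, e.g.\ by the conformal map $\zeta \mapsto \zeta^2$ at each junction, which turns the Dirichlet datum into $t \mapsto g_1(a_k - t^2)$ with modulus of continuity $t^{2\alpha}$, and then verify that the Dini condition $\int_0 t^{2\alpha - 2}\,dt < \infty$ (i.e.\ $\alpha > \frac12$) yields a bounded gradient of the harmonic extension at the junction, hence Lipschitz behavior in $\zeta$ and $C^{0,\frac12}$ behavior in $x$. This computation is the exact analogue of the paper's estimate $|u_1(x) - g(1)| \le C\|g\|_{C^\alpha}\sqrt{1-x^2}$, where convergence of $\int_1^2 (z-1)^{\alpha - \frac32}\,dz$ is what requires $\alpha > \frac12$. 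Without carrying out that step, the proof is incomplete at its central point.
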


In the previous result we used the compact notation~$C^{\alpha}(\overline{K} \setminus J)$ to refer to the space~$C^{0,\alpha}(\overline{K} \setminus J)$ if~$\alpha \in (0,1)$ and to~$C^{1}(\overline{K} \setminus J)$ if~$\alpha = 1$.

\begin{proof}[Proof of Proposition~\ref{boundary regularity half-laplacian}]
First of all, up to a rescaling we may assume that~$\dist(J, \R \setminus K) \ge 3$. Next, without loss of generality, we can take~$g$ to be supported inside~$K$. Indeed, up to a translation we have that~$K = (-a, a)$ for some~$a > 3$. Consider a cutoff function~$\chi \in C^\infty(\R)$ satisfying~$\chi = 1$ in~$\left[ -a + 2, a - 2 \right]$,~$\chi = 0$ in~$\R \setminus \left( -a + 1, a - 1 \right)$, and~$|\chi'| \le 2$ in~$\R$. Then,~$\tilde{u} := u \chi$ solves
$$
\left\{
\begin{aligned}
	\Lfrac \tilde{u} & = \tilde{h}, \quad && \textup{ in } J, \\
	\tilde{u} & = \tilde{g}, && \textup{ in } \R \setminus J,
\end{aligned}
\right.
$$
with~$\tilde{g} := g \chi$ and
$$
\tilde{h}(x) := h(x) + \int_{\R \setminus \left( - a + 2, \, a - 2 \right)} \frac{\left( 1 - \chi(y) \right) g(y)}{(x - y)^2} \, dy, \quad \mbox{ for } x \in J.
$$
Clearly,~$\tilde{g} \in C^\alpha(\R \setminus J)$ with~$\supp(\tilde{g}) \subset \subset K$ and~$\| \tilde{g} \|_{C^\alpha(\R \setminus J)} \le C \| g \|_{C^\alpha(\overline{K} \setminus J)}$, whereas~$\tilde{h} \in L^\infty(J)$ with~$\| \tilde{h} \|_{L^\infty(J)} \le \| h \|_{L^\infty(J)} + C \| g \|_{L^1_{1/2}(\R\setminus J)}$, for some constant~$C > 0$ depending only on~$a$. Notice that here we took advantage of the fact that~$J \subset (-a + 3, a - 3)$.

From now on, we thus suppose that~$g \in C^\alpha(\R \setminus J)$ with~$g = 0$ in~$\R \setminus K$. Under this assumption, it is immediate to obtain an~$L^\infty$ bound for the solution~$u$. Indeed, it suffices to consider the barrier~$\overline{w}(x) := \| h \|_{L^\infty(J)} \sqrt{(a^2 - x^2)_+} + \| g \|_{L^\infty(\R \setminus J)}$, which satisfies
$$
\left\{
\begin{aligned}
	\Lfrac \overline{w} & \ge |h|, \quad && \textup{ in } J, \\
	\overline{w} & \ge |g|, && \textup{ in } \R \setminus J.
\end{aligned}
\right.
$$
The maximum principle then yields that~$\| u \|_{L^\infty(J)} \le \| \overline{w} \|_{L^\infty(J)} \le a \| h \|_{L^\infty(J)} + \| g \|_{L^\infty(\R)}$.

We now address the~$C^{\frac{1}{2}}(\overline{J})$ regularity of~$u$. First of all, notice that is suffices to consider the case of one interval--i.e., that~$d = 1$--as revealed by a cutoff procedure similar to the one carried out earlier. Up to a rescaling and a translation, we may also assume that~$J = (-1, 1)$. Finally, it does not harm the generality to suppose~$h$ to be a smooth function--the general case can be tackled via a simple approximation procedure. Under these assumptions, it is well-known--see, e.g.,~\cite{BGR61,L72,B16}--that the solution~$u$ can be represented via the explicit Poisson kernel and Green function for the~half-Laplacian in the interval~$(-1, 1)$. More explicitly, we have that~$u = u_1 + u_2$, where
$$
u_1(x) := \mathds{1}_{(-1, 1)}(x) \int_{\R \setminus (-1, 1)} \! \frac{P_1(x, z)}{2 \pi} \, g(z) \, dz + \mathds{1}_{\R \setminus (-1, 1)}(x) g(x), \quad \mbox{ with } P_1(x, z) := 2 \sqrt{\frac{1 - x^2}{z^2 - 1}} \frac{1}{|x - z|},
$$
and
$$
u_2(x) := \mathds{1}_{(-1, 1)}(x) \int_{-1}^1 \frac{G_1(x, z)}{2 \pi} \, h(z) \, dz, \quad \mbox{ with } G_1(x, z) := 2 \log \left( \frac{1 - x z + \sqrt{(1 - x^2)(1 - z^2)}}{|x - z|} \right),
$$
for all~$x \in \R$. We establish separately the~$C^{\frac{1}{2}}$ estimates for~$u_1$ and~$u_2$.

We begin by dealing with~$u_1$. First, we point out that
\begin{equation} \label{boundarydecayest}
|u_1(x) - g(1)| + |u_1(- x) - g(-1)| \le C \| g \|_{C^{\alpha}(\R \setminus J)} \sqrt{1 - x^2}, \quad \mbox{ for all } x \in [0, 1).
\end{equation}
In order to see this, recall that, by, e.g.,~\cite[Lemma~A.5]{B16},
\begin{equation} \label{Phasmass1}
\int_{\R \setminus (-1, 1)} P_1(x, z) \, dz = 2 \pi, \quad \mbox{ for all } x \in (-1, 1).
\end{equation}
Hence, given~$x \in [0, 1)$, we have
\begin{align*}
|u_1(x) - g(1)| & \le \int_{\R \setminus (-1, 1)} \frac{P_1(x, z)}{2 \pi} \left| g(z) - g(1) \right| \, dz \\
& \le \frac{\sqrt{1 - x^2}}{\pi} \left\{ [g]_{C^\alpha([1, 2])} \int_1^2 \frac{(z - 1)^{\alpha - \frac{1}{2}}}{(z - x)} \, dz + 2 \| g \|_{L^\infty(\R \setminus (-1, 2))} \int_{\R \setminus (-1, 2)} \frac{dz}{\sqrt{z - 1} |x - z|} \right\} \\
& \le C \| g \|_{C^{\alpha}(\R \setminus J)} \sqrt{1 - x^2} \left\{ \int_1^2 (z - 1)^{\alpha - \frac{3}{2}} \, dz + \int_{\R \setminus (-1, 2)} \frac{dz}{|z|^{\frac{3}{2}}} \right\} \le C \| g \|_{C^{\alpha}(\R \setminus J)} \sqrt{1 - x^2},
\end{align*}
thanks to the fact that~$\alpha > \frac{1}{2}$. Since an analogous computation can be carried out for~$|u_1(- x) - g(-1)|$, we conclude that~\eqref{boundarydecayest} holds true. From its definition, it is immediate to see that~$u_1$ is smooth inside~$(-1, 1)$. By differentiating under the integral sign, we find that its derivative satisfies
$$
u_1'(x) = \int_{\R \setminus (-1, 1)} \frac{\partial_x P_1(x, z)}{2 \pi} \, g(z) \, dz, \quad \mbox{ for all } x \in (-1, 1).
$$
Now, by differentiating identity~\eqref{Phasmass1}, we find that~$\partial_x P(x, \cdot)$ has vanishing integral over~$\R \setminus (-1, 1)$, for all~$x \in (-1, 1)$. Therefore, for all~$x \in [0, 1)$,
\begin{align*}
|u_1'(x)| & = \left| \int_{\R \setminus (-1, 1)} \! \frac{\partial_x P_1(x, z)}{2 \pi} \big( {g(z) - g(1)} \big) \, dz \right| \le \frac{1}{\pi \sqrt{1 - x^2}} \int_{\R \setminus (-1, 1)} \frac{\left| 1 - x z \right|}{\sqrt{z^2 - 1} (x - z)^2} \, |g(z) - g(1)| \, dz \\
& \le \frac{C \| g \|_{C^\alpha(\R \setminus J)}}{\sqrt{1 - x^2}} \left\{ \int_1^2 \frac{\left| 1 - x z \right| (z - 1)^{\alpha - \frac{1}{2}}}{(z - x)^2} \, dz + \int_{\R \setminus (-1, 2)} \frac{\left| 1 - x z \right|}{\sqrt{z^2 - 1} (z - x)^2} \, dz \right\} \\
& \le \frac{C \| g \|_{C^\alpha(\R \setminus J)}}{\sqrt{1 - x^2}} \left\{ \int_1^2 \frac{1 - x + x(z - 1)}{(z - x)^{\frac{5}{2} - \alpha}} \, dz + \int_{\R \setminus (-1, 2)} \frac{dz}{|z|^{\frac{3}{2}}} \right\} \le \frac{C \| g \|_{C^\alpha(\R \setminus J)}}{\sqrt{1 - x^2}}
\end{align*}
Since the same estimate also holds for~$x \in (-1, 0]$, we conclude that~$u_1$ belongs to~$C^{\frac{1}{2}}([-1, 1])$ and satisfies~$\| u_1 \|_{C^{\frac{1}{2}}([-1, 1])} \le C \| g \|_{C^\alpha(\R \setminus J)}$.

We now deal with~$u_2$. We claim that~$u_2 \in C^{\frac{1}{2}}(\R)$ with
\begin{equation} \label{u2half-cont}
|u_2(x) - u_2(y)| \le C \| h \|_{L^\infty(J)} \sqrt{|x - y|}, \quad \mbox{ for all } x, y \in \R.
\end{equation}
Since~$u_2 = 0$ outside of~$(-1, 1)$, it is enough to establish~\eqref{u2half-cont} when both~$x$ and~$y$ lie inside~$[-1, 1]$. It is fairly easy to see that~$|u_2(x)| \le C \| h \|_{L^\infty(J)} \sqrt{1 - x^2}$ for all~$x \in (-1, 1)$--e.g., by recalling that~$u_2$ satisfies~$(-\Delta)^{\frac{1}{2}} u_2 = h$ in~$(-1, 1)$ and comparing it with the supersolution~$\overline{w}_2(x) := \|h\|_{L^\infty(J)} \sqrt{(1 - x^2)_+}$. Thus, we can assume that~$x, y \in (-1 ,1)$. From the definition of~$u_2$ it is rather straightforward to derive interior H\"older estimates of any order~$\beta \in (0, 1)$. Hence, we can suppose that~$x, y \in (-1, 1) \setminus [-1 + \delta, 1 - \delta]$ for a fixed, arbitrary~$\delta \in \left( 0, \frac{1}{2} \right]$. Without loss of generality, we are then reduced to consider the case of~$x$ and~$y$ satisfying~$1 - \delta < x < y < 1$. We have
$$
|u_2(x) - u_2(y)| \le \frac{\| h \|_{L^\infty(J)}}{2 \pi} \int_{-1}^1 \big| {G_1(x, z) - G_1(y, z)} \big| \, dz \le \frac{\| h \|_{L^\infty(J)}}{\pi} \Big( {I_1(x, y) + I_2(x, y)} \Big),
$$
where
$$
I_1(x, y) := \int_{-1}^1 \left| \log \left( \frac{|y - z|}{|x - z|} \right) \right| dz
\quad \mbox{and} \quad
I_2(x, y) := \int_{-1}^1 \left| \log \left( \frac{1 - x z + \sqrt{(1 - x^2)(1 - z^2)}}{1 - y z + \sqrt{(1 - y^2)(1 - z^2)}}  \right)\right| dz.
$$
On the one hand, changing variables appropriately, we simply compute
\begin{align*}
I_1(x, y) & = \int_{-1}^x \log \left( \frac{y - z}{x - z} \right) dz + \int_x^{\frac{x + y}{2}} \log \left( \frac{y - z}{z - x} \right) dz + \int_{\frac{x + y}{2}}^y \log \left( \frac{z - x}{y - z} \right) dz + \int_y^1 \log \left( \frac{z - x}{z - y} \right) dz \\
& = (y - x) \left\{ \int_{1 + \frac{y - x}{x + 1}}^{+\infty} \frac{\log t}{(t - 1)^2} \, dt + 2 \int_1^{+\infty} \frac{\log t}{(t + 1)^2} \, dt + \int_{1 + \frac{y - x}{1 - y}}^{+\infty} \frac{\log t}{(t - 1)^2} \, dt \right\} \\
& \le C (y - x) \left| \log (y - x) \right|.
\end{align*}
On the other hand, the change of coordinates given by~$t = \frac{z}{\sqrt{1 - z^2}}$ yields that
$$
I_2(x, y) = \int_{-\infty}^{+\infty} \left| \log \left( 1 + (y - x) \frac{t + b(x, y)}{\sqrt{1 + t^2} - y t + \sqrt{1 - y^2}} \right) \right| \frac{dt}{(1 + t^2)^{\frac{3}{2}}},
$$
with~$b = b(x, y) := \frac{\sqrt{1 - \vphantom{y^2} x^2} - \sqrt{1 - y^2}}{y - x} = \frac{y + x}{\sqrt{1 - \vphantom{y^2}x^2} + \sqrt{1 - y^2}}$. Note that, by taking~$\delta$ sufficiently small, we have that~$b(x, y) \ge 2$ and~$(y - x) \frac{t + b(x, y)}{\sqrt{1 + t^2} - y t + \sqrt{1 - y^2}} \ge - \frac{1}{2}$ for all~$t \in \R$. Therefore, as~$|{\log (1 + w)}| \le (\log 4) |w|$ for all~$w \ge -\frac{1}{2}$, we get that
\begin{align*}
& \int_{-\infty}^{b(x, y)} \left| \log \left( 1 + (y - x) \frac{t + b(x, y)}{\sqrt{1 + t^2} - y t + \sqrt{1 - y^2}} \right) \right| \frac{dt}{(1 + t^2)^{\frac{3}{2}}} \\
& \hspace{20pt} \le C (y - x) \int_{-\infty}^{b} \frac{|t + b|}{\sqrt{1 + t^2} - y t + \sqrt{1 - y^2}} \frac{dt}{(1 + t^2)^{\frac{3}{2}}} \\
& \hspace{20pt} = C (y - x) \left\{ \int_{-\infty}^{- b} \frac{- b - t}{\sqrt{1 + t^2} - y t + \sqrt{1 - y^2}} \frac{dt}{(1 + t^2)^{\frac{3}{2}}} + \int_{-b}^{b} \frac{t + b}{\sqrt{1 + t^2} - y t + \sqrt{1 - y^2}} \frac{dt}{(1 + t^2)^{\frac{3}{2}}} \right\} \\
& \hspace{20pt} \le C (y - x) \left\{ \int_{-\infty}^{- b} \frac{dt}{|t|^3} + b \left[ \int_{-b(x, y)}^{\frac{1}{2}} \frac{dt}{(1 + t^2)^{\frac{3}{2}}} + \int_{\frac{1}{2}}^{b} \frac{dt}{t^3 \big( {\sqrt{1 + t^2} - t} \big)} \right] \right\} \\
& \hspace{20pt} \le C (y - x) \Big( {1 + b(x, y)} \Big) \le C \, \frac{y - x}{\sqrt{1 - x}} \le C \sqrt{y - x}.
\end{align*}
Furthermore, using that~$|{\log(1 + w)}| \le C_\beta w^\beta$ for all~$w \ge 0$ and~$\beta \in (0, 1]$, and for some~$C_\beta > 0$, we compute
\begin{align*}
& \int_{b(x, y)}^{+\infty} \left| \log \left( 1 + (y - x) \frac{t + b(x, y)}{\sqrt{1 + t^2} - y t + \sqrt{1 - y^2}} \right) \right| \frac{dt}{(1 + t^2)^{\frac{3}{2}}} \\
& \hspace{50pt} \le C (y - x)^\beta \int_{b(x, y)}^{+\infty} \left( \frac{t + b(x, y)}{\sqrt{1 + t^2} - y t + \sqrt{1 - y^2}} \right)^\beta \frac{dt}{(1 + t^2)^{\frac{3}{2}}} \\
& \hspace{50pt} \le C (y - x)^\beta \int_{b(x, y)}^{+\infty} \frac{dt}{t^{3 - \beta} \left( \sqrt{1 + t^2} - t \right)^\beta} \le C (y - x)^\beta \int_{2}^{+\infty} \frac{dt}{t^{3 - 2 \beta}} \le C (y - x)^\beta,
\end{align*}
provided we take~$\beta < 1$. By this and the previous estimate, we obtain that~$I_2(x, y) \le C \sqrt{y - x}$, which in turn leads to the desired bound~\eqref{u2half-cont} when combined with the one for~$I_1(x, y)$ which we previously established. The proof of the proposition is thus concluded.
\end{proof}

We point out that the regularity assumptions on the exterior datum~$g$ in Proposition~\ref{boundary regularity half-laplacian} are sharp, in the sense that if~$g$ is only of class~$C^{\frac{1}{2}}$ (i.e.,~$\alpha = 1/2$ in the previous statement), then the solution~$u$ is in general \emph{not}~$C^{\frac{1}{2}}$ up to the boundary, as logarithmic singularities may arise. This can be seen, for instance, by taking~$g \in C^\infty_c(\R)$ such that~$g(x) = \sqrt{x^2 - 1}$ for~$x \in [1, 2]$ and analyzing the~$\frac{1}{2}$-harmonic function in~$(- 1, 1)$ which coincides with~$g$ in~$\R \setminus (-1, 1)$, by representing it via the fractional Poisson kernel. See~\cite[Proposition~1.2]{A-RO20} for a similar observation.

Next, we recall the following interior regularity result for weak solutions, which can be easily deduced from~\cite[Theorem 1.1 and~Corollary 3.5]{Ro-Se-JDE-2016} and a cutoff argument similar to the one displayed at the beginning of the proof of Proposition~\ref{boundary regularity half-laplacian}.

\begin{proposition} \label{interior-regularity}
Let~$K \subset \R$ be a bounded open interval such that~$\overline{J} \subset K$,~$f$ be a measurable function on~$J$, and~$u \in L^1_{1/2}(\R \setminus J) \cap L^{\infty}(K)$ be a weak solution to
$$
(-\Delta)^{\frac12}u = f, \quad \textup{ in } J.
$$
Given any open interval~$I$ such that~$\overline{I} \subset J$, the following statements hold:
\begin{enumerate}[label=$(\roman*)$]
\item If~$f \in L^{\infty}(J)$, then~$u \in C^{0, 1 - \sigma}(I)$ for all~$\sigma \in (0,1)$ and there exists~$C_0 > 0$ for which
$$
\|u\|_{C^{0,1-\sigma}(I)} \leq C_0 \Big( { \|f\|_{L^{\infty}(J)} + \|u\|_{L^{\infty}(K)} + \| u \|_{L^1_{1/2}(\R \setminus J)} } \Big). 
$$
\item If~$f \in C^{\,l,\sigma}(J)$ with~$l \in \N \cup \{0\}$ and~$\sigma \in (0, 1)$, then~$u \in C^{\, l + 1, \sigma}(I)$ and there exists~$C_{l + 1} > 0$ for which
$$
\|u\|_{C^{\,l+1,\sigma}(I)} \leq C_{l + 1} \Big( { \|f\|_{C^{\,l,\sigma}(J)} + \|u\|_{L^{\infty}(K)} + \| u \|_{L^1_{1/2}(\R \setminus J)} } \Big).
$$
\end{enumerate}
The constants~$C_l$ depend only on~$\sigma$,~$I$,~$J$,~$K$, and~$l$.
\end{proposition}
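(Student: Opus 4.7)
The plan is to deduce the stated interior estimates from the corresponding results for globally bounded weak solutions in~\cite{Ro-Se-JDE-2016} by means of a cutoff reduction, in the same spirit as the truncation carried out at the start of the proof of Proposition~\ref{boundary regularity half-laplacian}. The issue is that the results of Ros-Oton and Serra are typically stated for solutions bounded in all of $\R$ (or at least with a tail term they can absorb), while here we only have $u \in L^\infty(K) \cap L^1_{1/2}(\R \setminus J)$. The strategy is therefore to replace $u$ by a global truncation $v = \chi u$, identify the extra terms that appear in the equation as a smooth, well-controlled remainder, and then apply the quoted interior estimates to $v$.

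Concretely, I would fix an auxiliary open interval $J'$ with $\overline{I} \subset J' \subset \overline{J'} \subset J$ and a cutoff $\chi \in C^\infty_c(J)$ satisfying $\chi \equiv 1$ on $J'$. The function $v := \chi u$ belongs to $L^\infty(\R)$ with $\|v\|_{L^\infty(\R)} \le \|u\|_{L^\infty(K)}$ and $v \equiv u$ on $I$. Using the algebraic identity $\chi(x)u(x) - \chi(y)u(y) = \chi(x)(u(x) - u(y)) + u(y)(\chi(x) - \chi(y))$ and evaluating at $x \in J'$ (where $\chi(x) = 1$), a direct computation gives
$$
(-\Delta)^{1/2} v(x) = f(x) + g(x), \qquad g(x) := \frac{1}{\pi} \int_{\R \setminus J'} \frac{(1 - \chi(y))\, u(y)}{(x-y)^2}\, dy,
$$
where the integral is no longer singular because $1 - \chi \equiv 0$ on $J'$.

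Since $\mathrm{dist}(I, \R \setminus J') > 0$, the kernel $(x-y)^{-2}$ is smooth on $I \times (\R \setminus J')$. Differentiating under the integral sign and splitting the integration domain into the near part $K \setminus J'$ (where $u$ is bounded) and the far part $\R \setminus K$ (where $u$ is controlled in $L^1_{1/2}$, and the extra decay of $(x-y)^{-2}$ dominates $(1+y^2)^{-1}$), one obtains
$$
\|g\|_{C^k(I)} \le C_k \Big( \|u\|_{L^\infty(K)} + \|u\|_{L^1_{1/2}(\R \setminus J)} \Big), \quad \textup{for every } k \in \N \cup \{0\},
$$
with $C_k$ depending only on $k$, $I$, $J'$, $J$, $K$. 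In particular, $g \in C^\infty(I)$.

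With this reduction in hand, both parts follow by applying the known interior estimates to $v$, which is globally bounded. For part~(i), the right-hand side $f + g$ belongs to $L^\infty(J')$, and \cite[Theorem~1.1]{Ro-Se-JDE-2016} applied on small balls covering $\overline{I}$ yields $v \in C^{0,1-\sigma}(I)$ for every $\sigma \in (0,1)$ with the claimed estimate. For part~(ii), the function $f + g$ lies in $C^{l,\sigma}(J')$ (since $g$ is smooth), and \cite[Corollary~3.5]{Ro-Se-JDE-2016} gives $v \in C^{l+1,\sigma}(I)$; since $v \equiv u$ on $I$, the desired bound for $u$ follows. The main -- and essentially only -- point requiring verification is the control of $g$ and its derivatives stated above, which is a routine calculation once the domain is split into the near and far parts; everything else is direct invocation of the cited theorems after trivial rescalings if needed to match their normalizations.
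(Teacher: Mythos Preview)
Your proposal is correct and follows precisely the approach the paper indicates: the paper does not give a detailed proof but states that the result ``can be easily deduced from~\cite[Theorem 1.1 and~Corollary 3.5]{Ro-Se-JDE-2016} and a cutoff argument similar to the one displayed at the beginning of the proof of Proposition~\ref{boundary regularity half-laplacian}.'' Your cutoff reduction and the computation of the smooth remainder~$g$ are exactly this argument made explicit.
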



We then have the following strong maximum principle for equations which possess a positive supersolution. Notice that the nonlocality of~$(-\Delta)^{\frac{1}{2}}$ allows the result to be true even in disconnected sets--such as~$J$ with~$d \ge 2$. 

\begin{lemma} \label{maxprincequivlemma}
Let~$c \in L^{\infty}(J)$ and~$u \in C(\overline{J})$ be a weak solution to
$$
\left\{
\begin{aligned}
\Lfrac u - cu & \geq 0, \quad && \textup{ in } J,\\
u & \geq 0, && \textup{ in } \R \setminus J.
\end{aligned}
\right.
$$
Suppose that there exists~$z \in C(\overline{J})$ (weakly) satisfying
$$
\left\{
\begin{aligned}
\Lfrac z - cz & \geq 0, \quad && \textup{ in } J,\\
z & > 0, \quad && \textup{ in } \overline{J},\\
z & \geq 0, && \textup{ in } \R \setminus J.
\end{aligned}
\right.
$$
Then, either~$u > 0$ in $J$ or~$u = 0$ in~$\R$.
\end{lemma}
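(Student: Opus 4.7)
\textbf{Plan for Lemma~\ref{maxprincequivlemma}.} The approach is the classical two-step scheme: first establish a weak maximum principle ($u \geq 0$ in $\R$) by comparison with the positive supersolution $z$, then invoke the nonlocal strong maximum principle at an interior minimum point.

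For the weak maximum principle, I would set
$$
\kappa := \inf_{\overline{J}} \frac{u}{z},
$$
which is well-defined and finite, since $z \in C(\overline{J})$ is strictly positive on the compact set $\overline{J}$. If $\kappa \geq 0$ then $u \geq \kappa z \geq 0$ on $\overline{J}$ and we are done. So I assume by contradiction that $\kappa < 0$. The auxiliary function $v := u - \kappa z$ then satisfies $v \geq 0$ on $\overline{J}$ by definition of $\kappa$, while in $\R \setminus J$ one has $v = u - \kappa z \geq 0$ as well, since $u \geq 0$, $-\kappa > 0$, and $z \geq 0$ there. Linearity also gives that $v$ is a weak supersolution of the same equation: $(-\Delta)^{1/2} v - c v \geq 0$ in $J$. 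By compactness and continuity on $\overline{J}$, the value $\kappa$ is attained at some $x_0 \in \overline{J}$, where $v(x_0) = 0$.

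Assume for the moment that $x_0 \in J$ is an interior point. By interior regularity (Proposition~\ref{interior-regularity}, applied with $\sigma < 1/2$), $v \in C^{0, \alpha}_{\loc}(J)$ for some $\alpha > 1/2$, which is enough to evaluate the principal value representation of $(-\Delta)^{1/2} v$ pointwise at $x_0$. Using $v(x_0) = 0$ and $v \geq 0$ in $\R$, we obtain
$$
(-\Delta)^{1/2} v(x_0) = - \frac{1}{\pi} \int_{\R} \frac{v(y)}{(x_0 - y)^2} \, dy \leq 0,
$$
with equality if and only if $v \equiv 0$ a.e.~in $\R$. But the pointwise version of the equation gives $(-\Delta)^{1/2} v(x_0) \geq c(x_0) v(x_0) = 0$. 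Therefore $v \equiv 0$ a.e., i.e., $u = \kappa z$ a.e.~in $\R$. Using $u \geq 0$ and $z \geq 0$ in $\R \setminus J$ together with $\kappa < 0$ forces $z = 0$ a.e.\ in $\R \setminus J$ and hence $u = 0$ a.e.\ in $\R \setminus J$. On the other hand, $u = \kappa z < 0$ on $\overline{J}$ by continuity, which at points of $\partial J$ is incompatible with the a.e.\ vanishing just established once we recall that weak $H^{1/2}$ solutions do not admit a jump across an isolated boundary point in our one-dimensional setting; this yields the desired contradiction.

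For the strong maximum principle, assuming $u \geq 0$ in $\R$, suppose by contradiction that $u(x_0) = 0$ at some $x_0 \in J$ with $u \not\equiv 0$. The same pointwise PV identity applied directly to $u$ gives $(-\Delta)^{1/2} u(x_0) \leq 0$ with equality only if $u \equiv 0$ a.e., while $(-\Delta)^{1/2} u(x_0) \geq c(x_0) u(x_0) = 0$ from the equation. Hence $u \equiv 0$ a.e.~in $\R$, which by continuity on $\overline{J}$ gives $u = 0$ in $\overline{J}$, completing the dichotomy. The main obstacle to watch out for is the possibility that the infimum $\kappa$ in Step~1 is attained only at a point $x_0 \in \partial J$: in that case the pointwise PV argument is no longer immediately available, and one must either exploit the hypothesis $u \geq 0$ on $\R \setminus J$ (which includes $\partial J$) together with the matching of limits forced by the weak formulation, or invoke a Hopf-type boundary result to relocate the minimum into the interior. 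Once this is handled the rest is routine.
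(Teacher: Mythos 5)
Your plan is essentially the paper's own proof: the paper sets $v = u + \mu z$ with $\mu = \max_J \frac{u^-}{z}$ (your $v = u - \kappa z$ with $\kappa = \inf_{\overline{J}} u/z < 0$ is the same function), locates an interior zero of the nonnegative supersolution $v$, applies the strong maximum principle to conclude $v \equiv 0$ in $J$, and contradicts $v > 0$ on $\partial J$. Two remarks. First, the ``main obstacle'' you leave open at the end, namely that the minimum might only be attained on $\partial J$, is vacuous: if $\kappa < 0$ is attained at $x_0$, then $u(x_0) = \kappa z(x_0) < 0$, which is impossible for $x_0 \in \partial J \subset \R \setminus J$, where $u \geq 0$ and $z > 0$. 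This is precisely the observation the paper uses ($v > 0$ on $\partial J$), and it also streamlines your final contradiction: once $v \equiv 0$ a.e.\ (hence, by continuity, on $\overline{J}$), you get $u = \kappa z < 0$ on $\partial J$, directly against $u \geq 0$ there, with no need for the murky ``no jump across an isolated boundary point'' remark. Second, your justification for evaluating the principal value pointwise at the touching point is shaky: Proposition~\ref{interior-regularity} concerns weak \emph{solutions} of $(-\Delta)^{\frac12} u = f$ with $f \in L^\infty$, whereas $v$ is only a \emph{supersolution}, whose distributional half-Laplacian is $c v$ plus a nonnegative measure, so $C^{0,\alpha}_{\loc}$ regularity with $\alpha > \frac12$ is not available for free. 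The clean way to run the interior step for a merely weak supersolution is the one the paper takes: invoke the strong maximum principle for nonnegative weak supersolutions of nonlocal equations with bounded zeroth-order term, which follows from the weak Harnack inequality. With these two repairs your argument coincides with the paper's and is correct.
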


\begin{proof}
We first prove that~$u \geq 0$ in~$\R$. Assume by contradiction that~$u^- \not \equiv 0$ and define~$v = u + \mu z$ with~$\mu$ the smallest value for which~$v  \geq 0$ in $J$,~i.e.,~$\mu := \sup_J \frac{u^-}{z} = \max_J \frac{u^-}{z} > 0$--the supremum is achieved by the continuity of~$u$ and~$z$. Since~$\mu > 0$ and~$u \ge 0$,~$z > 0$ on~$\partial J$, it follows that~$v > 0$ on~$\partial J$. Thus, there exists~$x_0 \in J$ at which~$v(x_0) = 0$. Since~$(-\Delta)^{\frac{1}{2}} v - c v \ge 0$ in~$J$, by the strong maximum principle we get that~$v \equiv 0$ in~$J$--the strong maximum principle for non-negative weak supersolutions of nonlocal equations with bounded zeroth order terms follows from the weak Harnack inequality, which can in turn be established via the methods of, e.g.,~\cite{K09,DC-K-P14}; see also~\cite[Remark~8]{P18}. Since~$v \in C(\overline{J})$, this is in contradiction with the fact that~$v > 0$ on~$\partial J$. Hence, we deduce that~$u \geq 0$ in~$\R$. A further application of the strong maximum principle yields that~$u$ is either positive in~$J$ or vanishes identically.
\end{proof}

Let~$G$ and~$H$ be respectively the Green function for the half-Laplacian in~$J$ and its regular part. The following lemma collects a couple of notable properties of these functions.

\begin{lemma} \label{regularity-Green}
For every point~$z \in J$, we have that~$H(\cdot,z) \in C^\infty(J) \cap C^{\,0,\frac12}(\R)$ and~$G(\cdot,z) \in C^\infty(J \setminus \{z\}) \cap C^{\,0,\frac12}(\R \setminus \{z\})$. Moreover, for any~$z_0 \in \partial J$,
$$
\lim_{J \ni z \rightarrow z_0} H(z, z) = -\infty.
$$
\end{lemma}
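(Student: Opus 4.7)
For $z \in J$, the function $H(\cdot, z) = \mathcal{H}^z$ is, by definition, the bounded weak solution of $\Lfrac v = 0$ in $J$ with exterior datum $g_z(y) := -\Gamma(y - z) = 2 \log |y - z|$ in $\R \setminus J$. Because $z$ lies at positive distance from $\R \setminus J$, $g_z$ is smooth on $\R \setminus J$, belongs to $C^1(\overline K \setminus J)$ for every bounded open interval $K \supset \overline J$, and lies in $L^1_{1/2}(\R \setminus J)$ because of its logarithmic growth at infinity. Proposition~\ref{boundary regularity half-laplacian} then gives $H(\cdot, z) \in C^{0,1/2}(\overline K)$, and since $g_z$ is smooth outside $J$, we conclude $H(\cdot, z) \in C^{0,1/2}(\R)$. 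An iteration of Proposition~\ref{interior-regularity} with the identically zero source term upgrades this to $H(\cdot, z) \in C^\infty(J)$. The corresponding statements for $G(\cdot, z) = H(\cdot, z) + \Gamma(\cdot - z)$ follow at once, since $\Gamma \in C^\infty(\R \setminus \{0\})$.

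\textbf{Reduction of the divergence to a diagonal estimate.} The Dirichlet condition $H(\cdot, z) = 2 \log |\cdot - z|$ on $\R \setminus J$, together with the $C^{0, 1/2}$-continuity of $H(\cdot, z)$ up to $\partial J$ just established, yields
\[
H(z_0, z) = 2 \log |z_0 - z|, \qquad \text{for every } z_0 \in \partial J \text{ and } z \in J.
\]
By the symmetry of $H$, this reads $H(z, z_0) = 2 \log |z - z_0|$, which already tends to $-\infty$ as $z \to z_0$. It therefore suffices to show that the diagonal difference $H(z, z) - H(z, z_0)$ remains bounded along sequences $z \to z_0$ in $J$.

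\textbf{Strategy via the Poisson representation and main obstacle.} My plan is to use the explicit representation
\[
H(z, z) = \frac{1}{\pi} \int_{\R \setminus J} P_J(z, y) \log |y - z| \, dy,
\]
where, up to a universal normalization, the Poisson kernel for $\Lfrac$ in $J$ takes the factored form $P_J(z, y) = |z - y|^{-1} \sqrt{|R(z)/R(y)|}$ with $R(u) := \prod_{j=1}^{d} (u - a_j)(b_j - u)$. For a single interval $(a, b)$, a direct computation of the type carried out around~\eqref{integralMexp} yields the closed form $H(z, z) = 2 \log \big( 2 (z - a)(b - z)/(b - a) \big)$, from which the divergence at each endpoint is immediate. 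For $d \ge 2$, focusing on $z_0 = a_k$ (the case $b_k$ being symmetric), set $\delta := z - a_k \to 0^+$ and note that $|R(z)| = \delta \cdot |R_k| + O(\delta^2)$ and $|R(y)| = |y - a_k| \cdot |R_k| + O(|y - a_k|^2)$ for $y$ near $a_k$, where the constant $R_k := (b_k - a_k) \prod_{j \ne k}(a_k - a_j)(b_j - a_k)$ is nonzero and depends only on the geometry of $J$. Splitting the integral over the connected components of $\R \setminus J$, the piece on $(b_{k-1}, a_k)$ (with the convention $b_0 = -\infty$) reduces, after the rescaling $y = a_k - \delta t$, to exactly the same type of integral as in the single-interval computation and produces the leading contribution $2 \log \delta + O(1)$; the remaining pieces contribute only $O(1)$, since the factor $\sqrt{|R(z)|} \sim \sqrt{\delta} \to 0$ multiplies bounded logarithmic factors there. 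The main obstacle is the careful verification of the factored form of $P_J$ in the disconnected setting and the bookkeeping needed to show that the $O(1)$ error is uniform as $z \to z_0$ and across all boundary points $z_0 \in \partial J$. A conceptually cleaner alternative would be to replace the explicit kernel analysis by two-sided boundary-type estimates on $G_J$---for instance $G_J(x, z) \lesssim \sqrt{\dist(x, \partial J) \dist(z, \partial J)}/|x-z|$---combined with the already-established boundary identity for $H$.
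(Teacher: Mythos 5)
Your treatment of the regularity statements is correct and coincides with the paper's: Proposition~\ref{boundary regularity half-laplacian} gives the global $C^{0,\frac12}$ bound (the exterior datum $2\log|\cdot-z|$ is smooth on $\overline K\setminus J$ since $z$ is interior, and lies in $L^1_{1/2}$), and bootstrapping Proposition~\ref{interior-regularity} with zero right-hand side gives smoothness in $J$. The reduction $H(z_0,z)=2\log|z_0-z|$ for $z_0\in\partial J$ is also a correct observation.

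The blow-up of $H(z,z)$, however, is not proved. Your argument hinges on the factored Poisson kernel $P_J(z,y)=|z-y|^{-1}\sqrt{|R(z)/R(y)|}$, which is only valid for $d=1$. For a disconnected union of $d\ge 2$ intervals the kernel acquires an additional rational factor (a ratio of polynomials of degree $d-1$ with zeros located in the gaps, determined by period conditions on the associated hyperelliptic curve); a quick sanity check is that $\int_{\R\setminus J}P_J(x,y)\,dy$ must be constant in $x\in J$ because constants are $\frac12$-harmonic, and the proposed kernel fails this for $d\ge2$. You flag this verification as "the main obstacle," but it is not an obstacle to be overcome -- the formula is simply wrong in the setting where the lemma is actually needed. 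Your fallback via two-sided bounds $G(x,z)\asymp\log\bigl(1+\sqrt{d(x)d(z)}/|x-z|\bigr)$ also does not close the gap: $H(z,z)$ is the limit of $G(z,y)+2\log|z-y|$, a difference of two diverging quantities, so an upper bound on $G$ with an unspecified multiplicative constant in front of the logarithm gives no information unless that constant is exactly the coefficient $2$ of the singularity, which the cited comparison results do not provide. The paper avoids all of this with a domain-monotonicity barrier: after normalizing so that $z_0=1\in\partial J$ and $(-1,1)\subset\R\setminus\overline J$, the Kelvin transform $\widetilde G_1(x,z):=G_1(1/x,1/z)$ of the explicit Green function of $(-1,1)$ is the Green function of $\R\setminus[-1,1]\supset J$; since $\widetilde G_1\ge 0$, its regular part $\widetilde H_1$ is $\frac12$-harmonic in $J$ and dominates $-\Gamma(\cdot-z)$ outside $J$, so the maximum principle yields $H(z,z)\le\widetilde H_1(z,z)=2\log\bigl(2(z^2-1)\bigr)\to-\infty$. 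If you want to salvage a direct computation, you would have to restrict it to a single interval and then import exactly this kind of comparison to pass to general $J$.
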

\begin{proof}
To establish the regularity of~$H$ and~$G$, we argue as in, e.g.,~\cite[Lemma 3.1.1]{A15}. The regular part~$H$ of the Green function solves the Dirichlet problem
$$
\left\{
\begin{aligned}
(-\Delta)^{\frac{1}{2}} H(\cdot, z) & = 0, \quad && \mbox{ in } J, \\
H(\cdot, z) & = - \Gamma( \cdot - z), \quad && \mbox{ in } \R \setminus J,
\end{aligned}
\right.
$$
where, as usual,~$\Gamma(y) = - 2 \log |y|$. The~$C^{\frac{1}{2}}$ boundary regularity of~$H(\cdot, z)$ is then a consequence of Proposition~\ref{boundary regularity half-laplacian}, while its smoothness inside~$J$ follows from Proposition~\ref{interior-regularity}. The regularity properties of~$G$ simply follow from the fact that~$G(x, z) = H(x, z) + \Gamma(x - z)$ for all~$x \in \R$.

To establish the (negative) blow up of~$H(z, z)$ when~$z$ approaches the boundary of~$J$, we follow the elegant argument of~\cite[Lemma~7.6]{D-LR-S17}. Up to a translation and a dilation, we may assume that~$z_0 = 1 \in \partial J$ and that~$(-1, 1) \subset \R \setminus \overline{J}$. Denote with~$H_1$ the Green function for the half-Laplacian in~$(-1, 1)$ and call~$\widetilde{G}_1$ its Kelvin transform corresponding to~$(-\Delta)^{\frac{1}{2}}$, that is~$\widetilde{G}_1(x, z) := G_1 \! \left( \frac{1}{x}, \frac{1}{z} \right)$ for every~$x \in \R \setminus \{ 0 \}$ and~$z \in \R \setminus [-1, 1]$. It is not hard to see that~$\widetilde{G}_1$ is the Green function in~$\R \setminus [-1, 1]$, i.e., that it solves
$$
\left\{
\begin{aligned}
(-\Delta)^{\frac{1}{2}} \widetilde{G}_1(\cdot, z) & = 2 \pi \delta_z, \quad && \mbox{ in } \R \setminus [-1, 1], \\
\widetilde{G}_1(\cdot, z) & = 0, \quad && \mbox{ in } [-1, 1],
\end{aligned}
\right.
$$
in the distributional sense, for every~$z \in \R \setminus [-1, 1]$. Set now~$\widetilde{H}_1(x, z) := \widetilde{G}_1(x, z) - \Gamma(x - z)$. As~$\widetilde{G}_1$ is non-negative, it immediately follows that~$\widetilde{H}_1$ satisfies
$$
\left\{
\begin{aligned}
(-\Delta)^{\frac{1}{2}} \widetilde{H}_1(\cdot, z) & = 0, \quad && \mbox{ in } J, \\
\widetilde{H}_1(\cdot, z) & \ge - \Gamma(\cdot - z), \quad && \mbox{ in } \R \setminus J.
\end{aligned}
\right.
$$
Let now~$z \in J \subset \R \setminus [-1, 1]$. Since~$H(\cdot , z)$ is also~$\frac{1}{2}$-harmonic in~$J$ and coincides with~$- \Gamma( \cdot - z)$ outside~$J$, from the maximum principle we conclude that~$H(\cdot, z) \le \widetilde{H}_1(\cdot, z)$ in~$\R$. In particular, taking advantage of the explicit expression for~$G_1$, we infer that
\begin{align*}
H(x, z) \le \widetilde{H}_1(x, z) = G_1 \! \left( \frac{1}{x}, \frac{1}{z} \right) - \Gamma(x - z) = 2 \log \left( (x z - 1) \, \sgn(x z) + \sqrt{(x^2 - 1)(z^2 - 1)} \right)
\end{align*}
for all~$x, z \in J$. Taking~$x = z$, this becomes~$H(z, z) \le 2 \log \big( {2(z^2 - 1)} \big)$, which blows up to~$-\infty$ when~$z \rightarrow z_0 = 1$. This concludes the proof.
\end{proof}

\section{Non-degeneracy of entire solutions for the $\frac{1}{2}\,$--$\,$Liouville equation} \label{App Nondeg}

\noindent In this appendix we provide a direct proof of Proposition~\ref{prop-nondegeneracy-L}. First observe that the transformation~$\varphi(x) = \phi(\eta + \mu x)$ reduces equation~\eqref{linearized-problem} to 
$$
(-\Delta)^{\frac12}\varphi - \frac{2}{1+x^2} \, \varphi = 0, \quad \textup{ in } \R.
$$
Hence, without loss of generality, we may assume $\mu = 1$ and $\eta = 0$. Then, for $J(x):= \frac{2}{1+x^2}$, we define
$$
L_1(\phi):= (-\Delta)^{\frac12} \phi-J(x) \phi, \quad  \varphi_0(x) := Z_{0,1,0}(x) = \frac{x^2-1}{x^2+1} \quad \textup{and} \quad \varphi_1(x):= Z_{1,1,0}(x) = \frac{2x}{1+x^2},
$$
and the proof of Proposition \ref{prop-nondegeneracy-L} is reduced to show the validity of the following:
\begin{proposition} \label{prop-nondegeneracy}
If $\phi \in L^{\infty}(\R)$ is a solution to $L_1(\phi) = 0$ in $\R$, then $\phi = a \varphi_0 + b \varphi_1$ with $a,b, \in \R$. 
\end{proposition}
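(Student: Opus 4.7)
The plan is to reduce the problem to a Fourier-analytic identification on the unit disk, via the Caffarelli--Silvestre extension combined with the Cayley conformal map. The underlying idea is that the weight $J(x) := 2/(1+x^2)$ defining $L_1$ is exactly the boundary Jacobian of the Cayley transform; once this is exploited, the equation becomes a clean spectral problem with a two-dimensional solution space.

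First, I would lift a bounded solution $\phi$ to its Poisson extension $\Phi$, which is bounded and harmonic on $\mathbb{R}^2_+ = \{(x,y) : y > 0\}$ with $\Phi(\,\cdot\,,0) = \phi$. Interpreting the half-Laplacian as the Dirichlet-to-Neumann map, $L_1 \phi = 0$ becomes
$$
\Delta \Phi = 0 \text{ in } \mathbb{R}^2_+, \qquad -\partial_y \Phi(x, 0) = J(x) \phi(x) \text{ for } x \in \mathbb{R}.
$$
Next, I would compose with the Cayley transform $\psi(z) = (z - i)/(z + i)$, a conformal diffeomorphism from $\mathbb{R}^2_+$ onto the open unit disk $D$ that sends $\mathbb{R}$ bijectively onto $\partial D \setminus \{1\}$. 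Set $\tilde\Phi := \Phi \circ \psi^{-1}$ and $\tilde\phi := \tilde\Phi|_{\partial D}$. A direct computation gives $|\psi'(x)| = 2/(1 + x^2) = J(x)$ for $x \in \mathbb{R}$, and the conformal invariance of the two-dimensional Laplacian together with the normal-derivative rule along the boundary transforms the previous system into
$$
\Delta \tilde\Phi = 0 \text{ in } D, \qquad \partial_r \tilde\Phi = \tilde\phi \text{ on } \partial D \setminus \{1\},
$$
the factor $J(x)$ having cancelled exactly with the boundary Jacobian.

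This is a spectral problem for the Dirichlet-to-Neumann operator on $D$, which coincides with the half-Laplacian on $S^1$. Writing the bounded harmonic function as $\tilde\Phi(r e^{i\theta}) = \sum_{n \in \mathbb{Z}} a_n r^{|n|} e^{i n \theta}$ via the Poisson representation, one computes $\partial_r \tilde\Phi|_{r = 1} = \sum_n |n| a_n e^{i n \theta}$, and matching with $\tilde\phi = \sum_n a_n e^{i n \theta}$ forces $(|n| - 1) a_n = 0$ for every $n \in \mathbb{Z}$. Hence $a_n = 0$ unless $n = \pm 1$, so $\tilde\Phi(X, Y) = \alpha X + \beta Y$ in Cartesian coordinates on $D$. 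Pulling back via $\psi$ and using the explicit identities
$$
X\big|_{y = 0} = \frac{x^2 - 1}{x^2 + 1} = \varphi_0(x), \qquad Y\big|_{y = 0} = \frac{-2 x}{x^2 + 1} = -\varphi_1(x),
$$
I would conclude that $\phi = \alpha \varphi_0 - \beta \varphi_1$, which is the desired representation.

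The main technical obstacle is the singular point $w = 1 \in \partial D$, corresponding to the point at infinity of $\mathbb{R}$: since $\phi$ is merely bounded, $\tilde\phi$ may be discontinuous there, and the boundary identity $\partial_r \tilde\Phi = \tilde\phi$ holds only on $\partial D \setminus \{1\}$. This is handled by reading the Poisson expansion in the sense of bounded signed measures on $\partial D$, and by testing the boundary identity against smooth test functions supported away from~$1$; this remains enough to force $(|n| - 1) a_n = 0$ for every $n \in \mathbb{Z}$. The remaining verifications (boundedness of the Poisson extension $\Phi$, the boundary-derivative transformation rule under conformal maps, and the cancellation $|\psi'(x)| = J(x)$) are routine.
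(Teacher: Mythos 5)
Your strategy is sound and is, at its core, the same as the paper's: both transfer the problem to the unit circle via the stereographic/Cayley map, exploit that the boundary Jacobian $2/(1+x^2)$ cancels the weight $J$ exactly, and then read off the kernel from the Fourier multiplier $|n|$ of the half-Laplacian on $\S^1$, which forces $(|n|-1)a_n=0$ and leaves only the modes $n=\pm1$. The difference is one of implementation: you pass through the harmonic (Caffarelli--Silvestre) extension and the Dirichlet-to-Neumann map on the disk, whereas the paper deliberately works with the singular-integral definition of $(-\Delta_{\S^1})^{\frac12}$ on the circle and avoids the extension (it notes that the extension route is the one recovering the known two-dimensional result). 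Either formulation is legitimate, and the Fourier endgame is identical.

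The one place where your argument is genuinely thin is the treatment of the point $w=1$ (the image of $x=\infty$), which is precisely the crux of the proposition. Knowing only that the transferred boundary identity holds on $\partial D\setminus\{1\}$ and testing against smooth functions supported away from $1$ yields that the distribution $T:=\sum_n(|n|-1)a_n e^{in\theta}$ vanishes on $\partial D\setminus\{1\}$, i.e.\ is \emph{supported at} $\{1\}$ --- not that it vanishes. By itself this does not force $(|n|-1)a_n=0$. To close the gap you need an extra step: a distribution supported at a point is a finite sum $\sum_{k\le N}c_k\,\delta^{(k)}_1$, so $(|n|-1)a_n$ is a polynomial in $n$; since $\tilde\phi\in L^1(\partial D)$ its Fourier coefficients tend to $0$, so $(|n|-1)a_n=o(|n|)$ and that polynomial must be a constant $c$; evaluating at $n=1$ then gives $c=0$, and only at that point do you conclude $a_n=0$ for $|n|\neq 1$. (You also need $\phi$ smooth so that the Neumann condition holds pointwise away from the singular point; that is routine interior regularity, as in the paper.) The paper handles the same issue differently and more structurally: it shows via a capacity argument that a single point has zero $H^{\frac12}$-capacity, deduces that the Kelvin transform $\phi^*(x)=\phi(1/x)$ is again a bounded solution of $L_1\phi^*=0$ on all of $\R$, and concludes that the lifted function is smooth across the north pole and satisfies the equation there as well --- after which the Fourier argument applies with no exceptional point. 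Either repair works, but as written your ``this remains enough'' conceals the essential step.
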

Let us begin with some preliminary lemmas. As a first step we prove that all the bounded solutions to $L_1(\phi)= 0$ in $\R$ are smooth and belong to $\dot{H}^{\frac12}(\R) := \left\{ v \in L^1_\loc(\R) : [v]_{H^{\frac{1}{2}}(\R)} < +\infty \right\}$, where
$$
[v]_{H^{\frac{1}{2}}(\R)} := \left( \int_{\R} \int_{\R} \frac{(v(x)-v(y))^2}{(x-y)^2} \, dx dy \right)^{\! \frac{1}{2}}.
$$
More precisely, we have the following:

\begin{lemma} \label{energyidlem}
If $\phi \in L^{\infty}(\R)$ is a solution to $L_1(\phi) = 0$ in $\R$,   then $\phi \in C^\infty(\R) \cap \dot{H}^{\frac{1}{2}}(\R)$ and
\begin{equation} \label{energyid}
[\phi]_{H^{\frac{1}{2}}(\R)}^2 = 2 \pi \int_\R J(x) \phi^2(x) dx.
\end{equation}
In addition, let~$\phi^*(x) := \phi(1/x)$ for~$x \in \R \setminus \{ 0 \}$. Then,~$\phi^*$ can be extended to a function in~$C^\infty(\R)$ which satisfies~$L_1 (\phi^*) = 0$ in~$\R$ and~$[\phi^*]_{H^{\frac{1}{2}}(\R)} = [\phi]_{H^{\frac{1}{2}}(\R)}$.
\end{lemma}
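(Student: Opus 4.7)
The plan is to establish the statement in three steps: smoothness of $\phi$, the energy identity, and the Kelvin invariance.

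\textbf{Step 1 (Smoothness).} Since $\phi \in L^{\infty}(\R)$ solves $\Lfrac \phi = J \phi$ with $J \in C^{\infty}(\R) \cap L^{\infty}(\R)$, I would bootstrap Proposition~\ref{interior-regularity}: part~(i) first yields $\phi \in C^{0,\alpha}_{\loc}(\R)$ for every $\alpha \in (0,1)$, and iterating part~(ii) (using that $J\phi$ inherits the regularity of $\phi$) gives $\phi \in C^{\infty}(\R)$.

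\textbf{Step 2 (Energy identity).} I would fix a smooth, radially non-increasing cutoff $\chi$ with $\chi \equiv 1$ on $[-1,1]$ and $\chi \equiv 0$ outside $[-2,2]$, set $\chi_R(x) := \chi(x/R)$, and test the equation $L_1(\phi) = 0$ against $\chi_R^2 \phi$ (admissible since $\phi$ is smooth and $\chi_R^2\phi \in H^{\frac{1}{2}}(\R)$ is compactly supported). Symmetrizing the resulting integral yields
\begin{equation*}
\int_{\R} J \chi_R^2 \phi^2 \, dx = \frac{1}{2\pi} \int_{\R}\int_{\R} \frac{(\phi(x)-\phi(y))\big(\chi_R^2(x)\phi(x)-\chi_R^2(y)\phi(y)\big)}{(x-y)^2}\, dx\, dy.
\end{equation*}
Next, I would apply the algebraic identity $(a-b)(a\alpha - b\beta) = \tfrac{1}{2}(\alpha+\beta)(a-b)^2 + \tfrac{1}{2}(a^2-b^2)(\alpha-\beta)$, with $a=\phi(x)$, $b=\phi(y)$, $\alpha = \chi_R^2(x)$, $\beta = \chi_R^2(y)$, to split the right-hand side as $(A_R + B_R)/(2\pi)$, where $A_R$ is the nonnegative symmetric quadratic form and $B_R$ is a commutator-type error. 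A Cauchy-Schwarz bound of the shape $|B_R| \le \tfrac{1}{2} A_R + C \|\phi\|_{L^{\infty}(\R)}^2 [\chi_R]_{H^{\frac{1}{2}}(\R)}^2$, together with the scale invariance $[\chi_R]_{H^{\frac{1}{2}}(\R)} = [\chi]_{H^{\frac{1}{2}}(\R)}$, yields a uniform-in-$R$ bound on $A_R$. Since $\chi_R^2$ is monotone non-decreasing in $R$, monotone convergence then gives $\phi \in \dot H^{\frac{1}{2}}(\R)$ with $A_R \nearrow [\phi]_{H^{\frac{1}{2}}(\R)}^2$. To conclude $B_R \to 0$, I would refine the Cauchy-Schwarz bound by exploiting that the support of $\chi_R(x) - \chi_R(y)$ lies in $\{|x| \ge R\}\cup\{|y| \ge R\}$, obtaining
\begin{equation*}
|B_R|^2 \le C \|\phi\|_{L^{\infty}(\R)}^2 [\chi]_{H^{\frac{1}{2}}(\R)}^2 \int\int_{\{|x| \ge R \,\text{or}\, |y| \ge R\}}\! \frac{(\phi(x)-\phi(y))^2}{(x-y)^2}\, dx\, dy,
\end{equation*}
whose last factor vanishes as $R \to +\infty$ because $\phi \in \dot H^{\frac{1}{2}}(\R)$. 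Passing to the limit proves~\eqref{energyid}.

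\textbf{Step 3 (Kelvin part).} A direct change of variables $z = 1/y$ in the principal-value integral defining $\Lfrac \phi^*$ yields $\Lfrac \phi^*(x) = x^{-2}\,(\Lfrac \phi)(1/x)$ for all $x \ne 0$; together with the identity $x^{-2} J(1/x) = J(x)$, this gives $L_1(\phi^*) = 0$ in $\R \setminus \{0\}$. Extending $\phi^*$ arbitrarily at the origin to an element of $L^{\infty}(\R)$, the distribution $T := \Lfrac \phi^* - J \phi^*$ is supported at $\{0\}$, hence a finite linear combination of $\delta_0^{(k)}$. Since $\phi^* \in L^2_{\loc}(\R)$ implies $\Lfrac \phi^* \in H^{-\frac{1}{2}}_{\loc}(\R)$ while $J \phi^* \in L^{\infty}_{\loc}(\R)$, we have $T \in H^{-\frac{1}{2}}_{\loc}(\R)$; but $\delta_0^{(k)} \in H^{-s}(\R)$ if and only if $s > k + \tfrac{1}{2}$, so $T$ must vanish. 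Hence $L_1(\phi^*) = 0$ on all of $\R$, and Step~1 applied to $\phi^*$ gives $\phi^* \in C^{\infty}(\R)$. The seminorm identity $[\phi^*]_{H^{\frac{1}{2}}(\R)} = [\phi]_{H^{\frac{1}{2}}(\R)}$ then follows immediately from the substitution $x = 1/u$, $y = 1/v$, under which the measure $dx\,dy/(x-y)^2$ is invariant.

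The main obstacle is the energy identity: whereas the dominant term $A_R$ is easily controlled by monotone convergence, showing that the commutator error $B_R$ tends to zero requires a two-stage argument—first establishing $\phi \in \dot H^{\frac{1}{2}}(\R)$ via the scale-invariant $[\chi_R]_{H^{\frac{1}{2}}}$ bound, and then exploiting the just-established finiteness of the Gagliardo integral to kill its tail.
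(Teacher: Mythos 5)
Your proposal is correct, but it implements the two delicate steps differently from the paper. For the energy identity, the paper's whole point is to use \emph{logarithmic} cutoffs $\eta_R$ with $[\eta_R]_{H^{1/2}(\R)}^2\le 1/R$ (available because bounded intervals have zero $\tfrac12$-capacity in $\R$, Proposition~\ref{zerocapprop}), together with the splitting $(\phi(x)-\phi(y))(\eta_R^2(x)\phi(x)-\eta_R^2(y)\phi(y))=(\eta_R(x)\phi(x)-\eta_R(y)\phi(y))^2-\phi(x)\phi(y)(\eta_R(x)-\eta_R(y))^2$; the error then dies simply because $[\eta_R]^2\to0$. You instead use the scale-invariant cutoff $\chi(\cdot/R)$, whose seminorm does \emph{not} vanish, and compensate with a different algebraic identity and a two-stage argument: Young's inequality absorbs half of $B_R$ into $A_R$ and yields the uniform bound (hence $\phi\in\dot H^{1/2}$ by monotone convergence), and then the refined Cauchy--Schwarz bound kills $B_R$ via the vanishing Gagliardo tail of $\phi$ over $\{|x|\ge R\}\cup\{|y|\ge R\}$. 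Both work; the paper's route is shorter once the capacity lemma is available (and it needs that lemma anyway for the Kelvin step), while yours avoids the interval-capacity computation entirely. For the removable singularity of $\phi^*$ at the origin, the paper again tests against $(1-\eta_\varepsilon)\psi$ with $\|\eta_\varepsilon\|_{H^{1/2}}\le\varepsilon$; your argument via distributions supported at a point and $\delta_0^{(k)}\notin H^{-1/2}(\R)$ is the dual formulation of the same zero-capacity fact, so the two are essentially equivalent.

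One small repair in Step 3: the implication ``$\phi^*\in L^2_\loc(\R)$ implies $\Lfrac\phi^*\in H^{-1/2}_\loc(\R)$'' is not right as stated ($(-\Delta)^{1/2}$ loses a full derivative, so $L^2$ regularity only gives $H^{-1}$). What you actually need --- and have already proved at that point via the change of variables --- is $[\phi^*]_{H^{1/2}(\R)}=[\phi]_{H^{1/2}(\R)}<\infty$, so that $\langle\Lfrac\phi^*,\psi\rangle=\tfrac{1}{2\pi}\langle\phi^*,\psi\rangle_{\dot H^{1/2}}$ is controlled by $[\psi]_{H^{1/2}}$ for compactly supported $\psi$; hence $T$ extends continuously to $H^{1/2}$ and cannot contain any $\delta_0^{(k)}$. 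With that justification the argument closes.
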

\begin{proof}
Let $\phi \in L^{\infty}(\R)$ be a solution to $L_1(\phi) = 0$ in $\R$, Proposition \ref{interior-regularity} and a simple bootstrap argument yield that~$\phi$ is smooth. In particular, the equation holds in the pointwise sense.

We now check that~$\phi \in \dot{H}^{\frac{1}{2}}(\R)$ and that identity~\eqref{energyid} holds true. To this aim, for any fixed~$R \ge 2$, let~$\eta_R \in C^\infty_c(\R)$ be such that~$\eta_R = 1$ in~$(- R, R)$ and
\begin{equation} \label{etaRsmall}
[\eta_R]_{H^{\frac{1}{2}}(\R)}^2 \le \frac{1}{R}.
\end{equation}
Note that the existence of such a function is warranted by standard capacity arguments--namely, Proposition~\ref{zerocapprop} stated at the end of this appendix and the density of~$C^\infty_c(\R)$ in~$H^{\frac{1}{2}}(\R)$. Without loss of generality, we may also assume that~$0 \le \eta_R \le 1$ in $\R$. We multiply~$L_1 (\phi)$ against~$\eta_R^2 \phi$ and integrate in~$\R$. After a symmetrization, we obtain
\begin{equation} \label{identity-weak-formulation-etaR_phi}
\begin{aligned}
\int_\R J(x) \eta_R^2(x) \phi^2(x) \, dx & = \int_\R \eta_R^2(x) \phi(x) \Lfrac \phi(x) \, dx \\
& = \frac{1}{2 \pi} \int_\R \int_\R \frac{\big( \phi(x) - \phi(y) \big) \big( \eta_R^2(x) \phi(x) - \eta_R^2(y) \phi(y) \big)}{(x - y)^2} \, dx dy.
\end{aligned}
\end{equation}
On one hand, since
$$
\big( \phi(x) - \phi(y) \big) \big( \eta_R^2(x) \phi(x) - \eta_R^2(y) \phi(y) \big) = \big( \eta_R(x) \phi(x) - \eta_R(y) \phi(y) \big)^2 - \phi(x) \phi(y) \big( \eta_R(x) - \eta_R(y) \big)^2,
$$
and $\eta_{R} \equiv 1$ in $(-R,R)$, it follows from \eqref{identity-weak-formulation-etaR_phi} that
\begin{align*}
2 \pi \int_\R J(x) \eta_R^2(x) \phi^2(x) \, dx & = \int_\R \int_\R \frac{\big( \eta_R(x) \phi(x) - \eta_R(y) \phi(y) \big)^2}{(x-y)^2} \, dx dy \\
& \quad - \int_\R \int_\R \frac{\phi(x) \phi(y) \big( \eta_R(x) - \eta_R(y) \big)^2}{(x - y)^2} \, dx dy \\
& \geq \int_{-R}^{R} \int_{-R}^{R} \frac{(\phi(x)-\phi(y))^2}{(x-y)^2} \, dx dy - \|\phi\|_{L^{\infty}(\R)}^2 [\eta_R]_{H^{\frac12}(\R)}^2.
\end{align*}
Letting~$R \to \infty$, applying Fatou's Lemma and using~\eqref{etaRsmall}, we easily deduce that
\begin{equation} \label{energy-one-side}
2 \pi \int_\R J(x) \phi^2(x) \, dx \geq [\phi]_{H^{\frac12}(\R)}^2,
\end{equation}
and so, that $\phi \in \dot{H}^{\frac12}(\R)$. On the other hand, having at hand that $\phi \in  \dot{H}^{\frac12}(\R)$, we use again \eqref{identity-weak-formulation-etaR_phi} and we obtain 
$$
2 \pi \int_\R J(x) \eta_R^2(x) \phi^2(x) \, dx \leq \int_\R \eta_R^2(x) \phi(x) \Lfrac \phi(x) \, dx  \leq \int_\R \phi(x) \Lfrac \phi(x) \, dx  = [\phi]_{H^{\frac12}(\R)}^2.
$$
Applying again Fatou's Lemma with~$R \to \infty$ and taking into account~\eqref{energy-one-side}, we obtain~\eqref{energyid}.

Finally, we consider the function~$\phi^*(x) := \phi(1/x)$ for~$x \ne 0$. Clearly,~$\phi^* \in L^\infty(\R) \cap C^\infty(\R \setminus \{0\})$. Moreover, changing variables one sees that~$[\phi^*]_{H^{\frac{1}{2}}(\R)} = [\phi]_{H^{\frac{1}{2}}(\R)}$ and that~$L_1 (\phi^*) = 0$ in~$\R \setminus \{ 0\}$. Let now~$\psi \in C^\infty_c(\R)$. For~$\varepsilon > 0$, we consider a function~$\eta_\varepsilon \in C^\infty_c(\R)$ such that~$0 \le \eta_\varepsilon \le 1$ in~$\R$,~$\eta_\varepsilon = 1$ in a neighborhood of~$0$, and~$\| \eta_\varepsilon \|_{H^{\frac{1}{2}}(\R)} \le \varepsilon$. Again, this function exists thanks to Proposition~\ref{zerocapprop}. We have
\begin{align*}
\langle \phi^*, \psi \rangle_{\dot{H}^{\frac{1}{2}}(\R)} & = \langle \phi^*, (1 - \eta_\varepsilon) \psi \rangle_{\dot{H}^{\frac{1}{2}}(\R)} + \langle \phi^*, \eta_\varepsilon \psi \rangle_{\dot{H}^{\frac{1}{2}}(\R)} \\
& = 2 \pi \langle J \phi^*, (1 - \eta_\varepsilon) \psi \rangle_{L^2(\R)}  + \langle \phi^*, \eta_\varepsilon \psi \rangle_{\dot{H}^{\frac{1}{2}}(\R)} \\
& = 2 \pi \langle J \phi^*, \psi \rangle_{L^2(\R)} - 2 \pi \langle J \phi^*, \eta_\varepsilon \psi \rangle_{L^2(\R)}  + \langle \phi^*, \eta_\varepsilon \psi \rangle_{\dot{H}^{\frac{1}{2}}(\R)},
\end{align*}
where the second identity follows since~$(1 - \eta_\varepsilon) \psi$ is supported away from the origin. Note that, by H\"older's inequality,
$$
\left| \langle J \phi^*, \eta_\varepsilon \psi \rangle_{L^2(\R)} \right| \le \| \phi^* \|_{L^\infty(\R)} \| \psi \|_{L^\infty(\R)} \| J \|_{L^2(\R)} \| \eta_\varepsilon  \|_{L^2(\R)} \le C \varepsilon
$$
and
\begin{align*}
\left| \langle \phi^*, \eta_\varepsilon \psi \rangle_{\dot{H}^{\frac{1}{2}}(\R)} \right| &  = \left| \int_\R \int_\R  \frac{\big(\phi^*(x)-\phi^*(y)\big)\big( \eta_{\varepsilon}(x) \psi(x) - \eta_{\varepsilon}(y)\psi(x) + \eta_{\varepsilon}(y) \psi(x) - \eta_{\varepsilon}(y) \psi(y) \big) }{(x-y)^2}\, dxdy \right| \\
 & \le \| \psi \|_{L^\infty(\R)} [\phi^*]_{H^{\frac{1}{2}}(\R)} [\eta_\varepsilon]_{H^{\frac{1}{2}}(\R)} + \int_\R |\eta_\varepsilon(x)| \left( \int_\R \frac{|\phi^*(x) - \phi^*(y)| |\psi(x) - \psi(y)|}{(x - y)^2} \, dy \right) dx \\
& \le C \varepsilon + \| \eta_\varepsilon \|_{L^2(\R)} [\phi^*]_{H^{\frac{1}{2}}(\R)} \sup_{x \in \R} \left( \int_\R \frac{|\psi(x) - \psi(y)|^2}{(x - y)^2} \, dy \right)^{\! \frac{1}{2}} \le C \varepsilon.
\end{align*}
for some costant~$C > 0$ independent of~$\varepsilon$. As~$\varepsilon$ can be taken arbitrarily small, we conclude that~$L_1 (\phi^*) = 0$ in~$\R$ in the weak sense. But, as~$\phi^* \in L^\infty(\R)$, it follows that~$\phi^*$ is smooth in the whole of~$\R$ and that it satisfies the equation in the pointwise sense.
\end{proof}

\begin{remark}
Having at hand Lemma~\ref{energyidlem}, Proposition~\ref{prop-nondegeneracy} can be immediately deduced from~\cite[Theorem 1.4]{S19}. However, for the benefit of the reader, we provide a detailed and (almost) self-contained proof.
\end{remark} 
 
\medbreak
Let us consider the stereographic projection~$\pi: \S^1 \setminus \{ (0, 1) \} \to \R$ defined by~$\pi(z) = z_1 / (1 - z_2)$ and the lifting~$\widetilde{\phi}$ of~$\phi: \R \to \R$ to~$\S^1$ given by~$\widetilde{\phi} := \phi \circ \pi$. As the next result shows, when~$\phi$ is a bounded solution to~$L_1 (\phi) = 0$ in $\R$, its lifting~$\widetilde{\phi}$ satisfies an equation in $\S^1$ driven by the integro-differential operator
\begin{equation} \label{fract-Lapl-sphere}
(-\Delta_{\S^1})^{\frac{1}{2}} \widetilde{\phi}(z) := \frac{1}{\pi} \, \PV \int_{\S^1} \frac{\widetilde{\phi}(z) - \widetilde{\phi}(w)}{|z - w|^2} \, d\Haus^1(w), \quad \mbox{for } z \in \S^1.
\end{equation}

\begin{lemma} \label{lemma-equation-sphere}
If $\phi \in L^{\infty}(\R)$ is a solution of~$L_1 (\phi) = 0$ in~$\R$,  then $\widetilde{\phi}$ can be extended to a~$C^\infty(\S^1)$ function satisfying
\begin{equation} \label{eqfortildephi}
(-\Delta_{\S^1})^{\frac{1}{2}} \, \widetilde{\phi} - \widetilde{\phi} = 0 \quad \mbox{in} \quad \S^1.
\end{equation}
\end{lemma}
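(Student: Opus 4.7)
The plan is to prove the lemma in two main steps: (i) first establish that $\widetilde{\phi}$ extends to a smooth function on the whole of $\S^1$, and (ii) then carry out a change-of-variables computation that converts the equation $L_1(\phi) = 0$ in $\R$ into~\eqref{eqfortildephi} on $\S^1 \setminus \{(0,1)\}$, extending by continuity to the full sphere.

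For the smooth extension, the only potentially problematic point is the north pole $(0,1)$, which corresponds to $|x|\to\infty$ under $\pi$. Here I would invoke the Kelvin-type transform provided by Lemma~\ref{energyidlem}: the function $\phi^*(x) := \phi(1/x)$ extends to an element of $C^\infty(\R)$ that still solves $L_1(\phi^*) = 0$. Near the north pole, I would use the stereographic projection $\pi_S$ from the \emph{south} pole as a local chart; a direct check shows that $\phi \circ \pi = \phi^* \circ \pi_S^{-1}$ (up to the obvious composition, since the transition $\pi \circ \pi_S^{-1}$ is exactly $x \mapsto 1/x$ away from the origin), so the smoothness of $\phi^*$ at $0$ translates into smoothness of $\widetilde{\phi}$ at $(0,1)$.

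For the change of variables, I would parametrize $\S^1$ by $z=(\sin\theta,-\cos\theta)$ so that $x = \pi(z) = \tan(\theta/2)$. Three key identities follow from elementary trigonometry:
$$
|x-y|^2 = \frac{|z-w|^2}{(1-z_2)(1-w_2)}, \qquad dx = \frac{d\Haus^1(z)}{1-z_2}, \qquad J(x) = \frac{2}{1+x^2} = 1 - z_2.
$$
Substituting $y = \pi(w)$ in the definition of $\Lfrac \phi$ and using the first two identities,
$$
\Lfrac \phi(\pi(z)) = \frac{1-z_2}{\pi} \, \PV \int_{\S^1} \frac{\widetilde{\phi}(z) - \widetilde{\phi}(w)}{|z-w|^2} \, d\Haus^1(w) = (1-z_2)\,(-\Delta_{\S^1})^{\frac{1}{2}}\widetilde{\phi}(z),
$$
so the equation $L_1(\phi) = 0$ reads $(1-z_2)\,(-\Delta_{\S^1})^{\frac{1}{2}}\widetilde{\phi}(z) = (1-z_2)\widetilde{\phi}(z)$, and dividing by $1-z_2 > 0$ (valid on $\S^1 \setminus \{(0,1)\}$) gives~\eqref{eqfortildephi} there. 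By the smoothness of $\widetilde{\phi}$ established in step (i), both sides of~\eqref{eqfortildephi} are continuous on all of $\S^1$, so the identity extends to the north pole.

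The main obstacles are two routine but technical points: justifying the principal value exchange under the change of variables (which requires verifying that the singularity at $z = w$ is handled consistently, e.g., by symmetric truncation on either side of the map $\pi$, using that $\pi$ is a conformal diffeomorphism and the kernel of the half-Laplacian is symmetric), and verifying the smooth extension at the north pole rigorously from the behaviour of $\phi^*$. Both are expected to work with the explicit formulas above; neither should produce any genuine difficulty beyond bookkeeping.
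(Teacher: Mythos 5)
Your proposal is correct and follows essentially the same route as the paper: the smooth extension across the north pole via the Kelvin-type function $\phi^*$ from Lemma~\ref{energyidlem}, and the conformal change of variables yielding $\Lfrac \phi(\pi(z)) = (1 - z_2)\,(-\Delta_{\S^1})^{\frac{1}{2}}\widetilde{\phi}(z)$ together with $J(\pi(z)) = 1 - z_2$. The only (harmless) deviation is that you extend the identity~\eqref{eqfortildephi} to the north pole by continuity of both sides, whereas the paper repeats the change-of-variables computation with $\phi^*$; both are valid.
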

\begin{proof}
Since~$\phi \in C^\infty(\R)$, we immediately deduce from its definition that~$\widetilde{\phi}$ is smooth away from~$\{(0, 1)\}$. Moreover, letting~$\phi^*$ be defined as in Lemma~\ref{energyidlem}, it is easy to see that~$\widetilde{\phi}(z) = \phi^*(z_1 / (1 + z_2))$ for every~$z \in \S^1 \setminus \{ (0, - 1), (0, 1) \}$. As~$\phi^* \in C^\infty(\R)$, we conclude that~$\widetilde{\phi}$ can be extended to a smooth function in the whole~$\S^1$.

We now address the validity of~\eqref{eqfortildephi}. Using the parametrization~$\big[ {-\frac{3 \pi}{2}}, \frac{\pi}{2} \big) \ni \theta \mapsto e^{i \theta} \in \S^1$ for~$\S^1$, we compute
$$
(-\Delta_{\S^1})^{\frac{1}{2}} \widetilde{\phi}(e^{i \theta}) := \frac{1}{\pi} \, \PV \int_{-\frac{3\pi}{2}}^{\frac{\pi}{2}} \frac{\widetilde{\phi}(e^{i \theta}) - \widetilde{\phi}(e^{i \eta})}{|e^{i \theta} - e^{i \eta}|^2} \, d\eta = \frac{1}{\pi} \, \PV \int_{-\frac{3\pi}{2}}^{\frac{\pi}{2}} \frac{\phi \big( \frac{\cos \theta}{1 - \sin \theta} \big) - \phi \big( \frac{\cos \eta}{1 - \sin \eta} \big)}{2(1 - \cos (\theta - \eta))} \, d\eta,
$$
for every~$\theta \in \big( {-\frac{3 \pi}{2}}, \frac{\pi}{2} \big)$. Now, setting~$x = \frac{\cos \theta}{1 - \sin \theta}$ and~$y = \frac{\cos \eta}{1 - \sin \eta}$ and taking advantage of the fact that
$$
\frac{d\eta}{2(1 - \cos(\theta - \eta))} = \frac{1}{J(x)} \frac{dy}{|x - y|^2},
$$
we obtain
$$
(-\Delta_{\S^1})^{\frac{1}{2}} \widetilde{\phi}(e^{i \theta}) = \frac{1}{J(x)} \frac{1}{\pi} \, \PV \int_0^{2 \pi} \frac{\phi(x) - \phi(y)}{|x - y|^2} \, dy = \frac{1}{J(x)} \Lfrac \phi(x).
$$
From this, it follows that~$(-\Delta_{\S^1})^{\frac{1}{2}} \widetilde{\phi} - \widetilde{\phi} = 0$ in~$\S^1 \setminus \{ (0, 1)\}$. To verify that the equation holds at $\{(0,1)\}$ as well, one can argue in a similar fashion, but writing~$\widetilde{\phi}$ in terms of~$\phi^*$ and using that~$\phi^*$ also satisfies~$L_1 (\phi^*) = 0$ in~$\R$, by Lemma~\ref{energyidlem}.
\end{proof}

Next, given $\widetilde{\phi} \in L^1(\S^1)$, let us define 
$$
\widehat{\phi}(n) = \frac{1}{2\pi} \int_0^{2\pi} \widetilde{\phi}(e^{it}) e^{-int} dt, \quad \mbox{for } n \in \Z,
$$
and point out that, for $\widetilde{\phi}$ smooth, the integro-differential operator given in \eqref{fract-Lapl-sphere} can be equivalently defined as
\begin{equation} \label{half-laplac-series}
(-\Delta_{\S^1})^{\frac12} \widetilde{\phi}(e^{i\theta}) = \sum_{n \in \Z} |n| \widehat{\phi}(n) e^{in\theta}, \quad \textup{ for } \theta \in [0,2\pi).
\end{equation}
See for instance to \cite[Appendix A]{DL-M-R15} for a proof of this equivalence. Having at hand \eqref{half-laplac-series}, in the next lemma we characterize the form of the solutions to \eqref{eqfortildephi}.

\begin{lemma} \label{lemma-eigenvalues-sphere}
If $\widetilde{\phi} \in C^{\infty}(\S^1)$ is a solution to \eqref{eqfortildephi}, then $\widetilde{\phi}(e^{i\theta}) = a \cos(\theta) + b \sin(\theta)$ for some $a, b \in \R$. 
\end{lemma}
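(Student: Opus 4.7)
The plan is to exploit the spectral representation of $(-\Delta_{\S^1})^{1/2}$ provided by formula~\eqref{half-laplac-series}. Since $\widetilde{\phi} \in C^{\infty}(\S^1)$, its Fourier coefficients $\widehat{\phi}(n)$ decay faster than any polynomial in $n$, so all series manipulations below are absolutely convergent and may be performed term-by-term.

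First I would insert the Fourier expansion of $\widetilde{\phi}$ and of $(-\Delta_{\S^1})^{1/2}\widetilde{\phi}$ into the equation $(-\Delta_{\S^1})^{1/2}\widetilde{\phi} - \widetilde{\phi} = 0$. Using~\eqref{half-laplac-series} this yields
$$
\sum_{n \in \Z} \bigl(|n| - 1\bigr)\,\widehat{\phi}(n)\,e^{in\theta} = 0, \quad \textup{for all } \theta \in [0, 2\pi).
$$
By the orthogonality of $\{e^{in\theta}\}_{n \in \Z}$ in $L^2((0,2\pi))$, we conclude $(|n|-1)\widehat{\phi}(n) = 0$ for every $n \in \Z$, hence $\widehat{\phi}(n) = 0$ whenever $|n| \neq 1$. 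Therefore
$$
\widetilde{\phi}(e^{i\theta}) = \widehat{\phi}(1)\,e^{i\theta} + \widehat{\phi}(-1)\,e^{-i\theta}.
$$

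Next, recalling that $\widetilde{\phi} = \phi \circ \pi$ with $\phi$ real-valued, the function $\widetilde{\phi}$ is itself real-valued, which forces $\widehat{\phi}(-1) = \overline{\widehat{\phi}(1)}$. Writing $\widehat{\phi}(1) = \frac{a}{2} - i\frac{b}{2}$ with $a,b \in \R$, we obtain
$$
\widetilde{\phi}(e^{i\theta}) = 2\,\Re\bigl(\widehat{\phi}(1)\,e^{i\theta}\bigr) = a\cos(\theta) + b\sin(\theta),
$$
which is exactly the claim. No real obstacle is expected: the entire argument is a routine Fourier series computation once the spectral formula~\eqref{half-laplac-series} is available, and the only subtle point is the identification of the kernel of $(-\Delta_{\S^1})^{1/2} - \mathrm{Id}$ with the eigenspace of $(-\Delta_{\S^1})^{1/2}$ associated to the eigenvalue $|n|=1$.
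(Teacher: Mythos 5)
Your proposal is correct and follows essentially the same route as the paper: Fourier expand, apply the spectral formula \eqref{half-laplac-series} to get $(|n|-1)\widehat{\phi}(n)=0$, and conclude that only the modes $n=\pm1$ survive. The only difference is that you spell out the final reality/reconstruction step ($\widehat{\phi}(-1)=\overline{\widehat{\phi}(1)}$), which the paper leaves implicit.
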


\begin{proof}
Let $\widetilde{\phi} \in C^{\infty}(\S^1)$. We Fourier expand $\widetilde{\phi}$ and get
$$
\widetilde{\phi}(e^{i\theta}) = \sum_{n \in \Z} \widehat{\phi}(n) e^{in\theta}, \quad \mbox{for all } \theta \in [0,2\pi).
$$
Thus, taking into account the definition given in \eqref{half-laplac-series}, if $\widetilde{\phi}$ is a solution to \eqref{eqfortildephi}, it follows that
$$
\sum_{n \in \Z} (|n|-1) \widehat{\phi}(n) e^{in\theta} = 0, \quad \mbox{for all } \theta \in [0,2\pi).
$$
We immediately deduce that $\widehat{\phi}(n) = 0$ for all $n \in \Z \setminus\{-1,1\}$ and so, the result follows.
\end{proof}

\begin{proof}[Proof of Proposition \ref{prop-nondegeneracy}]
First , let us consider the lifting of $\varphi_0$ and $\varphi_1$ to $\S^1$, i.e. let us define $\widetilde{\varphi}_i$ as $\widetilde{\varphi}_i = \varphi_i \circ \pi$ for $i = 0,1$, where we recall $\pi: \S^1 \setminus \{ (0, 1) \} \to \R$ is the stereographic projection. One can directly check that
$$
\widetilde{\varphi}_0(e^{i\theta}) = \sin(\theta) \quad \textup{ and } \quad  \widetilde{\varphi}_1(e^{i\theta}) = \cos(\theta), \quad \mbox{for all } \theta \in \Big(-\frac{3\pi}{2}, \frac{\pi}{2} \Big).
$$
Now, let~$\phi \in L^{\infty}(\R)$ be a solution to $L_1(\phi) = 0$ in $\R$. By Lemma~\ref{lemma-equation-sphere}, we know that its lifting~$\widetilde{\phi}$ can be extended to a~$C^{\infty}(\S^1)$ function (still denoted by~$\widetilde{\phi}$) satisfying~\eqref{eqfortildephi}. Then, by Lemma~\ref{lemma-eigenvalues-sphere}, we have  
$$
\widetilde{\phi}(e^{i\theta}) = a \widetilde{\varphi}_0(e^{i\theta}) + b \widetilde{\varphi}_1(e^{i\theta}), \quad \mbox{for all } \theta \in \Big[-\frac{3\pi}{2}, \frac{\pi}{2} \Big),
$$
for some $a,b \in \R$. Pulling back this identity, we conclude that 
$\phi = a \varphi_0 + b \varphi_1$ in $\R$ with $a,b \in \R$.
\end{proof}

We conclude this appendix with a result about fractional capacities, mentioned in the proof of Lemma~\ref{energyidlem}. Let~$E$ be a subset of~$\R$. We define the following two notions of~$\frac{1}{2}$-capacity of~$E$:
\begin{align*}
\CAP_{\frac{1}{2}}(E) & := \inf \left\{ [v]^2_{H^{\frac{1}{2}}(\R)} : v \in H^{\frac{1}{2}}(\R) \mbox{ and } v = 1 \mbox{ a.e.~in~an open neighborhood of~} E  \right\}, \\
\overline{\CAP}_{\frac{1}{2}}(E) & := \inf \left\{ \| v \|^2_{H^{\frac{1}{2}}(\R)} : v \in H^{\frac{1}{2}}(\R) \mbox{ and } v = 1 \mbox{ a.e.~in~an open neighborhood of~} E  \right\}.
\end{align*}
Recall that~$H^{\frac{1}{2}}(\R) := L^2(\R) \cap \dot{H}^{\frac{1}{2}}(\R)$.

\begin{proposition} \label{zerocapprop}
It holds~$\CAP_{\frac{1}{2}} \! \big( (x_0 - r,\, x_0 + r) \big) = 0$ for every~$x_0 \in \R$ and~$r > 0$. Furthermore,~$\CAP_{\frac{1}{2}} \! \big( \{ x_0 \} \big) = \overline{\CAP}_{\frac{1}{2}} \! \big( \{ x_0 \} \big) = 0$ for every~$x_0 \in \R$.
\end{proposition}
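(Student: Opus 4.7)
The strategy is to exhibit explicit logarithmic cutoff functions whose energies can be estimated through the Caffarelli--Silvestre harmonic extension. Since $H^{\frac12}(\R)$ is critical for the Sobolev embedding in dimension one, such cutoffs can be chosen to equal $1$ on any prescribed bounded interval while having arbitrarily small $H^{\frac12}$-seminorm; the additional $L^2$ smallness needed to handle $\overline{\CAP}_{\frac12}(\{x_0\})$ will come from keeping the support of the competitor bounded and letting its inner radius shrink to zero.

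The main workhorse will be the following function. For $0 < \rho < R$, define $\Psi_{\rho,R} : \R^2 \to \R$ by setting $\Psi_{\rho, R}(X) := 1$ if $|X| \le \rho$, $\Psi_{\rho, R}(X) := \log(R/|X|)/\log(R/\rho)$ if $\rho \le |X| \le R$, and $\Psi_{\rho, R}(X) := 0$ if $|X| \ge R$. Since $|\nabla \Psi_{\rho, R}(X)| = 1/(|X| \log(R/\rho))$ on the annulus $B_R \setminus B_\rho$, a direct computation in polar coordinates yields $\int_{\R^2_+} |\nabla \Psi_{\rho, R}|^2 \, dX = \pi/\log(R/\rho)$. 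The one-dimensional trace $\psi_{\rho, R}(x) := \Psi_{\rho, R}(x, 0)$ is Lipschitz, compactly supported, and therefore lies in $H^{\frac12}(\R)$. Using the well-known identity $[u]_{H^{\frac12}(\R)}^2 = 2\pi \int_{\R^2_+} |\nabla U|^2 \, dX$ between the Gagliardo seminorm of a function $u$ and the Dirichlet energy of its harmonic extension $U$, together with the variational characterization of $U$ as the minimizer of the Dirichlet energy among all extensions with the prescribed trace, I obtain
$$
[\psi_{\rho, R}]_{H^{\frac12}(\R)}^2 \le 2\pi \int_{\R^2_+} |\nabla \Psi_{\rho, R}|^2 \, dX = \frac{2 \pi^2}{\log(R/\rho)}.
$$

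To establish $\CAP_{\frac12}((x_0 - r, x_0 + r)) = 0$, I will use $\psi_{r+1, R}(\cdot - x_0)$ as competitor: it equals $1$ on the open neighborhood $(x_0 - r - 1, x_0 + r + 1)$ of $[x_0 - r, x_0 + r]$, and its seminorm tends to zero as $R \to \infty$. The vanishing of $\CAP_{\frac12}(\{x_0\})$ then follows immediately by monotonicity. For the full-norm assertion $\overline{\CAP}_{\frac12}(\{x_0\}) = 0$ I will instead fix $R = 1$ and send $\rho \to 0^+$, using $\psi_{\rho, 1}(\cdot - x_0)$ as competitor; beyond the seminorm bound above, I must also control the $L^2$ norm, which, after the substitution $u = \log(1/x)$, satisfies
$$
\|\psi_{\rho, 1}\|_{L^2(\R)}^2 = 2\rho + \frac{2}{\log^2(1/\rho)} \int_0^{\log(1/\rho)} u^2 e^{-u} \, du \le 2\rho + \frac{4}{\log^2(1/\rho)} \longrightarrow 0.
$$
The only mildly delicate point is this last $L^2$ estimate, where one relies on the logarithmic decay profile of $\psi_{\rho, 1}$ being square-integrable against the Lebesgue measure on $[\rho, 1]$ with a loss of only two powers of $\log(1/\rho)$; everything else is a routine polar-coordinate computation combined with the minimality of the harmonic extension.
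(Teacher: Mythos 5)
Your proof is correct, and it takes a genuinely different route from the paper's. The paper works entirely on the real line: it introduces the same truncated logarithmic profile $v_R$ and estimates its Gagliardo seminorm by brute force, splitting the double integral over the four regions determined by $[-1,1]$, $[-R,R]\setminus[-1,1]$, and $\R\setminus[-R,R]$ and bounding each piece separately; for the point capacity it then rescales, using $v_R(R\,\cdot)$ so that the $L^2$ norm picks up a factor $1/R$ while the seminorm is scale invariant. You instead pass to the upper half-plane, use the Dirichlet principle for the harmonic extension together with the identity $[u]_{H^{\frac12}(\R)}^2 = 2\pi\int_{\R^2_+}|\nabla U|^2$, and get the bound $[\psi_{\rho,R}]^2_{H^{\frac12}(\R)}\le 2\pi^2/\log(R/\rho)$ in one line from the explicit energy of the two-dimensional logarithmic cutoff; for $\overline{\CAP}_{\frac12}(\{x_0\})$ you shrink the inner radius rather than rescaling, and your $L^2$ computation via $u=\log(1/x)$ is correct. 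What your approach buys is a much shorter and more transparent seminorm estimate (and it makes the conformal-invariance origin of the logarithmic cutoff visible); what it costs is the reliance on the extension identity with its normalization constant — though note that for the purpose of proving the capacities vanish only the comparability $[u]^2_{H^{\frac12}}\le C\int_{\R^2_+}|\nabla U|^2$ is needed, and this identity (with the constant $2\pi$ for the paper's normalization of $(-\Delta)^{\frac12}$) is consistent with the Dirichlet-to-Neumann realization already used in Section 3 of the paper. Both arguments are complete.
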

\begin{proof}
Up to a translation, we may take~$x_0 = 0$. Moreover, by scaling we have that~$\CAP_{\frac{1}{2}} \! \big( (- r, r) \big) = \CAP_{\frac{1}{2}} \! \big( (-1/2, 1/2) \big)$ for every~$r > 0$. Therefore, we may also restrict to the case~$r = 1/2$.

For $R \ge 3$ we consider the following truncated rescalings of the fundamental solution of~$(-\Delta)^{\frac{1}{2}}$:
$$
v_R(x) := \begin{dcases}
1 & \quad \mbox{if } x \in [-1, 1], \\
\frac{\log R - \log |x|}{\log R} & \quad \mbox{if } x \in [-R, R] \setminus [-1, 1], \\
0 & \quad \mbox{if } x \in \R \setminus [-R, R].
\end{dcases}
$$
Then, we compute
\begin{align*}
[v_R]^2_{H^{\frac{1}{2}}(\R)} & = 2 \int_{-1}^1 \int_{\R \setminus [-R, R]} \frac{dx dy}{(x - y)^2} + \frac{2}{\log^2 \! R} \int_{-1}^1 \left( \int_{[-R, R] \setminus [-1, 1]} \frac{\log^2 |y|}{(x - y)^2} \, dy \right) dx \\
& \quad + \frac{2}{\log^2 \!R} \int_{\R \setminus [-R, R]} \left( \int_{[-R, R] \setminus [-1, 1]} \frac{\big( \log R - \log |y| \big)^2}{(x - y)^2} \, dy \right) dx \\
& \quad + \frac{1}{\log^2 \! R} \int_{[-R, R] \setminus [-1, 1]} \int_{[-R, R] \setminus [-1, 1]} \frac{\big| \log |x| - \log |y| \, \big|^2}{(x - y)^2} \, dx dy.
\end{align*}
First, it is easy to see that
$$
\int_{-1}^1 \int_{\R \setminus [-R, R]} \frac{dx dy}{(x - y)^2} \le 4 \int_0^1 \left( \int_{R}^{+\infty} \frac{dy}{(y - x)^2} \right) dx = 4 \log \left( 1 + \frac{1}{R - 1} \right) \le \frac{C}{R},
$$
for some constant~$C > 0$ independent of~$R$. Secondly,
$$
\int_{-1}^1 \left( \int_{[-R, R] \setminus [-1, 1]} \frac{\log^2 |y|}{(x - y)^2} \, dy \right) dx \le 4\int_{1}^R \left(\int_0^1 \frac{dx}{(y - x)^2} \right) \log^2 y \, dy  = 4 \int_{1}^R \frac{\log^2 y}{(y - 1) y} \, dy \le C.
$$
Next, changing variables appropriately, we estimate
\begin{align*}
\int_{\R \setminus [-R, R]} \left( \int_{[-R, R] \setminus [-1, 1]} \frac{\big( \log R - \log |y| \big)^2}{(x - y)^2} \, dy \right) dx & \le 4 \int_{1}^R \left( \int_{R}^{+\infty} \frac{dx}{(x - y)^2} \right) \big( \log R - \log y \big)^2 \, dy \\
& = 4 \int_{1/R}^1 \frac{\log^2 z}{1 - z} \, dz \le C.
\end{align*}
Finally, we have
\begin{align*}
\int_{[-R, R] \setminus [-1, 1]} \int_{[-R, R] \setminus [-1, 1]} \frac{\big| \log |x| - \log |y| \, \big|^2}{(x - y)^2} \, dx dy & \le 8 \int_1^R \left( \int_{1}^x \frac{\big( \log x - \log y \big)^2}{(x - y)^2} \, dy \right) dx \\
& = 8 \int_1^R \left( \int_{1/x}^1 \frac{\log^2 z}{(1 - z)^2} \, dz \right) \frac{dx}{x} \le C \log R.
\end{align*}
Combining the last four inequalities, we see that~$[v_R]^2_{H^{\frac{1}{2}}(\R)} \le \frac{C}{\log R}$. As~$R$ can be taken arbitrarily large, we conclude that~$\CAP_{\frac{1}{2}} \! \big( (-1/2, 1/2) \big) = 0$.
 
On the other hand, the statement concerning the~$\frac{1}{2}$--capacity of points also immediately follows using $v_R$. Indeed, observe that
\begin{align*}
\overline{\CAP}_{\frac{1}{2}} \big( \{ 0 \} \big) & \le \| v_R(R \, \cdot \,) \|^2_{L^2(\R)} + [ v_R(R \, \cdot \,) ]_{H^{\frac{1}{2}}(\R)}^2 = \frac{1}{R} \, \| v_R \|^2_{L^2(\R)} + [ v_R ]_{H^{\frac{1}{2}}(\R)}^2 \\
& \le \frac{2}{R} \left\{ 1 + \frac{1}{\log^2 R} \int_1^R \log^2 \! \left( \frac{R}{x} \right) dx \right\} + \frac{C}{\log R} \le \frac{2}{\log^2 \! R} \int_{1/R}^1 \log^2 z \, dz + \frac{C}{\log R} \le \frac{C}{\log R},
\end{align*}
for every large~$R$.
\end{proof}

\section{Dependence on the $\xi_j$'s} \label{App dependence on the xijs}

\noindent This appendix is devoted to the proofs of Propositions~\ref{prop1derivative} and~\ref{prop2derivative}.

\begin{proof}[Proof of Proposition \ref{prop1derivative}]
The proof of the continuity of~$\phi$ with respect to~$\eta_l$ is similar to that of its differentiability and simpler. We therefore omit it and address directly the differentiability. To this aim, we start by doing a formal analysis. Let~$\eta$ be an interior point of~$\epsilon^{-1} \mathcal{I}_{\delta_0}$ and write~$g = g_\eta \in L^{\infty}_{\star,\sigma}(I_\epsilon)$. Let~$\phi =  L_\eta^{-1}(g)$ be the solution of~\eqref{mainprojlinearprob}-\eqref{phiorthtoZ1j}--whose existence and uniqueness is guaranteed by Proposition~\ref{mainlinearprop}. For~$l \in \{1,\ldots,m\}$, \emph{formally}, $\varphi := \partial_{\eta_l} \phi$ should satisfy
$$
\left\{
\begin{aligned}
L_\eta \varphi & = \phi \partial_{\eta_l} W + \sum_{j = 1}^m c_j \partial_{\eta_l} (\chi_j Z_{1j}) + \partial_{\eta_l} g + \sum_{j=1}^m d_j \chi_j Z_{1j}, \quad && \textup{ in } I_{\epsilon}, \\
\varphi & = 0, && \textup{ in } \R \setminus I_{\epsilon},
\end{aligned}
\right.
$$
where, still \emph{formally}, we have set~$d_j:= \partial_{\eta_l} c_j$ for all~$j = 1,\ldots, m$. We also stress that the orthogonality conditions~\eqref{phiorthtoZ1j} become
$$
\int_{\R} \varphi \chi_j Z_{1j} = -  \int_{\R} \phi \partial_{\eta_l}(\chi_j Z_{1j}), \quad \textup{ for all } j = 1, \ldots, m.
$$ 
Then, we define
\[ \widetilde{\varphi} := \varphi + \sum_{k=1}^m b_k \tilde{z}_{1k},
\]
with~$\tilde{z}_{1k}$ as in the proof of Proposition~\ref{mainlinearprop} and
\begin{equation} \label{bkderivative}
b_k :=  \displaystyle \dfrac{ \displaystyle \int_{\R} \phi \partial_{\eta_l}(\chi_k Z_{1k})}{ \displaystyle \int_{\R} \chi_k Z_{1k}^2}, \quad \textup{ for all } k = 1,\ldots,m. 
\end{equation}
Arguing as in the proof of Proposition~\ref{mainlinearprop}, one can easily see that
\begin{equation} \label{orthogonalityDerivative}
\int_{\R} \widetilde{\varphi} \chi_j Z_{1j} = 0, \quad \textup{ for all } j = 1, \ldots, m.
\end{equation}
Hence, we get that, \emph{formally},~$\widetilde{\varphi}$ is a bounded solution to 
$$
\left\{
\begin{aligned}
L_\eta \widetilde{\varphi} & = f + \sum_{j=1}^m d_j \chi_j Z_{1j}, \quad && \textup{ in } I_{\epsilon}, \\
\widetilde{\varphi} & = 0, && \textup{ in } \R \setminus I_{\epsilon},
\end{aligned}
\right.
$$
satisfying the orthogonality conditions~\eqref{orthogonalityDerivative}, where
\begin{equation}
\begin{aligned} \label{fDerivative}
f := \partial_{\eta_l} g + \sum_{i = 1}^3 f_i, \qquad \mbox{ with } \,\, f_1 := \phi \partial_{\eta_l} W, \,\,\, f_2 := \sum_{j = 1}^m c_j \partial_{\eta_l} (\chi_j Z_{1j}), \,\, \textup{ and } \,\, f_3 := \sum_{j = 1}^m b_j L_\eta \tilde{z}_{1j}\,.
\end{aligned}
\end{equation}
Proposition~\ref{mainlinearprop} would then imply that
\begin{equation} \label{estimateVarphitilde}
\|\widetilde{\varphi}\|_{L^{\infty}(\R)} \leq C |{\log \varepsilon}| \|f\|_{\star,\sigma} \le C |{\log \varepsilon}| \left( \big\| {\partial_{\eta_l} g} \big\|_{\star,\sigma} + \sum_{i = 1}^3 \| f_i \|_{\star,\sigma} \right).
\end{equation}
We estimate separately the~$\|\cdot\|_{\star,\sigma}$--norm of each~$f_i$. First of all, note that
\begin{equation} \label{355}
\partial_{\eta_l} (\chi_j Z_{1j}) = -4\epsilon\, \frac{(y-\eta_j) \chi(y-\eta_j) \mu_j \partial_{\xi_l} \mu_j}{(\mu_j^2 + (y-\eta_j)^2)^2}, \quad \textup{ for } j \neq l,
\end{equation}
and
\begin{equation} \label{356}
\partial_{\eta_l} (\chi_l Z_{1l}) = -2\,\frac{\chi(y-\eta_l) + (y-\eta_l)\chi'(y-\eta_l)}{\mu_l^2+(y-\eta_l)^2} + \frac{4 (y - \eta_l)^2}{(\mu_l^2 + (y-\eta_l)^2)^2} -4\epsilon\, \frac{(y-\eta_l) \chi(y-\eta_l) \mu_l \partial_{\xi_l} \mu_l}{(\mu_l^2 + (y-\eta_l)^2)^2}.
\end{equation}
Thus, taking into account that~$|\partial_{\xi_j} \mu_k| \le C$ for all $j,k \in \{1,\ldots,m\}$ (cf.~the proof of Lemma~\ref{derofVwrtetalem}) as well as the estimates~\eqref{mainlinearaprioriest} and~\eqref{Mjj}, we easily see that
$$
|b_j| \leq C \epsilon |{\log \epsilon}| \, \|g\|_{\star,\sigma}, \ \textup{ if } j \neq l, \quad \textup{ and }  \quad
|b_l| \leq C |{\log \epsilon}| \, \|g\|_{\star,\sigma}.
$$
Also, from Lemma~\ref{ztilde1lem} and~\eqref{Wasperturb}--\eqref{thetabounds} we deduce that
$$
|L_\eta \tilde{z}_{1j}(y)| \leq C \epsilon^{1-\sigma} \left( \epsilon + \frac{1}{(1+|y-\eta_j|)^{1+\sigma}} \right), \quad \textup{ for all } y \in  I_{\epsilon} \mbox{ and } j = 1,\ldots,m.
$$
Hence, it follows that
\[
\|f_3\|_{\star,\sigma} \leq C \epsilon^{1-\sigma} |{\log \epsilon}| \, \|g\|_{\star,\sigma}.
\]
Next, combining~\eqref{mainlinearaprioriest} and~\eqref{ckestimates}, we immediately see that
$$
|c_j| \leq \|g\|_{\star,\sigma}, \quad \textup{ for all } j = 1,\ldots,m.
$$
On the other hand, taking into account \eqref{355}-\eqref{356}, we easily check that, for all $y \in I_{\epsilon}$, 
$$
|\partial_{\eta_l}(\chi_j(y) Z_{1j}(y))| \leq \frac{C \epsilon}{(1+|y-\eta_j|)^{1+\sigma}}, \, \textup{ if } j \neq l,
$$
and
$$
|\partial_{\eta_l}(\chi_l(y) Z_{1l}(y))| \leq \frac{C}{(1+|y-\eta_l|)^{1+\sigma}}.
$$
Thus, we get that
$$
\|f_2\|_{\star,\sigma} \leq C \|g\|_{\star,\sigma}.
$$
Finally, taking into account~\eqref{Wasperturb}--\eqref{thetabounds} and Lemma~\ref{derofVwrtetalem}, we see that, for all~$y \in I_\epsilon$,
\begin{align*}
|\partial_{\eta_l} W(y)| = \bigg| \bigg(\sum_{j = 1}^m \frac{2 \mu_j}{\mu_j^2 + (y - \eta_j)^2} + \theta(y) \bigg) (Z_{1j}+R_j) \bigg| \leq C \bigg( \epsilon + \sum_{j=1}^m \frac{1}{(1+|y-\eta_j|)^{1+\sigma}} \bigg).
\end{align*}
Thus, using one more time~\eqref{mainlinearaprioriest}, we infer that
$$
\|f_1\|_{\star,\sigma} \leq C |{\log \epsilon}| \, \|g\|_{\star,\sigma}.
$$
Recalling~\eqref{estimateVarphitilde}, we conclude that
$$
\|\widetilde{\varphi}\|_{L^{\infty}(\R)} \leq C |{\log \epsilon}|^2 \left( \|g\|_{\star,\sigma} + \frac{1}{|{\log\epsilon|}} \, \big\| {\partial_{\eta_l} g} \big\|_{\star,\sigma} \right),
$$
and thus that
$$
\big\| {\partial_{\eta_{l}} \phi} \big\|_{L^{\infty}(\R)} \leq C |{\log \epsilon}|^2 \left( \|g\|_{\star,\sigma} + \frac{1}{|{\log\epsilon}|} \, \big\| {\partial_{\eta_l} g} \big\|_{\star,\sigma} \right).
$$

\medbreak The above computations are only \emph{formal}, since we do not know a priori that~$\phi$ is differentiable with respect to~$\eta_l$. To make them \emph{rigorous}, we define~$\varphi:= L_{\eta}^{-1}(f) - \sum_{k=1}^m b_k \tilde{z}_{1k}$, with~$f$ as in~\eqref{fDerivative} and the~$b_k$'s as in~\eqref{bkderivative}. We will prove that~$\phi$ is differentiable with respect to~$\eta_l$ and that~$\varphi = \partial_{\eta_l} \phi$. To this aim, we follow the argument presented at the end of~\cite[Section~4]{D-dP-W14}. Denote with~$e_l$ the~$l$-th element of the canonical basis of~$\R^m$. For~$t \in (- 1, 1) \setminus \{ 0 \}$ we define~$\eta_{l}^t = \eta + t e_l$, which belongs to~$\epsilon^{-1} \mathcal{I}_{\delta_0}$ provided~$t$ is sufficiently small. For an arbitrary function~$h = h(\eta)$, we set
$$
 \quad D_l^t h(\eta) := \frac{h(\eta_l^t) - h(\eta)}{t}.
$$
Using this notation, we define~$\varphi^t:= D_l^t \phi$ and $d_j^{\,t}:= D_l^t c_j$ for all $j = 1,\ldots,m$. Moreover, we choose
\[
b_k^t := \dfrac{\displaystyle \int_\R \phi(\eta^t_l) D_l^t (\chi_k Z_{1 k})}{\displaystyle \int_{\R} \chi_k Z_{1k}^2} = - \dfrac{ \displaystyle \int_{\R} \varphi^t \chi_k Z_{1k}}{ \displaystyle \int_{\R} \chi_k Z_{1k}^2}, \quad \textup{ for all } k = 1,\ldots,m. 
\]
and set~$\widetilde{\varphi}^{\,t} := \varphi^t + \sum_{k=1}^m b_{k}^t \tilde{z}_{1k}$. Arguing as in the proof of Proposition~\ref{mainlinearprop}, it is immediate to check that
$$
\int_{\R} \widetilde{\varphi}^{\,t} \chi_j Z_{1j} = 0, \quad \textup{ for all } j = 1,\ldots, m. 
$$
Moreover, we have that
$$
\left\{
\begin{aligned}
L_\eta \widetilde{\varphi}^{\,t} & = f^{t} + \sum_{j=1}^m d_j^{\,t} \chi_j Z_{1j}, \quad && \textup{ in } I_{\epsilon}, \\
\widetilde{\varphi} & = 0, \quad && \textup{ in } \R \setminus I_{\epsilon},
\end{aligned}
\right.
$$
with 
$$
f^t:= D_l^t g + \phi(\eta_l^t) D_l^t W + \sum_{j=1}^m c_j(\eta_l^t) D_l^t(\chi_j Z_{1j}) + \sum_{j=1}^m b_j^t L_\eta \tilde{z}_{1j}.
$$
From here, using the linearity of the problem, the continuity of~$\phi$ with respect to~$\eta_l$, and estimate \eqref{mainlinearaprioriest} of Proposition~\ref{mainlinearprop}, one checks that
$$
\lim_{t \to 0} \|\varphi^t  - \varphi\|_{L^{\infty}(\R)} = 0.
$$
This means that~$\phi$ is differentiable with respect to~$\eta_l$ with~$\varphi = \partial_{\eta_l} \phi$ a.e.~in~$\R$. The computation made in the first part of the proof is thus now fully justified and yields the bound~\eqref{boundDerivative} for~$\partial_{\eta_l} (L_\eta^{-1}(g))$. Moreover, its continuous dependence in~$\eta$ follows from the continuous dependence of the data involved in the definition of~$\varphi$. The proof is finished.
\end{proof}

\begin{proof}[Proof of Proposition \ref{prop2derivative}]
Write~$\phi = \Phi(\eta)$. Arguing as in the proof of~\cite[Proposition~5.1]{D-dP-W14}, it is not difficult to see that~$\eta \mapsto \Phi(\eta)$ is a~$C^1$ map. Since~$\phi = L_\eta^{-1} \left( - \mathscr{E} + \mathcal{N}(\phi) \right)$ and~$\eta \mapsto - \mathscr{E} + \mathcal{N}(\phi)$ is of class~$C^1$, 
Proposition~\ref{prop1derivative} gives that
\begin{equation} \label{mainDerivativeNonlinear}
\|\partial_{\eta_l} \phi\|_{L^{\infty}(\R)} \leq C |{\log \epsilon}| \Big\{ {|{\log \epsilon}| \big( {\|\mathscr{E}\|_{\star,\sigma} + \| \mathcal{N}(\phi) \|_{\star,\sigma}} \big)  + \|\partial_{\eta_l} \mathscr{E} \|_{\star,\sigma} + \|\partial_{\eta_l} \mathcal{N}(\phi)\|_{\star,\sigma}} \Big\}.
\end{equation}
We have already seen (cf. Lemma~\ref{error-measure} and~\eqref{nstarestimate}) that, for all~$\phi \in L^{\infty}(\R)$ with~$\|\phi\|_{L^{\infty}(\R)} \leq 1$, 
$$
\|\mathscr{E}\|_{\star,\sigma} \leq C \epsilon^{1-\sigma} \quad \textup{ and } 
\quad
\|\mathcal{N}(\phi)\|_{\star,\sigma} \leq C \|\phi\|_{L^{\infty}(\R)}^2.
$$
Moreover, having at hand the computations made in the proof of Lemma~\ref{derofVwrtetalem} and taking advantage of the fact that~$\|\partial_{\eta_l} W\|_{\star,\sigma} + \|W\|_{\star,\sigma} \leq C$ (cf. proof of Proposition \ref{prop1derivative}), it is not difficult to see that 
$$
\|\partial_{\eta_l} \mathscr{E}\|_{\star,\sigma} \leq C \epsilon^{1-\sigma}.
$$
Also, arguing as in the proof of Proposition~\ref{propnonlineartheory}, we get that
\begin{align*}
\|\partial_{\eta_l} \mathcal{N}(\phi)\|_{\star,\sigma} & \le C \Big(  \|\partial_{\eta_l} W\|_{\star,\sigma} \|\phi\|_{L^{\infty}(\R)}^2 + \|W\|_{\star,\sigma} \|\phi\|_{L^{\infty}(\R)} \|\partial_{\eta_l}\phi\|_{L^{\infty}(\R)} \Big) \\
& \le C \|\phi\|_{L^{\infty}(\R)} \Big( \|\phi\|_{L^{\infty}(\R)} + \|\partial_{\eta_l}\phi\|_{L^{\infty}(\R)} \Big).
\end{align*}
Substituting these estimates in~\eqref{mainDerivativeNonlinear} and using that~$\phi$ satisfies~\eqref{estfornonlinearphi}, we finally obtain that
$$
\|\partial_{\eta_l} \phi\|_{L^{\infty}(\R)} \leq C \epsilon^{1-\sigma} |{\log \epsilon}|^2,
$$
and the proof is complete. 
\end{proof}

\bibliographystyle{plain}
\bibliography{Bibliography}

\end{document}